\newcommand{\Q}{\mathbb{Q}}
\newcommand{\Z}{\mathbb{Z}}
\newcommand{\K}{\mathbb{K}}
\newcommand{\N}{\mathbb{N}}
\newcommand{\M}{\mathcal{M}}
\newcommand{\oh}{\mathcal{O}}
\newcommand{\I}{\mathcal{I}}
\newcommand{\Frac}{\mathrm{Frac}}
\newcommand{\PI}{{\rm PI}\text{-}{\rm deg}}
\newcommand{\End}{\mathrm{End}}
\newcommand{\orth}{\mathbf{1}^{\bot}}
\newcommand{\Span}{\mathrm{span}}
\newcommand{\oqmmn}{\oh_q(M_{m,n}(\K))}
\newcommand{\yl}{\oh_q(Y_{\lambda})}
\newcommand{\partitionk}{\oh_{q^{-1}}(Y_{\lambda})}
\newcommand{\oqgmn}{\oh_q(G_{m,n}(\K))}
\DeclareMathOperator{\Ima}{Im}
\DeclareMathOperator{\Id}{Id}
\newenvironment{rcases}
  {\left.\begin{aligned}}
  {\end{aligned}\right\rbrace}
\theoremstyle{plain}
\newtheorem{theorem}{Theorem}[section]
\newtheorem{lemma}[theorem]{Lemma}
\newtheorem{proposition}[theorem]{Proposition}
\newtheorem{corollary}[theorem]{Corollary}
\newtheorem{maintheorem}{Theorem}
\theoremstyle{definition}
\newtheorem{definition}[theorem]{Definition}
\newtheorem{remark}[theorem]{Remark}
\newtheorem{example}[theorem]{Example}
\theoremstyle{remark}
\newenvironment{rsmallmatrix}{\null\,\vcenter\bgroup
  \Let@\restore@math@cr\default@tag
  \baselineskip6\ex@ \lineskip1.5\ex@ \lineskiplimit\lineskip
  \ialign\bgroup\hfil$\m@th\scriptstyle##$&&\thickspace\hfil
  $\m@th\scriptstyle##$\crcr
}{%
  \crcr\egroup\egroup\,%
}
\title{Quantum Grassmannians and their Associated Quantum Schubert Varieties at roots of unity}
\author{Jason P. Bell\footnote{The first-named author's research was partially supported by NSERC grant RGPIN-2022-02951.},~~ St\'{e}phane Launois\footnote{The second-named author's research was partly supported by EPSRC grant EP/R009279/1.},~~and Alexandra Rogers}
\begin{document}
\maketitle
\begin{abstract}

We study the PI degree of various quantum algebras at roots of unity, including quantum Grassmannians, quantum Schubert varieties, partition subalgebras, and their associated quantum affine spaces. By a theorem of De Concini  and Procesi, the PI degree of partition subalgebras and their associated quantum affine spaces is controlled by skew-symmetric integral matrices associated to (Cauchon-Le) diagrams. We prove that the invariant factors of these matrices are always powers of 2. This allows us to compute explicitly the PI degree of partition subalgebras. 

Our results also apply to certain completely prime (homogeneous) quotients of partition subalgebras. In particular, our results allow us to extend results of Jakobsen and J\o ndrup regarding the PI degree of quantum determinantal rings at roots of unity \cite{JakobsenJondrup} and we present a method to construct an irreducible representation of maximal dimension for quantum determinantal ideals.

Building on these results, we use the strong connection between partition subalgebras and quantum Schubert varieties through noncommutative dehomogenisation \cite{LenaganRigal-SchubertVar} to obtain expressions for the PI degree of quantum Schubert varieties. In particular, we compute the PI degree of quantum Grassmannians. 
\end{abstract}

\section{Introduction}
The PI degree of a PI algebra is an invariant from which various properties of the algebra can be deduced. For example, Brown and Yakimov \cite{BrownYakimov} showed that, under some mild conditions on a prime PI algebra, knowing its Azumaya locus (an invariant which is linked to the PI degree of the algebra and the PI degree of quotients by maximal ideals) provides valuable information about its discriminant ideal (another invariant with applications in the study of automorphism groups of PI algebras (\cite{CekenPalmieriWangZhang1} and \cite{CekenPalmieriWangZhang2})). The PI degree also plays an important role in the representation theory of prime affine PI algebras, giving an upper bound on the dimension of irreducible representations (\cite[Theorem I.13.5]{BrownGoodearl}). Our present aim is to provide explicit computation of the PI degree of various quantum algebras at roots of unity including quantum Grassmannians, quantum Schubert varieties, partition subalgebras, and associated quantum affine spaces.

Let $\K$ be an arbitrary field and $1\neq q\in \K^*$  be a field element. We will specify where we require $q$ to be a primitive $\ell^{\text{th}}$ root of unity, with $\ell > 2$. 
We start with the standard (single parameter) quantised coordinate ring of $m\times n$ matrices over $\K$, denoted $\oh_q(M_{m,n}(\K))$. This is a $\K$-algebra generated by $mn$ indeterminates $(X_{i,j})_{i,j=1}^{m,n}$ subject to some commutation relations which will be recapped in Section \ref{sectionQDRs}. For $\lambda=(\lambda_1 \geq \dots \geq \lambda_m)$ a partition with $n\geq \lambda_1$ and with associated Young diagram denoted by $Y_{\lambda}$, the partition subalgebra $\yl$ of $\oh_q(M_{m,n}(\K))$ is the subalgebra of $\oh_q(M_{m,n}(\K))$ generated by the generators $X_{i,j}$ fitting into $Y_{\lambda}$. In particular, $\oh_q(M_{m,n}(\K))$ is itself a partition subalgebra associated to partition $(n, \dots, n)=n^m$. These algebras have appeared naturally in the study of so-called quantum Grassmannians (see for instance \cite{LenaganRigal-SchubertVar,LLR,LLN}). 

By taking the $\K$-algebra generated by all maximal quantum minors of $\oh_q(M_{m,n}(\K))$ we obtain a quantum analogue of the homogeneous coordinate ring of the Grassmannian of the $m$-dimensional subspaces of $\K^n$. This is called the \textit{quantum Grassmannian} and is denoted $\oh_q(G_{m,n}(\K))$. The maximal quantum minors of $\oh_q(M_{m,n}(\K))$ are referred to as {\em quantum Pl\"ucker coordinates}, and the set of quantum Pl\"ucker coordinates of $\oh_q(G_{m,n}(\K))$ is denoted by $\Pi_{m,n}$. There is a natural partial order on $\Pi_{m,n}$ (see (\ref{partial order on Pi})) which allows to attach to each quantum Pl\"ucker coordinate $\gamma$ the so-called quantum Schubert variety $\oqgmn_{\gamma}$ associated to $\gamma$. This is the quotient of $\oh_q(G_{m,n}(\K))$ by the Schubert ideal associated to $\gamma$, that is, the quotient by the ideal generated by  $ \Pi_\gamma:=\{ \alpha \in \Pi_{m,n}| ~ \alpha \ngeq \gamma \}$. This algebra is a noncommutative deformation of the homogeneous coordinate ring of the Schubert variety associated to the (classical) Pl\"ucker coordinate $\gamma$. Note that $\oh_q(G_{m,n}(\K))$ is itself a quantum Schubert variety (associated to $\gamma =[1, \dots ,m]$).

All these algebras are related by noncommutative dehomogenisation \cite[Theorem 3.1.6]{LenaganRigal-SchubertVar}: the quantum Schubert variety associated to $\gamma$ is a noetherian domain and $\overline{\gamma}$ is a nonzero normal element of this algebra. This allows us to form the localised ring $\oqgmn_{\gamma}[\overline{\gamma}^{-1}]$. The dehomogenisation isomorphism of Lenagan and Rigal then shows that this localisation is a skew-Laurent extension of a partition algebra. More precisely, we have:
$$ \partitionk [y^{\pm 1};\sigma] \cong \oqgmn_{\gamma}[\overline{\gamma}^{-1}], $$
where $\sigma$ denotes the automorphism of $\partitionk$ defined by $\sigma(X_{i,j})=qX_{i,j}$ for all $(i,j)$ in the Young diagram of $\lambda$, where $\lambda$ is the partition naturally associated to $\gamma$ (see Subsection \ref{subsec-lambda-gamma} for more details). 

When $q$ is a root of unity, all these algebras are PI, and this dehomogenisation isomorphism provides a way to compute the PI degree of quantum Schubert varieties. First, we should compute the PI-degree of quantum partition subalgebras and then study the effect of a skew-Laurent extension on the PI degree. This is roughly the strategy we will be using in this paper. Before we explain our results, let us introduce more details.

Partition subalgebras can be presented as iterated Ore extensions over $\K$ with the indeterminates $X_{i,j}$ fitting into the Young diagram of $\lambda$ added in the lexicographic order: 
$$ \yl = \K[X_{1,1}]\cdots [X_{1,\lambda_1}; \sigma_{1,\lambda_1},\delta_{1,\lambda_1}]\dots [X_{m,\lambda_m};\sigma_{m,\lambda_m}, \delta_{m,\lambda_m}],$$
for suitable automorphisms $\sigma_{i,j}$ and $\sigma_{i,j}$-skew derivations  $\delta_{i,j}$. We note that each automorphism $\sigma_{i,j}$ acts by multiplication by powers of $q$ on the generators $X_{u,v}$ with $(u,v) <_{\mathrm{lex}}(i,j)$. The behaviour of these algebras depend very much on the choice of the parameter $q$. Indeed, it follows from \cite[Corollary 4.7]{Haynal} that $\yl$ is PI if and only if $q$ is a root of unity. 

For the rest of this introduction, we assume that $q$ is a primitive $\ell^{\text{th}}$ root of unity, with $\ell > 2$, so that $\yl$ is a PI algebra.  In this case, the PI degree of the iterated Ore extension above does not depend on the skew-derivations $\delta_{i,j}$ by  \cite[Corollary 4.7]{Haynal}. In other words, 
$$\PI( \yl) = \PI(\K[T_{1,1}]\cdots [T_{1,\lambda_1}; \sigma_{1,\lambda_1}]\dots [T_{m,\lambda_m};\sigma_{m,\lambda_m}]),$$
with slightly abusive notation. We denote the iterated Ore extension on the right-hand side by $\overline{\yl}$.  This algebra $\overline{\yl}$ is a (uniparameter) quantum affine space, that is, it is the $\K$-algebra generated by the $T_{i,j}$ subject to $T_{i,j} T_{u,v} = q^{a_{(i,j),(u,v)}}T_{u,v}T_{i,j}$ for all $(i,j), (u,v)$, where the $a_{(i,j),(u,v)}$ are integers coming from the automorphisms $\sigma_{i,j}$. We denote by $M_{\lambda}$ the integral skew-symmetric matrix associated to this quantum affine space, that is,  $M_{\lambda}=(a_{(i,j),(u,v)})\in M_{N}(\mathbb{Z})$, where the rows and columns of $M_{\lambda}$ are indexed by $(i,j)\in Y_{\lambda}$ ordered in the lexicographic order.

The above discussion shows that the PI degree of $\yl$ is equal to the PI degree of the (uniparameter) quantum affine space associated to the matrix  $M_{\lambda}$.  The PI degree of quantum affine spaces at roots of unity has been studied in \cite{DeConciniProcesi}, where it was shown that this PI degree is controlled by the invariant factors of the skew-symmetric integral matrix $M_{\lambda}$. A priori, computing invariant factors can prove a difficult task. In the present situation, matrix $M_{\lambda}$ can be obtained from a combinatorial gadget called {\em (Cauchon-Le) diagrams}. These objects have naturally appeared in the study of the primitive spectrum of the quantum Grassmannian on one hand \cite{LLR} and the cell decomposition of the totally nonnegative  Grassmannian on the other hand \cite{Postnikov}. Our first main result asserts that the invariant factors of any matrix associated to a diagram (in the sense of Section \ref{sectionCauchonDiag}) are always powers of $2$. Combining with results from \cite{BellCasteelsLaunois} where the dimension of the kernel of $M_{\lambda}$ was computed (in terms of the so-called toric permutation associated to a given diagram--see Section \ref{section-toric-permutation}), we obtain a closed formula for the PI degree of $\yl$ as follows:  

\begin{maintheorem}
Assume that $q$ is a primitive $\ell^{\mathrm{th}}$ root of unity with $\ell$ odd. Then 
$$\PI(\yl)= \ell^{\frac{N-r}{2}},$$
where $N=\sum_{i=1}^m \lambda_i$  is the number of boxes in $Y_{\lambda}$ and $r$ is the number of odd cycles in the disjoint cycle decomposition of the toric permutation associated to $\lambda$.
\end{maintheorem}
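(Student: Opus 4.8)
The plan is to reduce the computation to one about invariant factors and then combine two external inputs: the De Concini--Procesi formula for the PI degree of a quantum affine space at a root of unity, and the kernel dimension computation of \cite{BellCasteelsLaunois}. By the discussion preceding the statement together with \cite[Corollary 4.7]{Haynal}, $\PI(\yl)=\PI(\overline{\yl})$, where $\overline{\yl}$ is the uniparameter quantum affine space attached to the skew-symmetric integral matrix $M_\lambda\in M_N(\Z)$. Because $M_\lambda$ is skew-symmetric, its nonzero invariant factors occur in equal consecutive pairs $h_1\mid h_1\mid\dots\mid h_k\mid h_k$ with $2k=\rank(M_\lambda)$, and \cite{DeConciniProcesi} gives $\PI(\overline{\yl})=\prod_{i=1}^{k}\ell/\gcd(\ell,h_i)$.

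Granting the key fact that every invariant factor of a matrix associated to a (Cauchon--Le) diagram is a power of $2$ --- so each $h_i$ is a power of $2$ --- the rest is immediate. Since $\ell$ is odd, $\gcd(\ell,h_i)=1$ for all $i$, so the product collapses to $\ell^{k}=\ell^{\rank(M_\lambda)/2}=\ell^{(N-\dim_\Q\ker M_\lambda)/2}$; here $N-\dim_\Q\ker M_\lambda=\rank(M_\lambda)$ is even, matching the shape of the claimed exponent. Finally, \cite{BellCasteelsLaunois} identifies $\dim_\Q\ker M_\lambda$ with the number $r$ of odd cycles in the disjoint cycle decomposition of the toric permutation associated to $\lambda$, and substituting yields $\PI(\yl)=\ell^{(N-r)/2}$.

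The main obstacle is the powers-of-$2$ statement for diagram matrices, which is exactly what makes the odd-$\ell$ hypothesis so clean. It is equivalent to showing $\rank_{\mathbb{F}_p}(M_\lambda)=\rank_\Q(M_\lambda)$ for every odd prime $p$, i.e. that reduction modulo an odd prime never drops the rank of $M_\lambda$. I would attack this by writing out the combinatorial recipe that produces $M_\lambda$ from the diagram and then inducting on the number of boxes of $Y_\lambda$: removing the (removable) corner box $(m,\lambda_m)$ deletes the last row and column of $M_\lambda$ and replaces $\lambda$ by a smaller partition, so one must propagate the hypothesis by controlling how a maximal nonsingular principal submatrix (equivalently, the maximal nondegenerate block, whose determinant is a Pfaffian squared) is updated when the new row and column are appended, keeping its determinant equal to $\pm 2^{a}$ and hence a unit modulo every odd prime. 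A possibly cleaner alternative is to check that the explicit kernel vectors exhibited in \cite{BellCasteelsLaunois}, which are built combinatorially from the toric permutation, remain linearly independent and span $\ker M_\lambda$ over any field of characteristic $\neq 2$; this again forces the rank equality over $\mathbb{F}_p$ for odd $p$. Either way, the heart of the matter is a bookkeeping argument showing that the diagram combinatorics can only create $2$-power, never odd-prime, degeneracies in $M_\lambda$.
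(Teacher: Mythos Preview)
Your overall architecture is exactly the paper's: reduce via Haynal to the quantum affine space on $M_\lambda$, apply the De~Concini--Procesi formula (this is the paper's Lemma~\ref{lemPIdeg}), use the powers-of-$2$ fact on the invariant factors of $M_\lambda$ to collapse each $\gcd(\ell,h_i)$ to $1$ when $\ell$ is odd, and finally invoke \cite{BellCasteelsLaunois} for $\dim\ker M_\lambda=r$. So at the level of logical structure there is nothing to correct.

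The substantive gap is precisely where you locate it: the proof that every invariant factor of a diagram matrix is a power of $2$. Neither of your two sketches matches what the paper does, and both have real difficulties. Your approach~(a), inducting on the number of boxes by removing a corner of $Y_\lambda$, gives no control over how the kernel dimension changes, so you cannot propagate information about a maximal nonsingular principal minor in any clean way. Your approach~(b), checking that the explicit \cite{BellCasteelsLaunois} kernel vectors still span $\ker M_\lambda$ over $\mathbb{F}_p$ for odd $p$, is essentially circular: linear independence is easy, but \emph{spanning} is exactly the statement that the kernel does not grow modulo $p$, i.e.\ that no $h_i$ is divisible by $p$ --- which is what you are trying to prove.

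The paper's argument (Theorem~\ref{leminvariant}) is different and worth knowing. It works for \emph{arbitrary} diagrams, not just those coming from partitions, and proceeds by induction on $d=\dim\ker M(D)$. The base case $d=0$ is handled by the external input \cite[Theorem~2.2]{BellLaunoisLutley}, which says $\det M(D)$ is a power of $4$; this immediately forces all $h_i$ to be powers of $2$. For the inductive step, the paper first proves a combinatorial lemma (Lemma~\ref{lemSubDiag}): one can always colour a single white box of $D$ black to obtain a diagram $D'$ with $\dim\ker M(D')=d-1$. Then $M(D')$ is a principal submatrix of $M(D)$ obtained by deleting one row and column, and by induction its invariant factors --- hence all its determinantal divisors $D_j(M(D'))$ --- are powers of $2$. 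Since every $j\times j$ minor of $M(D')$ is also a minor of $M(D)$, one has $D_j(M(D))\mid D_j(M(D'))$, so the $D_j(M(D))$ are powers of $2$ as well, and the invariant factors $h_i=D_{2i}(M(D))/D_{2i-1}(M(D))$ follow. The key idea you were missing is to induct on kernel dimension rather than on the size of the diagram, together with the box-colouring lemma that makes this induction run.
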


In the case of the partition $\lambda=n^n$, we retrieve that the PI degree of $\oh_q(M_{m,n}(\K))$ is equal to $\ell^{\frac{n(n-1)}{2}}$. This result was first obtained by Jakobsen and Zhang in \cite{JakobsenZhang} with the assumption that $\mathrm{char}(\K)=0$, and then reproved in \cite{Jondrup} and \cite{JakobsenJondrup}.  

We note that our results are not only applicable to partition subalgebras, but also to their completely prime quotients by {\em Cauchon ideals} in the sense of \cite[Definition 3.20]{LauLopRo}. We illustrate this in the specific case of quantum determinantal rings, that is, the quotient of $\oh_q(M_{n}(\K))$ by the ideal generated by all quantum minors of a given size. More precisely, our results allow us to compute the PI degree of quantum determinantal ideals, thus extending \cite[Proposition 6.2]{JakobsenJondrup} which was established with the assumption that the base field has characteristic $0$. We also construct an explicit irreducible representation of maximal dimension (equal to the PI degree) for all quantum determinantal rings. 

Now that we have a good grasp on the PI degree of partition subalgebras, we turn our attention quantum Schubert varieties. Quantum Schubert varieties are PI precisely when $q$ is a root of unity thanks to \cite{Haynal}, and so we continue to assume that $q$ is a primitive $\ell^{\text{th}}$ root of unity, with $\ell > 2$.  In view of the dehomogenisation isomorphism above and \cite[Corollary 4.7]{Haynal}, the computation of the PI degree of $\oqgmn_{\gamma}$ boils down to computing the dimension of the kernel and the invariant factors of the skew-symmetric matrix $M_{\gamma} =\left(\begin{array}{cc} M_{\lambda} &{\bf 1} \\{\bf -1}^T &0 \end{array}\right)$ with ${\bf 1} \in \mathbb{Z}^{\lambda_1+\dots+\lambda_m}$ being a column vector consisting of $1$ in each entry and with ${\bf -1} \in \mathbb{Z}^{\lambda_1+\dots+\lambda_m}$ being a row vector consisting of $-1$ in each entry. This naturally leads us to compare the invariant factors of an integral skew-symmetric matrix $M$ with the invariant factors of the matrix $M^E =\left(\begin{array}{cc} M &{\bf 1} \\{\bf -1}^T &0 \end{array}\right)$. We refer to the matrix $M^E$ as the {\em extended matrix} of $M$.   Interestingly, computing the invariant factors of $M_{\lambda}^E$ is trickier than computing those of $M_{\lambda}$. For instance, invariant factors of $M^E$ do not need to be powers of $2$ anymore. However, by a careful combinatorial study, we prove the following result: 

\begin{maintheorem}
\label{mainB}
 Let $q$ be a primitive $\ell^{\mathrm{th}}$ root of unity such that the smallest prime factor $\ell_0$ of $\ell$ satisfies $\ell_0>\min\{m,n,2\}$.
 
 Let $\gamma=[\gamma_1 < \dots < \gamma_m]$ be a quantum Pl\"ucker coordinate of $\oqgmn$ and let $\lambda $ be the partition associated to $\gamma$. We denote by $N$ the number of boxes in $Y_{\lambda}$ and by $r$ the number of odd cycles for the toric permutation associated to  $\lambda$.
 
  Then the PI degree of the quantum Schubert variety $\oqgmn_{\gamma}$ is given by 
\[ \PI \left(\oqgmn_{\gamma} \right) = \begin{cases}  \ell^{\frac{N-r}{2}} & \text{ if  $\ker(M_{\lambda}) \subseteq \mathbf{1}^{\bot}$}; \\
 \ell^{\frac{N-r}{2} + 1} & \text{ if $\ker(M_{\lambda}) \nsubseteq \mathbf{1}^{\bot}$}.
								\end{cases}
\]
\end{maintheorem}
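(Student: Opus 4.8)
As recalled in the introduction, the PI degree of $\oqgmn_{\gamma}$ equals that of the uniparameter quantum affine space attached to the skew-symmetric matrix $M_{\gamma}=M_{\lambda}^{E}$, so by the theorem of De Concini and Procesi it equals $\prod_{j}\ell/\gcd(\ell,h_{j})$, where $h_{1}\mid h_{2}\mid\dots$ run over one representative from each equal pair of nonzero invariant factors of $M_{\gamma}$ (a skew-symmetric integer matrix has Smith normal form $\mathrm{diag}(h_{1},h_{1},h_{2},h_{2},\dots,0,\dots)$). So it is enough to (a) compute $\rank M_{\gamma}$ and (b) show that every $h_{j}$ is coprime to $\ell$; then the PI degree is exactly $\ell^{\rank M_{\gamma}/2}$.

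For (a) I would work over $\Q$. Writing $M_{\gamma}\binom{w}{t}=\binom{M_{\lambda}w+t\mathbf{1}}{-\mathbf{1}^{T}w}$ and using that $M_{\lambda}$ is skew-symmetric — so $\Ima M_{\lambda}=(\ker M_{\lambda})^{\bot}$ and $v^{T}M_{\lambda}v=0$ — one first sees that $\mathbf{1}\in\Ima M_{\lambda}$ is equivalent to $\ker M_{\lambda}\subseteq\mathbf{1}^{\bot}$, and then a short computation gives $\dim\ker M_{\gamma}=r+1$ in that case and $\dim\ker M_{\gamma}=r-1$ otherwise, where $r=\dim\ker M_{\lambda}$ is the number of odd cycles of the toric permutation of $\lambda$ by \cite{BellCasteelsLaunois}. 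Hence $\rank M_{\gamma}=N-r$ when $\ker M_{\lambda}\subseteq\mathbf{1}^{\bot}$ and $\rank M_{\gamma}=N-r+2$ otherwise, which already produces the two exponents in the statement.

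For (b) I would first take care of all invariant factors except possibly the largest. Since $M_{\lambda}$ is a principal submatrix of $M_{\gamma}$, the determinantal divisors obey $\Delta_{k}(M_{\gamma})\mid\Delta_{k}(M_{\lambda})\mid\Delta_{k+1}(M_{\gamma})$ (the first because every minor of $M_{\lambda}$ is a minor of $M_{\gamma}$, the second by a Laplace expansion along the adjoined row or column). Combined with the first main theorem — every invariant factor of $M_{\lambda}$ is a power of $2$ — and with the hypothesis $\ell_{0}>\min\{m,n,2\}$, which forces $\ell$ odd unless $\min\{m,n\}=1$ (and in that degenerate case $M_{\lambda}$ and $M_{\gamma}$ are, up to sign, strictly-upper-triangular all-ones skew-symmetric matrices, so all their invariant factors are $1$), this shows $\Delta_{k}(M_{\gamma})$ is coprime to $\ell$ for $k\le N-r$, hence that the $h_{j}$ with $j\le(N-r)/2$ are coprime to $\ell$. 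If $\ker M_{\lambda}\subseteq\mathbf{1}^{\bot}$ these are all the invariant factors and we conclude $\PI(\oqgmn_{\gamma})=\ell^{(N-r)/2}$. If $\ker M_{\lambda}\not\subseteq\mathbf{1}^{\bot}$ there is exactly one further pair, whose common value $h$ is the largest invariant factor of $M_{\gamma}$, and it remains to prove $\gcd(\ell,h)=1$.

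This last step is the crux, and I expect it to be the main obstacle. I would attack it via Pfaffians. For a skew-symmetric matrix the $2k$-th determinantal divisor is the square of the greatest common divisor of the Pfaffians of its $2k\times2k$ principal submatrices, so, writing $\pi$ for the gcd of the $(N-r+2)\times(N-r+2)$ principal Pfaffians of $M_{\gamma}$, we have $h^{2}=\pi^{2}/\Delta_{N-r}(M_{\gamma})$ with $\Delta_{N-r}(M_{\gamma})$ a power of $2$. Moreover, because $\rank M_{\lambda}=N-r<N-r+2$, the only possibly nonzero $(N-r+2)\times(N-r+2)$ principal Pfaffians of $M_{\gamma}$ are those on index sets containing the adjoined index $N+1$, and a Pfaffian cofactor expansion along the adjoined row gives $\mathrm{Pf}(M_{\gamma}[I'\cup\{N+1\}])=\sum_{k\in I'}\pm\,\mathrm{Pf}(M_{\lambda}[I'\setminus\{k\}])$, an alternating sum of Pfaffians of maximal-rank principal submatrices of $M_{\lambda}$. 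It therefore suffices to produce, for every partition $\lambda$ coming from a (Cauchon--Le) diagram with $\ker M_{\lambda}\not\subseteq\mathbf{1}^{\bot}$, a single index set $I'$ of size $N-r+1$ for which this alternating sum has no prime divisor exceeding $\min\{m,n,2\}$, so that $\ell_{0}>\min\{m,n,2\}$ forces it coprime to $\ell$. Proving this is where the combinatorics of diagrams and of the toric permutation must be used in earnest: one needs a sufficiently explicit handle on $\mathrm{Pf}(M_{\lambda}[J])$ for $\#J=\rank M_{\lambda}$ — presumably via an induction that removes a row or a column of $Y_{\lambda}$ and tracks how the removed boxes interact with the adjoined row of $1$'s — in order to control the relevant alternating sum modulo every prime greater than $\min\{m,n,2\}$. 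Granting this combinatorial lemma, $\gcd(\ell,h)=1$, and the De Concini--Procesi formula gives $\PI(\oqgmn_{\gamma})=\ell^{(N-r)/2+1}$ in the remaining case.
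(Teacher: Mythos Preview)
Your reduction to the quantum affine space, the rank computation in (a), and the first portion of (b) (handling all but the last pair of invariant factors) are correct and essentially what the paper does: the kernel-dimension shift is Proposition~\ref{propInvFac}, and the divisibility $h_i(M_\gamma)\mid h_i(M_\lambda)$ for $i\le s=(N-r)/2$ comes from Thompson's interlacing inequalities \cite{Thompson}, so these invariant factors are powers of $2$ by Theorem~\ref{leminvariant}.

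The gap is precisely where you locate it: controlling the last invariant factor $h=h_{s+1}(M_\gamma)$ when $\ker M_\lambda\not\subseteq\orth$. Your Pfaffian route is plausible in outline, but the combinatorial lemma producing an index set $I'$ with a controlled alternating sum of maximal Pfaffians of $M_\lambda$ is not supplied, and it is not clear that principal Pfaffians of $M_\lambda$ are tractable enough in general for that argument to close. The paper takes a different and more direct approach, avoiding Pfaffians entirely. It uses the \emph{explicit} basis of $\ker M_\lambda$ built from the odd cycles of the toric permutation in \cite{BellCasteelsLaunois}: Lemma~\ref{lemSumw} computes, for each basis vector $\mathbf{w}$ associated with an odd cycle $\eta$, the quantity $\sum_i w_i$ in closed form as an even integer of absolute value at most $2\min(m,n)$, depending only on how $\eta$ alternates between row- and column-labels. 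The bridge to invariant factors is Proposition~\ref{propKerMod}: for an odd prime $p$ one has $p\mid h_{s+1}(M_\gamma)$ if and only if $\ker(M_\lambda\bmod p)\subseteq\bar{\mathbf 1}^{\bot}$. Since every invariant factor of $M_\lambda$ is a power of $2$, reduction modulo an odd $p$ does not enlarge the kernel, so this inclusion forces $p\mid\sum_i w_i$ for every basis vector; choosing one with nonzero sum (which exists since $\ker M_\lambda\not\subseteq\orth$) gives $p\le\min(m,n)$ and hence $\gcd(\ell,h)=1$ under the hypothesis on $\ell_0$ (Proposition~\ref{propSomething}). In short, the missing combinatorial lemma in your proposal is replaced by a concrete computation inside $\ker M_\lambda$ rather than a Pfaffian estimate on $M_\gamma$.
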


It would be interesting to characterise when $\ker(M_{\lambda}) \subseteq \mathbf{1}^{\bot}$ via $\gamma$ or $\lambda$. We are able to do so in the case of the quantum Grassmannian. The result suggests that this might indeed be an interesting combinatorial question. 

If $i$ is a positive integer greater than or equal to $2$, we denote by $\mu_2(i)$ the 2-adic valuation of $i$; that is, the largest integer $j$ such that $2^j$ divides $i$. Then we have:

\begin{maintheorem}
\label{mainC}
Let $q$ be a primitive $\ell^{\mathrm{th}}$ root of unity such that the smallest prime factor $\ell_0$ of $\ell$ satisfies $\ell_0>\min\{m,n,2\}$. Then the PI degree of the quantum Schubert variety $\oqgmn$ is given by 
\[ \PI \left(\oqgmn \right) = \begin{cases}  \ell^{\frac{m(n-m)}{2}} & \text{ if  } \mu_2(m)\neq \mu_2(n); \\
 \ell^{\frac{m(n-m)-\gcd(m,n)}{2} + 1} & \text{ otherwise}.
								\end{cases}
\]
\end{maintheorem}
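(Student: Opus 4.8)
The plan is to apply Theorem~\ref{mainB} to the quantum Pl\"ucker coordinate $\gamma=[1<2<\cdots<m]$, for which $\oqgmn_{\gamma}=\oqgmn$ and the associated partition $\lambda$ is the full rectangle $(n-m)^m$, so that $N=m(n-m)$ and $\partitionk=\oh_{q^{-1}}(M_{m,n-m}(\K))$. (The hypothesis $\ell_0>\min\{m,n,2\}$ is what guarantees, via De Concini--Procesi, that the PI degree is $\ell$ raised to half the rank of the relevant matrix, and it is already incorporated into Theorem~\ref{mainB}.) So it remains to (i) compute $r$, the number of odd cycles of the toric permutation of $(n-m)^m$, which by \cite{BellCasteelsLaunois} equals $\dim_{\Q}\ker(M_{\lambda})$, and (ii) decide whether $\ker(M_{\lambda})\subseteq\orth$.

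For (i) I would exploit the structure of the quantum affine space underlying the full rectangle. Writing $p=n-m$, any two generators $X_{i,j}$ and $X_{k,l}$ with $i\neq k$ and $j\neq l$ commute in the associated graded algebra (their commutator is a $\delta$-term), so $M_{\lambda}$ equals, up to a harmless sign, $S_m\otimes I_p+I_m\otimes S_p$ (the Kronecker sum of $S_m$ and $S_p$), where $S_t=(\operatorname{sign}(j-i))_{1\le i,j\le t}$ is the $t\times t$ skew-symmetric ``staircase'' matrix. Under this identification $\ker(M_{\lambda})$ becomes the intertwiner space $\{X\in M_{m\times p}(\Q):S_mX=XS_p\}$, so $\dim_{\Q}\ker(M_{\lambda})$ is the number of eigenvalues common to $S_m$ and $S_p$, counted with multiplicity. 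One checks that $S_t$ has simple spectrum $\{\,i\cot\!\big(\tfrac{(2j-1)\pi}{2t}\big):1\le j\le t\,\}$, with eigenvector $(\zeta,\zeta^2,\dots,\zeta^t)$ for $\zeta=e^{i\pi(2j-1)/t}$. By injectivity of $\cot$ on $(0,\pi)$, a common eigenvalue corresponds to a solution of $(2a-1)p=(2b-1)m$ with $1\le a\le m$ and $1\le b\le p$; writing $d=\gcd(m,p)=\gcd(m,n)$, a short count shows such solutions exist precisely when $m/d$ and $p/d$ are both odd (equivalently $\mu_2(m)=\mu_2(n-m)$), and then there are exactly $d$ of them. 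Hence $r=\gcd(m,n)$ in that case and $r=0$ otherwise. (Alternatively one can identify the toric permutation of the rectangle directly and count its odd cycles.)

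For (ii) I would reformulate: since $M_{\lambda}$ is skew-symmetric, $\ker(M_{\lambda})\subseteq\orth$ holds if and only if $\mathbf 1\in\Ima(M_{\lambda})$, which under the identification above means that the Sylvester equation $S_mX-XS_p=J$ is solvable, $J$ being the all-ones $m\times p$ matrix. If $S_m$ and $S_p$ have no common eigenvalue, this equation is solvable for every right-hand side, so $\ker(M_{\lambda})\subseteq\orth$. If they share an eigenvalue $i\mu$, then solvability for $J$ would force $(\mathbf 1^{T}\alpha)(\mathbf 1^{T}\beta)=0$ for an eigenvector $\alpha$ of $S_m$ at $i\mu$ and an eigenvector $\beta$ of $S_p$ at $-i\mu$; but the geometric-series identity $\sum_{j=1}^{t}\zeta^{j}=\tfrac{2\zeta}{1-\zeta}\neq0$ shows that no eigenvector of a staircase matrix is orthogonal to the all-ones vector, so this product is nonzero and no solution exists. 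Therefore $\ker(M_{\lambda})\subseteq\orth$ exactly when $r=0$.

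Feeding (i) and (ii) into Theorem~\ref{mainB} now gives the result: when $r=0$ one gets $\PI(\oqgmn)=\ell^{N/2}=\ell^{m(n-m)/2}$, and when $r=\gcd(m,n)>0$ one gets $\PI(\oqgmn)=\ell^{(N-r)/2+1}=\ell^{(m(n-m)-\gcd(m,n))/2+1}$; identifying which $2$-adic relation between $m$ and $n$ separates the two cases yields the stated dichotomy. I expect the main obstacle to be the bookkeeping in step~(i) — pinning down the spectrum of the staircase matrix and running the Diophantine count carefully enough that the precise condition (the comparison of $\mu_2(m)$ with $\mu_2(n)$) comes out right — together with the minor but necessary point that the containment in (ii) is a $\Q$-linear condition and so may be tested after extension of scalars to $\C$, where the eigenvector computations take place.
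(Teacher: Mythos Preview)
Your approach is essentially the paper's own: both arguments diagonalise the commutation matrix of the rectangular partition and show that each kernel basis vector has nonzero inner product with $\mathbf 1$. Where you write $M_{\lambda}$ as the Kronecker sum $S_m\otimes I_p+I_m\otimes S_p$ and study the Sylvester equation, the paper writes $M_{\lambda}$ as an $m\times m$ block matrix with diagonal blocks $B:=B_{n-m}$, derives the recursion $v_{i+1}=Cv_i$ with $C=(B-I)^{-1}(B+I)$, and reduces to the condition $\gamma^{m}=-1$ together with $\gamma^{n-m}=-1$ for an eigenvalue $\gamma$ of $C$; the paper then computes $\mathbf 1^{T}\mathbf v$ directly, while you invoke the standard obstruction to Sylvester solvability. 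These are the same computations in different packaging, and your version is arguably tidier. Your remark that the containment $\ker(M_\lambda)\subseteq\orth$ is a $\Q$-linear condition and may therefore be tested over $\C$ is exactly what justifies the eigenvector calculation in either approach.

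There is, however, a genuine issue at the very last step. You write that ``identifying which $2$-adic relation between $m$ and $n$ separates the two cases yields the stated dichotomy'', but you never carry this out, and in fact it cannot be carried out as stated: the condition you correctly derive is $\mu_2(m)=\mu_2(n-m)$ for $r>0$, and this is \emph{not} equivalent to $\mu_2(m)=\mu_2(n)$. For instance, with $m=3$, $n=6$ one has $\mu_2(m)\neq\mu_2(n)$ but $\mu_2(m)=\mu_2(n-m)$; your (correct) analysis gives $r=\gcd(3,6)=3$ and $\PI=\ell^{(9-3)/2+1}=\ell^{4}$, whereas the displayed formula in the first case would give $\ell^{9/2}$. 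The paper's proof makes the same slip --- its expression $(\gamma^{n}-1)(\gamma^{m}-1)$ should read $(\gamma^{\,n-m}-1)(\gamma^{m}-1)$, and the citation of \cite[Proposition~2.4]{LaunoisLenagan2007} is in that reference's variables (quantum $m\times(n-m)$ matrices), so the dichotomy in the statement should be governed by $\mu_2(m)$ versus $\mu_2(n-m)$. Your argument is sound; just do not claim it reproduces the condition exactly as printed.
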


The paper is organised as follows. In Section \ref{section-algebras}, we introduce all the algebras involved and we use results of Haynal \cite{Haynal} to reduce the computation of their PI degree to the computation of the PI degree of a (uniparameter) quantum affine space. In Section \ref{sectionB_tItOreExt}, we explain De Concini-Procesi's strategy to compute the PI degree of a quantum affine space through the invariant factors of their associated integral skew-symmetric matrix \cite{DeConciniProcesi} and proceed to prove that these invariant factors are always powers of $2$ for integral skew-symmetric matrices coming from a diagram. This allows us to compute the PI degree of partition subalgebras (see Theorem \ref{thm-PIdegree-partition subalgebras}) and quantum determinantal rings (see Theorem \ref{PIdegQDR}). Section \ref{sectionIrreds} is devoted to the construction of an irreducible representation of maximal dimension for quantum determinantal rings. Finally, in Section \ref{sectionQSV}, we compare invariant factors of an integral skew-symmetric matrix and its associated extended matrix. This allows us to prove Theorems \ref{mainB} and \ref{mainC}.\\

\noindent {\bf Acknowledgment:} In \cite{LauLopRo}, the second-named author, third-named author and Samuel Lopes have generalised the computation of the PI degree and the construction of irreducible representations to completely prime quotients of Quantum Nilpotent Algebras at roots of unity. The authors of the present article would like to thank Samuel Lopes for the useful discussions that have led to Section \ref{sectionIrreds} and for allowing us to include some of the material of \cite{LauLopRo} here. 

We also thank H.~P. Jakobsen for pointing out that the PI degree of quantum determinantal rings had already been computed in \cite{JakobsenJondrup} when the characteristic of the base field is $0$.

The material presented in this article is part of the PhD thesis of the third-named author \cite{Alex-thesis}. She would like to thank the School of Mathematics, Statistics and Actuarial Science at the University of Kent for their financial support.

\section{Quantum Schubert Varieties}\label{sectionQDRs}\label{section-algebras}

\subsection{Quantum matrices and partition subalgebras} 

We denote the quantised coordinate ring of $m\times n$ matrices over $\K$ by $\oh_q(M_{m,n}(\K))$. This is the $\K$-algebra generated by $m\times n$ indeterminates $\{X_{i,j}\}_{i,j=1}^{m,n}$ subject to the following relations: for $(1,1) \leq  (i,j) < (s,t) \leq (m,n)$ (in lexicographic ordering), we have
\begin{align*}
X_{i,j}X_{s,t}= \begin{cases} 
							X_{s,t}X_{i,j} & i<s, \, j>t; \\
							qX_{s,t}X_{i,j} & (i=s, \, j<t) \text{ or } (i<s, \, j=t); \\
							X_{s,t}X_{i,j}+(q-q^{-1})X_{i,t}X_{s,j} & i<s, \, j<t.							
				 \end{cases}
\end{align*}
The \emph{quantum determinant} $D_q$ of $\oh_q(M_{n}(\K))$ can be expressed as
\[D_q =\sum_{\pi \in S_n} (-q)^{\ell(\pi)} X_{\pi(1),1} \cdots X_{\pi(n),n}, \]
where $\ell(\pi)$ gives the length of the permutation $\pi$.
For a pair of subsets $(I,J) \in \llbracket 1, m \rrbracket \times \llbracket 1, n \rrbracket$, with $|I|=|J|=s \leq \min(m,n)$, called an \emph{index pair}, the \emph{quantum minor} $[I|J] \in \oh_q(M_{m,n}(\K))$ is defined to be the quantum determinant of the subalgebra $\oh_q(M_s(\K))$ generated by $\{X_{i,j}\}_{i\in I, j\in J}$. We denote by $\Delta_{m,n}$ the set of index pairs.


We finish this section with the definition of partition subalgebras: 
\begin{definition}\label{definition-partition-subalgebra} 
Let $\lambda = (\lambda_1\geq \lambda_2\geq\dots\geq\lambda_m)$ be a partition with associated Young diagram $Y_\lambda$. 
In $\oqmmn$, with $n\geq \lambda_1$, look at the subring  
$\yl$ generated 
over $\K$ by those 
$X_{ij}$ that fit  into the Young diagram for $\lambda$. We call this subalgebra the {\em partition 
subalgebra of $\oqmmn$
 associated with the partition $\lambda$}.
\end{definition} 

For instance, for the partition $\lambda = (4,3,3,1)$, $\yl$ is generated by $X_{1,1}$,  $X_{1,2}$, $X_{1,3}$, $X_{1,4}$, $X_{2,1}$, $X_{2,2}$, $X_{2,3}$, $X_{3,1}$, $X_{3,2}$, $X_{3,3}$ and $X_{4,1}$.\\

It is well known that $\yl$ is an iterated Ore extension over $\K$, with the indeterminates $X_{i,j}$ fitting into the Young diagram of $\lambda$ added in the lexicographic order.

\subsection{Quantum Grassmannians and quantum Schubert varieties}

The \emph{quantum Grassmannian}, $\oqgmn$, $m<n$, is defined to be the subalgebra of $\oqmmn$ generated by all $m\times m$ maximal quantum minors. As for quantum matrices, a maximal quantum minor $\gamma:=[\{1,\dots,m\}| \{\gamma_1<\dots <\gamma_m\}]$ is uniquely defined by its \emph{index set} $\gamma:=\{\gamma_1<\gamma_2<\cdots<\gamma_m\} \subseteq \llbracket 1, n\rrbracket$. Such maximal quantum minors  $[ \gamma_1\cdots\gamma_m]$ are called the  \emph{quantum Pl\"ucker coordinates} of $\oqgmn$. Again, we identify the set of all index sets with the set of elements in $\oqgmn$ and label this $\Pi_{m,n}$. There is a natural partial order on $\Pi_{m,n}$ given by 
\begin{equation}\label{partial order on Pi}
[\gamma_1< \cdots < \gamma_m]\leq [\gamma_1'< \cdots < \gamma_m']\iff (\gamma_i\leq \gamma_i'\mbox{ for all }i\in \llbracket 1,m \rrbracket).
\end{equation}

 This partial ordering is used to define a quantum analogue of the homogeneous coordinate ring of Schubert varieties in the Grassmannian.

\begin{definition}[\cite{LenaganRigal-SchubertVar}]
Let $\gamma\in \Pi_{m,n}$ and set $\Pi_{m,n}^{\gamma}:=\{ \alpha \in \Pi_{m,n} \mid \alpha \ngeq \gamma \}$. The \emph{quantum Schubert variety} associated to $\gamma$ is:
\[ \oqgmn_{\gamma} := \oqgmn/ \langle \Pi_{m,n}^{\gamma} \rangle. \]
\end{definition}

It follows from \cite[Corollary 3.1.7]{LenaganRigal-SchubertVar} that $\oqgmn_{\gamma}$ is a noetherian domain. Moreover, this algebra is a quantum algebra with a straightening law  \cite[Example 2.1.3]{LenaganRigal-SchubertVar}, and it follows from \cite[Remark 2.1.4]{LenaganRigal-SchubertVar} and \cite[Lemma 1.2.1]{LenaganRigal-Straightening}  that the coset $\overline{\gamma}$ of $\gamma$ in $\oqgmn_{\gamma}$ is a normal element.

\subsection{Quantum Schubert varieties and partition subalgebras}
\label{subsec-lambda-gamma}

We now describe an isomorphism between a localisation of $ \oqgmn_{\gamma}$ and a skew-Laurent extension of a partition subalgebra of $\oh_{q^{-1}}(M_{m,n-m}(\K))$. This isomorphism mainly follows from \cite[Theorem 3.1.6]{LenaganRigal-SchubertVar}. It was established under the assumption that $q$ is not a root of unity in \cite{LLN}. However, the existence of this isomorphism, see \cite[Equation (55) in Section 9.5]{LLN}, carries over when we just assume that $q\neq 0$. 

Before we describe this isomorphism, we introduce the necessary ingredients. 

\begin{definition}{\rm 
The \emph{ladder} associated to $\gamma$ is denoted by $\mathcal{L}_\gamma$ and defined by 
\[
\mathcal{L}_\gamma=\{(i,j)\in \llbracket 1,m \rrbracket\times \llbracket 1,n \rrbracket\ |\ j>\gamma_{m+1-i}\mbox{ and }j\neq \gamma_l\ \mbox{ for all } l\in \llbracket 1,m \rrbracket \}.
\]
}\end{definition}

For $(i,j) \in \mathcal{L}_\gamma$, one defines $m_{i,j}:=[\{\gamma_1, \dots,\gamma_m\} \setminus \{\gamma_{m+1-i}\}\sqcup \{j\}]$ (which clearly belongs to $\Pi_{m,n} \setminus \Pi_{m,n}^{\gamma}$).

 Notice that  $\gamma_i-i=|\{a\in \llbracket 1,n \rrbracket\setminus\gamma\ |\ a<\gamma_i\}|$ 
for each $i\in\llbracket 1,m \rrbracket$. It follows easily that if we define $\lambda_i=n-m-(\gamma_i-i)$ for each $i\in \llbracket 1,m \rrbracket$, then $(\lambda_1,\ldots,\lambda_m)$ is a partition with $n-m\geq \lambda_1\geq \lambda_2\geq\cdots\geq \lambda_m\geq 0$.
Let $c$ be as large as possible such that $\lambda_c\neq 0$ and denote by $\lambda$ the partition $(\lambda_1,\ldots,\lambda_c)$.  
We say that $\lambda$ is the {\em partition associated to $\gamma$}.

Note that the south and east borders of $Y_{\lambda}$ give rise to path of length $n$, from the north-east corner to the south-west corner of the $m\times (n-m)$ rectangle. Label each of edges of this path with the numbers $1$ through $n$ (starting from the north-east corner). Then the elements of $\gamma$ coincide with the vertical steps in this numbering. The following example illustrates this construction when $\gamma =[1347]$.
\begin{equation}\label{1347}
\begin{tikzpicture}[xscale=0.8, yscale=0.8]
\draw[color=gray] (0,0) rectangle (1,1);
\draw[color=gray] (0,1) rectangle (1,2);
\draw[color=gray] (1,1) rectangle (2,2);
\draw[color=gray] (2,1) rectangle (3,2);
\draw[color=gray] (0,2) rectangle (1,3);
\draw[color=gray] (1,2) rectangle (2,3);
\draw[color=gray] (2,2) rectangle (3,3);
\draw[color=gray] (0,3) rectangle (1,4);
\draw[color=gray] (1,3) rectangle (2,4);
\draw[color=gray] (2,3) rectangle (3,4);
\draw[color=gray] (3,3) rectangle (4,4);
\node at (4.25,3.55) {$1$};
\node at (3.6,2.75) {$2$};
\node at (3.25,2.55) {$3$};
\node at (3.25,1.55) {$4$};
\node at (2.6,0.75) {$5$};
\node at (1.6,0.75) {$6$};
\node at (1.25,0.55) {$7$};
\node at (0.6,-0.25) {$8$};
\end{tikzpicture}
\end{equation}

We are now ready to state the following result that makes a link between quantum Schubert varieties and partition subalgebras. 

\begin{theorem}
\label{the godfather iso}
There exists an isomorphism 
\begin{align}
\begin{split}
\Phi: \partitionk [y^{\pm 1};\sigma]&\xrightarrow{\cong} (\oqgmn/\langle \Pi_\gamma \rangle)[\overline{\gamma}^{-1}] \\
X_{i,j}&\mapsto \overline{m_{m+1-i,a_{n-m+1-j}}} \cdot \overline{\gamma}^{-1} \hspace{7mm} ((i,j)\in Y_\lambda) \\
y&\mapsto \overline{\gamma}, \\
\end{split}
\end{align}
where $\sigma$ denotes the automorphism of $\partitionk$ defined by $\sigma(X_{i,j})=qX_{i,j}$ for all $(i,j)$ in the Young diagram of $\lambda$, where we have set $\{a_1<\cdots<a_{n-m}\}:=\llbracket 1,n \rrbracket\setminus\gamma$ and where $\lambda$ is the partition associated to $\gamma$.
\end{theorem}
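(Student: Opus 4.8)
The plan is to obtain the isomorphism from the noncommutative dehomogenisation machinery of Lenagan and Rigal together with the straightening-law structure of the algebras involved, verifying at each stage that the hypothesis ``$q$ not a root of unity'' used in \cite{LenaganRigal-SchubertVar} and \cite{LLN} is never actually invoked.

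First I would isolate the purely ring-theoretic input. Write $A=\oqgmn_{\gamma}$, a $\Z_{\geq 0}$-graded algebra in which $\overline{\gamma}$ is homogeneous of degree $1$. By \cite[Corollary 3.1.7]{LenaganRigal-SchubertVar} the algebra $A$ is a noetherian domain, so $\overline{\gamma}$ is a regular element, and by \cite[Remark 2.1.4]{LenaganRigal-SchubertVar} together with \cite[Lemma 1.2.1]{LenaganRigal-Straightening} it is normal; neither assertion uses genericity of $q$. Hence the Ore localisation $A[\overline{\gamma}^{-1}]$ exists and is $\Z$-graded, and the standard noncommutative dehomogenisation procedure (cf. \cite[\S 3.1]{LenaganRigal-SchubertVar}) produces a canonical isomorphism $A[\overline{\gamma}^{-1}]\cong D_{\mathrm{hom}}(A)[y^{\pm 1};\tau]$, where $D_{\mathrm{hom}}(A)=\bigcup_{k\geq 0}A_k\,\overline{\gamma}^{-k}$ is the degree-zero part, $\tau(d)=\overline{\gamma}\,d\,\overline{\gamma}^{-1}$, and $y\mapsto\overline{\gamma}$. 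This reduces the theorem to constructing an isomorphism $\Phi_0:\partitionk\isomarrow D_{\mathrm{hom}}(A)$ sending $X_{i,j}$ to $u_{i,j}:=\overline{m_{m+1-i,a_{n-m+1-j}}}\cdot\overline{\gamma}^{-1}$ with $\Phi_0\circ\sigma=\tau\circ\Phi_0$.

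For $\Phi_0$ I would proceed in three steps, each reduced to a statement that holds integrally. \emph{(i) Well-definedness.} The $u_{i,j}$ lie in $D_{\mathrm{hom}}(A)$, they satisfy the defining relations of $\partitionk$ (the quantum-matrix relations with $q$ replaced by $q^{-1}$), and $\overline{\gamma}$ conjugates each $u_{i,j}$ to $q\,u_{i,j}$. All of these are commutation relations among quantum Pl\"ucker coordinates; written over the $\Z[t^{\pm 1}]$-form of the quantum Grassmannian, where the structure constants are Laurent polynomials and the relations are known to hold, and then specialised at $t\mapsto q$, they are valid for every $q\in\K^*$. This gives an algebra map $\Phi_0$, and $\overline{\gamma}\,u_{i,j}\,\overline{\gamma}^{-1}=q\,u_{i,j}$ is precisely $\Phi_0\circ\sigma=\tau\circ\Phi_0$. \emph{(ii) Surjectivity.} $D_{\mathrm{hom}}(A)$ is generated as a $\K$-algebra by the $u_{i,j}$: using the straightening law of the quantum straightening-law algebra $\oqgmn_{\gamma}$ (\cite[Example 2.1.3]{LenaganRigal-SchubertVar}) one rewrites any element of $A_k\,\overline{\gamma}^{-k}$ as a $\K$-combination of products of the $\overline{m_{i,j}}$ and powers of $\overline{\gamma}^{\pm 1}$, and this rewriting uses nothing about $q$ beyond $q\neq 0$. \emph{(iii) Injectivity.} $\partitionk$ is an iterated Ore extension and so has the $\K$-basis of lexicographically ordered monomials in the $X_{i,j}$; on the other side $\oqgmn_{\gamma}$ is a quantum straightening-law algebra (\cite[Example 2.1.3]{LenaganRigal-SchubertVar}) and therefore carries the standard-monomial $\K$-basis (\cite{LenaganRigal-Straightening}), which is a basis for every $q\in\K^*$ and induces a basis of each $A_k$ and hence of $D_{\mathrm{hom}}(A)$. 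One then checks that $\Phi_0$ takes the ordered-monomial basis of $\partitionk$ to a family unitriangular (for a suitable ordering of standard monomials) with respect to this basis of $D_{\mathrm{hom}}(A)$; this forces $\Phi_0$ to be bijective, and combined with the skew-Laurent decompositions it yields that $\Phi$ is an isomorphism.

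The main obstacle is step (iii): injectivity is not obtained by ``specialising'' a generic statement, so one genuinely has to compare the two bases. The feature that makes this work at a root of unity is that both bases involved---the standard-monomial basis of a quantum straightening-law algebra and the PBW basis of an iterated Ore extension---are defined over $\Z[q^{\pm 1}]$ and remain bases after specialising to \emph{any} nonzero value of $q$, and the unitriangular transition matrix between them (its entries being the relevant straightening coefficients) is likewise defined over $\Z[q^{\pm 1}]$; so the bijectivity established for generic $q$ in \cite[\S 9.5]{LLN} transfers verbatim. Concretely I would follow the construction of \cite[Equation (55), \S 9.5]{LLN} step by step, replacing each appeal to ``$q$ generic'' with the corresponding statement over $\Z[q^{\pm 1}]$ and specialising at the end.
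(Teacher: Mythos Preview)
Your proposal is correct and follows essentially the same route as the paper: both invoke the noncommutative dehomogenisation result \cite[Theorem 3.1.6]{LenaganRigal-SchubertVar} and then argue that the construction of \cite[Equation (55), \S 9.5]{LLN} goes through for arbitrary $q\in\K^*$, not just generic $q$. The paper's proof is a one-line reference to these two sources; your write-up is a careful unpacking of what that reference entails, with the integral-form and basis arguments making explicit why genericity of $q$ is never needed.
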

\begin{proof}
This is proved in a similar manner than \cite[Equation (55) in Section 9.5]{LLN} thanks to \cite[Theorem 3.1.6]{LenaganRigal-SchubertVar}.
\end{proof}

\subsection{Partition subalgebras and quantum Schubert varieties at roots of unity}

Recall that $\oqmmn$ can be written as an iterated Ore extension of the form:
$$\oqmmn =\K[X_{11}][X_{12};\sigma_{12},\delta_{12}]\cdots [X_{mn};\sigma_{mn},\delta_{mn}],$$
where the indeterminates $X_{i,j}$ are added in the lexicographic order, and where the $\sigma_{ij}$ are automorphisms and the $\delta_{ij}$ are left $\sigma_{ij}$-derivations of the appropriate subalgebras. 

With a slightly abusive notation, the partition subalgebra $\yl$ associated to the partition $\lambda = (\lambda_1\geq \lambda_2\geq\dots\geq\lambda_m)$ with $n\geq \lambda_1$ can also be written as an iterated Ore extention as follows: 
$$\yl =\K[X_{11}][X_{12};\sigma_{12},\delta_{12}]\cdots [X_{mn};\sigma_{mn},\delta_{mn}],$$
where the indeterminates $X_{i,j}$ with $(i,j)$ fitting in the Young diagram $Y_{\lambda}$ are added in the lexicographic order. Note that these automorphisms $\sigma_{i,j}$ and skew-derivations $\delta_{i,j}$ are in general distinct from the ones above. However, we kept the same notation to emphasize the fact that they are defined on the previous generators in the same way in both cases.

Assume $q$ is a primitive $\ell^{\mbox{th}}$ root of unity. Under this assumption, it follows from \cite[Theorem 1.2]{Haynal} that $\oqmmn$ is a PI algebra. The fact that $\oqmmn$ satisfies the required assumptions to apply \cite[Theorem 1.2]{Haynal} follows from \cite[Theorem 2.8]{Haynal}.  More generally, the same arguments can be used to prove that $\yl$ is a PI algebra.

It then easily follows from Theorem \ref{the godfather iso} that quantum Schubert varieties are also PI algebra. Note that quantum Schubert algebras are actually PI only under the assumption that $q$ is a root of unity. 

Moreover, it follows from Theorem \ref{the godfather iso} that:
$$\PI (\partitionk [y;\sigma])=\PI (\oqgmn_{\gamma}). $$

In this article, we are interested in computing the PI degree of $\yl$ and $\oqgmn_{\gamma}$. It follows from \cite[Theorem 1.2]{Haynal} that 
$$\PI(\yl) = \PI (\K[T_{11}][T_{12};\sigma_{12}]\cdots [T_{mn};\sigma_{mn}]),$$
where the indeterminates $T_{i,j}$ with $(i,j)$ fitting in the Young diagram $Y_{\lambda}$ are added in the lexicographic order. We denote this algebra $\K[T_{11}][T_{12};\sigma_{12}]\cdots [T_{mn};\sigma_{mn}]$ by $\overline{\yl}$. Note that once again we have used slightly abusive notation for the automorphisms $\sigma_{i,j}$. These are defined by 
$$\sigma_{i,j}(T_{s,t})= \begin{cases} 
							T_{s,t}& i<s, \, j>t; \\
							qT_{s,t} & (i=s, \, j<t) \text{ or } (i<s, \, j=t); \\
							T_{s,t} & i<s, \, j<t,	\end{cases}	$$
for all $(s,t) <_{\mbox{lex}} (i,j)$ such that $(s,t),(i,j) \in Y_{\lambda}$. 

We note that the algebra $ \overline{\yl}=\K[T_{11}][T_{12};\sigma_{12}]\cdots [T_{mn};\sigma_{mn}]$ is a quantum affine space. Recall that the (uniparameter) quantum affine space $\oh_{q^M}(\K^N)$ associated to the integral skew-symmetric matrix $M=(a_{i,j})\in M_{N}(\mathbb{Z})$ is the $\K$-algebra generated $T_1,\dots,T_N$ subject to $T_i T_j = q^{a_{i,j}}T_jT_i$ for all $1\leq i,j\leq N$. 

The above discussion shows that 
$$\PI(\yl) = \PI(\oh_{q^{M_{\lambda}}}[T_1, \dots , T_N]),$$
where $N = \lambda_1 + \dots + \lambda_m$ is the number of boxes in the Young diagram $Y_{\lambda}$ and $M_{\lambda} \in M_{N}(\mathbb{Z})$ is the skew-symmetric integral matrix defined as follows:  Label the boxes of $Y_{\lambda}$ from $1$ to $N$ in such a way that the labels increase along the rows (from left to right) and down the columns (from top to bottom). Given such a labelling, we construct $M_{\lambda}$ according the rule
\[ M_{\lambda}[i,j] = \begin{cases} 1 & \text{ if square } i \text{ is strictly below or strictly to the right of square } j; \\
							-1 & \text{ if square } i \text{ is strictly above or strictly to the left of square } j; \\
							0 & \text{ otherwise}. \end{cases} \]

We are now ready to summarise the main result of this section. 

\begin{proposition}
\label{prop-PIdegree-yl}
Assume $q$ is a primitive $\ell^{\mbox{th}}$ root of unity. 
\begin{enumerate}
\item Let $\lambda = (\lambda_1\geq \lambda_2\geq\dots\geq\lambda_m)$ be a partition. Then $\yl$ is a PI algebra and its PI degree is given by:
$$\PI (\yl)=\PI(\oh_{q^{M_{\lambda}}}(\K^N)),$$
where $M_{\lambda}$ is the matrix deduced from $M$ by deleting rows and columns indexed by $(i,j)$ not fitting in $Y_{\lambda}$.
\item Let $\gamma$ be an indexed set. Then the quantum Schubert variety $\oqgmn_{\lambda}$ is a PI algebra and 
$$\PI(\oqgmn_{\lambda}) =\PI(\oh_{q^{M_{\gamma}}}(\K^N)),$$
where $M_{\gamma} =\left(\begin{array}{cc} M_{\lambda} &{\bf 1} \\{\bf -1}^T &0 \end{array}\right)$ with ${\bf 1} \in \mathbb{Z}^{\lambda_1+\dots+\lambda_m}$ being a column vector consisting of $1$ in each entry and with ${\bf -1} \in \mathbb{Z}^{\lambda_1+\dots+\lambda_m}$ being a row vector consisting of $-1$ in each entry.
\end{enumerate}
\end{proposition}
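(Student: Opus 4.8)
The plan is to assemble Proposition~\ref{prop-PIdegree-yl} from the pieces already laid out in this section, essentially by stringing together a chain of equalities of PI degrees and then translating the iterated Ore extension descriptions into quantum affine spaces. The backbone of the argument is Haynal's theorem \cite[Theorem 1.2]{Haynal}, which is invoked twice: first to guarantee that the relevant algebras are PI (via the iterated Ore extension presentations, with the hypothesis check referred to \cite[Theorem 2.8]{Haynal}), and second to discard the skew-derivations $\delta_{i,j}$ without changing the PI degree. So the skeleton is: $\yl$ is PI; $\PI(\yl)=\PI(\overline{\yl})$; and $\overline{\yl}$ is precisely the quantum affine space $\oh_{q^{M_{\lambda}}}(\K^N)$ once one reads off the commutation exponents from the automorphisms $\sigma_{i,j}$.

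First I would address part (1). Starting from the iterated Ore extension presentation of $\yl$ displayed above, apply Haynal to get that $\yl$ is PI and that $\PI(\yl)=\PI(\K[T_{11}][T_{12};\sigma_{12}]\cdots[T_{mn};\sigma_{mn}])=\PI(\overline{\yl})$. Then verify that $\overline{\yl}$ is the uniparameter quantum affine space attached to $M_{\lambda}$: the displayed formula for $\sigma_{i,j}(T_{s,t})$ shows $T_{i,j}T_{s,t}=q^{\epsilon}T_{s,t}T_{i,j}$ for $(s,t)<_{\mathrm{lex}}(i,j)$, where $\epsilon$ is $0$ when square $(i,j)$ is strictly below-and-left (the $i<s,j<t$ case read in the other order) or the two squares are incomparable in the appropriate sense, $+1$ when $(i,j)$ is in the same row strictly to the right or the same column strictly below, etc. Matching this against the defining rule for $M_{\lambda}[i,j]$ (relabelling the boxes $1,\dots,N$ in lexicographic order, which is exactly "increasing along rows and down columns") gives the identification $\overline{\yl}\cong\oh_{q^{M_{\lambda}}}(\K^N)$, hence $\PI(\yl)=\PI(\oh_{q^{M_{\lambda}}}(\K^N))$. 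The remark that $M_\lambda$ is obtained from the full matrix $M$ (of $\oh_q(M_{m,n}(\K))$, i.e.\ $\lambda=n^m$) by deleting the rows and columns indexed by boxes outside $Y_\lambda$ is immediate from the combinatorial rule, since that rule only depends on the relative position of pairs of boxes.

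For part (2), I would invoke Theorem~\ref{the godfather iso}: the dehomogenisation isomorphism $\Phi\colon\partitionk[y^{\pm1};\sigma]\xrightarrow{\cong}(\oqgmn/\langle\Pi_\gamma\rangle)[\overline{\gamma}^{-1}]$ together with the general fact (already used in the preceding subsection to deduce $\PI(\partitionk[y;\sigma])=\PI(\oqgmn_\gamma)$) that localising at the normal element $\overline{\gamma}$ and passing between the skew-Laurent and skew-polynomial versions does not change the PI degree. Thus $\PI(\oqgmn_\gamma)=\PI(\partitionk[y;\sigma])$. Now $\partitionk$ is itself (by part (1), applied with $q$ replaced by $q^{-1}$, noting a primitive $\ell$th root of unity stays one under inversion) PI-equivalent to a quantum affine space on $N$ generators with matrix $M_\lambda$ (the sign of the exponent does not affect the skew-symmetric matrix up to the harmless substitution $q\mapsto q^{-1}$, which preserves PI degree since $q^{-1}$ is also a primitive $\ell$th root of unity). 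Adjoining the skew-Laurent variable $y$ with $\sigma(X_{i,j})=qX_{i,j}$ appends one further generator that $q$-commutes with every $X_{i,j}$ with exponent $+1$ (and the $X_{i,j}$ correspondingly $q^{-1}$-commute with $y$), which is exactly the extended matrix $M_{\gamma}=\left(\begin{smallmatrix}M_\lambda & \mathbf{1}\\ -\mathbf{1}^T & 0\end{smallmatrix}\right)$; applying Haynal once more (the skew-Laurent extension being an iterated Ore localisation whose PI degree is again read off the skew-symmetric matrix) yields $\PI(\oqgmn_\gamma)=\PI(\oh_{q^{M_\gamma}}(\K^N))$ with $N=\lambda_1+\dots+\lambda_m+1$.

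I expect the main obstacle to be purely bookkeeping: carefully matching the three-case formula for $\sigma_{i,j}(T_{s,t})$ against the three-case combinatorial rule defining $M_\lambda$, keeping track of which index plays the role of row versus column and of the direction of the inequality $(s,t)<_{\mathrm{lex}}(i,j)$, so that the signs in the skew-symmetric matrix come out consistently; and, for part (2), being precise about the passage $q\leftrightarrow q^{-1}$ and about why a skew-Laurent extension (rather than a skew-polynomial one) still falls under the scope of Haynal's PI-degree formula — this is the one spot where one should explicitly note that inverting a normal/central-up-to-scalar variable does not change the PI degree, so that one may replace $[y^{\pm1};\sigma]$ by $[y;\sigma]$ before applying the quantum-affine-space machinery.
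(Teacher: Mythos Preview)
Your proposal is correct and follows essentially the same approach as the paper: the proposition is stated there as a summary of the preceding discussion in Section~2.4, which proceeds exactly as you outline---Haynal's results (\cite[Theorem~1.2 and Theorem~2.8]{Haynal}) to establish the PI property and to strip the derivations, identification of $\overline{\yl}$ with the quantum affine space attached to $M_\lambda$, and for part~(2) the dehomogenisation isomorphism of Theorem~\ref{the godfather iso} together with invariance of PI degree under localisation. Your extra care about the $q\leftrightarrow q^{-1}$ passage and the skew-Laurent versus skew-polynomial distinction is well placed but does not depart from the paper's line of argument.
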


In the next section, we develop new results to compute the PI degree of quantum affine spaces attached to matrices coming from (Cauchon-Le) diagrams. This will allow us to give an explicit formula for $\PI (\yl)$ thanks to Proposition \ref{prop-PIdegree-yl}.

\section{PI degree of quantum affine spaces associated to diagrams }\label{sectionB_tItOreExt}

\subsection{The PI Setting} \label{sectionB_tPI}
First, we present a general result relating the PI degree of an arbitrary quantum affine space $\oh_{q^M}(\K^N)$ to properties of its skew-symmetric commutation matrix $M \in M_N(\Z)$. It is a well-known result that $M$ is congruent (in the sense of \cite[Chapter IV]{Newman}) to its skew-normal form. We denote the skew normal form by $S$ or $\mathrm{Sk}(M)$ if it is not clear to which matrix $M$ the skew-normal form $S$ is congruent, and we write $M \sim_C S$. This is a block diagonal matrix of the form
\[ S=\left(\begin{smallmatrix}
 0 & h_1 & & & & & & & \\
 -h_1 & 0 & & & & & & & \\
 & & 0 & h_2 & & & & & \\
& & -h_2 & 0 & & & & & \\
 & & & & \ddots & & & \\
 & & & & & 0 & h_s & \\
 & & & & & -h_s & 0 & \\
 & & & & & & & \mathbf{0} 
\end{smallmatrix}\right), \]
where $\mathbf{0}$ is a square matrix of zeros of dimension $\dim(\ker(M))$ so that $2s=N-\dim(\ker(M))$, and $h_1,\, h_1,\, h_2,\, h_2,\, \ldots,\, h_s,\, h_s \in \Z\backslash \{0\}$ are called the \emph{invariant factors} of $M$. As they always come in pairs, from now on we will avoid repetition and list the invariant factors simply as $h_1,\, h_2,\,  \ldots,\, h_s$. These have the property that $h_i|h_{i+1}$ for all $i\in \llbracket 1, s \rrbracket$.

\begin{lemma}\label{lemPIdeg}
Take $q$ to be a primitive $\ell^{\mathrm{th}}$ root of unity. For some $N>1$, let $M\in M_N(\Z)$ be a skew-symmetric integral matrix with invariant factors $h_1, \ldots, h_s$. Then the PI degree of $\oh_{q^M}(\K^N)$ is given as
\[\PI(\oh_{q^M}(\K^N))=\prod_{i=1}^{\frac{N-\dim(\ker(M))}{2}} \frac{\ell}{\gcd(h_i, \ell)}.\]
\end{lemma}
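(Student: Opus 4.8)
The plan is to reduce the statement about PI degree to known structural results on quantum affine spaces, using the congruence $M \sim_C S$ of $M$ with its skew-normal form $S$ together with invariance of PI degree under the relevant transformations. First I would recall that if $M \sim_C M'$ via $M' = P^T M P$ with $P \in GL_N(\Z)$, then the quantum affine spaces $\oh_{q^M}(\K^N)$ and $\oh_{q^{M'}}(\K^N)$ become isomorphic after localising at the (Ore) sets generated by the canonical generators, i.e. the quantum tori $\oh_{q^M}((\K^*)^N)$ and $\oh_{q^{M'}}((\K^*)^N)$ are isomorphic as $\K$-algebras; the change of variables is the monomial substitution $T_i \mapsto \prod_j T_j^{P_{ji}}$. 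Since passing to this localisation does not change the PI degree (a quantum affine space is a domain, the localisation is central over a localisation of its centre, and PI degree is a property of the Goldie quotient ring / the skew field of fractions, which is unchanged), we get $\PI(\oh_{q^M}(\K^N)) = \PI(\oh_{q^{S}}(\K^N))$. This step is essentially bookkeeping with monomial changes of variable and the fact that congruent skew-symmetric matrices over $\Z$ give isomorphic quantum tori; I expect it to be routine but it must be stated carefully.

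Next I would compute the PI degree of the quantum affine space attached to the block-diagonal skew-normal form $S$. Because $S$ is block diagonal with blocks $\begin{pmatrix} 0 & h_i \\ -h_i & 0\end{pmatrix}$ for $i = 1, \dots, s$ and a zero block of size $\dim(\ker(M))$, the algebra $\oh_{q^S}(\K^N)$ decomposes as an iterated tensor product (over $\K$) of the algebras $\K_{q^{h_i}}[T,T']$ (the quantum plane with commutation parameter $q^{h_i}$) for $i = 1,\dots,s$, tensored with a commutative polynomial ring $\K[T_1,\dots,T_{\dim\ker M}]$. The PI degree of a tensor product of PI algebras over $\K$ with $\K$ algebraically closed in a suitable sense, or more safely the PI degree of a tensor product of quantum affine spaces, is multiplicative — this follows from the fact that the centre of the tensor product is the tensor product of the centres and the total algebra is a free module of the expected rank over it; alternatively one invokes the computation of the centre of a quantum affine space directly. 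So $\PI(\oh_{q^S}(\K^N)) = \prod_{i=1}^s \PI(\K_{q^{h_i}}[T,T'])$, the commutative factor contributing $1$.

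Finally I would compute $\PI(\K_{q^{h}}[T,T'])$ for a single quantum plane. Here $q^h$ is a root of unity of order exactly $\ell/\gcd(h,\ell)$ (since $q$ is a primitive $\ell$-th root of unity), and the quantum plane $\K_{\zeta}[T,T']$ with $\zeta$ a primitive $d$-th root of unity is well known to have PI degree exactly $d$: its centre is $\K[T^d, T'^d]$ and it is free of rank $d^2$ over its centre, with $d$ the largest integer such that $d^2$ divides the rank, so the PI degree is $d$. Hence $\PI(\K_{q^h}[T,T']) = \ell/\gcd(h,\ell)$, and multiplying over $i = 1, \dots, s = (N - \dim\ker M)/2$ gives exactly the claimed formula. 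The only genuinely delicate point, which I would treat with care, is the multiplicativity of PI degree over the tensor-product decomposition coming from the block structure of $S$ — one must ensure there is no drop in PI degree, which one guarantees by exhibiting the centre of $\oh_{q^S}(\K^N)$ explicitly as the tensor product of the centres of the blocks and checking the module rank; everything else (congruence invariance, the single-block computation) is standard and reduces to De Concini–Procesi-type arguments already cited in the paper.
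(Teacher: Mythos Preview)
Your proposal is correct and follows a route that differs from the paper's in its final computation. Both arguments begin the same way: reduce to the skew-normal form $S$ via congruence, invoke the isomorphism of quantum tori under integral congruence (Panov), and use invariance of PI degree under Ore localisation. From there the paper appeals directly to the De~Concini--Procesi description of the PI degree of a quantum affine space as $\sqrt{h}$, where $h$ is the cardinality of the image of $\Z^N \xrightarrow{S} \Z^N \to (\Z/\ell\Z)^N$, and carries out an explicit count of that image coordinate by coordinate. You instead exploit the block-diagonal shape of $S$ to factor $\oh_{q^S}(\K^N)$ as a tensor product of quantum planes $\K_{q^{h_i}}[T,T']$ and a commutative polynomial ring, and then multiply the PI degrees of the factors. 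Your approach is slightly more structural and avoids invoking the De~Concini--Procesi cardinality formula, at the cost of needing multiplicativity of PI degree across tensor products of quantum affine spaces; you correctly flag this as the one point requiring care, and it is readily dispatched by exhibiting the centre of $\oh_{q^S}(\K^N)$ as the tensor product of the block centres and checking the free rank. The paper's route has the advantage of being entirely self-contained once one grants the De~Concini--Procesi input, with no tensor-product subtleties. Either argument is perfectly adequate here.
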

\begin{proof}
By \cite[Proposition 7.1]{DeConciniProcesi}, the PI degree of this quantum affine space is $\sqrt{h}$, where $h$ is the cardinality of the image of the homomorphism
\[\begin{tikzcd}
 \Z^{N} \arrow{r}{M} & \Z^{N} \arrow{r}{\pi} & (\Z/\ell\Z)^N,
\end{tikzcd}\]
where $\pi$ denotes the canonical epimorphism. Let $S$ be the skew-normal form of $M$. Then $S$ is congruent to $M$ and, by \cite[Lemma 2.4]{PanovRussian}, the two quantum tori $\mathcal{O}_{q^{M}}((\K^*)^N)$ and $\mathcal{O}_{q^{S}}((\K^*)^N)$ are isomorphic. Also, since the PI degree of a prime, noetherian ring does not change upon localisation \cite[Corollary I.13.3]{BrownGoodearl}, the PI degree of a quantum affine space is equal to the PI degree of the associated quantum torus, and thus
\begin{align*}
 \PI(\oh_{q^M}(\K^N)) = \PI(\oh_{q^M}((\K^*)^N) = \PI(\oh_{q^S}((\K^*)^N) = \PI(\oh_{q^S}(\K^N)).
\end{align*}
It is therefore enough to compute the cardinality, $h$, of the image of the homomorphism
\begin{equation}\label{card}
\begin{tikzcd}
\Z^N \arrow{r}{S} & \Z^N \arrow{r}{\pi} & (\Z/\ell\Z)^N.
\end{tikzcd}
\end{equation}
Applying this map to a general element $\underline{z}=(z_1, \ldots, z_N)^T \in \Z^N$, we obtain the following:
\begin{equation*}
(\pi \circ S) (\underline{z})  =  (\overline{h_1 z_2},\overline{-h_1 z_1}, \overline{h_2 z_4}, \overline{-h_2 z_3}, \ldots ,\overline{h_s z_{2s}}, \overline{-h_s z_{2s-1}}, \mathbf{\overline{0}})^T,
\end{equation*}
where $\overline{h_i z_j}$ denotes the canonical image of $h_i z_j$ in $\Z/\ell \Z$. Since $\dim(\ker(M))=\dim(\ker(S))$, the dimension of the zero matrix $\mathbf{0}$ in $S$ is equal to $\dim(\ker(M))$ and hence $2s=N-\dim(\ker(M))$.

We now turn our attention to the entries of $(\pi \circ S)(\underline{z})$.  Each nonzero entry is of the form $\pm \overline{h_i} \overline{z_j}$ for some $\overline{z_j}\in \Z/\ell\Z$. Consider then, for each invariant factor $h_i$, the map
\begin{align*}
f_i: ~\Z &{} ~\longrightarrow~ \Z/\ell\Z \\
z_j &{} ~\longmapsto~ \overline{h_i z_j}.
\end{align*}
The image of $f_i$ is
\[ \Ima(f_i)=\{ \overline{h}_i \overline{z}_j \mid \overline{z}_j \in \Z/\ell\Z\} \subseteq \Z/\ell\Z \]
and this forms an additive subgroup of $(\Z/\ell\Z, +)$.  Furthermore, the image of $f_i$ is the cyclic subgroup generated by $\overline{h}_i$ and denoted by $\langle \overline{h}_i \rangle$. Since $(\Z/\ell\Z, +)$ is the cyclic group generated by $1$ containing $\ell$ elements then the order of $\overline{h}_i\in (\Z/\ell\Z, +)$ is $\ell/\gcd(h_i, \ell)$. Hence
\[ |\Ima(f_i)| = |\langle \overline{h}_i \rangle|= \frac{\ell}{\gcd(h_i, \ell)}. \]
The image of $(\pi \circ S)$ in $(\Z/\ell \Z)^N$ consists of copies of the subgroup $\Ima(f_i)\in \Z/\ell \Z$ in positions $2i$ and $2i-1$, for each $i\in \llbracket 1, s \rrbracket$, and zeros in positions $2s+1, \ldots, N$. Therefore, for each $i \in \llbracket 1, s \rrbracket$ the image of $f_i$ describes two entries in the image of $(\pi \circ S)$. The cardinality of the whole image of $(\pi \circ S)$ is therefore given as
\begin{equation*}
h = \prod_{i=1}^s |\Ima(f_i)|^2 = \prod_{i=1}^s | \langle \overline{h}_i \rangle |^2 = \prod_{i=1}^s \left( \frac{\ell}{\gcd(h_i, \ell)}\right)^2.
\end{equation*}
Substituting $2s=N-\dim(\ker(M))$, the PI degree of $\oh_{q^M}(\K^N)$ becomes
\begin{equation}\label{eqnrooth}
\sqrt{h} = \sqrt{\prod_{i=1}^{\frac{N-\dim(\ker(M))}{2}} \left( \frac{\ell}{\gcd(h_i, \ell)}\right)^2} = \prod_{i=1}^{\frac{N-\dim(\ker(M))}{2}} \frac{\ell}{\gcd(h_i, \ell)},
\end{equation}
as desired.
\end{proof}

Lemma \ref{lemPIdeg} tells us we need to find out the invariant factors and the kernel of the matrix $M$ in order to obtain the PI degree of the associated quantum affine space. For an arbitrary matrix $M$ this may need to be calculated specifically. However, we will see in the next section that for matrices associated to a (Cauchon-Le) diagram we can compute explicitly the PI degree of the corresponding quantum affine space. 

\subsection{Properties of Matrices associated to (Cauchon-Le) Diagrams and PI degree of their associated quantum affine spaces}\label{sectionCauchonDiag}
An \emph{$m\times n$ diagram} is an $m\times n$ grid with each square coloured either black or white. If the diagram satisfies the rule that if a square is black then all squares directly above it, or all squares directly to the left of it, are also black then we say that the diagram is a \emph{Cauchon-Le diagram}. This name recognises the fact that these diagrams were discovered independently by both Cauchon \cite{Cauchon-SpectrePremiers}, in relation to torus-invariant prime ideals of generic quantum matrices, and Postnikov \cite{Postnikov}, in relation to the totally nonnegative Grassmannian where they are called ``Le'' diagrams.

Given an $m\times n$ diagram $D$ we may compute its \emph{toric permutation} $\tau$, as defined in \cite[Section 4.1]{BellCasteelsLaunois}, by laying pipes over the squares such that we place a ``cross" on each black square and a ``hyperbola" on each white square. We label the sides of the diagram with the numbers $1,\ldots, m+n$ such that each pair of opposite sides share the same labels in the same order. The permutation, $\tau$, may then be read off this diagram by defining $\tau(i)$ to be the label (on the left or top side of $D$) reached by following the pipe starting at label $i$ (on the right or bottom side of $D$). See Figure \ref{FigToricPerm} for an example of a diagram with $\tau=(17)(26384)$.

\definecolor{light-gray}{gray}{0.6}
\begin{figure}[h]
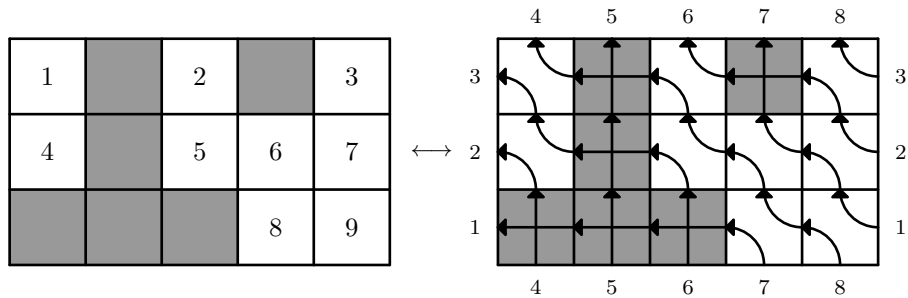

\begin{center}
\begin{tabular}{cc}
\begin{pgfpicture}{0cm}{0cm}{6cm}{4cm}%
\pgfsetroundjoin \pgfsetroundcap%
\pgfsetfillcolor{light-gray}
\pgfmoveto{\pgfxy(0.5,0.5)}\pgflineto{\pgfxy(0.5,1.5)}\pgflineto{\pgfxy(3.5,1.5)}\pgflineto{\pgfxy(3.5,0.5)}\pgflineto{\pgfxy(0.5,0.5)}\pgffill
\pgfmoveto{\pgfxy(1.5,1.5)}\pgflineto{\pgfxy(1.5,3.5)}\pgflineto{\pgfxy(2.5,3.5)}\pgflineto{\pgfxy(2.5,1.5)}\pgflineto{\pgfxy(1.5,1.5)}\pgffill
\pgfmoveto{\pgfxy(3.5,2.5)}\pgflineto{\pgfxy(3.5,3.5)}\pgflineto{\pgfxy(4.5,3.5)}\pgflineto{\pgfxy(4.5,2.5)}\pgflineto{\pgfxy(3.5,2.5)}\pgffill
\pgfsetlinewidth{1pt}
\pgfxyline(0.5,0.5)(4.5,0.5)
\pgfxyline(0.5,1.5)(4.5,1.5)
\pgfxyline(0.5,2.5)(4.5,2.5)
\pgfxyline(0.5,3.5)(4.5,3.5)
\pgfxyline(0.5,0.5)(0.5,3.5)
\pgfxyline(1.5,0.5)(1.5,3.5)
\pgfxyline(2.5,0.5)(2.5,3.5)
\pgfxyline(3.5,0.5)(3.5,3.5)
\pgfxyline(4.5,0.5)(4.5,3.5)
\pgfxyline(5.5,0.5)(5.5,3.5)
\pgfxyline(4.5,0.5)(5.5,0.5)
\pgfxyline(4.5,1.5)(5.5,1.5)
\pgfxyline(4.5,2.5)(5.5,2.5)
\pgfxyline(4.5,3.5)(5.5,3.5)

\pgfputat{\pgfxy(1,3)}{\pgfnode{rectangle}{center}{\color{black} $1$}{}{\pgfusepath{}}}
\pgfputat{\pgfxy(3,3)}{\pgfnode{rectangle}{center}{\color{black} $2$}{}{\pgfusepath{}}}
\pgfputat{\pgfxy(5,3)}{\pgfnode{rectangle}{center}{\color{black} $3$}{}{\pgfusepath{}}}
\pgfputat{\pgfxy(1,2)}{\pgfnode{rectangle}{center}{\color{black} $4$}{}{\pgfusepath{}}}
\pgfputat{\pgfxy(3,2)}{\pgfnode{rectangle}{center}{\color{black} $5$}{}{\pgfusepath{}}}
\pgfputat{\pgfxy(4,2)}{\pgfnode{rectangle}{center}{\color{black} $6$}{}{\pgfusepath{}}}
\pgfputat{\pgfxy(5,2)}{\pgfnode{rectangle}{center}{\color{black} $7$}{}{\pgfusepath{}}}
\pgfputat{\pgfxy(4,1)}{\pgfnode{rectangle}{center}{\color{black} $8$}{}{\pgfusepath{}}}
\pgfputat{\pgfxy(5,1)}{\pgfnode{rectangle}{center}{\color{black} $9$}{}{\pgfusepath{}}}

\pgfputat{\pgfxy(6,2)}{\pgfnode{rectangle}{center}{\color{black}\footnotesize $  ~~~~\longleftrightarrow~~~$}{}{\pgfusepath{}}}
\end{pgfpicture}

 &

\begin{pgfpicture}{0cm}{0cm}{6cm}{4cm}%
\pgfsetroundjoin \pgfsetroundcap%
\pgfsetfillcolor{light-gray}
\pgfmoveto{\pgfxy(0.5,0.5)}\pgflineto{\pgfxy(0.5,1.5)}\pgflineto{\pgfxy(3.5,1.5)}\pgflineto{\pgfxy(3.5,0.5)}\pgflineto{\pgfxy(0.5,0.5)}\pgffill
\pgfmoveto{\pgfxy(1.5,1.5)}\pgflineto{\pgfxy(1.5,3.5)}\pgflineto{\pgfxy(2.5,3.5)}\pgflineto{\pgfxy(2.5,1.5)}\pgflineto{\pgfxy(1.5,1.5)}\pgffill
\pgfmoveto{\pgfxy(3.5,2.5)}\pgflineto{\pgfxy(3.5,3.5)}\pgflineto{\pgfxy(4.5,3.5)}\pgflineto{\pgfxy(4.5,2.5)}\pgflineto{\pgfxy(3.5,2.5)}\pgffill
\pgfsetlinewidth{1pt}
\pgfxyline(0.5,0.5)(4.5,0.5)
\pgfxyline(0.5,1.5)(4.5,1.5)
\pgfxyline(0.5,2.5)(4.5,2.5)
\pgfxyline(0.5,3.5)(4.5,3.5)
\pgfxyline(0.5,0.5)(0.5,3.5)
\pgfxyline(1.5,0.5)(1.5,3.5)
\pgfxyline(2.5,0.5)(2.5,3.5)
\pgfxyline(3.5,0.5)(3.5,3.5)
\pgfxyline(4.5,0.5)(4.5,3.5)
\pgfxyline(5.5,0.5)(5.5,3.5)
\pgfxyline(4.5,0.5)(5.5,0.5)
\pgfxyline(4.5,1.5)(5.5,1.5)
\pgfxyline(4.5,2.5)(5.5,2.5)
\pgfxyline(4.5,3.5)(5.5,3.5)

\pgfsetlinewidth{1pt}
\color{black}

\pgfmoveto{\pgfxy(0.5,3)}\pgfpatharc{90}{0}{0.5cm}
\pgfstroke
\pgfmoveto{\pgfxy(0.5,3)}\pgflineto{\pgfxy(0.6,3.1)}\pgflineto{\pgfxy(0.6,2.9)}\pgflineto{\pgfxy(0.5,3)}\pgfclosepath\pgffillstroke

\pgfmoveto{\pgfxy(1,3.5)}\pgfpatharc{180}{270}{0.5cm}
\pgfstroke
\pgfmoveto{\pgfxy(1,3.5)}\pgflineto{\pgfxy(0.9,3.4)}\pgflineto{\pgfxy(1.1,3.4)}\pgflineto{\pgfxy(1,3.5)}\pgfclosepath\pgffillstroke

\color{black}
\pgfxyline(2,2.5)(2,3.5)
\pgfmoveto{\pgfxy(1.9,3.4)}\pgflineto{\pgfxy(2,3.5)}\pgflineto{\pgfxy(2.1,3.4)}\pgflineto{\pgfxy(1.9,3.4)}\pgfclosepath\pgffillstroke

\pgfxyline(1.5,3)(2.5,3)
\pgfmoveto{\pgfxy(1.5,3)}\pgflineto{\pgfxy(1.6,3.1)}\pgflineto{\pgfxy(1.6,2.9)}\pgflineto{\pgfxy(1.5,3)}\pgfclosepath\pgffillstroke

\pgfmoveto{\pgfxy(2.5,3)}\pgfpatharc{90}{0}{0.5cm}
\pgfstroke
\pgfmoveto{\pgfxy(2.5,3)}\pgflineto{\pgfxy(2.6,3.1)}\pgflineto{\pgfxy(2.6,2.9)}\pgflineto{\pgfxy(2.5,3)}\pgfclosepath\pgffillstroke

\pgfmoveto{\pgfxy(3,3.5)}\pgfpatharc{180}{270}{0.5cm}
\pgfstroke
\pgfmoveto{\pgfxy(3,3.5)}\pgflineto{\pgfxy(2.9,3.4)}\pgflineto{\pgfxy(3.1,3.4)}\pgflineto{\pgfxy(3,3.5)}\pgfclosepath\pgffillstroke


\pgfxyline(4,2.5)(4,3.5)
\pgfmoveto{\pgfxy(3.9,3.4)}\pgflineto{\pgfxy(4,3.5)}\pgflineto{\pgfxy(4.1,3.4)}\pgflineto{\pgfxy(3.9,3.4)}\pgfclosepath\pgffillstroke

\pgfxyline(3.5,3)(4.5,3)
\pgfmoveto{\pgfxy(3.5,3)}\pgflineto{\pgfxy(3.6,3.1)}\pgflineto{\pgfxy(3.6,2.9)}\pgflineto{\pgfxy(3.5,3)}\pgfclosepath\pgffillstroke


\pgfmoveto{\pgfxy(0.5,2)}\pgfpatharc{90}{0}{0.5cm}
\pgfstroke
\pgfmoveto{\pgfxy(0.5,2)}\pgflineto{\pgfxy(0.6,2.1)}\pgflineto{\pgfxy(0.6,1.9)}\pgflineto{\pgfxy(0.5,2)}\pgfclosepath\pgffillstroke

\pgfmoveto{\pgfxy(1,2.5)}\pgfpatharc{180}{270}{0.5cm}
\pgfstroke
\pgfmoveto{\pgfxy(1,2.5)}\pgflineto{\pgfxy(0.9,2.4)}\pgflineto{\pgfxy(1.1,2.4)}\pgflineto{\pgfxy(1,2.5)}\pgfclosepath\pgffillstroke


\pgfxyline(2,1.5)(2,2.5)
\pgfmoveto{\pgfxy(1.9,2.4)}\pgflineto{\pgfxy(2,2.5)}\pgflineto{\pgfxy(2.1,2.4)}\pgflineto{\pgfxy(1.9,2.4)}\pgfclosepath\pgffillstroke

\pgfxyline(1.5,2)(2.5,2)
\pgfmoveto{\pgfxy(1.5,2)}\pgflineto{\pgfxy(1.6,2.1)}\pgflineto{\pgfxy(1.6,1.9)}\pgflineto{\pgfxy(1.5,2)}\pgfclosepath\pgffillstroke

\pgfmoveto{\pgfxy(2.5,2)}\pgfpatharc{90}{0}{0.5cm}
\pgfstroke
\pgfmoveto{\pgfxy(2.5,2)}\pgflineto{\pgfxy(2.6,2.1)}\pgflineto{\pgfxy(2.6,1.9)}\pgflineto{\pgfxy(2.5,2)}\pgfclosepath\pgffillstroke

\pgfmoveto{\pgfxy(3,2.5)}\pgfpatharc{180}{270}{0.5cm}
\pgfstroke
\pgfmoveto{\pgfxy(3,2.5)}\pgflineto{\pgfxy(2.9,2.4)}\pgflineto{\pgfxy(3.1,2.4)}\pgflineto{\pgfxy(3,2.5)}\pgfclosepath\pgffillstroke


\pgfmoveto{\pgfxy(3.5,2)}\pgfpatharc{90}{0}{0.5cm}
\pgfstroke
\pgfmoveto{\pgfxy(3.5,2)}\pgflineto{\pgfxy(3.6,2.1)}\pgflineto{\pgfxy(3.6,1.9)}\pgflineto{\pgfxy(3.5,2)}\pgfclosepath\pgffillstroke

\pgfmoveto{\pgfxy(4,2.5)}\pgfpatharc{180}{270}{0.5cm}
\pgfstroke
\pgfmoveto{\pgfxy(4,2.5)}\pgflineto{\pgfxy(3.9,2.4)}\pgflineto{\pgfxy(4.1,2.4)}\pgflineto{\pgfxy(4,2.5)}\pgfclosepath\pgffillstroke


\pgfxyline(1,0.5)(1,1.5)
\pgfmoveto{\pgfxy(0.9,1.4)}\pgflineto{\pgfxy(1,1.5)}\pgflineto{\pgfxy(1.1,1.4)}\pgflineto{\pgfxy(0.9,1.4)}\pgfclosepath\pgffillstroke

\pgfxyline(0.5,1)(1.5,1)
\pgfmoveto{\pgfxy(0.5,1)}\pgflineto{\pgfxy(0.6,1.1)}\pgflineto{\pgfxy(0.6,0.9)}\pgflineto{\pgfxy(0.5,1)}\pgfclosepath\pgffillstroke


\pgfxyline(2,0.5)(2,1.5)
\pgfmoveto{\pgfxy(1.9,1.4)}\pgflineto{\pgfxy(2,1.5)}\pgflineto{\pgfxy(2.1,1.4)}\pgflineto{\pgfxy(1.9,1.4)}\pgfclosepath\pgffillstroke

\pgfxyline(1.5,1)(2.5,1)
\pgfmoveto{\pgfxy(1.5,1)}\pgflineto{\pgfxy(1.6,1.1)}\pgflineto{\pgfxy(1.6,0.9)}\pgflineto{\pgfxy(1.5,1)}\pgfclosepath\pgffillstroke


\pgfxyline(3,0.5)(3,1.5)
\pgfmoveto{\pgfxy(2.9,1.4)}\pgflineto{\pgfxy(3,1.5)}\pgflineto{\pgfxy(3.1,1.4)}\pgflineto{\pgfxy(2.9,1.4)}\pgfclosepath\pgffillstroke

\pgfxyline(2.5,1)(3.5,1)
\pgfmoveto{\pgfxy(2.5,1)}\pgflineto{\pgfxy(2.6,1.1)}\pgflineto{\pgfxy(2.6,0.9)}\pgflineto{\pgfxy(2.5,1)}\pgfclosepath\pgffillstroke

\pgfmoveto{\pgfxy(3.5,1)}\pgfpatharc{90}{0}{0.5cm}
\pgfstroke
\pgfmoveto{\pgfxy(3.5,1)}\pgflineto{\pgfxy(3.6,1.1)}\pgflineto{\pgfxy(3.6,0.9)}\pgflineto{\pgfxy(3.5,1)}\pgfclosepath\pgffillstroke

\pgfmoveto{\pgfxy(4,1.5)}\pgfpatharc{180}{270}{0.5cm}
\pgfstroke
\pgfmoveto{\pgfxy(4,1.5)}\pgflineto{\pgfxy(3.9,1.4)}\pgflineto{\pgfxy(4.1,1.4)}\pgflineto{\pgfxy(4,1.5)}\pgfclosepath\pgffillstroke

\pgfmoveto{\pgfxy(4.5,3)}\pgfpatharc{90}{0}{0.5cm}
\pgfstroke
\pgfmoveto{\pgfxy(4.5,3)}\pgflineto{\pgfxy(4.6,3.1)}\pgflineto{\pgfxy(4.6,2.9)}\pgflineto{\pgfxy(4.5,3)}\pgfclosepath\pgffillstroke

\pgfmoveto{\pgfxy(5,3.5)}\pgfpatharc{180}{270}{0.5cm}
\pgfstroke
\pgfmoveto{\pgfxy(5,3.5)}\pgflineto{\pgfxy(4.9,3.4)}\pgflineto{\pgfxy(5.1,3.4)}\pgflineto{\pgfxy(5,3.5)}\pgfclosepath\pgffillstroke


\pgfmoveto{\pgfxy(4.5,2)}\pgfpatharc{90}{0}{0.5cm}
\pgfstroke
\pgfmoveto{\pgfxy(4.5,2)}\pgflineto{\pgfxy(4.6,2.1)}\pgflineto{\pgfxy(4.6,1.9)}\pgflineto{\pgfxy(4.5,2)}\pgfclosepath\pgffillstroke

\pgfmoveto{\pgfxy(5,2.5)}\pgfpatharc{180}{270}{0.5cm}
\pgfstroke
\pgfmoveto{\pgfxy(5,2.5)}\pgflineto{\pgfxy(4.9,2.4)}\pgflineto{\pgfxy(5.1,2.4)}\pgflineto{\pgfxy(5,2.5)}\pgfclosepath\pgffillstroke

\pgfmoveto{\pgfxy(4.5,1)}\pgfpatharc{90}{0}{0.5cm}
\pgfstroke
\pgfmoveto{\pgfxy(4.5,1)}\pgflineto{\pgfxy(4.6,1.1)}\pgflineto{\pgfxy(4.6,0.9)}\pgflineto{\pgfxy(4.5,1)}\pgfclosepath\pgffillstroke

\pgfmoveto{\pgfxy(5,1.5)}\pgfpatharc{180}{270}{0.5cm}
\pgfstroke
\pgfmoveto{\pgfxy(5,1.5)}\pgflineto{\pgfxy(4.9,1.4)}\pgflineto{\pgfxy(5.1,1.4)}\pgflineto{\pgfxy(5,1.5)}\pgfclosepath\pgffillstroke

\pgfputat{\pgfxy(0.2,1)}{\pgfnode{rectangle}{center}{\color{black}\footnotesize $1$}{}{\pgfusepath{}}}
\pgfputat{\pgfxy(0.2,2)}{\pgfnode{rectangle}{center}{\color{black}\footnotesize $2$}{}{\pgfusepath{}}}
\pgfputat{\pgfxy(0.2,3)}{\pgfnode{rectangle}{center}{\color{black}\footnotesize $3$}{}{\pgfusepath{}}}

\pgfputat{\pgfxy(5.8,1)}{\pgfnode{rectangle}{center}{\color{black}\footnotesize $1$}{}{\pgfusepath{}}}
\pgfputat{\pgfxy(5.8,2)}{\pgfnode{rectangle}{center}{\color{black}\footnotesize $2$}{}{\pgfusepath{}}}
\pgfputat{\pgfxy(5.8,3)}{\pgfnode{rectangle}{center}{\color{black}\footnotesize $3$}{}{\pgfusepath{}}}

\pgfputat{\pgfxy(1,0.2)}{\pgfnode{rectangle}{center}{\color{black}\footnotesize $4$}{}{\pgfusepath{}}}
\pgfputat{\pgfxy(2,0.2)}{\pgfnode{rectangle}{center}{\color{black}\footnotesize $5$}{}{\pgfusepath{}}}
\pgfputat{\pgfxy(3,0.2)}{\pgfnode{rectangle}{center}{\color{black}\footnotesize $6$}{}{\pgfusepath{}}}
\pgfputat{\pgfxy(4,0.2)}{\pgfnode{rectangle}{center}{\color{black}\footnotesize $7$}{}{\pgfusepath{}}}
\pgfputat{\pgfxy(5,0.2)}{\pgfnode{rectangle}{center}{\color{black}\footnotesize $8$}{}{\pgfusepath{}}}

\pgfputat{\pgfxy(1,3.8)}{\pgfnode{rectangle}{center}{\color{black}\footnotesize $4$}{}{\pgfusepath{}}}
\pgfputat{\pgfxy(2,3.8)}{\pgfnode{rectangle}{center}{\color{black}\footnotesize $5$}{}{\pgfusepath{}}}
\pgfputat{\pgfxy(3,3.8)}{\pgfnode{rectangle}{center}{\color{black}\footnotesize $6$}{}{\pgfusepath{}}}
\pgfputat{\pgfxy(4,3.8)}{\pgfnode{rectangle}{center}{\color{black}\footnotesize $7$}{}{\pgfusepath{}}}
\pgfputat{\pgfxy(5,3.8)}{\pgfnode{rectangle}{center}{\color{black}\footnotesize $8$}{}{\pgfusepath{}}}
\end{pgfpicture}
\end{tabular}
\caption{A labelled $3\times 5$ (Cauchon-Le) diagram (left) with pipe dream construction (right).\label{FigToricPerm}}
\end{center}
\end{figure}

For each $m\times n$ diagram, $D$, with $N$ white squares, we may construct a skew-symmetric integral matrix, $M(D) \in M_{N}(\Z)$, as follows: Label the white squares from $1$ to $N$ in such a way that the labels increase along the rows (from left to right) and down the columns (from top to bottom). Given such a labelling, we construct $M(D)$ according the rule
\[ M(D)[i,j] = \begin{cases} 1 & \text{ if square } i \text{ is strictly below or strictly to the right of square } j; \\
							-1 & \text{ if square } i \text{ is strictly above or strictly to the left of square } j; \\
							0 & \text{ otherwise}. \end{cases} \]

The matrix associated to the diagram $D$ in Figure \ref{FigToricPerm}  is:
\[ M(D) = \left(\begin{rsmallmatrix}
0 & 1 & 1 & 1 & 0 & 0 & 0 & 0 & 0 \\
-1 & 0 & 1 & 0 & 1 & 0 & 0 & 0 & 0 \\
-1 & -1 & 0 & 0 & 0 & 0 & 1 & 0 & 1 \\
-1 & 0 & 0 & 0 & 1 & 1 & 1 & 0 & 0 \\
0 & -1 & 0 & -1 & 0 & 1 & 1 & 0 & 0 \\
0 & 0 & 0 & -1 & -1 & 0 & 1 & 1 & 0 \\
0 & 0 & -1 & -1 & -1 & -1 & 0 & 0 & 1\\
0 & 0 & 0 & 0 & 0 & -1 & 0 & 0 & 1 \\
0 & 0 & -1 & 0 & 0 & 0 & -1 & -1 & 0
\end{rsmallmatrix} \right) . \]

Our main aim in this section is to show that the invariant factors of a matrix $M(D)$ are always powers of $2$. The next lemma is a simplification of \cite[Proposition 4.6]{BellLaunois} where we consider diagrams without requiring them to have the Cauchon-Le property. This is needed for the theorem that follows. The proof follows that of the aforementioned proposition. We incorporate it for completeness.

\begin{lemma}\label{lemSubDiag}
Let $D$ be an $m \times n$ diagram. Suppose that the kernel of $M(D)$ has dimension $e \geq 1$. Then there is a diagram $D' \supseteq D$ obtained by adding exactly one black box to $D$ such that $M(D')$ has an $(e-1)$-dimensional kernel.
\end{lemma}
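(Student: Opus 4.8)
The plan is to analyze what the kernel of $M(D)$ looks like combinatorially and then read off where to insert the new black box. Think of an element of $\ker(M(D))$ as an integer weighting $v=(v_i)_{i\in W}$ of the white squares $W$ of $D$. The defining rule for $M(D)$ says that $\big(M(D)v\big)_j$ equals (sum of $v_i$ over white squares $i$ strictly below or strictly to the right of $j$) minus (sum of $v_i$ over white squares strictly above or strictly to the left of $j$). A short reformulation: for a white square $j$ sitting in row $a$ and column $b$, the "strictly below or strictly right" white squares are exactly those white squares in row $a$ to the right of $j$ together with those in column $b$ below $j$; similarly for the other sign. So the vanishing condition $M(D)v=0$ decouples, for each white square $j$, into a statement relating partial row-sums and partial column-sums of $v$ restricted to the white squares lining up with $j$. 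First I would make this precise: for a fixed row, record the positions of white squares and the running sums of $v$ along that row; do the same for each column; the condition $M(D)v = 0$ then becomes a compatibility between these running row-sums and column-sums at every white square.

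Next I would extract the key structural consequence. Because $D$ need not be Cauchon-Le, the white squares in a given row (resp. column) can be broken into several maximal runs separated by black squares. The decoupled equations force, on each such run, that the relevant alternating partial sums are constant, and crucially they link the "boundary value" of one run to that of an adjacent run in the perpendicular direction. Following \cite[Proposition 4.6]{BellLaunois}, the upshot should be: a nonzero kernel element, together with the combinatorics of where black squares sit, produces a distinguished white square $s$ — typically an "extremal" white square (e.g. the last white square of some row or column, bordered by the edge of the grid or by black squares) — with the property that the value $v_s$ is pinned down by the others and, conversely, deleting $s$ (equivalently, blackening it) kills exactly one degree of freedom. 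Concretely, I would argue that blackening one carefully chosen white square $s$ yields a diagram $D'$ with $M(D')$ equal to $M(D)$ with the $s$-th row and column removed, and that this operation drops the kernel dimension by exactly one: it drops by at most one since deleting a single row/column of a skew-symmetric matrix changes the rank by at most $2$, hence the nullity by at most... — here one must be careful, so the real work is showing it drops by \emph{at least} one and \emph{at most} one, i.e. exactly one. For "at most one", the clean way is to exhibit, inside $\ker(M(D))$, a vector supported away from $s$ (or to show $\ker(M(D'))$ injects into $\ker(M(D))$ with codimension $\ge 1$); for "at least one", one shows the chosen $s$ can be given a value forced by a kernel vector, so the restriction map $\ker(M(D))\to \ker(M(D'))$ is not injective, using that $e\ge 1$ guarantees a nonzero kernel vector to work with and that $s$ was chosen so that some kernel vector has $v_s\ne 0$ — or, if every kernel vector vanishes at every "edge-extremal" white square, one derives a contradiction by propagating the decoupled equations inward to force $v\equiv 0$.

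The main obstacle, as I see it, is the bookkeeping in choosing $s$ so that blackening it keeps $D'$ a legitimate diagram and simultaneously achieves the exact drop in nullity. Adding a black box anywhere is always allowed for a general diagram (there is no constraint to preserve), so legitimacy of $D'$ is free; the subtlety is purely the linear-algebra/combinatorics of picking $s$ to realize nullity $e-1$ rather than $e-2$ or $e$. I expect the cleanest route is to pick $s$ to be a white square that is "terminal" for the decoupled system — say, the rightmost white square in the lowest row that contains a white square adjacent to a nonzero kernel value — so that in the running-sum description, $v_s$ appears in exactly one equation and as the last unknown, making its value a linear function of the other $v_i$'s; then $\ker(M(D'))$ is precisely $\{v\in\ker(M(D)) : v_s = 0\}$ together with an extra relation, and a direct count gives dimension $e-1$. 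I would close the argument by verifying this count against the small running example or, better, by a clean rank inequality: $\rank(M(D)) \le \rank(M(D')) + 2$ and $\rank(M(D')) \le \rank(M(D)) + 1$ always hold for a principal $(N-1)\times(N-1)$ submatrix, and combining with skew-symmetry (ranks are even) forces $\rank(M(D')) \in \{\rank(M(D)), \rank(M(D))+2\}$; ruling out the "no change" case via the explicit forced value of $v_s$ then yields exactly $\dim\ker(M(D')) = e-1$.
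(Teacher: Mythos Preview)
Your proposal contains the right seed---pick a white square $s$ at which some kernel vector is nonzero and show blackening $s$ drops the kernel dimension to $e-1$---but the execution has genuine errors, and the combinatorial apparatus you build is not needed.

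Concretely, your rank bookkeeping at the end is wrong. From $\rank(M(D)) \le \rank(M(D')) + 2$ and $\rank(M(D')) \le \rank(M(D))$ (a submatrix never has larger rank), parity gives $\rank(M(D')) \in \{\rank(M(D))-2,\ \rank(M(D))\}$, not $\{\rank(M(D)),\ \rank(M(D))+2\}$. Consequently the ``no change in rank'' case is the \emph{desired} one (it yields $\dim\ker(M(D')) = e-1$); the case you must exclude is the rank-drops-by-$2$ case, where the kernel jumps to $e+1$. You have this backwards. Moreover, there is no restriction map $\ker(M(D)) \to \ker(M(D'))$: deleting the $s$-th coordinate of $v \in \ker(M(D))$ lands in $\ker(M(D'))$ only when $v_s = 0$, so your injectivity discussion of that map is not well-posed. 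Finally, the running-sum and ``edge-extremal square'' analysis is never carried out, and even the assertion that vanishing at all extremal squares propagates to $v\equiv 0$ is left unproved.

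The paper's proof uses none of the diagram combinatorics; it is pure linear algebra for skew-symmetric matrices. It argues by contradiction: suppose $\dim\ker(M(D_i)) \ge e$ (hence $= e+1$ by parity) for \emph{every} white square $i$. For each $i$, lift $e+1$ independent vectors of $\ker(M(D_i))$ to $\mathbb{Q}^N$ by inserting $0$ at position $i$; these lifts are orthogonal to every row of $M(D)$ except possibly row $i$, and the map ``dot with row $i$'' on their $(e+1)$-dimensional span must be surjective (else they would all lie in the $e$-dimensional $\ker(M(D))$). Its $e$-dimensional kernel then coincides with $\ker(M(D))$, forcing every vector of $\ker(M(D))$ to have $i$-th coordinate $0$. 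Running over all $i$ gives $\ker(M(D))=0$, contradicting $e\ge 1$. The upper bound $\dim\ker(M(D_{i_0}))\le e-1$ then becomes equality by the easy observation that $e-1$ of the basis vectors of $\ker(M(D))$ can be taken with $i_0$-th coordinate $0$ and survive deletion. This is exactly the contrapositive of the implication you aimed at but did not establish: if some $v\in\ker(M(D))$ has $v_s\ne 0$, then $\dim\ker(M(D_s))=e-1$, with no need to locate $s$ combinatorially.
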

\begin{proof}
Without loss of generality we may assume that $D$ has no all-black columns. Make $D$ into a labelled diagram by assigning the white boxes labels $\llbracket 1, N \rrbracket$, for some $N\geq 1$. Clearly if any one of these boxes is coloured black, we are left with a diagram with $N-1$ white boxes.

For every $i\in \llbracket 1, N \rrbracket$, denote by $D_i$ the diagram obtained by colouring the white block with label $i$ black. Assume for contradiction, that $\dim(\ker(M(D_i))) \geq e$ for all $i$. Due to parity arguments, this means that $\dim(\ker(M(D_i))) \geq e+1$ and we can take $e+1$ linearly independent vectors of $\ker(M(D_i))$, which we denote $\{v_1^{(i)}, \ldots, v_{e+1}^{(i)}\}$.

We now construct $e+1$ linearly independent vectors of $\ker(M(D))$ as follows. For $1\leq j \leq e+1$ and $1\leq i \leq N$, let $w_j^{(i)}$ be the vector whose $\ell$-th coordinate is the $\ell$-th coordinate of $v_j^{(i)}$ if $\ell<i$, is $0$ if $\ell=i$, and is the $(\ell-1)$-th coordinate of $v_j^{(i)}$ if $\ell> i$. By construction, the set $\{w_j^{(i)}\}_{j=1}^{e+1}$ is orthogonal to every row of $M(D)$, except possibly row $i$.  Let $r$ denote row $i$ of $M(D)$ and define the following map:
\begin{eqnarray*}
\mathrm{Span}\{w_1^{(i)}, \ldots, w_{e+1}^{(i)}\} & \longrightarrow & \Q  \\
w & \longmapsto & r\cdot w
\end{eqnarray*}
This map must be surjective, since if not then it is the zero map, in which case the linearly independent vectors $\{w_j^{(i)}\}_{i=1}^{e+1}$ are contained within the kernel of $M(D)$. However, per the conditions of the lemma, the kernel of $M(D)$ has dimension $e$ so this is not possible. The kernel of this map is therefore an $e$-dimensional subspace of the span which lies in the kernel of $M(D)$, itself having dimension $e$. From this we see that every vector in $\ker(M(D))$ has $i$-th coordinate equal to zero. Furthermore, this is true for all $i \in \llbracket 1, N \rrbracket$, thus $\ker(M(D)) = \{\mathbf{0}\}$. This contradicts the dimension condition again, hence there must exist at least one $i \in \llbracket 1, N \rrbracket$ with $\dim(\ker(M(D_i))) \leq e-1$. 

To show that $\dim(\ker(M(D_i)))=e-1$ we simply observe that it is no loss of generality to assume that in the basis $\{v_1, \ldots, v_e\} \subseteq \ker(M(D))$, the vectors $v_1, \ldots, v_{e-1}$ have $i$-th coordinate equal to $0$. Therefore, deleting the $i$-th coordinate from each of these vectors to obtain $v'_1, \ldots, v'_{e-1}$ gives us a linearly independent set of vectors of $\ker(M(D_i))$. The result follows.
\end{proof}

We are now ready to state our main result regarding the invariant factors of a matrix $M(D)$. 

\begin{theorem}\label{leminvariant}
Let $D$ be an $m\times n$ diagram and $M(D)$ be its associated matrix. Then the invariant factors of $M(D)$ are all powers of $2$.
\end{theorem}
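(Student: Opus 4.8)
The plan is to recast the statement in terms of reductions modulo odd primes and then run an induction driven by Lemma~\ref{lemSubDiag}. Let $h_1\mid\cdots\mid h_s$ be the invariant factors of $M(D)$. These are all powers of $2$ precisely when, for every odd prime $p$, the torsion of $\operatorname{coker}M(D)=\Z^N/M(D)\Z^N$ has trivial $p$-part, and this in turn holds precisely when $\rank_{\mathbb{F}_p}M(D)=\rank_\Q M(D)$: indeed, the $p$-rank of $\operatorname{coker}M(D)$ equals $\dim_\Q\ker M(D)$ plus twice the number of $h_i$ divisible by $p$, as one sees by computing $\dim_{\mathbb{F}_p}\bigl(\mathbb{F}_p^N/M(D)\mathbb{F}_p^N\bigr)$ both directly and from the Smith normal form. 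Since $\rank_{\mathbb{F}_p}M(D)\le\rank_\Q M(D)$ for every $p$, the theorem is equivalent to the assertion that $\dim_{\mathbb{F}_p}\ker M(D)=\dim_\Q\ker M(D)$ for every diagram $D$ and every odd prime $p$. Fix such a $p$.

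Induct on the number $N$ of white boxes of $D$; the case $N=0$ is vacuous. Suppose $\dim_\Q\ker M(D)\ge 1$. Then Lemma~\ref{lemSubDiag} produces a diagram $D'$, obtained from $D$ by recolouring a single white box black, with $N-1$ white boxes and $\dim_\Q\ker M(D')=\dim_\Q\ker M(D)-1$; here $M(D')$ is the principal submatrix of $M(D)$ obtained by deleting the row and column indexed by that box. The inductive hypothesis applied to $D'$ gives $\dim_{\mathbb{F}_p}\ker M(D')=\dim_\Q\ker M(D')=\dim_\Q\ker M(D)-1$. Deleting a row and the corresponding column of a matrix cannot raise its rank, so $\dim_{\mathbb{F}_p}\ker M(D)\le\dim_{\mathbb{F}_p}\ker M(D')+1=\dim_\Q\ker M(D)$; combined with the automatic reverse inequality, this is an equality, and $D$ is settled.

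There remains the case $\dim_\Q\ker M(D)=0$, where Lemma~\ref{lemSubDiag} offers nothing. Now $M(D)$ is a nonsingular alternating matrix (alternating because it has zero diagonal), so $N$ is even and $\det M(D)=\mathrm{Pf}(M(D))^2$, and one must show that $|\mathrm{Pf}(M(D))|$ is a power of $2$. I would obtain this from the sharper statement that $\mathrm{Pf}(M(D))\in\{0\}\cup\{\pm2^{k}:k\ge0\}$ for \emph{every} diagram $D$ with an even number of white boxes, proved by a separate induction on $N$: expanding the Pfaffian along the row indexed by the first white box $b$ gives $\mathrm{Pf}(M(D))=\sum_c\pm\,M(D)[b,c]\,\mathrm{Pf}(M_{b,c})$, the sum running over white boxes $c$ that share a row or column with $b$, where $M_{b,c}$ is the diagram matrix of the sub-diagram obtained by recolouring $b$ and $c$ black; by induction each summand is $0$ or $\pm$ a power of $2$.

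The main obstacle is this last step: showing that the signs and $2$-adic valuations of the terms of the Pfaffian expansion combine so as to produce $0$ or a single power of $2$. The natural approach is to organise the perfect matchings of the ``shares a row or column'' graph on the white boxes of $D$ --- which is the line graph of the bipartite row/column incidence graph of $D$ --- into pairs with cancelling contributions, up to a controlled remainder, exploiting that the row-neighbours of $b$ form an initial segment of the reading order while its column-neighbours all lie strictly below $b$. Pinning down this cancellation pattern is the combinatorial heart of the theorem; the reductions above then deliver the general statement.
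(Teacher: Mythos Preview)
Your inductive reduction for the case $\dim_\Q\ker M(D)\ge 1$ is correct and is essentially the same strategy as the paper's, just phrased in terms of $\mathbb{F}_p$-ranks rather than determinantal divisors: the paper also invokes Lemma~\ref{lemSubDiag} to pass to a diagram $D'$ with kernel dimension one less, and then uses that every minor of $M(D')$ is a minor of $M(D)$ to transfer the ``power of $2$'' conclusion from the $D_j(M(D'))$ to the $D_j(M(D))$. Your rank inequality $\rank_{\mathbb{F}_p}M(D')\le\rank_{\mathbb{F}_p}M(D)$ encodes exactly the same information.

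The genuine gap is the base case $\dim_\Q\ker M(D)=0$. You correctly reduce it to showing $\mathrm{Pf}(M(D))$ is $\pm$ a power of $2$, and you propose to prove this by a Pfaffian row-expansion and an induction on $N$ --- but, as you yourself flag, the argument stops at the point where one must show that a sum of terms, each $0$ or $\pm 2^k$, is again of that form. This is not automatic (e.g.\ $2+2-1=3$), and the sketch of a sign-cancelling involution on perfect matchings is not carried out. The paper does \emph{not} attempt this combinatorics: it simply invokes \cite[Theorem~2.2]{BellLaunoisLutley}, which already establishes that $\det M(D)$ is a power of $4$ whenever $M(D)$ is invertible. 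With that input your proof would be complete; without it, the base case remains open. If you want a self-contained argument, you would need to actually construct the cancelling involution you allude to --- and that is a substantial result in its own right, essentially the content of the cited theorem.
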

\begin{proof}
We prove that all invariant factors of $M(D)$ are powers of $2$ using increasing induction on the dimension, $d$, of the kernel of $M(D)$.

If $d=0$ then the matrix $M(D)$ is invertible and it follows, from \cite[Theorem 2.2]{BellLaunoisLutley}, that the determinant of $M(D)$ is a power of $4$. Let $\det(M(D))=4^a$ for some $a\in \N$. The matrix $M(D)$ is congruent to the block diagonal matrix
\begin{equation*}
\mathrm{Sk}(D):=\left(\begin{smallmatrix}
        0 & h_1 & & & & & \\
        -h_1 & 0 & & & & & \\
         & & 0 & h_2 & & & \\
         & & -h_2 & 0 & & & \\
         & & & & \ddots & & \\
         & & & & & 0 & h_s \\
         & & & & & -h_s & 0 \\
        \end{smallmatrix} \right),
\end{equation*}
where $\mathrm{rank}(M(D))=2s$ and $h_i \mid h_{i+1} \in \Z\backslash \{0\}$, for $i \in \llbracket 1, s-1 \rrbracket$. The matrix $\mathrm{Sk}(D)$ is clearly equivalent (in the sense of \cite[Chapter II]{Newman}) to the Smith normal form of $M(D)$, which we denote by
\[\mathrm{Sm}(D)= \mathrm{diag}(h_1,h_1,h_2,h_2,\ldots,h_s,h_s).\]
We see that $\det(\mathrm{Sk}(D))=\det(\mathrm{Sm}(D))$ since $\mathrm{Sm}(D)$ is obtained from $\mathrm{Sk}(D)$ by performing $s$ distinct row swaps and multiplying $s$ rows by $-1$, thus changing the determinant of $\mathrm{Sk}(D)$ by a factor of $(-1)^{2s}=1$. The congruence relation then implies that
\[\det(M(D))=\det(\mathrm{Sk}(D))=\det(\mathrm{Sm}(D)).\]
Since the nonzero entries $h_i$ in $\mathrm{Sm}(D)$ are the invariant factors of the matrix $M(D)$, we may use the observations above to deduce that
\begin{equation*}
4^a = \det(M(D))= \det(\mathrm{Sm}(D)) = \prod_{i=1}^s h_i^2 \quad \Longrightarrow \quad  2^a = \prod_{i=1}^s h_i.
\end{equation*}
Therefore, each $h_i$ is a power of $2$.

Now let $d>0$ and assume the statement of the theorem holds for all diagrams whose associated matrix has kernel of dimension less than $d$. That is, for all diagrams $D'$, with $\dim(\ker(M(D')))<d$, the invariant factors of $M(D')$ are powers of $2$.  Consider a diagram $D$ whose associated matrix $M(D)$ has kernel of dimension $d$, and let $N$ be the number of white squares in $D$ so that $M(D)$ is an $N\times N$ matrix.  Then, by Lemma \ref{lemSubDiag}, there exists a diagram $D'$ obtained by adding exactly one black square to $D$, with the property
\[\dim(\ker(M(D')))=d'=d-1.\]
Label the white squares of $D$ as $l_1 < \ldots < l_N$ and suppose we obtain $D'$ by colouring the white square labelled $l_{i_0}$ black, for some $i_0 \in \llbracket 1, N \rrbracket$.  Then $M(D')$ is the $(N-1)\times (N-1)$ submatrix of $M(D)$ obtained by removing the row and column indexed by $i_0$. Noting that $d'=d-1<d$, we apply the induction hypothesis to $M(D')$ to see that all its invariant factors, denoted by $h'_1,\ldots, h'_{s'}$ for some $s'\in \N$, are powers of $2$. Let $h_1,\ldots, h_s$ be the invariant factors of $M(D)$ and note that
\[s'=N-1-(d-1)=N-d=s.\]
We may therefore write the Smith normal forms of $M(D)$ and $M(D')$, respectively, as
\begin{align}
\mathrm{Sm}(D)&{}= \mathrm{diag}(h_1, h_1,\ldots, h_{s}, h_{s},0,\ldots, 0),\\
\mathrm{Sm}(D')&{}= \mathrm{diag}(h'_1, h'_1,\ldots, h'_{s}, h'_{s},0,\ldots, 0).
\end{align}

A well-known result for the Smith normal form (see \cite[Chapter II subsection 16]{Newman}) applied to $M(D')$ allows us to write
\begin{align}
D_{2i-1}(M(D'))&{}=h'_i \cdot D_{2i-2}(M(D')), \label{gcd1} \\
D_{2i}(M(D'))&{}=h'_i \cdot D_{2i-1}(M(D')), \label{gcd2}
\end{align}
where $D_j(M(D'))$ is the greatest common divisor of all $j\times j$ minors of the matrix $M(D')$, for all $j \in \llbracket 1, N-1 \rrbracket$, and $D_0(M(D')):=1$. Clearly any minor of $M(D')$ of size $j$ is also a minor of $M(D)$ of size $j$, so $D_j(M(D))$ divides $D_j(M(D'))$. Therefore, if $D_j(M(D'))$ is a power of 2, for all $j \in \llbracket 1, 2s \rrbracket$, then $D_j(M(D))$ is also a power of 2, for all $j \in \llbracket 1, 2s \rrbracket$. We now apply an induction argument on $i \in \llbracket 1, s \rrbracket$ to show that $D_{2i-1}(M(D'))$ and $D_{2i}(M(D'))$ are powers of 2 for all $i$. 

The base case, when $i=1$, is easily seen to hold since $h'_1$ is a power of 2, by the inductive hypothesis on $d$, and (\ref{gcd1}) and (\ref{gcd2}) give
\begin{align*}
D_1(M(D'))&{}=h'_1 \cdot D_0(M(D'))=h'_1\cdot 1, \\
D_2(M(D'))&{}=h'_1 \cdot D_1(M(D'))=h'_1 \cdot h'_1.
\end{align*}
Assume now that $D_{2i-1}(M(D'))$ and $D_{2i}(M(D'))$ are powers of 2, for some $i\in \llbracket 1, s-1 \rrbracket$, and consider $D_{2i+1}(M(D'))$ and $D_{2i+2}(M(D'))$. Using (\ref{gcd1}) and (\ref{gcd2}), we write these as
\begin{align*}
D_{2i+1}(M(D'))&{}=h'_{i+1} \cdot D_{2i}(M(D')), \\
D_{2i+2}(M(D'))&{}=h'_{i+1} \cdot D_{2i+1}(M(D')),
\end{align*}
where $h'_{i+1}$ is a power of $2$, by the induction hypothesis on $d$, and $D_{2i}(M(D'))$ is a power of $2$, by the induction hypothesis on $i \in \llbracket 1, s \rrbracket$. Hence, using the equations above, we may write
\begin{align*}
D_{2i+1}(M(D'))&{}=2^a \cdot 2^b=2^{a+b}, \\
D_{2i+2}(M(D'))&{}= 2^a \cdot 2^{a+b} = 2^{2a+b},
\end{align*}
for some $a,b \in \N$. This proves the inductive step for the induction on $i$ and, hence, we conclude that $D_j(M(D))$ is a power of $2$, for all $j \in \llbracket 1, 2s \rrbracket$.

Finally, to conclude the main induction on $d$ we apply the identity (\ref{gcd2}) to $M(D)$, and use the result of the previous induction argument, to show that, for any $i \in \llbracket 1, s \rrbracket$,
\[h_i=\frac{D_{2i}(M(D))}{D_{2i-1}(M(D))}=\frac{2^e}{2^f}=2^{e-f} \in \Z, \]
for some $e, f \in \N$ such that  $e\geq f$. This proves that all the invariant factors, $h_i$, of $M(D)$ are powers of 2, thus completing the main proof by induction.
\end{proof}

Before we turn our attention back to partition subalgebras, we observe that combining  \cite[Theorem 4.6 and Lemma 4.3]{BellCasteelsLaunois} gives the following result:
\begin{proposition}[\cite{BellCasteelsLaunois}]\label{propkerneldimension}
Let $D$ be an $m\times n$ diagram and $M(D)$ be its associated matrix. Then the dimension of the kernel of $M(D)$ is the number of odd cycles in the disjoint cycle decomposition of its toric permutation $\tau$.
\end{proposition}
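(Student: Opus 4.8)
The plan is to deduce this statement by combining two results already available in \cite{BellCasteelsLaunois}, namely \cite[Theorem 4.6]{BellCasteelsLaunois} and \cite[Lemma 4.3]{BellCasteelsLaunois}, rather than to re-derive it from scratch. Together these two results express $\dim\ker M(D)$ in terms of a permutation canonically attached to the diagram $D$ and then identify that permutation with the toric permutation $\tau$ read off the pipe dream of $D$ exactly as in Section \ref{sectionCauchonDiag}; chaining them therefore gives that $\dim\ker M(D)$ equals the number of odd cycles in the disjoint cycle decomposition of $\tau$, which is the assertion.

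Before invoking \cite{BellCasteelsLaunois} I would record one trivial but useful observation: since $M(D)$ is a skew-symmetric integer matrix, its rank over $\Q$ is even, so $\dim\ker M(D)\equiv N\pmod{2}$, where $N$ is the number of white squares of $D$. This is implied by the claimed identity and is a convenient consistency check when aligning conventions. The genuine mathematical content lies in \cite[Theorem 4.6]{BellCasteelsLaunois}: one tracks how the rank of $M(D)$ changes along a recursive process on $D$ --- an analysis in the same spirit as, but finer than, Lemma \ref{lemSubDiag} --- and shows that the only net contribution to the corank comes from the odd-length cycles, each such cycle accounting for exactly one dimension of $\ker M(D)$.

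The only thing that remains to be checked on our side is that the conventions match. The labelling of the white squares used to build $M(D)$ in Section \ref{sectionCauchonDiag} (reading order: along the rows from left to right, then down the columns) need only agree with the one used in \cite{BellCasteelsLaunois} up to a relabelling of $\llbracket 1,N\rrbracket$; such a relabelling is a simultaneous permutation of the rows and columns of $M(D)$ and hence changes neither its rank nor its kernel, so it is harmless. Similarly one checks that the conventions defining $\tau$ (which sides carry which labels, in which order, and the direction in which the pipes are traced) coincide with those of \cite[Section 4.1]{BellCasteelsLaunois}, so that ``number of odd cycles of $\tau$'' denotes the same integer in both sources. Neither check is hard; the real difficulty, were one to want a self-contained proof, is exactly the parity argument of \cite[Theorem 4.6]{BellCasteelsLaunois} linking cycle lengths to the corank of $M(D)$, and since that argument is already carried out there, deferring to it is the efficient and natural option.
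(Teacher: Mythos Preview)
Your proposal is correct and matches the paper's own treatment: the paper does not give an independent proof but simply records that the result follows by combining \cite[Theorem 4.6 and Lemma 4.3]{BellCasteelsLaunois}, exactly as you do. Your additional remarks on parity and on matching conventions are sound and harmless embellishments of the same citation-based argument.
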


From the two results above we see that if a matrix $M=M(D)$ is associated to an $m\times n$ diagram with $N$ white boxes, then we have methods to obtain the properties required to calculate the PI degree of $\oh_{q^M}(\K^N)$, when $q$ is a root of unity. More precisely, if $q$ is a primitive $\ell^{\mathrm{th}}$ root of unity, then 
\[\PI(\oh_{q^{M(D)}}(\K^{N}))= \begin{cases} \ell^{\frac{N-r}{2}} & \text{if $\ell$ is odd};  \\
\prod_{i=1}^{\frac{N-r}{2}} \frac{\ell}{\gcd(h_i, \ell)} & \text{if $\ell$ is even}, 
\end{cases}\]
where $h_1, \ldots, h_{(N-r)/2} \in \Z$ are the invariant factors of the matrix $M(D)$, $N$ is the number of white squares in $D$, $r$ is the number of odd cycles in the disjoint cycle decomposition of $\tau$.\\


In fact, our results work more generally as explained in the following remark. 

\begin{remark}
The results in this section dealing with diagrams can easily be stated for Young diagrams upon noting that every diagram on a Young tableau can be considered a diagram (in a rectangle) by embedding it in a suitably sized rectangle and colouring all squares outside the Young tableau black. More formally: Let $\lambda=(\lambda_1, \ldots, \lambda_m)$ be a partition with $n \geq \lambda_1 \geq \lambda_2, \cdots \geq \lambda_m \geq 0$ and let $Y_{\lambda}$ be the corresponding Young tableau. We can put this inside an $m \times n$ diagram, colouring all boxes outside of $Y_{\lambda}$ black (see Figure \ref{FigYoung}).  Furthermore, any Young diagram on $Y_{\lambda}$ obtained by colouring a set of white boxes black will, in turn, become another $m\times n$ diagram, $D'$, by colouring the corresponding white boxes in $D$ black. Combinatorially, there is no difference between the Young diagram and the corresponding diagram $D'$; their associated matrices will have the same invariant factors and dimension of kernel. 

Therefore, we can apply the results above to any Young diagram and see that all invariant factors of its associated matrix are powers of 2 and that the kernel dimension can be calculated by counting the odd cycles in its toric permutation. We note that (Cauchon-Le) diagrams on Young tableaux naturally arise in the study of the prime and primitive spectra of the quantum Grassmannian \cite{LLR}. \end{remark}

\definecolor{light-gray}{gray}{0.6}
\begin{figure}[h]
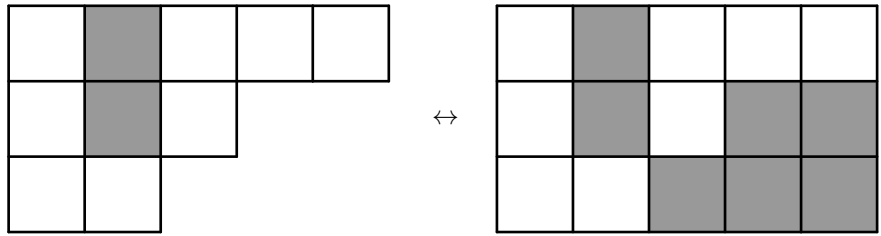

\begin{center}
\begin{tabular}{cc}
\begin{pgfpicture}{0cm}{0cm}{6cm}{4cm}%
\pgfputat{\pgfxy(0,2)}{\pgfnode{rectangle}{center}{\color{black} }{}{\pgfusepath{}}}
\pgfsetroundjoin \pgfsetroundcap%
\pgfsetfillcolor{light-gray}
\pgfmoveto{\pgfxy(1.5,1.5)}\pgflineto{\pgfxy(2.5,1.5)}\pgflineto{\pgfxy(2.5,3.5)}\pgflineto{\pgfxy(1.5,3.5)}\pgffill
\pgfsetlinewidth{1pt}
\pgfxyline(0.5,0.5)(2.5,0.5)
\pgfxyline(0.5,1.5)(3.5,1.5)
\pgfxyline(0.5,2.5)(4.5,2.5)
\pgfxyline(0.5,3.5)(4.5,3.5)
\pgfxyline(0.5,0.5)(0.5,3.5)
\pgfxyline(1.5,0.5)(1.5,3.5)
\pgfxyline(2.5,0.5)(2.5,3.5)
\pgfxyline(3.5,1.5)(3.5,3.5)
\pgfxyline(4.5,2.5)(4.5,3.5)
\pgfxyline(5.5,2.5)(5.5,3.5)
\pgfxyline(4.5,2.5)(5.5,2.5)
\pgfxyline(4.5,3.5)(5.5,3.5)

\end{pgfpicture}

 &

\begin{pgfpicture}{0cm}{0cm}{6cm}{4cm}%
\pgfputat{\pgfxy(0,2)}{\pgfnode{rectangle}{center}{\color{black} $ \leftrightarrow \quad $}{}{\pgfusepath{}}}
\pgfsetroundjoin \pgfsetroundcap%
\pgfsetroundjoin \pgfsetroundcap%
\pgfsetfillcolor{light-gray}
\pgfmoveto{\pgfxy(2.5,0.5)}\pgflineto{\pgfxy(2.5,1.5)}\pgflineto{\pgfxy(5.5,1.5)}\pgflineto{\pgfxy(5.5,0.5)}\pgflineto{\pgfxy(2.5,0.5)}\pgffill
\pgfmoveto{\pgfxy(3.5,1.5)}\pgflineto{\pgfxy(3.5,2.5)}\pgflineto{\pgfxy(5.5,2.5)}\pgflineto{\pgfxy(5.5,1.5)}\pgflineto{\pgfxy(3.5,1.5)}\pgffill
\pgfmoveto{\pgfxy(1.5,1.5)}\pgflineto{\pgfxy(2.5,1.5)}\pgflineto{\pgfxy(2.5,3.5)}\pgflineto{\pgfxy(1.5,3.5)}\pgffill
\pgfsetlinewidth{1pt}
\pgfxyline(0.5,0.5)(4.5,0.5)
\pgfxyline(0.5,1.5)(4.5,1.5)
\pgfxyline(0.5,2.5)(4.5,2.5)
\pgfxyline(0.5,3.5)(4.5,3.5)
\pgfxyline(0.5,0.5)(0.5,3.5)
\pgfxyline(1.5,0.5)(1.5,3.5)
\pgfxyline(2.5,0.5)(2.5,3.5)
\pgfxyline(3.5,0.5)(3.5,3.5)
\pgfxyline(4.5,0.5)(4.5,3.5)
\pgfxyline(5.5,0.5)(5.5,3.5)
\pgfxyline(4.5,0.5)(5.5,0.5)
\pgfxyline(4.5,1.5)(5.5,1.5)
\pgfxyline(4.5,2.5)(5.5,2.5)
\pgfxyline(4.5,3.5)(5.5,3.5)

\end{pgfpicture}
\end{tabular}
\caption{Diagram on a Young diagram (left) embedded into an $3\times 5$ diagram (right).}\label{FigYoung}
\end{center}
\end{figure}

We are now ready to state our main result for this section. 
\begin{theorem}
\label{thm-PIdegree-diag}
Let $Y_{\lambda}$ be a Young tableau, $D$ be a diagram on $Y_{\lambda}$ with $N$ white boxes, and $\tau$ be the toric permutation on $Y_{\lambda}$. If $q$ is a primitive $\ell^{\mathrm{th}}$ root of unity, then 
\[\PI(\oh_{q^{M(D)}}(\K^{N}))= \begin{cases} \ell^{\frac{N-r}{2}} & \text{if $\ell$ is odd};  \\
\prod_{i=1}^{\frac{N-r}{2}} \frac{\ell}{\gcd(h_i, \ell)} & \text{if $\ell$ is even}, 
\end{cases}\]
where $h_1, \ldots, h_{(N-r)/2} \in \Z$ are the invariant factors of the matrix $M(D)$ and $r$ is the number of odd cycles in the disjoint cycle decomposition of $\tau$.
\end{theorem}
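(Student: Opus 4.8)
The plan is to obtain the statement as a direct assembly of the three preceding results: Lemma~\ref{lemPIdeg}, Theorem~\ref{leminvariant}, and Proposition~\ref{propkerneldimension}. First I would invoke the Remark above to reduce to the case of a genuine $m\times n$ diagram: a diagram on the Young tableau $Y_{\lambda}$ becomes an $m\times n$ diagram $D$ by colouring every box lying outside $Y_{\lambda}$ black, and this operation changes neither the white-square count $N$, nor the associated skew-symmetric matrix $M(D)$ (the black boxes outside $Y_{\lambda}$ contribute no row or column), nor the toric permutation $\tau$. Hence it suffices to prove the displayed formula for $M(D)$ with $D$ an $m\times n$ diagram.

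Next I would apply Lemma~\ref{lemPIdeg} to $M=M(D)\in M_N(\Z)$, which gives
$$\PI(\oh_{q^{M(D)}}(\K^{N})) = \prod_{i=1}^{\frac{N-\dim(\ker M(D))}{2}} \frac{\ell}{\gcd(h_i,\ell)},$$
where $h_1,\ldots$ are the invariant factors of $M(D)$. By Proposition~\ref{propkerneldimension}, $\dim(\ker M(D))=r$, the number of odd cycles in the disjoint cycle decomposition of $\tau$, so the product runs over $i=1,\ldots,(N-r)/2$. This already establishes the formula when $\ell$ is even.

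Finally, for $\ell$ odd I would invoke Theorem~\ref{leminvariant}: each invariant factor satisfies $h_i=2^{a_i}$ for some $a_i\in\N$. Since $\ell$ is odd, $\gcd(2^{a_i},\ell)=1$, so each factor $\ell/\gcd(h_i,\ell)$ equals $\ell$, and the product collapses to $\ell^{(N-r)/2}$, as claimed.

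The theorem is thus essentially a corollary, and there is no serious obstacle: the two genuinely non-trivial inputs---that the invariant factors of $M(D)$ are powers of $2$ (Theorem~\ref{leminvariant}) and that $\dim\ker M(D)$ counts the odd cycles of $\tau$ (Proposition~\ref{propkerneldimension})---have already been established. The only point requiring a moment's care is the reduction via the Remark, i.e.\ verifying that embedding a Young-tableau diagram into a rectangular diagram leaves $M(D)$, its kernel dimension, and $\tau$ unchanged; this is immediate once one observes that the extra black boxes are deleted when forming $M(D)$ and are traversed trivially by the pipe dream defining $\tau$.
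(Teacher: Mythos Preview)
Your proposal is correct and follows exactly the approach taken in the paper: the theorem is assembled from Lemma~\ref{lemPIdeg}, Proposition~\ref{propkerneldimension}, and Theorem~\ref{leminvariant}, with the Remark supplying the reduction from Young-tableau diagrams to rectangular diagrams. The paper presents this reasoning in the paragraph immediately preceding the theorem statement rather than as a formal proof, but the content is identical to what you have written.
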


%

\subsection{PI degree of partition subalgebras}\label{section-toric-permutation}

In view of Theorem \ref{thm-PIdegree-diag} and Proposition \ref{prop-PIdegree-yl}, we know that the PI degree of $\yl$ depends only on the toric permutation associated to $Y_{\lambda}$. More precisely, let $\lambda=\{ n=\lambda_1 \geq \dots \geq \lambda_m\}$ be a partition with associated Young diagram $Y_{\lambda}$. We can associate to $\lambda$ an $m \times n$ diagram $D_{\lambda}$ by colouring black every box $(i,j)$ with $j> \lambda_i$, see Figure \ref{FigYoung}. Then, we deduce from Proposition \ref{prop-PIdegree-yl} that
$$\PI (\yl )= \PI(\oh_{q^{M(D_{\lambda})}}(\K^{N})).$$
Moreover, Theorem \ref{thm-PIdegree-diag} shows that $\PI(\oh_{q^{M(D_{\lambda})}}(\K^{N}))$ essentially depends on the toric permutation associated to $D_{\lambda}$. \\

Our next aim is to identify this toric permutation $\tau_{\lambda}$. To achieve this, we need another permutation attached to a diagram, which we call the {\em $w$-permutation}. In the construction of toric permutations through pipe dreams, we made a choice for the labelling of the sides of a diagram. By making a different choice, we obtain a different permutation. More precisely,  the permutation $w$ attached to an $m\times n$ diagram $D$ is obtained by laying pipes over the squares such that we place a ``cross" on each black square and a ``hyperbola" on each white square. The difference with the toric permutation associated to $D$ is in the way we label the sides of $D$: to compute $w$, we label the sides of the diagram with the numbers $1,\ldots, m+n$ starting from the North East box. The permutation, $w$, may then be read off this diagram by defining $w(i)$ to be the label (on the left or top side of $D$) reached by following the pipe starting at label $i$ (on the right or bottom side of $D$). See Figure \ref{FigRestrictedPerm} for an example of a diagram with $w=(23)(4587)$.

\definecolor{light-gray}{gray}{0.6}
\begin{figure}[h]
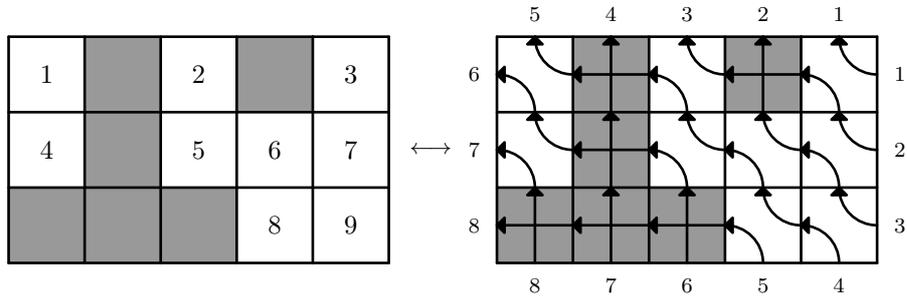

\begin{center}
\begin{tabular}{cc}
\begin{pgfpicture}{0cm}{0cm}{6cm}{4cm}%
\pgfsetroundjoin \pgfsetroundcap%
\pgfsetfillcolor{light-gray}
\pgfmoveto{\pgfxy(0.5,0.5)}\pgflineto{\pgfxy(0.5,1.5)}\pgflineto{\pgfxy(3.5,1.5)}\pgflineto{\pgfxy(3.5,0.5)}\pgflineto{\pgfxy(0.5,0.5)}\pgffill
\pgfmoveto{\pgfxy(1.5,1.5)}\pgflineto{\pgfxy(1.5,3.5)}\pgflineto{\pgfxy(2.5,3.5)}\pgflineto{\pgfxy(2.5,1.5)}\pgflineto{\pgfxy(1.5,1.5)}\pgffill
\pgfmoveto{\pgfxy(3.5,2.5)}\pgflineto{\pgfxy(3.5,3.5)}\pgflineto{\pgfxy(4.5,3.5)}\pgflineto{\pgfxy(4.5,2.5)}\pgflineto{\pgfxy(3.5,2.5)}\pgffill
\pgfsetlinewidth{1pt}
\pgfxyline(0.5,0.5)(4.5,0.5)
\pgfxyline(0.5,1.5)(4.5,1.5)
\pgfxyline(0.5,2.5)(4.5,2.5)
\pgfxyline(0.5,3.5)(4.5,3.5)
\pgfxyline(0.5,0.5)(0.5,3.5)
\pgfxyline(1.5,0.5)(1.5,3.5)
\pgfxyline(2.5,0.5)(2.5,3.5)
\pgfxyline(3.5,0.5)(3.5,3.5)
\pgfxyline(4.5,0.5)(4.5,3.5)
\pgfxyline(5.5,0.5)(5.5,3.5)
\pgfxyline(4.5,0.5)(5.5,0.5)
\pgfxyline(4.5,1.5)(5.5,1.5)
\pgfxyline(4.5,2.5)(5.5,2.5)
\pgfxyline(4.5,3.5)(5.5,3.5)

\pgfputat{\pgfxy(1,3)}{\pgfnode{rectangle}{center}{\color{black} $1$}{}{\pgfusepath{}}}
\pgfputat{\pgfxy(3,3)}{\pgfnode{rectangle}{center}{\color{black} $2$}{}{\pgfusepath{}}}
\pgfputat{\pgfxy(5,3)}{\pgfnode{rectangle}{center}{\color{black} $3$}{}{\pgfusepath{}}}
\pgfputat{\pgfxy(1,2)}{\pgfnode{rectangle}{center}{\color{black} $4$}{}{\pgfusepath{}}}
\pgfputat{\pgfxy(3,2)}{\pgfnode{rectangle}{center}{\color{black} $5$}{}{\pgfusepath{}}}
\pgfputat{\pgfxy(4,2)}{\pgfnode{rectangle}{center}{\color{black} $6$}{}{\pgfusepath{}}}
\pgfputat{\pgfxy(5,2)}{\pgfnode{rectangle}{center}{\color{black} $7$}{}{\pgfusepath{}}}
\pgfputat{\pgfxy(4,1)}{\pgfnode{rectangle}{center}{\color{black} $8$}{}{\pgfusepath{}}}
\pgfputat{\pgfxy(5,1)}{\pgfnode{rectangle}{center}{\color{black} $9$}{}{\pgfusepath{}}}

\pgfputat{\pgfxy(6,2)}{\pgfnode{rectangle}{center}{\color{black}\footnotesize $  ~~~~\longleftrightarrow~~~$}{}{\pgfusepath{}}}
\end{pgfpicture}

 &

\begin{pgfpicture}{0cm}{0cm}{6cm}{4cm}%
\pgfsetroundjoin \pgfsetroundcap%
\pgfsetfillcolor{light-gray}
\pgfmoveto{\pgfxy(0.5,0.5)}\pgflineto{\pgfxy(0.5,1.5)}\pgflineto{\pgfxy(3.5,1.5)}\pgflineto{\pgfxy(3.5,0.5)}\pgflineto{\pgfxy(0.5,0.5)}\pgffill
\pgfmoveto{\pgfxy(1.5,1.5)}\pgflineto{\pgfxy(1.5,3.5)}\pgflineto{\pgfxy(2.5,3.5)}\pgflineto{\pgfxy(2.5,1.5)}\pgflineto{\pgfxy(1.5,1.5)}\pgffill
\pgfmoveto{\pgfxy(3.5,2.5)}\pgflineto{\pgfxy(3.5,3.5)}\pgflineto{\pgfxy(4.5,3.5)}\pgflineto{\pgfxy(4.5,2.5)}\pgflineto{\pgfxy(3.5,2.5)}\pgffill
\pgfsetlinewidth{1pt}
\pgfxyline(0.5,0.5)(4.5,0.5)
\pgfxyline(0.5,1.5)(4.5,1.5)
\pgfxyline(0.5,2.5)(4.5,2.5)
\pgfxyline(0.5,3.5)(4.5,3.5)
\pgfxyline(0.5,0.5)(0.5,3.5)
\pgfxyline(1.5,0.5)(1.5,3.5)
\pgfxyline(2.5,0.5)(2.5,3.5)
\pgfxyline(3.5,0.5)(3.5,3.5)
\pgfxyline(4.5,0.5)(4.5,3.5)
\pgfxyline(5.5,0.5)(5.5,3.5)
\pgfxyline(4.5,0.5)(5.5,0.5)
\pgfxyline(4.5,1.5)(5.5,1.5)
\pgfxyline(4.5,2.5)(5.5,2.5)
\pgfxyline(4.5,3.5)(5.5,3.5)

\pgfsetlinewidth{1pt}
\color{black}

\pgfmoveto{\pgfxy(0.5,3)}\pgfpatharc{90}{0}{0.5cm}
\pgfstroke
\pgfmoveto{\pgfxy(0.5,3)}\pgflineto{\pgfxy(0.6,3.1)}\pgflineto{\pgfxy(0.6,2.9)}\pgflineto{\pgfxy(0.5,3)}\pgfclosepath\pgffillstroke

\pgfmoveto{\pgfxy(1,3.5)}\pgfpatharc{180}{270}{0.5cm}
\pgfstroke
\pgfmoveto{\pgfxy(1,3.5)}\pgflineto{\pgfxy(0.9,3.4)}\pgflineto{\pgfxy(1.1,3.4)}\pgflineto{\pgfxy(1,3.5)}\pgfclosepath\pgffillstroke

\color{black}
\pgfxyline(2,2.5)(2,3.5)
\pgfmoveto{\pgfxy(1.9,3.4)}\pgflineto{\pgfxy(2,3.5)}\pgflineto{\pgfxy(2.1,3.4)}\pgflineto{\pgfxy(1.9,3.4)}\pgfclosepath\pgffillstroke

\pgfxyline(1.5,3)(2.5,3)
\pgfmoveto{\pgfxy(1.5,3)}\pgflineto{\pgfxy(1.6,3.1)}\pgflineto{\pgfxy(1.6,2.9)}\pgflineto{\pgfxy(1.5,3)}\pgfclosepath\pgffillstroke

\pgfmoveto{\pgfxy(2.5,3)}\pgfpatharc{90}{0}{0.5cm}
\pgfstroke
\pgfmoveto{\pgfxy(2.5,3)}\pgflineto{\pgfxy(2.6,3.1)}\pgflineto{\pgfxy(2.6,2.9)}\pgflineto{\pgfxy(2.5,3)}\pgfclosepath\pgffillstroke

\pgfmoveto{\pgfxy(3,3.5)}\pgfpatharc{180}{270}{0.5cm}
\pgfstroke
\pgfmoveto{\pgfxy(3,3.5)}\pgflineto{\pgfxy(2.9,3.4)}\pgflineto{\pgfxy(3.1,3.4)}\pgflineto{\pgfxy(3,3.5)}\pgfclosepath\pgffillstroke


\pgfxyline(4,2.5)(4,3.5)
\pgfmoveto{\pgfxy(3.9,3.4)}\pgflineto{\pgfxy(4,3.5)}\pgflineto{\pgfxy(4.1,3.4)}\pgflineto{\pgfxy(3.9,3.4)}\pgfclosepath\pgffillstroke

\pgfxyline(3.5,3)(4.5,3)
\pgfmoveto{\pgfxy(3.5,3)}\pgflineto{\pgfxy(3.6,3.1)}\pgflineto{\pgfxy(3.6,2.9)}\pgflineto{\pgfxy(3.5,3)}\pgfclosepath\pgffillstroke


\pgfmoveto{\pgfxy(0.5,2)}\pgfpatharc{90}{0}{0.5cm}
\pgfstroke
\pgfmoveto{\pgfxy(0.5,2)}\pgflineto{\pgfxy(0.6,2.1)}\pgflineto{\pgfxy(0.6,1.9)}\pgflineto{\pgfxy(0.5,2)}\pgfclosepath\pgffillstroke

\pgfmoveto{\pgfxy(1,2.5)}\pgfpatharc{180}{270}{0.5cm}
\pgfstroke
\pgfmoveto{\pgfxy(1,2.5)}\pgflineto{\pgfxy(0.9,2.4)}\pgflineto{\pgfxy(1.1,2.4)}\pgflineto{\pgfxy(1,2.5)}\pgfclosepath\pgffillstroke


\pgfxyline(2,1.5)(2,2.5)
\pgfmoveto{\pgfxy(1.9,2.4)}\pgflineto{\pgfxy(2,2.5)}\pgflineto{\pgfxy(2.1,2.4)}\pgflineto{\pgfxy(1.9,2.4)}\pgfclosepath\pgffillstroke

\pgfxyline(1.5,2)(2.5,2)
\pgfmoveto{\pgfxy(1.5,2)}\pgflineto{\pgfxy(1.6,2.1)}\pgflineto{\pgfxy(1.6,1.9)}\pgflineto{\pgfxy(1.5,2)}\pgfclosepath\pgffillstroke

\pgfmoveto{\pgfxy(2.5,2)}\pgfpatharc{90}{0}{0.5cm}
\pgfstroke
\pgfmoveto{\pgfxy(2.5,2)}\pgflineto{\pgfxy(2.6,2.1)}\pgflineto{\pgfxy(2.6,1.9)}\pgflineto{\pgfxy(2.5,2)}\pgfclosepath\pgffillstroke

\pgfmoveto{\pgfxy(3,2.5)}\pgfpatharc{180}{270}{0.5cm}
\pgfstroke
\pgfmoveto{\pgfxy(3,2.5)}\pgflineto{\pgfxy(2.9,2.4)}\pgflineto{\pgfxy(3.1,2.4)}\pgflineto{\pgfxy(3,2.5)}\pgfclosepath\pgffillstroke


\pgfmoveto{\pgfxy(3.5,2)}\pgfpatharc{90}{0}{0.5cm}
\pgfstroke
\pgfmoveto{\pgfxy(3.5,2)}\pgflineto{\pgfxy(3.6,2.1)}\pgflineto{\pgfxy(3.6,1.9)}\pgflineto{\pgfxy(3.5,2)}\pgfclosepath\pgffillstroke

\pgfmoveto{\pgfxy(4,2.5)}\pgfpatharc{180}{270}{0.5cm}
\pgfstroke
\pgfmoveto{\pgfxy(4,2.5)}\pgflineto{\pgfxy(3.9,2.4)}\pgflineto{\pgfxy(4.1,2.4)}\pgflineto{\pgfxy(4,2.5)}\pgfclosepath\pgffillstroke


\pgfxyline(1,0.5)(1,1.5)
\pgfmoveto{\pgfxy(0.9,1.4)}\pgflineto{\pgfxy(1,1.5)}\pgflineto{\pgfxy(1.1,1.4)}\pgflineto{\pgfxy(0.9,1.4)}\pgfclosepath\pgffillstroke

\pgfxyline(0.5,1)(1.5,1)
\pgfmoveto{\pgfxy(0.5,1)}\pgflineto{\pgfxy(0.6,1.1)}\pgflineto{\pgfxy(0.6,0.9)}\pgflineto{\pgfxy(0.5,1)}\pgfclosepath\pgffillstroke


\pgfxyline(2,0.5)(2,1.5)
\pgfmoveto{\pgfxy(1.9,1.4)}\pgflineto{\pgfxy(2,1.5)}\pgflineto{\pgfxy(2.1,1.4)}\pgflineto{\pgfxy(1.9,1.4)}\pgfclosepath\pgffillstroke

\pgfxyline(1.5,1)(2.5,1)
\pgfmoveto{\pgfxy(1.5,1)}\pgflineto{\pgfxy(1.6,1.1)}\pgflineto{\pgfxy(1.6,0.9)}\pgflineto{\pgfxy(1.5,1)}\pgfclosepath\pgffillstroke


\pgfxyline(3,0.5)(3,1.5)
\pgfmoveto{\pgfxy(2.9,1.4)}\pgflineto{\pgfxy(3,1.5)}\pgflineto{\pgfxy(3.1,1.4)}\pgflineto{\pgfxy(2.9,1.4)}\pgfclosepath\pgffillstroke

\pgfxyline(2.5,1)(3.5,1)
\pgfmoveto{\pgfxy(2.5,1)}\pgflineto{\pgfxy(2.6,1.1)}\pgflineto{\pgfxy(2.6,0.9)}\pgflineto{\pgfxy(2.5,1)}\pgfclosepath\pgffillstroke

\pgfmoveto{\pgfxy(3.5,1)}\pgfpatharc{90}{0}{0.5cm}
\pgfstroke
\pgfmoveto{\pgfxy(3.5,1)}\pgflineto{\pgfxy(3.6,1.1)}\pgflineto{\pgfxy(3.6,0.9)}\pgflineto{\pgfxy(3.5,1)}\pgfclosepath\pgffillstroke

\pgfmoveto{\pgfxy(4,1.5)}\pgfpatharc{180}{270}{0.5cm}
\pgfstroke
\pgfmoveto{\pgfxy(4,1.5)}\pgflineto{\pgfxy(3.9,1.4)}\pgflineto{\pgfxy(4.1,1.4)}\pgflineto{\pgfxy(4,1.5)}\pgfclosepath\pgffillstroke

\pgfmoveto{\pgfxy(4.5,3)}\pgfpatharc{90}{0}{0.5cm}
\pgfstroke
\pgfmoveto{\pgfxy(4.5,3)}\pgflineto{\pgfxy(4.6,3.1)}\pgflineto{\pgfxy(4.6,2.9)}\pgflineto{\pgfxy(4.5,3)}\pgfclosepath\pgffillstroke

\pgfmoveto{\pgfxy(5,3.5)}\pgfpatharc{180}{270}{0.5cm}
\pgfstroke
\pgfmoveto{\pgfxy(5,3.5)}\pgflineto{\pgfxy(4.9,3.4)}\pgflineto{\pgfxy(5.1,3.4)}\pgflineto{\pgfxy(5,3.5)}\pgfclosepath\pgffillstroke


\pgfmoveto{\pgfxy(4.5,2)}\pgfpatharc{90}{0}{0.5cm}
\pgfstroke
\pgfmoveto{\pgfxy(4.5,2)}\pgflineto{\pgfxy(4.6,2.1)}\pgflineto{\pgfxy(4.6,1.9)}\pgflineto{\pgfxy(4.5,2)}\pgfclosepath\pgffillstroke

\pgfmoveto{\pgfxy(5,2.5)}\pgfpatharc{180}{270}{0.5cm}
\pgfstroke
\pgfmoveto{\pgfxy(5,2.5)}\pgflineto{\pgfxy(4.9,2.4)}\pgflineto{\pgfxy(5.1,2.4)}\pgflineto{\pgfxy(5,2.5)}\pgfclosepath\pgffillstroke

\pgfmoveto{\pgfxy(4.5,1)}\pgfpatharc{90}{0}{0.5cm}
\pgfstroke
\pgfmoveto{\pgfxy(4.5,1)}\pgflineto{\pgfxy(4.6,1.1)}\pgflineto{\pgfxy(4.6,0.9)}\pgflineto{\pgfxy(4.5,1)}\pgfclosepath\pgffillstroke

\pgfmoveto{\pgfxy(5,1.5)}\pgfpatharc{180}{270}{0.5cm}
\pgfstroke
\pgfmoveto{\pgfxy(5,1.5)}\pgflineto{\pgfxy(4.9,1.4)}\pgflineto{\pgfxy(5.1,1.4)}\pgflineto{\pgfxy(5,1.5)}\pgfclosepath\pgffillstroke

\pgfputat{\pgfxy(0.2,1)}{\pgfnode{rectangle}{center}{\color{black}\footnotesize $8$}{}{\pgfusepath{}}}
\pgfputat{\pgfxy(0.2,2)}{\pgfnode{rectangle}{center}{\color{black}\footnotesize $7$}{}{\pgfusepath{}}}
\pgfputat{\pgfxy(0.2,3)}{\pgfnode{rectangle}{center}{\color{black}\footnotesize $6$}{}{\pgfusepath{}}}

\pgfputat{\pgfxy(5.8,1)}{\pgfnode{rectangle}{center}{\color{black}\footnotesize $3$}{}{\pgfusepath{}}}
\pgfputat{\pgfxy(5.8,2)}{\pgfnode{rectangle}{center}{\color{black}\footnotesize $2$}{}{\pgfusepath{}}}
\pgfputat{\pgfxy(5.8,3)}{\pgfnode{rectangle}{center}{\color{black}\footnotesize $1$}{}{\pgfusepath{}}}

\pgfputat{\pgfxy(1,0.2)}{\pgfnode{rectangle}{center}{\color{black}\footnotesize $8$}{}{\pgfusepath{}}}
\pgfputat{\pgfxy(2,0.2)}{\pgfnode{rectangle}{center}{\color{black}\footnotesize $7$}{}{\pgfusepath{}}}
\pgfputat{\pgfxy(3,0.2)}{\pgfnode{rectangle}{center}{\color{black}\footnotesize $6$}{}{\pgfusepath{}}}
\pgfputat{\pgfxy(4,0.2)}{\pgfnode{rectangle}{center}{\color{black}\footnotesize $5$}{}{\pgfusepath{}}}
\pgfputat{\pgfxy(5,0.2)}{\pgfnode{rectangle}{center}{\color{black}\footnotesize $4$}{}{\pgfusepath{}}}

\pgfputat{\pgfxy(1,3.8)}{\pgfnode{rectangle}{center}{\color{black}\footnotesize $5$}{}{\pgfusepath{}}}
\pgfputat{\pgfxy(2,3.8)}{\pgfnode{rectangle}{center}{\color{black}\footnotesize $4$}{}{\pgfusepath{}}}
\pgfputat{\pgfxy(3,3.8)}{\pgfnode{rectangle}{center}{\color{black}\footnotesize $3$}{}{\pgfusepath{}}}
\pgfputat{\pgfxy(4,3.8)}{\pgfnode{rectangle}{center}{\color{black}\footnotesize $2$}{}{\pgfusepath{}}}
\pgfputat{\pgfxy(5,3.8)}{\pgfnode{rectangle}{center}{\color{black}\footnotesize $1$}{}{\pgfusepath{}}}
\end{pgfpicture}
\end{tabular}
\caption{A labelled $3\times 5$ (Cauchon-Le) diagram (left) with pipe dream construction (right).\label{FigRestrictedPerm}}
\end{center}
\end{figure}

 \begin{proposition}
Let $\lambda=\{ n=\lambda_1 \geq \dots \geq \lambda_m\}$ be a permutation. Let $\tau_{\lambda}$ its associated toric permutation, $D_{\lambda}$ its associated diagram and $w_\lambda$ its associated $w$-permutation.  Similarly to Section \ref{subsec-lambda-gamma}, we define $\gamma=\{\gamma_1 < \dots < \gamma_m\}$ by $\lambda_i+\gamma_i= n+i$ for all $1\leq i \leq m$.\begin{enumerate} 
\item $\tau_{\lambda}= w_0 w_{\lambda} w_0'$, where $w_0$ is the longest element of $S_{m+n}$ and $w_0'$ is the longest element of $S_m \times S_n$, that is, in 1-line notation, $w_0=[m+n ~m+n-1 ~\dots 2~1]$ and  $w_0'=[m ~m-1 ~\dots 2~1 ~m+n~m+n-1~ \dots m+2~m+1]$.
\item Let $\gamma'=\{\gamma'_1 < \dots < \gamma'_n\}$ be the complement of $\gamma$ in $\{1, \dots , m+n\}$. Then, in 1-line notation, we have:
$$w_{\lambda} = [\gamma_1 ~ \gamma_2 ~\dots \gamma_m ~ \gamma'_1 ~ \gamma'_2 ~ \dots ~ \gamma'_n].$$
\end{enumerate}
\end{proposition}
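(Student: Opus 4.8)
The proposition relates three permutations attached to a partition $\lambda$ (equivalently, to a Young diagram sitting inside an $m \times n$ rectangle): the toric permutation $\tau_\lambda$, the $w$-permutation $w_\lambda$, and a description of $w_\lambda$ in one-line notation via $\gamma$ and its complement $\gamma'$. Both parts are essentially bookkeeping about pipe dreams, so the strategy is to unwind the two labelling conventions carefully and then track a single pipe through the diagram.

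For part (1), I would argue that the difference between the toric permutation and the $w$-permutation is purely a relabelling of the boundary edges of the same pipe dream, hence a conjugation/composition by the permutations that implement the change of labels. Recall the two conventions: for $\tau_\lambda$ we label all $m+n$ sides so that opposite sides carry the same label in the same order; for $w_\lambda$ we start labelling from the North-East corner. Concretely, on the \emph{output} side (left $+$ top) the toric labels run $1,\dots,m$ down the left edge and $m+1,\dots,m+n$ along the top (left to right), whereas the $w$-labels run $1,\dots,m+n$ along top then left, i.e. $m+n,\dots,m+1$ down the left edge (top to bottom) and then... — in any case the relation between the two boundary labellings on each side is given by the longest element $w_0$ of $S_{m+n}$ (total reversal) composed appropriately with $w_0'$, the longest element of $S_m \times S_n$ (which reverses the first $m$ and last $n$ blocks separately), because reversing "opposite-side same order" versus "start from NE corner" reverses the order within each of the two edges that make up the input side, and likewise for the output side. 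Since following a pipe is unchanged, reading $\tau_\lambda$ in the new labels amounts to $w_0 w_\lambda w_0'$; I would verify the exact placement of $w_0$ versus $w_0'$ on the two sides against the $3\times 5$ example in Figures~\ref{FigToricPerm} and \ref{FigRestrictedPerm} (where $\tau = (17)(26384)$ and $w = (23)(4587)$) to fix conventions, and check that $w_0 w_\lambda w_0'$ indeed yields $\tau_\lambda$ there.

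For part (2), I would give a direct combinatorial description of $w_\lambda$ by tracing pipes in the diagram $D_\lambda$ with the NE-corner labelling. The key geometric fact, already recalled in the excerpt (see the discussion around \eqref{1347}), is that the south-east border of $Y_\lambda$ is a lattice path of length $n$ from the NE corner to the SW corner of the $m\times(n-m)$ rectangle... but here the ambient rectangle is $m \times n$, and the relationship $\lambda_i + \gamma_i = n+i$ says $\gamma_i = n + i - \lambda_i$, so $\gamma$ records where the vertical steps of the border path of $D_\lambda$ occur when edges are numbered $1,\dots,m+n$ from the NE corner. In the pipe dream for $D_\lambda$, white boxes carry hyperbolas and black boxes carry crosses; since $D_\lambda$ is the Cauchon–Le diagram whose black region is exactly $\{(i,j): j > \lambda_i\}$ (the complement of $Y_\lambda$), a pipe entering from the bottom travels straight up through the black region (crosses act as "go straight") and then, upon reaching the staircase boundary, gets deflected by the hyperbolas in the white region. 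Following this through, the pipe starting at input edge $k$ exits at output edge $\gamma_k$ for $k \le m$ (these are the labels picked out by the vertical boundary steps) and at $\gamma'_{k-m}$ for $k > m$ (the horizontal steps, i.e. the complement), which is precisely the claimed one-line notation $w_\lambda = [\gamma_1\ \cdots\ \gamma_m\ \gamma'_1\ \cdots\ \gamma'_n]$. I would make this rigorous by an induction on the number of boxes of $\lambda$, or on the columns of $D_\lambda$ processed left to right, using the local pipe-moves; the all-black columns on the right contribute the identity on the corresponding strands, reducing to the staircase case.

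The main obstacle I anticipate is \textbf{nailing down the conventions} — which edge gets which label under each scheme, and consequently the exact order of $w_0$ and $w_0'$ and which of the three sides each acts on. The combinatorial content of both parts is genuinely elementary once the pipe-following picture is set up, but sign/order errors in the boundary labellings are easy to make; the safest route is to pin everything to the worked $3\times 5$ example in Figures~\ref{FigToricPerm}–\ref{FigRestrictedPerm} and to the $\gamma = [1347]$ picture \eqref{1347}, then state the general claim and verify it is consistent. A secondary technical point is justifying that a pipe never re-enters the black region after leaving it (so that the "straight up in black, deflect in white" description is complete); this follows from the Cauchon–Le staircase shape of $D_\lambda$, since the white region is a Young-diagram-shaped "staircase" and hyperbolas move a pipe monotonically towards the output boundary.
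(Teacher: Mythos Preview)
Your approach to part (1) is exactly the paper's: the difference between $\tau_\lambda$ and $w_\lambda$ is purely a boundary relabelling, and the paper dispatches it in one sentence just as you propose (``the permutations $w_0$ and $w_0'$ are just fixing the different ways of labelling the sides'').

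For part (2), your plan is correct and would lead to a valid proof, but the paper takes a shorter and cleaner route that is worth knowing. Rather than tracing pipes from the bottom and setting up an induction on columns or boxes, the paper focuses first on the row labels $i \le m$: the pipe entering from the \emph{right} at row $i$ crosses exactly the $n-\lambda_i$ black boxes in that row and then enters the all-white Young region, from which one reads off directly that $w_\lambda(i) = i + (n-\lambda_i) = \gamma_i$. (Your description ``a pipe entering from the bottom travels straight up through the black region'' is the complementary picture for $i>m$; the $i\le m$ case, with the pipe entering from the right, is actually the one the paper computes.) Having determined $w_\lambda$ on $\{1,\dots,m\}$, the paper then avoids tracing the remaining pipes entirely: it simply observes that $w_\lambda$ is order-preserving on $\{m+1,\dots,m+n\}$, so since $w_\lambda$ is a bijection the values there must be the increasing enumeration $\gamma'_1 < \cdots < \gamma'_n$ of the complement. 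This monotonicity trick replaces your proposed induction and is the main economy in the paper's argument. Your plan via induction on boxes or columns would certainly work, and your observation that pipes never re-enter the black region (by the Cauchon--Le shape) is the right ingredient, but it is more labour than needed here.
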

\begin{proof}
The first part is trivial as the permutations $w_0$ and $w_0'$ are just `fixing' the different ways of labelling the sides of a diagram in the constructions of toric and $w$ permutations.  

For the second part, we first consider the case where $i \leq m$. In this case, the pipe starting at $i$ will first go across $n-\lambda_i$ black boxes (starting from the right) on row $i$. After this, it will only encounter white boxes. As a consequence, we see that:
$$w_{\lambda}(i) = i +(n-\lambda_i) =\gamma_i.$$

To conclude we just observe that if $j>i>m$, then $w_{\lambda}(j)>w_{\lambda}(i)$.
\end{proof}

Putting together the previous proposition and Theorem \ref{thm-PIdegree-diag}, we are now ready to state our main result concerning partition subalgebras. 

\begin{theorem}
\label{thm-PIdegree-partition subalgebras}
Let $Y_{\lambda}$ be a Young tableau and $\tau$ be the toric permutation on $Y_{\lambda}$. 
Assume that $q$ is a primitive $\ell^{\mathrm{th}}$ root of unity with $\ell$ odd.\\
Then 
$$\PI(\yl)= \ell^{\frac{N-r}{2}},$$
where $N=\sum_{i=1}^m \lambda_i$  and $r$ is the number of odd cycles in the disjoint cycle decomposition of $\tau_{\lambda}= w_0 w_{\lambda} w_0'$.
\end{theorem}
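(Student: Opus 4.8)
The plan is to assemble the theorem directly from the three ingredients that have just been established, so the ``proof'' is essentially a bookkeeping argument chaining together prior results. First I would invoke Proposition \ref{prop-PIdegree-yl}(1) to replace $\PI(\yl)$ by $\PI(\oh_{q^{M(D_\lambda)}}(\K^N))$, where $D_\lambda$ is the $m\times n$ diagram obtained from $Y_\lambda$ by colouring black every box $(i,j)$ with $j>\lambda_i$, and $N=\sum_{i=1}^m\lambda_i$ is the number of white boxes. This reduces the problem to a statement about a quantum affine space attached to a diagram.

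Next I would apply Theorem \ref{thm-PIdegree-diag} with the standing hypothesis that $q$ is a primitive $\ell^{\mathrm{th}}$ root of unity with $\ell$ \emph{odd}. In the odd case that theorem gives outright
$$\PI(\oh_{q^{M(D_\lambda)}}(\K^N))=\ell^{\frac{N-r}{2}},$$
where $r$ is the number of odd cycles in the disjoint cycle decomposition of the toric permutation $\tau$ on $Y_\lambda$; note that the even-$\ell$ branch involving the invariant factors $h_i$ does not intervene here, which is precisely why Theorem \ref{leminvariant} (invariant factors are powers of $2$) combined with oddness of $\ell$ makes each factor $\ell/\gcd(h_i,\ell)$ equal to $\ell$. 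The only remaining point is to identify the number $\frac{N-r}{2}$ correctly: this is legitimate because $N-r\equiv 0\pmod 2$ follows from $N-r=N-\dim\ker(M(D_\lambda))=\rank(M(D_\lambda))$ being even for a skew-symmetric integral matrix, using Proposition \ref{propkerneldimension} to equate $r$ with $\dim\ker(M(D_\lambda))$.

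Finally I would match $\tau$ with the explicit description $\tau_\lambda=w_0 w_\lambda w_0'$ supplied by the preceding Proposition, where $w_0$ is the longest element of $S_{m+n}$ and $w_0'$ the longest element of $S_m\times S_n$, and $w_\lambda$ is the $w$-permutation written in one-line notation as $[\gamma_1\,\gamma_2\,\cdots\,\gamma_m\,\gamma'_1\,\cdots\,\gamma'_n]$ via $\lambda_i+\gamma_i=n+i$. Since the number of odd cycles is a conjugation-type invariant only in the specific sense made precise there, I would simply cite that the toric permutation appearing in Theorem \ref{thm-PIdegree-diag} is this $\tau_\lambda$, so that $r$ may be read off from $w_0 w_\lambda w_0'$. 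Stringing these three citations together yields the claimed formula.

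There is essentially no genuine obstacle remaining at this stage: all the hard work (Theorem \ref{leminvariant} on powers of $2$, Proposition \ref{propkerneldimension} on kernel dimension via odd cycles, Proposition \ref{prop-PIdegree-yl} reducing to a quantum affine space, and the combinatorial identification of $\tau_\lambda$) has already been carried out. The only point requiring a line of care is confirming that the odd-$\ell$ hypothesis collapses the general formula of Lemma \ref{lemPIdeg} to a clean power of $\ell$ --- i.e. that $\gcd(h_i,\ell)=1$ for every invariant factor $h_i$, which is immediate from Theorem \ref{leminvariant} together with $\ell$ being odd. So the expected ``main obstacle'' is really just presenting the reduction cleanly and making sure the parity bookkeeping ($N-r$ even, $2s=N-\dim\ker$) is stated explicitly.
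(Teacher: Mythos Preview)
Your proposal is correct and matches the paper's approach exactly: the paper gives no separate proof of this theorem, instead prefacing it with ``Putting together the previous proposition and Theorem \ref{thm-PIdegree-diag}, we are now ready to state our main result concerning partition subalgebras,'' which is precisely the chain of citations you have spelled out. Your additional remarks on why $\gcd(h_i,\ell)=1$ and why $N-r$ is even are accurate and simply make explicit what the paper leaves implicit.
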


For instance, for the partition $\lambda=(5,3,2)$, and $\ell=5$, we obtain $\tau_{\lambda}=(138764)(25)$ and so $\PI(\yl)= 5^{\frac{10-2}{2}}=5^4$.

\subsection{PI degree of quantum determinantal rings}

For $t\in \llbracket 1, n-1 \rrbracket$, let $\I_t$ be the \textit{quantum determinantal ideal} generated by all $(t+1)\times (t+1)$ quantum minors of $\oh_q(M_n(\K))$ and denote the \textit{quantum determinantal ring} as $R_t(M_n) := \oh_q(M_n(\K))/\I_t$. This is a noetherian domain, see for instance \cite{GoodearlLenagan}.

We begin by stating the following lemma, in which the first isomorphism is exactly  \cite[Lemma 4.4]{LenaganRigal-SchubertVar} and the second isomorphism follows from the map defined in   \cite[Proposition 3.7.1(3)]{ParshallWang} which preserves the normality of $\delta=[1,\ldots, t \mid 1,\ldots, t]$ when mapping it to $\delta_t=[n+t-1,\ldots, n\mid n+t-1,\ldots, n]$:
\begin{lemma}\label{lemequiv}
Let $\delta:=[1,\ldots, t \mid 1,\ldots, t]$ be a $t\times t$ quantum minor in $R=\oh_q(M_n(\K))$, for some $q\in \K^*$. Let $\bar{\delta}\in R_t(M_n)$ be its canonical image and $\delta_t:=[n+t-1,\ldots, n,\mid n+t-1,\ldots, n]$ be a $t\times t$ quantum minor in $R^{\mathrm{op}}$. Then
\[ R_t(M_n)[\bar{\delta}^{-1}]\cong A_t[\delta^{-1}] \cong B_t[\delta_t^{-1}],\]
where $A_t$ and $B_t$ are the following subalgebras:
\begin{align*}
A_t&{}:=\langle X_{i,j}\in R \mid i\leq t \text{ or } j\leq t \rangle \subseteq R,\\
B_t&{}:=\langle X_{i,j}\in R^{\mathrm{op}} \mid i\geq n-t+1 \text{ or } j\geq n-t+1 \rangle \subseteq R^{\mathrm{op}}.
\end{align*}
\end{lemma}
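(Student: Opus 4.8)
The first isomorphism requires no work on our part: it is exactly \cite[Lemma 4.4]{LenaganRigal-SchubertVar}, so the plan is simply to quote it (its content being that $\bar\delta$ is a normal element of $R_t(M_n)$, that $\{\delta^{k}\}$ is an Ore set in $A_t$, and that inverting $\bar\delta$ in $R_t(M_n)$ produces precisely $A_t[\delta^{-1}]$, because every $X_{i,j}$ with $i,j>t$ becomes, in the localisation, a product of generators of $A_t$ and $\bar\delta^{-1}$). The substance of the lemma for us is therefore the second isomorphism $A_t[\delta^{-1}]\cong B_t[\delta_t^{-1}]$, and I would obtain it by restricting a global symmetry of $\oqmmn$ to the two ``L-shaped'' subalgebras.

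\textbf{Step 1 (the symmetry).} I would recall from \cite[Proposition 3.7.1(3)]{ParshallWang} the $\K$-algebra isomorphism $\Theta\colon R\isomarrow R^{\mathrm{op}}$ given on generators by $X_{i,j}\mapsto X_{n+1-i,\,n+1-j}$, the $180^{\circ}$ rotation of the index grid (recall that $R^{\mathrm{op}}$ and $\oh_{q^{-1}}(M_n(\K))$ are the same algebra on these generators). If one wants to be self-contained, checking that $\Theta$ respects the defining relations of $\oqmmn$ once one passes to $R^{\mathrm{op}}$ is a short verification on the three families of relations, using that the rotation reverses the lexicographic order on index pairs and interchanges the two ``$q$-skew'' cases.

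\textbf{Step 2 (tracking the data).} Here I would make two observations. First, $X_{i,j}$ satisfies ``$i\le t$ or $j\le t$'' if and only if $X_{n+1-i,n+1-j}$ satisfies ``$n+1-i\ge n-t+1$ or $n+1-j\ge n-t+1$''; hence $\Theta$ carries the generating set of $A_t$ bijectively onto that of $B_t$ and restricts to an isomorphism $A_t\isomarrow B_t$. Second, $\Theta$ sends a quantum minor $[I\mid J]$ to a nonzero scalar multiple of the quantum minor $[\,\{n+1-i:i\in I\}\mid\{n+1-j:j\in J\}\,]$; applying this to $\delta=[1,\ldots,t\mid 1,\ldots,t]$ gives $\Theta(\delta)=c\,\delta_t$ for some $c\in\K^{*}$, where $\delta_t$ is the lower-right $t\times t$ quantum minor of the statement. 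This last point is the computation of the image of a quantum minor under the rotation and is implicit in \cite[\S3.7]{ParshallWang}.

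\textbf{Step 3 (localise and conclude).} Since $\delta$ is normal in $A_t$ with $\{\delta^{k}\}$ an Ore set (already built into the first isomorphism), its image under $\Theta|_{A_t}$ --- namely $c\,\delta_t$, hence $\delta_t$ itself, scalars being irrelevant for localisation --- is normal in $B_t$ with $\{\delta_t^{k}\}$ an Ore set there. An algebra isomorphism matching two Ore sets up to units extends uniquely to an isomorphism of the corresponding Ore localisations, so $\Theta$ induces $A_t[\delta^{-1}]\isomarrow B_t[\delta_t^{-1}]$; chaining this with \cite[Lemma 4.4]{LenaganRigal-SchubertVar} yields the displayed chain. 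I expect Step 2 to be the only place where anything is genuinely computed, and hence the main obstacle: one has to be careful about conventions (that the target is $\oh_{q^{-1}}(M_n(\K))=R^{\mathrm{op}}$, which copy of the algebra $B_t$ lives in, and which scalar the quantum minor picks up under the rotation). All of this is recorded in \cite[\S3.7]{ParshallWang}, which is why the lemma merely cites it; Steps 1 and 3 are formal.
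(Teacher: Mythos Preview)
Your proposal is correct and follows exactly the route the paper indicates: the first isomorphism is quoted from \cite[Lemma 4.4]{LenaganRigal-SchubertVar}, and the second is obtained by transporting $A_t$, $\delta$ and their normality across the Parshall--Wang rotation $\Theta$ of \cite[Proposition 3.7.1(3)]{ParshallWang}. The paper does not give a separate proof of this lemma at all---its justification is the sentence immediately preceding the statement, which names precisely these two references---so your Steps 1--3 are simply an unpacking of that sentence.
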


\begin{remark}\label{remFracR_tB_t}
Lemma \ref{lemequiv} implies that $\mathrm{Frac}(R_t(M_n)) \cong \mathrm{Frac}(B_t)$.
\end{remark}

We point out that $B_t$ may be expressed as an iterated Ore extension over $\K$ by taking the nonzero indeterminates $X_{i,j}\in B_t$, for $(i,j)\in \{(1, n-t+1), (1,n-t+2), \ldots, (n,n)\}$, and adjoining them in lexicographic order to the base field, $\K$. Let $\alpha_k$ be the $\K$-automorphism, and $\beta_k$ be the $\alpha_k$-derivation, appearing in the $k^{\text{th}}$ Ore extension of $B_t$ (these are easily computed using the commutation rules of $R$), and let $Y_{k,t}$ and $Y_{l,t}$ be the indeterminates in the $k^{\text{th}}$ and $l^{\text{th}}$ extensions of $B_t$ respectively. That is, 
\[Y_{1,t}=X_{1,n-t+1}, ~~ \ldots,  ~~ Y_{t,t}=X_{1,n}, ~~ Y_{t+1, t}= X_{2,n-t+1}, ~~ \ldots , ~~ Y_{2nt-t^2,t}=X_{n,n}. \]
Then,
\begin{align*}
 B_t &{}= \K[X_{1,n-t+1}][X_{1,n-t+2}; \alpha_2, \beta_2] \cdots [X_{n,n};\alpha_{2nt-t^2}, \beta_{2nt-t^2}]  \\
 &{} = \K[Y_{1,t}][Y_{2,t}; \alpha_2, \beta_2] \cdots [Y_{2nt-t^2,t};\alpha_{2nt-t^2}, \beta_{2nt-t^2}] 
\end{align*}
and, for all $1\leq l < k \leq 2nt-t^2$, we have
\begin{align*}
Y_{k,t} \ast Y_{l,t} &{}= \alpha_k(Y_{l,t}) \ast Y_{k,t} + \beta_k(Y_{l,t})\\
&{}= q^{m_{k,l}}Y_{l,t} \ast Y_{k,t} + \beta_k(Y_{l,t}),
\end{align*}
for some $m_{k,l}\in \Z$. These $m_{k,l} \in \Z$ form a skew-symmetric matrix $M_{n,t}=(m_{k,l})\in M_{2nt-t^2}(\Z)$. Note that we use $\ast$ above to denote the multiplication in the opposite algebra.

It was shown in \cite[Example 5.4]{Haynal} that $R$ satisfies the conditions of \cite[Theorem 4.6]{Haynal}. As $B_t$ is isomorphic to a subalgebra of $R$, it too must satisfy these conditions because the relevant properties of the iterated Ore extension of $R$ are preserved in $B_t$. We may therefore apply \cite[Corollary 4.7]{Haynal} to $B_t$ and observe the following results:
\begin{lemma}\label{lemHaynal} \
\begin{enumerate}
\item $\Frac(R_t(M_n)) \cong \mathrm{Frac}(B_t) \cong \mathrm{Frac}(\oh_{q^{M_{n,t}}}(\K^{2nt-t^2}))$.
\item If $q$ is a root of unity, then $B_t$ is a PI algebra and $$\PI(R_t(M_n))=\PI(B_t)=\PI(\oh_{q^{M_{n,t}}}(\K^{2nt-t^2})).$$
\end{enumerate}
\end{lemma}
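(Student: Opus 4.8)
The plan is to reduce everything to Haynal's deleting-derivations results applied to the iterated Ore extension presentation of $B_t$, and then to transport the conclusions back to $R_t(M_n)$ via the localisation isomorphism of Lemma~\ref{lemequiv}.

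For part~(1): since $R_t(M_n)$ and $B_t$ are Noetherian domains, the Ore localisations at $\bar\delta$ and at $\delta_t$ do not change the division ring of fractions, so Lemma~\ref{lemequiv} gives $\Frac(R_t(M_n))\cong\Frac\big(R_t(M_n)[\bar\delta^{-1}]\big)\cong\Frac\big(B_t[\delta_t^{-1}]\big)\cong\Frac(B_t)$, which is Remark~\ref{remFracR_tB_t}. For the remaining isomorphism I would apply \cite[Corollary 4.7]{Haynal} to the iterated Ore extension $B_t=\K[Y_{1,t}][Y_{2,t};\alpha_2,\beta_2]\cdots[Y_{2nt-t^2,t};\alpha_{2nt-t^2},\beta_{2nt-t^2}]$: the deleting-derivations machinery identifies $\Frac(B_t)$ with the fraction field of the uniparameter quantum affine space obtained by discarding the skew-derivations $\beta_k$, and the commutation matrix of this quantum affine space is by construction exactly $M_{n,t}=(m_{k,l})\in M_{2nt-t^2}(\Z)$. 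The only point needing verification is that $B_t$ satisfies the hypotheses of \cite[Theorem 4.6]{Haynal}; this is the crux of the argument and is discussed below.

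For part~(2), suppose $q$ is a primitive $\ell^{\text{th}}$ root of unity. Then $\oh_{q^{M_{n,t}}}(\K^{2nt-t^2})$ is a PI algebra, so by the fraction field identification in part~(1) so is $B_t$, and \cite[Corollary 4.7]{Haynal} moreover yields $\PI(B_t)=\PI\big(\oh_{q^{M_{n,t}}}(\K^{2nt-t^2})\big)$. To transfer the equality of PI degrees to $R_t(M_n)$, note that $R_t(M_n)$ is a prime Noetherian PI algebra: it is a domain and a homomorphic image of the PI algebra $\oh_q(M_n(\K))$. Since the PI degree of a prime Noetherian PI algebra is invariant under Ore localisation at sets of regular elements (\cite[Corollary I.13.3]{BrownGoodearl}), it equals the PI degree of the associated simple artinian ring of fractions; applying this to both $R_t(M_n)$ and $B_t$ and using part~(1) gives $\PI(R_t(M_n))=\PI(\Frac(R_t(M_n)))=\PI(\Frac(B_t))=\PI(B_t)$. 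Combining the two chains gives $\PI(R_t(M_n))=\PI(B_t)=\PI\big(\oh_{q^{M_{n,t}}}(\K^{2nt-t^2})\big)$, as claimed.

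The main obstacle is checking that $B_t$ meets the hypotheses of Haynal's theorems. These hypotheses constrain the Ore data: each automorphism $\alpha_k$ must act diagonally on the previously adjoined generators, multiplying them by powers of $q$, and each skew-derivation $\beta_k$ must satisfy a suitable eigenvalue (``$q$-skew'') condition relative to $\alpha_k$. One knows from \cite[Example 5.4]{Haynal} that $R=\oh_q(M_n(\K))$, and hence $R^{\mathrm{op}}$, satisfies these conditions. The remaining step is to observe that passing to the subalgebra generated by the subset $\{X_{i,j}\in R^{\mathrm{op}}\mid i\geq n-t+1\text{ or }j\geq n-t+1\}$, with the generators adjoined in the same lexicographic order, preserves all of the relevant structure: the restrictions of the $\alpha_k$ still act diagonally with the same powers of $q$ on those generators lying in $B_t$, and the restrictions of the $\beta_k$ still satisfy the eigenvalue condition since they are the ambient skew-derivations precomposed with the inclusion. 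Making this ``initial-segment-closed subalgebra'' stability precise — essentially the observation already flagged in the paragraph preceding the statement — is the only nontrivial ingredient; the rest is bookkeeping.
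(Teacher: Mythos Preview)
Your proposal is correct and follows essentially the same approach as the paper: the paper does not give a separate proof of this lemma but relies on the paragraph immediately preceding it, which argues exactly as you do that $B_t$ inherits the hypotheses of \cite[Theorem~4.6]{Haynal} from $R$ (via \cite[Example~5.4]{Haynal}), so that \cite[Corollary~4.7]{Haynal} applies, and then uses Remark~\ref{remFracR_tB_t} for the link to $R_t(M_n)$. Your treatment is in fact slightly more careful than the paper's, in that you make explicit (via \cite[Corollary~I.13.3]{BrownGoodearl}) why the PI degree passes from $B_t$ to $R_t(M_n)$ through their common fraction field.
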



It can easily be verified that $-M_{n,t}=M(C)$, where $C$ is the $n\times n$ Cauchon-Le diagram whose last $t$ rows and $t$ columns are white (and all other boxes are black). It follows from  \cite[Corollary]{PanovEnglish} that the PI degree of $R_t(M_n)$ depends only on the properties of this matrix $M(C)$.  Since these specific diagrams and matrices are important for the results that follow, we define notation for them in the following corollary to Lemmas \ref{lemHaynal} and \ref{lemPIdeg}:
\begin{corollary}\label{corNotation}
Let $C_{n,t}$ be the $n\times n$ Cauchon-Le diagram whose last $t$ rows and $t$ columns are white (and all other boxes are black) and let $\mathcal{M}_{n,t}:=M(C_{n,t})$ be its associated matrix with invariant factors $\{h_i\}_i$. Then:
\begin{enumerate}
\item $M_{n,t}\sim_C \M_{n,t}$ and thus $\Frac(R_t(M_n)) \cong \mathrm{Frac}(\oh_{q^{M_{n,t}}}(\K^{2nt-t^2})) \cong \mathrm{Frac}(\oh_{q^{\M_{n,t}}}(\K^{2nt-t^2}))$.

\item If $q$ is a primitive $\ell^{\mathrm{th}}$ root of unity, then 
\[\PI(R_t(M_n))=\PI(\oh_{q^{\M_{n,t}}}(\K^{2nt-t^2}))=\prod_{i=1}^{\frac{2nt-t^2-\dim(\ker(\M_{n,t}))}{2}} \frac{\ell}{\gcd(h_i, \ell)}.\]
\end{enumerate}
\end{corollary}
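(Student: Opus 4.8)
The proof is essentially a concatenation of Lemma~\ref{lemHaynal}, Lemma~\ref{lemPIdeg}, and one explicit combinatorial identification, so I will indicate how each ingredient is used and where the only real computation lies.

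\textbf{Part (1).} The plan is first to verify the entrywise identity $-M_{n,t}=\M_{n,t}=M(C_{n,t})$ announced just before the corollary. The generators of $B_t$ are the $X_{i,j}\in R^{\mathrm{op}}$ with $i\ge n-t+1$ or $j\ge n-t+1$, adjoined in lexicographic order; these positions are exactly the white squares of $C_{n,t}$ (its last $t$ rows together with its last $t$ columns, so $tn+tn-t^{2}=2nt-t^{2}$ of them), and adjoining them in lexicographic order is adjoining them in the left-to-right, top-to-bottom order used to label the white squares of a diagram. For two such generators $Y_{k,t}$ and $Y_{l,t}$ the exponent $m_{k,l}$ is read off the defining relations of $\oqmmn$, with all $q$-exponents negated because we work in $R^{\mathrm{op}}$: two generators in a common row or column $q$-commute (exponent $\pm1$), two generators one of which lies strictly above-and-left of the other commute up to a $(q-q^{-1})$-term that does not affect the leading exponent, and two generators one of which lies strictly above-and-right of the other commute. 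Matching this, position by position, against the rule defining $M(D)$ (value $1$ when square $i$ is strictly below or strictly to the right of square $j$) gives $-M_{n,t}=M(C_{n,t})$. Since any skew-symmetric integral matrix is congruent to its negative — one has $M\sim_C\mathrm{Sk}(M)\sim_C-\mathrm{Sk}(M)\sim_C-M$, the middle congruence being the coordinate swap inside each $2\times2$ block of the skew-normal form — we conclude $M_{n,t}\sim_C\M_{n,t}$. The displayed chain of $\Frac$-isomorphisms then follows from Lemma~\ref{lemHaynal}(1) together with \cite[Lemma 2.4]{PanovRussian}, which turns a congruence of commutation matrices into an isomorphism of the associated quantum tori.

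\textbf{Part (2).} Assume now $q$ is a primitive $\ell^{\mathrm{th}}$ root of unity. Lemma~\ref{lemHaynal}(2) gives $\PI(R_t(M_n))=\PI(\oh_{q^{M_{n,t}}}(\K^{2nt-t^{2}}))$. By part (1), $M_{n,t}$ and $\M_{n,t}$ are congruent, hence share a skew-normal form and therefore have the same kernel dimension and the same invariant factors $\{h_i\}_i$. Applying Lemma~\ref{lemPIdeg} to $\M_{n,t}$ (equivalently to $M_{n,t}$) with $N=2nt-t^{2}$ then yields
\[
\PI(R_t(M_n))=\PI(\oh_{q^{\M_{n,t}}}(\K^{2nt-t^{2}}))=\prod_{i=1}^{\frac{2nt-t^{2}-\dim(\ker(\M_{n,t}))}{2}}\frac{\ell}{\gcd(h_i,\ell)},
\]
which is the asserted formula; the middle equality also uses that congruent matrices give quantum affine spaces of equal PI degree, as recorded inside the proof of Lemma~\ref{lemPIdeg} (isomorphic quantum tori, and invariance of PI degree of a prime noetherian ring under localisation, \cite[Corollary I.13.3]{BrownGoodearl}).

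\textbf{Main obstacle.} The only genuinely computational step is the box-by-box identification $-M_{n,t}=M(C_{n,t})$ in part (1): one must carefully track the sign flip produced by passing to $R^{\mathrm{op}}$, check that the lexicographic order on the generators of $B_t$ agrees with the canonical labelling of the white squares of $C_{n,t}$, and note that the skew-derivations $\beta_k$ play no role — which is precisely \cite[Corollary 4.7]{Haynal}, already built into Lemma~\ref{lemHaynal}. Everything else is a formal assembly of earlier results.
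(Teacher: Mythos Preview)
Your proof is correct and follows essentially the same approach as the paper: the paper also treats this result as an immediate corollary of Lemmas~\ref{lemHaynal} and~\ref{lemPIdeg}, after noting (in the paragraph preceding the corollary) that $-M_{n,t}=M(C_{n,t})$ and invoking Panov's result that congruent commutation matrices yield isomorphic quantum tori. You have simply made explicit the combinatorial check and the congruence $M\sim_C -M$ that the paper leaves to the reader.
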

Having proved that all invariant factors of $\mathcal{M}_{n,t}$ are powers of 2 (Theorem \ref{leminvariant}), we now wish to find $\dim(\ker(\M_{n,t}))$. We observe that \cite[Proposition 6.2]{JakobsenJondrup} have computed the PI degree of $R_t(M_n)$ over a field of characteristic $0$. However, our above corollary shows that $\PI(R_t(M_n))$ is independent of the characteristic of the base field $\K$, and so we conclude from \cite[Proposition 6.2]{JakobsenJondrup} the following generalisation of their result. 

\begin{theorem}\label{PIdegQDR}
Let $n\in \N_{>0}$ and $t \in \llbracket 1, n-1 \rrbracket$ and take $q$ to be a primitive $\ell^{\mathrm{th}}$ root of unity with $\ell>2$. Then
\[\PI(R_t(M_n))= \begin{cases} \ell^{\frac{2nt-t^2-t}{2}} & \ell \text{ is odd}; \\ 	
										 \ell^{\frac{2nt-t^2-t}{2}} \cdot 2^{-nt+\frac{t^2+t}{2}+n-1}& \ell \text{ is even}.
							\end{cases} \]
\end{theorem}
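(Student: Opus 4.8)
The plan is to invoke Corollary \ref{corNotation}(2), which reduces everything to computing $\dim(\ker(\M_{n,t}))$ together with the invariant factors of $\M_{n,t}$; by Theorem \ref{leminvariant} all the invariant factors are powers of $2$, so only $\dim(\ker(\M_{n,t}))$ and a count of which invariant factors are even is needed. For the kernel dimension I would use Proposition \ref{propkerneldimension}: it equals the number of odd cycles in the disjoint cycle decomposition of the toric permutation $\tau$ of the diagram $C_{n,t}$. So the first real step is to identify $\tau$ for the diagram whose last $t$ rows and $t$ columns are white (all other boxes black). Following the pipe-dream prescription, I would track each pipe through the black ``corner'' region and the white ``hook''-shaped region; the black region acts as a block of crossings that simply relabels indices, while the white hook routes pipes via hyperbolas. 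I expect the resulting $\tau$ to be a product of roughly $t$ transpositions (or $2$-cycles) coming from the $t$ white rows/columns, with the remaining $m+n-2t$ (or so) indices fixed, giving exactly $t$ odd cycles that are transpositions plus $m+n-2t$ fixed points (also odd cycles) — but the precise count is what must be pinned down. Whatever the exact structure, the upshot should be $\dim(\ker(\M_{n,t}))=t$, matching the exponent $\frac{2nt-t^2-t}{2}$ in the odd-$\ell$ case.

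With $\dim(\ker(\M_{n,t}))=t$ established, the odd-$\ell$ case of the theorem is immediate from Corollary \ref{corNotation}(2): since every invariant factor $h_i$ is a power of $2$ and $\ell$ is odd, $\gcd(h_i,\ell)=1$, so $\PI(R_t(M_n))=\ell^{(2nt-t^2-t)/2}$. For the even-$\ell$ case I would need to know, for each invariant factor $h_i=2^{a_i}$, the contribution $\ell/\gcd(2^{a_i},\ell)$; writing $\mu_2(\ell)=v$, this equals $\ell$ if $a_i=0$ and $\ell\cdot 2^{-\min(a_i,v)}$ otherwise. Here I would appeal to \cite[Proposition 6.2]{JakobsenJondrup}, which in characteristic $0$ already computed $\PI(R_t(M_n))$; since Corollary \ref{corNotation}(2) shows the PI degree is independent of $\operatorname{char}(\K)$ and depends only on $\M_{n,t}$, their characteristic-$0$ answer transfers verbatim. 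Comparing their formula with $\ell^{(2nt-t^2-t)/2}\prod 2^{-\min(a_i,v)}$ and specializing to, say, $\ell=2$ (where the product collapses to $2^{-\#\{i: a_i\geq 1\}}$) forces the number of even invariant factors, and hence recovers the stated exponent $2^{-nt+(t^2+t)/2+n-1}$ of $2$; one then checks the general even-$\ell$ case reduces to the same count because $v\geq 1$ suffices once one knows $a_i\in\{0,1\}$ for all $i$ (which the $\ell=2$ comparison establishes).

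The main obstacle is the explicit determination of the toric permutation of $C_{n,t}$ and the verification that $\dim(\ker(\M_{n,t}))=t$; the pipe-dream bookkeeping through the L-shaped white region is fiddly and must be done carefully, and one should double-check it against small cases (e.g. $t=1$, where $\M_{n,1}=M(C_{n,1})$ is the matrix of a single white hook and the kernel should be one-dimensional). A secondary subtlety is confirming that all even invariant factors of $\M_{n,t}$ are exactly $2$ rather than higher powers of $2$ — this is what makes the even-$\ell$ formula have the clean shape above, and it is most cleanly extracted by playing the general formula of Corollary \ref{corNotation}(2) against the known characteristic-$0$ result of \cite{JakobsenJondrup} rather than by a direct Smith-normal-form computation. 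Once these two combinatorial inputs are in hand, assembling the final case distinction is a routine arithmetic simplification of the exponents.
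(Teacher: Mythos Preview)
Your core argument---that Corollary \ref{corNotation}(2) makes $\PI(R_t(M_n))$ depend only on the integer matrix $\M_{n,t}$ and hence is independent of $\operatorname{char}(\K)$, so that \cite[Proposition 6.2]{JakobsenJondrup} (proved in characteristic $0$) transfers verbatim---is exactly what the paper does, and it suffices for both parities of $\ell$. The paper's entire proof is that one sentence; it does \emph{not} compute $\dim(\ker(\M_{n,t}))$ independently beforehand, but rather reads off $\dim(\ker(\M_{n,t}))=t$ \emph{a posteriori} from the PI degree formula (this is the remark immediately following the theorem, leading into Proposition \ref{lemtoricperm}).

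Two remarks on the detours in your plan. First, the separate treatment of the odd-$\ell$ case via a direct toric-permutation computation is unnecessary: once you invoke Jakobsen--J\o ndrup and characteristic independence, you get both cases at once. Second, your guess about the structure of $\tau$ is off: transpositions are \emph{even} cycles, so ``$t$ transpositions plus fixed points'' would give the wrong count of odd cycles. In fact, as recorded in Proposition \ref{lemtoricperm}, $\tau$ decomposes into exactly $t$ disjoint \emph{odd} cycles (of lengths roughly $2n/t$, not $2$), with no fixed points outside those cycles. This does yield $\dim(\ker(\M_{n,t}))=t$, but the bookkeeping is rather different from what you sketched, and---as the paper's ordering makes clear---is not needed to establish the theorem.
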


We note that the above result shows that the toric permutation associated to  $C_{n,t}$ has $t$ odd cycles in its disjoint cycle decomposition. While we will not need to be more precise, for completeness, we state the following result whose proof can be found in \cite[Proposition 6.10]{Alex-thesis}. 

\begin{proposition}\label{lemtoricperm}
Fix some $n\in \N_{>0}$ and $t\in \llbracket 1, n-1 \rrbracket$. Then the toric permutation, $\tau$, associated to $C_{n,t}$ is the product of $t$ disjoint odd cycles $\tau=(\eta_1\cdots \eta_t)$. 

In particular, write $n=ut+r$, for some $u\in \N$ and $r\in \llbracket 0, t-1\rrbracket$. Then the cycles depend on $n-t$ and $t$, and are the following:

$\bullet$ \underline{$0<n-t<t$}:

\begin{equation*}
\eta_i:=\begin{cases}
		(i,i+t,2t+r+i,t+r+i), & 1\leq i \leq r;\\
		(i,t+r+i), &  r< i \leq t.
		\end{cases}
\end{equation*}

$\bullet$ \underline{$n-t=t$}:

\begin{equation*}
\eta_i:=(i,\, i+t,\, i+t+n,\, i+n), \quad 1\leq i \leq t.
\end{equation*}

$\bullet$ \underline{$t<n-t$}:

\begin{equation*}
\eta_i:=\begin{cases}
(i,\, i+t,\, \ldots,\, i+ut,\, i+ut+n,\, i+(u-1)t+n,\, \ldots,\, i+n), & 1\leq i \leq r; \\
(i,\, i+t,\, \ldots,\, i+(u-1)t,\, i+(u-1)t+n,\, i+(u-2)t+n,\, \ldots,\, i+n), & r< i \leq t.
\end{cases}
\end{equation*}
\end{proposition}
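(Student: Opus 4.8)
The plan is to compute the toric permutation $\tau$ of $C_{n,t}$ explicitly from its pipe dream and then read off its disjoint cycle decomposition. Set $p:=n-t$; by construction the black region of $C_{n,t}$ is exactly the top-left $p\times p$ square, and its white region is the hook consisting of the bottom $t$ rows together with the rightmost $t$ columns. Recall the local rules of the pipe dream: a black box carries a cross (the leftward pipe stays leftward, the upward pipe stays upward), while a white box carries a pair of elbows (the pipe arriving from the right leaves through the top, the pipe arriving from below leaves through the left). With the side labelling of Section~\ref{section-toric-permutation} (cf. Figure~\ref{FigToricPerm}), the pipe entering at row $i$ from the right carries label $n+1-i$, the pipe entering at column $j$ from below carries label $n+j$, and a pipe leaving at the top of column $c$ (resp. at the left of row $r$) carries label $n+c$ (resp. $n+1-r$).

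The first step is the pipe trace. Every pipe enters the white hook (the rightmost column and the bottom row are white), keeps turning inside it, and thereby describes a staircase that moves one square left and one square up at each pair of turns, until either it crosses into the black square --- after which, all boxes being crosses, it proceeds in a straight line and exits on the left (if it entered the square horizontally) or on the top (if it entered vertically) --- or it first reaches the left or top edge of the diagram. Keeping track of the two events ``the staircase leaves the bottom strip after $t$ steps'' and ``the relevant index has reached $0$'', one obtains, for all $i,j\in\llbracket 1,n\rrbracket$,
\[
\tau(\text{row }i)=\begin{cases}\text{top of column }n+1-i,& i\le t,\\ \text{left of row }i-t,& i>t,\end{cases}
\qquad
\tau(\text{col }j)=\begin{cases}\text{left of row }n+1-j,& j\le t,\\ \text{top of column }j-t,& j>t.\end{cases}
\]
Translated into labels, this is the uniform piecewise-linear formula
\[
\tau(\ell)=\begin{cases}\ell+t,& 1\le\ell\le n-t,\\ \ell+n,& n-t<\ell\le n,\\ \ell-n,& n<\ell\le n+t,\\ \ell-t,& n+t<\ell\le 2n,\end{cases}
\]
and one checks at once that this is a bijection of $\llbracket 1,2n\rrbracket$.

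With this formula the cycle decomposition falls out by iteration. Fix $x\in\llbracket 1,t\rrbracket$ and write $n=ut+r$ with $0\le r<t$. Starting from $x$ the map repeatedly adds $t$, producing $x,x+t,x+2t,\dots$ as long as the current value stays $\le n-t$; this happens for $u-1$ steps when $x\le r$ and for $u-2$ steps when $x>r$ (no such step when $u=1$ and $x>r$). The next application adds $n$, landing in $\llbracket 2n-t+1,2n\rrbracket$; from there $\tau$ repeatedly subtracts $t$, retracing the same progression shifted by $n$ until it reaches $x+n$; and $\tau(x+n)=x$ closes the cycle. This produces precisely the cycle $\eta_x$ listed in the statement, in each of the three regimes $0<n-t<t$ ($u=1$), $n-t=t$ ($n=2t$), and $t<n-t$ ($u\ge2$). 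The unique element of $\eta_x$ that is $\le t$ is its base point $x$, so the cycles $\eta_1,\dots,\eta_t$ are pairwise distinct and hence disjoint; their lengths, $2(u+1)$ for $x\le r$ and $2u$ for $r<x\le t$, sum to $2(r+ut)=2n$, so they exhaust $\llbracket 1,2n\rrbracket$. Thus $\tau=\eta_1\cdots\eta_t$ is a product of $t$ disjoint cycles, each of even length (equivalently, an odd cycle in the sense of Proposition~\ref{propkerneldimension}), as claimed; this also recovers the count of $t$ odd cycles already used in Theorem~\ref{PIdegQDR}.

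The step I expect to be the main obstacle is the pipe trace itself: one must be scrupulous about the labelling of the four sides and about the orientation of the elbows, and the derivation of the piecewise description of $\tau$ on rows and columns genuinely uses $t<n$ and the dichotomy between indices $\le t$ and $>t$; once the single formula for $\tau$ is in hand the iteration is routine, but it should be organised so that all three regimes are visibly special cases of it. As an independent cross-check I would note that $C_{n,t}$ is the $180^{\circ}$ rotation of the diagram $D_\mu$ attached to the fat-hook partition $\mu=(n^{t},t^{n-t})$; computing $w_\mu$ from the proposition of Section~\ref{section-toric-permutation} and tracking the effect of a $180^{\circ}$ rotation on the toric permutation yields the same $\tau$, hence the same cycles $\eta_x$.
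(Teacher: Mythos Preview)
Your argument is correct. The paper does not give its own proof of this proposition; it simply states the result and refers to \cite[Proposition~6.10]{Alex-thesis}. So there is nothing in the paper to compare your approach against, but your method---first extracting the closed piecewise formula
\[
\tau(\ell)=\begin{cases}\ell+t,& 1\le\ell\le n-t,\\ \ell+n,& n-t<\ell\le n,\\ \ell-n,& n<\ell\le n+t,\\ \ell-t,& n+t<\ell\le 2n,\end{cases}
\]
from the pipe trace and then iterating it---is clean, correct, and in fact the natural way to carry out the computation that the thesis proof presumably also performs. The verification that the $t$ cycles are disjoint (via the unique element $\le t$ in each) and exhaust $\llbracket 1,2n\rrbracket$ (via the length count $2r(u{+}1)+2(t{-}r)u=2n$) is exactly right.

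One small expository slip: in your sentence ``this happens for $u-1$ steps when $x\le r$ and for $u-2$ steps when $x>r$'', the counts are off by one. For $x\le r$ the values $x, x+t,\ldots, x+(u-1)t$ all lie in $\llbracket 1,n-t\rrbracket$, so the $+t$ rule is applied $u$ times (producing $x+ut\in (n-t,n]$) before the $+n$ rule fires; similarly $u-1$ times for $x>r$. This does not affect your conclusion, since the cycles you write down and their lengths $2(u+1)$ and $2u$ are correct; but you should adjust the sentence so that ``the next application adds $n$'' is applied to the right element.

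Your proposed cross-check via the $180^\circ$ rotation of $D_\mu$ with $\mu=(n^t,t^{\,n-t})$ is a nice sanity check but is not needed for the proof.
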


\section{Irreducible Representations of Quantum Determinantal Rings}\label{sectionIrreds}
In this section we construct an irreducible representation for $R_t(M_n)$ at a root of unity, with dimension equal to the PI degree calculated above.

\begin{definition}
An index pair $(I,J)\in \Delta_{n,n}$ corresponds to a \emph{final quantum minor} of $\oh_q(M_n(\K))$ of size $s\in \llbracket 1, n \rrbracket$  if $I=\llbracket i, i+(s-1)\rrbracket$ and $J=\llbracket j, j+(s-1)\rrbracket$, and either $i+(s-1)=n$ or $j+(s-1)=n$.
\end{definition}
We denote the set of all final quantum minors of $\oh_q(M_n(\K))$ by $\Omega_{n,n}\subseteq \Delta_{n,n}$, and the set of all final quantum minors of $\oh_q(M_n(\K))$ of size less than or equal to $t$ by $\Omega_{n,n}^t\subseteq \Omega_{n,n}$. Note that all the quantum minors in $\Omega_{n,n}^t$ ``survive'' in the quotient algebra $R_t(M_n)$ and generate a subalgebra, which we denote by $\K\langle \Omega_{n,n}^t \rangle \subseteq R_t(M_n)$. It may also be verified that $|\Omega_{n,n}^t|= 2nt-t^2$.

\begin{lemma}\label{lemAQAS}
Let $A=\K\langle \Omega_{n,n}^t \rangle$ and $\Sigma \subset A$ be the multiplicative set generated by $\Omega_{n,n}^t$. Then,
\begin{enumerate}[(i)]
\item The elements of $\Omega_{n,n}^t$ commute with each other up to powers of $q$;
\item $A \subseteq R_t(M_n) \subseteq A\Sigma^{-1}$;
\item $A$ is a quantum affine space $\oh_{q^{M'}}(\K^{2nt-t^2})$, for some $M'\in M_{2nt-t^2}(\Z)$, and $A\Sigma^{-1}$ is the quantum torus associated to $A$;
\item $M'\in M_{2nt-t^2}(\Z)$ has rank $2nt-t^2-t$ and all its invariant factors are powers of $2$. 
\end{enumerate}
\end{lemma}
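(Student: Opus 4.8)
The plan is to establish the four assertions in order, each relying on the earlier ones; throughout, $N:=2nt-t^2=|\Omega_{n,n}^t|$.

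\emph{Part (i).} Every element of $\Omega_{n,n}^t$ is a \emph{solid} quantum minor (consecutive rows, consecutive columns) anchored at the south-east corner, meaning its last row or last column equals $n$. I would prove that two such minors $q$-commute either by a direct computation from the commutation relations of $\oh_q(M_n(\K))$ and the quantum Muir/Laplace identities, or --- more conceptually --- by identifying, for each box $(a,b)$, the largest south-east-anchored solid minor with north-west corner $(a,b)$ with a scalar multiple of the Cauchon variable at $(a,b)$ obtained from the deleting-derivations algorithm applied to $R_t(M_n)$; Cauchon variables $q$-commute by construction (cf.\ \cite{LauLopRo,Alex-thesis}). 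Recording the exponents $m_{k,l}$ in $[\pi_k][\pi_l]=q^{m_{k,l}}[\pi_l][\pi_k]$, with $\Omega_{n,n}^t$ ordered by the lexicographic order of north-west corners, produces a skew-symmetric $M'\in M_N(\Z)$.

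\emph{Part (ii).} The inclusion $A\subseteq R_t(M_n)$ is immediate, since no quantum minor of size $\le t$ lies in $\I_t$. By (i) the generators of $A$ are normal in $A$, so $\Sigma$ is an Ore set in $A$, the localisation $A\Sigma^{-1}$ exists, and each element of $\Omega_{n,n}^t$ is a unit of $A\Sigma^{-1}$. For $R_t(M_n)\subseteq A\Sigma^{-1}$ I would show $\overline{X_{i,j}}\in A\Sigma^{-1}$ for all $(i,j)$ by induction on $(i,j)$ in decreasing lexicographic order, the base cases $\overline{X_{n,j}}$, $\overline{X_{i,n}}$ being size-one final minors. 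For the step, pick a relation in $R_t(M_n)$ whose quantum Laplace expansion along the first row isolates $\overline{X_{i,j}}$ against a cofactor in $\Omega_{n,n}^t$: if $\max(i,j)\ge n-t+1$, the south-east-anchored solid minor with north-west corner $(i,j)$ (cofactor: the one with corner $(i{+}1,j{+}1)$); if $i,j\le n-t$, the vanishing size-$(t{+}1)$ minor $[\{i\}\cup\llbracket n{-}t{+}1,n\rrbracket\mid\{j\}\cup\llbracket n{-}t{+}1,n\rrbracket]$ (cofactor: $[\llbracket n{-}t{+}1,n\rrbracket\mid\llbracket n{-}t{+}1,n\rrbracket]$). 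The remaining terms involve only $\overline{X_{i,\ell}}$ with $\ell>j$ and minors supported on rows $>i$, all in $A\Sigma^{-1}$ by induction, so solving for $\overline{X_{i,j}}$ finishes the step.

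\emph{Part (iii).} Part (i) yields a surjection $\oh_{q^{M'}}(\K^{N})\twoheadrightarrow A$ with prime kernel $P$ (as $A$ is a domain). By (ii), $A\subseteq R_t(M_n)\subseteq A\Sigma^{-1}$ and $A\Sigma^{-1}$ is an Ore localisation of $A$, so $\mathrm{GKdim}(A)=\mathrm{GKdim}(R_t(M_n))$; the latter equals $N$ because, by Lemma \ref{lemHaynal}, $R_t(M_n)$ and the iterated Ore extension $B_t$ (built from $\K$ by $N$ Ore extensions) have isomorphic Ore localisations. As $\oh_{q^{M'}}(\K^{N})$ has a filtration with associated graded $\K[x_1,\dots,x_N]$, every nonzero prime quotient has GK-dimension $<N$; hence $P=0$ and $A\cong\oh_{q^{M'}}(\K^{N})$, and $A\Sigma^{-1}$, obtained by inverting the generating monomials, is precisely the associated quantum torus $\oh_{q^{M'}}((\K^*)^{N})$.

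\emph{Part (iv).} The north-west-corner assignment of (i) identifies $\Omega_{n,n}^t$ with the set of white boxes of the Cauchon--Le diagram $C_{n,t}$, and the crux is that this identification intertwines the $q$-commutation exponents of final minors with the entries of $M(C_{n,t})$, giving $M'\sim_C \M_{n,t}=M(C_{n,t})$. Theorem \ref{leminvariant} then shows the invariant factors of $M'$ are powers of $2$, while Propositions \ref{propkerneldimension} and \ref{lemtoricperm} give $\dim\ker(M')=\dim\ker(\M_{n,t})=t$, so $\rank(M')=N-t=2nt-t^2-t$. The main obstacle is exactly this combinatorial input --- computing the $q$-commutation exponent of an arbitrary pair of south-east-anchored solid minors of size $\le t$ and matching it with $M(C_{n,t})$ (equivalently, checking these minors are the Cauchon variables of $R_t(M_n)$); once it is in hand, parts (ii) and (iii) are routine, the former needing only the small case analysis above to guarantee a unit cofactor.
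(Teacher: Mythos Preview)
Your overall plan tracks the paper closely for parts (i)--(iii) and is sound in outline, but two points deserve comment.

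For part (iii), the step ``$A\subseteq R_t(M_n)\subseteq A\Sigma^{-1}$ and $A\Sigma^{-1}$ is an Ore localisation of $A$, so $\mathrm{GKdim}(A)=\mathrm{GKdim}(R_t(M_n))$'' is not immediate: GK-dimension can increase under Ore localisation, and at this stage you do not yet know $A$ is a quantum affine space, so you cannot invoke the equality of GK-dimension for a quantum affine space and its torus. The paper closes this gap by showing $R_t(M_n)$ is Tdeg-stable (via \cite{GoodearlLaunoisLenagan-Tauvel} in the generic case and \cite{Zhang} in the PI case), whence $\mathrm{Frac}(A)=\mathrm{Frac}(R_t(M_n))$ together with \cite[Proposition~3.5(3)]{Zhang} yields $\mathrm{GKdim}(A)\ge \mathrm{GKdim}(R_t(M_n))$. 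You should insert an argument of this kind.

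For part (iv) your route genuinely diverges from the paper's. You propose to compute the $q$-commutation exponents of pairs of final minors directly (or via identifying them with Cauchon variables) and match $M'$ with $M(C_{n,t})$; you rightly flag this as the main obstacle. The paper sidesteps this entirely: from parts (ii)--(iii) and Corollary~\ref{corNotation} one has $\mathrm{Frac}(\oh_{q^{M'}}(\K^{N}))\cong\mathrm{Frac}(R_t(M_n))\cong\mathrm{Frac}(\oh_{q^{\M_{n,t}}}(\K^{N}))$, and then Panov's theorem \cite[Theorem~2.19]{PanovRussian}, applied at a non-root-of-unity value of $q$, forces $M'\sim_C \M_{n,t}$; since $M'$ is determined combinatorially and does not depend on $q$, the congruence holds in general. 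This avoids any explicit computation of the exponents. Your Cauchon-variable approach would also work and has the advantage of being self-contained, but it requires verifying that the final minors of size $\le t$ really are (scalar multiples of) the Cauchon generators of $R_t(M_n)$ and that their commutation matrix is congruent to $M(C_{n,t})$, which is more labour than the paper's three-line argument.
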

\begin{proof}
\
\begin{enumerate}[(i)]
\item This result follows from an analogous result for initial quantum minors of $\oh_q(M_n(\K))$: a quantum minor $[I|J] \in \Delta_{n,n}$ is called \emph{initial} if $I=\{i, i+1, \ldots, i+t\}$ and $J=\{j, j+1, \ldots, j+t\}$, for some $t \in \llbracket 0, n \rrbracket$, and $1\in I \cup J$. Noting that all pairs of initial quantum minors $[I|J], \, [M|N]$ satisfying $i=m=1$ are \emph{weakly separated} in the sense of Leclerc and Zelevinsky \cite{LeclercZelevinsky}, we may apply their result \cite[Lemma 2.1]{LeclercZelevinsky} to see that the $[I|J], \, [M|N]$ above quasi-commute (that is, commute up to a power of $q$). Applying the transpose automorphism $\tau_q^1$ of \cite[Proposition 3.7.1(1)]{ParshallWang} reveals that any pair of initial quantum minors $[I'|J'], \, [M'|N']$ satisfying $j'=n'=1$ also quasi-commute, as is shown in \cite[(4-13)]{Goodearl}. Finally, \cite[Corollary 6.5]{Goodearl} shows that pairs of initial quantum minors $[I|J], \, [M'|N']$ satisfying $i=n'=1$ quasi-commute and hence all initial quantum minors of $\oh_q(M_n(\K))$ quasi-commute with each other. This result passes to the final quantum minors of $\oh_q(M_n(\K))$, and hence to all elements of $\Omega_{n,n}^t$, by use of the anti-automorphism $\tau_q^2$ of \cite[Proposition 3.7.1(2)]{ParshallWang} and the property found in  \cite[Lemma 4.3.1]{ParshallWang}.

\item $A\subseteq R_t(M_n)$ is immediate. We deduce from part (i) that the elements of $\Omega_{n,n}^t$ are normal in $A$. They are also regular, since $R_t(M_n)$ is a noetherian domain. Therefore, $\Sigma\subseteq A$ is an Ore set at which we can localise. We now show that each generator of $R_t(M_n)$ is contained in $A \Sigma^{-1}$:

Define sets $Q_1:=\llbracket n-t+1, n \rrbracket \times \llbracket 1, n \rrbracket, \;Q_2:=\llbracket 1, n \rrbracket \times \llbracket n-t+1, n\rrbracket$, and $Q:=Q_1\cup Q_2$. To prove $R_t(M_n)\subseteq A\Sigma^{-1}$ we first use decreasing induction on $(i,j) \in Q$ (for the lexicographic order) to show that $\bar{X}_{i,j}\in A\Sigma^{-1}$, where $\bar{X}_{i,j}\in R_t(M_n)$ is the canonical image of the generator $X_{i,j}\in R$. We then show that $\bar{X}_{i,j}\in A\Sigma^{-1}$, for all $(i,j)\in \llbracket 1, n-t \rrbracket \times \llbracket 1, n-t \rrbracket$. This will prove the result for all $ (i,j) \in  \llbracket 1, n \rrbracket \times \llbracket 1, n \rrbracket$ since $\llbracket 1, n \rrbracket \times \llbracket 1, n \rrbracket = Q \cup (\llbracket 1, n-t \rrbracket \times \llbracket 1, n-t \rrbracket)$.

We start with the induction argument. Since $\bar{X}_{i,j}\in A$ for all $(i,j)\in (\{n\} \times \llbracket 1, n \rrbracket)\cup (\llbracket 1, n \rrbracket \times \{n\})$, by definition, the first non-trivial $(i,j)\in Q$ to prove in the induction is when $(i,j)=(n-1, n-1)$. We take this as our base case.  

By the definition of quantum minors, we may write 
\[[\{n-1, n\}|\{n-1, n\}]=\bar{X}_{n-1,n-1}\bar{X}_{n,n} - q\bar{X}_{n-1,n}\bar{X}_{n,n-1}.\]
Rearranging this we obtain
\[ \bar{X}_{n-1,n-1} = ([\{n-1, n\}|\{n-1, n\}] + q\bar{X}_{n-1,n}\bar{X}_{n,n-1} )\bar{X}_{n,n}^{-1} \in A\Sigma^{-1}. \]
Since  $[\{n-1, n\}|\{n-1, n\}], \, \bar{X}_{n-1,n}, \, \bar{X}_{n,n-1}, \, \bar{X}_{n,n} \in A$, and $\bar{X}_{n,n}^{-1}\in A\Sigma^{-1}$, this proves the base case.

For ease of reading we now fix some notation: for some $(i,j)\in Q$, denote by $[I_{i,j}|J_{i,j}]\in \Delta_{n,n}$ the final quantum minor which has $i$ as its first row index and $j$ as its first column index; that is,
\[ I_{i,j}:=\llbracket i, i+\min\{n-i, n-j\} \rrbracket, ~~ J_{i,j}:=\llbracket j, j+\min\{n-i, n-j\} \rrbracket. \]
Setting $s_{i,j}:=\min\{n-i, n-j\}$ we see that $[I_{i,j}|J_{i,j}]$ has size $s_{i,j}+1$ and  $s_{i,j}+1 \in \llbracket 1, t \rrbracket$ because $(i,j)\in Q$. Therefore, $[I_{i,j} | J_{i,j}] \in \Omega_{n,n}^t \subseteq A$ (note that to avoid cumbersome notation we do not distinguished between a quantum minor and its coset). One may also verify that $[I_{i,j} | J_{i,j}]$ is generated by sums of monomials in $\bar{X}_{l,k}$, where $(l,k)\geq (i,j)$ and $(l,k)\in Q$.

For the inductive step, fix some $(i,j)\in Q$, with $(i,j)<(n-1,n-1)$, and assume that $\bar{X}_{l,k}\in A\Sigma^{-1}$, for all $(i,j) < (l,k) \leq (n,n)$ and $(l,k) \in Q$. Consider $\bar{X}_{i,j}$. With the notation above we can rewrite $[I_{i,j} | J_{i,j}] \in A$, using the quantum Laplace relations from \cite[Proposition 1.1]{Noumi}, as
\begin{align}\label{eqnMinor}
[I_{i,j}|J_{i,j}] =  \bar{X}_{i,j}[I_{i+1,j+1} | J_{i+1,j+1}] + \sum_{r=1}^{s_{i,j}}(-q )^{r} \bar{X}_{i, j+r}[I_{i,j}\backslash \{i\} | J_{i,j}\backslash \{j+r\}].
\end{align} 
Since $(i+1, j+1) \in Q$ then $[I_{i+1, j+1} | J_{i+1, j+1}] \in \Omega_{n,n}^t$ and is hence invertible in $A\Sigma^{-1}$. Using the inductive hypothesis we deduce that $[I_{i,j}\backslash \{i\} | J_{i,j}\backslash \{j+r\}] \in A\Sigma^{-1}$, for all $r \in \llbracket 0, s_{i,j} \rrbracket$, as it is the sum of monomials in $\bar{X}_{l,k}$, where $(l,k)>(i,j)$ and $(l,k)\in Q$.
Similarly, $\bar{X}_{i, j+r} \in A\Sigma^{-1}$, for all $r \in \llbracket 1, s_{i,j}\rrbracket$, because $(i, j+r)\in Q$ and $(i, j+r)>(i,j)$. We may therefore rearrange (\ref{eqnMinor}) to obtain
\begin{align*}
\bar{X}_{i,j}=\left([I_{i,j}|J_{i,j}] - \sum_{r=1}^{s_{i,j}}(-q )^{r} \bar{X}_{i, j+r}[I_{i,j}\backslash \{i\} | J_{i,j}\backslash \{j+r\}] \right) [I_{i+1,j+1} | J_{i+1,j+1}]^{-1} \in A\Sigma^{-1}.
\end{align*}
This proves the inductive step and we conclude that $\bar{X}_{i,j}\in A\Sigma^{-1}$, for all $(i,j)\in Q$.

For any $(i,j)\in \llbracket 1, n-t \rrbracket \times  \llbracket 1, n-t \rrbracket$, define
\[\hat{I}_{i,j}:= \{i, n-t+1, n-t+2, \ldots, n\}, \quad \hat{J}_{i,j}:=\{j, n-t+1, n-t+2, \ldots, n\} \]
so that $|\hat{I}_{i,j}|=|\hat{J}_{i,j}|=t+1$. Then $[\hat{I}_{i,j} | \hat{J}_{i,j}] =0$ in $R_t(M_n)$ and hence also in $A$. We may write this as
\begin{align}\label{eqn0Minor}
0= \bar{X}_{i,j}[\hat{I}_{i,j}\backslash \{i\} | \hat{J}_{i,j}\backslash \{j\}] + \sum_{r=1}^{t}(-q )^{r} \bar{X}_{i, n-t+r}[\hat{I}_{i,j}\backslash \{i\} | \hat{J}_{i,j}\backslash \{n-t+r\}].
\end{align}
Note that, for all $r \in \llbracket 1, t \rrbracket$, the quantum minor $[\hat{I}_{i,j}\backslash \{i\} | \hat{J}_{i,j}\backslash \{n-t+r\}]$ is a sum of monomials in $\bar{X}_{l,k}$, where $l\in \llbracket n-t+1, n \rrbracket$ so that $(l,k)\in Q$. By the induction above we deduce that $\bar{X}_{l,k}\in A\Sigma^{-1}$ and hence $[\hat{I}_{i,j}\backslash \{i\} | \hat{J}_{i,j}\backslash \{n-t+r\}]\in A\Sigma^{-1}$,  for all $r \in \llbracket 1, t \rrbracket$. Furthermore, $[\hat{I}_{i,j}\backslash \{i\} | \hat{J}_{i,j}\backslash \{j\}] = [I_{n-t+1, n-t+1}, I_{n-t+1, n-t+1}] \in \Omega_{n,n}^t$ and is therefore invertible in $A\Sigma^{-1}$. Rearranging (\ref{eqn0Minor}) we obtain
 \[\bar{X}_{i,j}= \left(-\sum_{r=1}^{t}(-q )^{r} \bar{X}_{i, n-t+r}[\hat{I}_{i,j}\backslash \{i\} | \hat{J}_{i,j}\backslash \{n-t+r\}]\right) [\hat{I}_{i,j}\backslash \{i\} | \hat{J}_{i,j}\backslash \{j\}]^{-1} \in A\Sigma^{-1}. \]
Hence $\bar{X}_{i,j}\in A\Sigma^{-1}$, for all $(i,j) \in \llbracket 1, n-t \rrbracket \times \llbracket 1, n-t \rrbracket$, and, together with the induction proof above, this proves that $R_t(M_n) \subseteq A\Sigma^{-1}$.

\item 
It was shown in part (i) that elements of $\Omega_{n,n}^t$ commute with each other up to powers of $q$ as determined by a skew-symmetric matrix, which we denote by $M'\in M_{2nt-t^2}(\Z)$.  Hence, there is a surjective homomorphism
\begin{align}\label{eqnfSurj}
f: \oh_{q^{M'}}(\K^{2nt-t^2}) &{} \longrightarrow A.
\end{align}
We will use the GK-dimension to show that this surjection is, in fact, an isomorphism.

\textbf{Claim 1:} $\mathrm{GKdim}(A) = \mathrm{GKdim}(R_t(M_n))=2nt-t^2$.

When  $q$ is not a root of unity then \cite[Corollary 4.8]{GoodearlLaunoisLenagan-Tauvel} applied to $R_t(M_n)$, with $J_w=\I_t$ and $w=\llbracket 1, n-t \rrbracket \times \llbracket 1, n-t \rrbracket$, shows that $R_t(M_n)$ is Tdeg-stable in the generic case. When $q$ is a root of unity then $R_t(M_n)$ is a PI algebra which is a noetherian domain. Thus  \cite[Theorem 5.3]{Zhang} says that $R_t(M_n)$ is also Tdeg-stable in the root of unity case. Taking total rings of fractions of the algebras in part (ii) (possible since all algebras involved are noetherian domains), we obtain
\[ \Frac(A) \subseteq \Frac(R_t(M_n))\subseteq \Frac(A\Sigma^{-1})= \Frac(A) \quad \implies \quad \Frac(R_t(M_n)) = \Frac(A).\]
The algebras $A$ and $R_t(M_n)$ therefore satisfy the conditions of \cite[Proposition 3.5(3)]{Zhang}, which tells us that $\mathrm{GKdim}(A)\geq \mathrm{GKdim}(R_t(M_n))$. The equality of GK-dimension follows from the basic property that $A\subseteq R_t(M_n)$ implies $\mathrm{GKdim}(A)\leq \mathrm{GKdim}(R_t(M_n))$.

Finally, we see that $\mathrm{GKdim}(R_t(M_n))=2nt-t^2$, using \cite[Remark 4.2 (iii) \& (iv)]{LenaganRigal-SchubertVar}, replacing the $t$ with $t+1$ to make the result in the paper compatible with our definition of $R_t(M_n)$.

\textbf{Claim 2:} The map, $f$, in (\ref{eqnfSurj}) is an isomorphism.

From (\ref{eqnfSurj}) and the First Isomorphism Theorem, we have $\oh_{q^{M'}}(\K^{2nt-t^2})/\ker(f) \cong A$ and hence
\[ \mathrm{GKdim}\left(\oh_{q^{M'}}(\K^{2nt-t^2})/\ker(f)\right) =\mathrm{GKdim}(A) = 2nt-t^2.\] 
As a consequence of Goldie's Theorem (see \cite[Corollary 6.4]{GoodearlWarfield}), every nonzero ideal of $\oh_{q^{M'}}(\K^{2nt-t^2})$ contains a regular element. Therefore, if $\ker(f)$ were not trivial then, by \cite[Proposition 3.15]{KrauseLenagan}, 
\begin{align}\label{eqnGKdims}
\mathrm{GKdim}\left(\oh_{q^{M'}}(\K^{2nt-t^2})/\ker(f)\right) +1 &{} \leq \mathrm{GKdim}\left(\oh_{q^{M'}}(\K^{2nt-t^2})\right).
\end{align}
However, $\mathrm{GKdim}\left(\oh_{q^{M'}}(\K^{2nt-t^2})\right) = 2nt-t^2$, by \cite[Lemma 2]{LeroyMatczukOkninski}, so the inequality in (\ref{eqnGKdims}) becomes $2nt-t^2+1 \leq 2nt-t^2$, which is clearly false. Therefore $\ker(f)$ must be trivial and $\oh_{q^{M'}}(\K^{2nt-t^2}) \cong A$.

\item 
In Corollary \ref{corNotation} it was shown that $\Frac(R_t(M_n)) \cong \Frac(\oh_{q^{\M_{n,t}}}(\K^{2nt-t^2}))$ and in Theorem \ref{leminvariant} and Proposition \ref{lemtoricperm} we deduced that $\M_{n,t}$ has rank $2nt-t^2-t$ and that all its invariant factors are powers of $2$. Parts (ii) and (iii) of this lemma imply that  $\Frac(R_t(M_n))=\Frac(A) \cong \Frac(\oh_{q^{M'}}(\K^{2nt-t^2}))$, therefore
\[ \Frac(\oh_{q^{M'}}(\K^{2nt-t^2})) \cong \Frac(R_t(M_n))\cong \Frac(\oh_{q^{\M_{n,t}}}(\K^{2nt-t^2})). \]
Specialising $q$ to be a non-root of unity allows us to apply \cite[Theorem 2.19]{PanovRussian} to this isomorphism, which states that
\[ \Frac(\oh_{q^{M'}}(\K^{2nt-t^2})) \cong \Frac(\oh_{q^{\M_{n,t}}}(\K^{2nt-t^2})) \quad \iff \quad M'  \sim_C \M_{n,t}. \]
Hence, $M'$ shares the same invariant factors and rank as $\M_{n,t}$ thus proving part (iv) in the case when $q$ is not a root of unity. Since the matrix $M'$ also defines the commutation relations on the quantum affine space $A$ when $q$ is a root of unity, this also proves part (iv) for any $1\neq q\in \K^*$.
\end{enumerate}
\end{proof}

For the rest of this section we take $\K$ to be algebraically closed as this is required for the application of the Jacobson Density Theorem in Proposition \ref{propReponQAS}(ii). All results that follow in this section will take $q$ to be a root of unity, as will be clearly stated.

We start by constructing an irreducible representation of a quantum affine space at a root of unity, with dimension equal to its PI degree.  This result is not believed to be new, however, we present a proof of it here to convince the reader that we do indeed get an irreducible representation, as this is used heavily in the results that follow.
The notation of the lemma has been chosen to suit its application in the results to come, in which we will take tensor products of a family of quantum affine planes indexed by $i$.

\begin{lemma}\label{lemReponQAP}
Let $q$ be a primitive $\ell^{\mathrm{th}}$ root of unity, for some $\ell > 1$. Let $\K_{q^{h_i}}[x_i, y_i]$ denote the quantum affine plane with relations $x_i y_i=q^{h_i}y_i x_i$, for some $h_i\in \Z\backslash \{0\}$ satisfying $\gcd(h_i, \ell)=1$.  Let $V_i$ be an $\ell$-dimensional $\K$-vector space with basis $\{v^{(i)}_1, \ldots, v^{(i)}_{\ell}\} \subseteq V_i$ and define the map 
\begin{align} \label{reponQAS}
\varphi_i: \K_{q^{h_i}}[x_i, y_i] &{} ~ \longrightarrow ~ \End_\K(V_i) \\
x_i &{} ~ \longmapsto ~ \varphi_i(x_i) \nonumber \\
y_i &{} ~ \longmapsto ~ \varphi_i(y_i), \nonumber
\end{align}
where the endomorphisms $\varphi_i(x_i), \, \varphi_i(y_i)$ act on the basis vectors $v_j^{(i)} \in \{v_1^{(i)}, \ldots, v_{\ell}^{(i)} \}$ in the following way:
\begin{align}
\varphi_i(x_i)\cdot v^{(i)}_j &{}= \lambda_i q^{(j-1)h_i} v^{(i)}_j, \label{eqnActionXonBasis}\\
\varphi_i(y_i)\cdot v^{(i)}_j &{}= \begin{cases} v^{(i)}_{j+1}, & \text{ if } j \in \llbracket 1, \ell-1 \rrbracket; \\
											v^{(i)}_1, & \text{ if } j=\ell,
							\end{cases} \label{eqnActionYonBasis}
\end{align}
for some $\lambda_i\in K^*$. In particular, we can write $\varphi_i(x_i), \, \varphi_i(y_i)$ as matrices in $M_{\ell}(\K)$ with respect to this basis:
\begin{equation}\label{matrices}
 \varphi_i(x_i)= \left(\begin{smallmatrix} \lambda_i &  & &  & \\
												 & \lambda_i q^{h_i} & & & \\
												  &  & \lambda_i q^{2 h_i} &  & \\
												  & & & \ddots &  \\
												  &  & &  & \lambda_i q^{(\ell-1)h_i}
							\end{smallmatrix} \right), \qquad  \varphi_i(y_i) = \left(\begin{smallmatrix} 0 & 0 &  \ldots & 0 & 1 \\
											1 & 0 &  \ldots & 0 & 0 \\
											0& 1 &  \ldots & 0 & 0 \\
											\vdots &  & \ddots  &\vdots & \vdots \\
											0 & \ldots & \ldots  & 1 & 0 \end{smallmatrix}\right).
\end{equation}

Then $\varphi_i$ is a surjective algebra homomorphism which defines an irreducible representation of $\K_{q^{h_i}}[x_i, y_i]$ on $V$ of dimension $\ell$ and satisfies the property $\varphi_i(x_i)^{\ell}= \lambda_i^{\ell} \Id_{V_i}$ and $\varphi_i(y_i)^{\ell}= \Id_{V_i}$.
\end{lemma}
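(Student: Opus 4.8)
The plan is a hands-on verification with the explicit matrices in~(\ref{matrices}); no deep machinery is needed. First I would check that $\varphi_i$ is a well-defined algebra homomorphism. Since the quantum affine plane $\K_{q^{h_i}}[x_i,y_i]$ is presented by the generators $x_i,y_i$ subject to the single relation $x_iy_i=q^{h_i}y_ix_i$, it is enough to verify $\varphi_i(x_i)\varphi_i(y_i)=q^{h_i}\,\varphi_i(y_i)\varphi_i(x_i)$ in $\End_\K(V_i)$, which I would do by applying both sides to each basis vector $v_j^{(i)}$ using~(\ref{eqnActionXonBasis}) and~(\ref{eqnActionYonBasis}); the only point to watch is the wrap-around $j=\ell\mapsto 1$, handled by $q^{\ell h_i}=1$. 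The power identities are then immediate from the shape of the matrices: $\varphi_i(y_i)$ is the permutation matrix of an $\ell$-cycle, so $\varphi_i(y_i)^{\ell}=\Id_{V_i}$, while $\varphi_i(x_i)$ is diagonal with entries $\lambda_iq^{(j-1)h_i}$ whose $\ell^{\mathrm{th}}$ powers all equal $\lambda_i^{\ell}$ (using $q^{\ell}=1$), so $\varphi_i(x_i)^{\ell}=\lambda_i^{\ell}\Id_{V_i}$.

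For surjectivity I would argue as follows. Because $\gcd(h_i,\ell)=1$, the element $q^{h_i}$ is again a primitive $\ell^{\mathrm{th}}$ root of unity, so the $\ell$ scalars $\lambda_iq^{(j-1)h_i}$ ($j\in\llbracket 1,\ell\rrbracket$) are pairwise distinct (here $\lambda_i\neq 0$ is used). Thus $\varphi_i(x_i)$ has $\ell$ distinct eigenvalues, and Lagrange interpolation writes each spectral projection $E_{jj}$ onto $\K v_j^{(i)}$ as a polynomial in $\varphi_i(x_i)$; hence all the $E_{jj}$ lie in $\Ima(\varphi_i)$. Writing $\varphi_i(y_i)^{k}=\sum_{j}E_{j+k,\,j}$ with indices read modulo $\ell$, one obtains $E_{jj}\,\varphi_i(y_i)^{k}=E_{j,\,j-k}\in\Ima(\varphi_i)$ for all $j\in\llbracket 1,\ell\rrbracket$ and all $k\in\llbracket 0,\ell-1\rrbracket$; as $(j,k)$ varies these exhaust every matrix unit $E_{ab}$. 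Therefore $\Ima(\varphi_i)=\End_\K(V_i)$ and $\varphi_i$ is surjective.

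Finally, a surjection $\K_{q^{h_i}}[x_i,y_i]\twoheadrightarrow\End_\K(V_i)\cong M_{\ell}(\K)$ makes $V_i$ a module over the full matrix algebra, and the standard column module of $M_{\ell}(\K)$ is simple over any field; pulling this back along $\varphi_i$ shows that $V_i$ is a simple $\K_{q^{h_i}}[x_i,y_i]$-module, i.e.\ $\varphi_i$ defines an irreducible representation of dimension $\ell=\dim_\K V_i$. There is no genuinely hard step here; the only thing requiring care is the index bookkeeping in the surjectivity argument (the cyclic labelling of the $v_j^{(i)}$ and checking that the products $E_{jj}\varphi_i(y_i)^k$ really sweep out all matrix units), together with noticing that $\gcd(h_i,\ell)=1$ and $\lambda_i\neq 0$ are precisely what guarantee that $\varphi_i(x_i)$ has simple spectrum.
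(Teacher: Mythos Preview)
Your proof is correct and takes a genuinely different route from the paper. The paper first proves irreducibility by a direct ``minimal support'' argument on a hypothetical nonzero submodule $W\subseteq V_i$ (using $\varphi_i(y_i-\lambda_i^{-1}x_iy_i)$ to shorten a minimal vector), and only afterwards derives surjectivity from the Jacobson Density Theorem combined with Schur's Lemma, which is why the paper explicitly assumes $\K$ algebraically closed at this point. You reverse the order: you establish surjectivity first by an explicit matrix-unit computation (Lagrange interpolation on the simple-spectrum diagonal matrix $\varphi_i(x_i)$ to obtain the $E_{jj}$, then conjugation/multiplication by powers of the cyclic-shift matrix $\varphi_i(y_i)$ to reach every $E_{ab}$), and then read off irreducibility from the fact that $V_i$ becomes the standard simple module for $M_\ell(\K)$. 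Your argument is more elementary and, notably, works over an arbitrary field $\K$ without any algebraic-closure hypothesis; the paper's approach trades that generality for a shorter appeal to density. Both are valid; yours is self-contained, theirs is quicker once one accepts the ambient hypothesis.
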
 
\begin{proof}
Using (\ref{eqnActionXonBasis}) and (\ref{eqnActionYonBasis}) it may be verified that $\varphi_i(x_i) \varphi_i(y_i)=q^{h_i} \varphi_i(y_i) \varphi_i(x_i)$, so that the relations between generators $x_i, y_i \in \K_{q^{h_i}}[x_i, y_i]$ are preserved in the image of $\varphi_i$. By the universal property of algebras with generators and relations, $\varphi_i$ then becomes an algebra homomorphism and thus defines a representation of $\K_{q^{h_i}}[x_i, y_i]$ on $V_i$. This gives $V_i$ a $\K_{q^{h_i}}[x_i, y_i]$-module structure with multiplication defined as $rv=\varphi_i(r)\cdot v$, for all $r\in \K_{q^{h_i}}[x_i, y_i]$ and $v\in V$.

To show that $(\varphi_i, V_i)$ defines an irreducible representation of the quantum affine space, we suppose that there exists a nonzero subspace $W\subseteq V_i$ which is a sub-representation of $V_i$ and we argue that $W=V_i$. Assume (for contradiction) that $v_j^{(i)} \notin W$, for all $j \in \llbracket 1, \ell \rrbracket$, and let $w:=\sum_{j=1}^{\ell} \alpha_j v^{(i)}_j \in W$, with $\alpha_1, \ldots, \alpha_{\ell} \in \K$, be a nonzero element of $W$ which is minimal with respect to the number of nonzero summands. Note that the assumption $v_j^{(i)}\notin W$ implies that at least two coefficients in $\{\alpha_1, \ldots, \alpha_{\ell}\}$ are nonzero. Since $W$ is a sub-representation then $\varphi_i(y_i)\cdot w \in W$ and $\varphi_i(x_iy_i)\cdot w \in W$, where
\begin{align*}
\varphi_i(y_i)\cdot w &{}= \sum_{j=1}^{\ell-1} \alpha_j v^{(i)}_{j+1} + \alpha_{\ell} v^{(i)}_1,  \\
\varphi_i(x_iy_i)\cdot w = \varphi_i(x_i) \cdot (\varphi_i(y_i) \cdot w) &{}=  \sum_{j=1}^{\ell-1} \lambda_i q^{j h_i} \alpha_j v^{(i)}_{j+1} + \lambda_i \alpha_{\ell} v^{(i)}_1.
\end{align*} 
We now use these elements of $W$ to arrive at a contradiction. If $\alpha_{\ell} \neq 0$ then we consider $\varphi_i(y_i - \lambda_i^{-1} x_i y_i) \cdot w \in W$, where
\[\varphi_i(y_i - \lambda_i^{-1} x_i y_i) \cdot w = \sum_{j=1}^{\ell-1} \alpha_j(1-q^{j h_i}) v^{(i)}_{j+1}. \]
This is nonzero, since $\ell \nmid jh_i$ for all $j \in \llbracket 1, \ell-1 \rrbracket$, and it has fewer summands than $w$, thus contradicting the minimality of $w$. If, instead, $\alpha_{\ell}=0$ and $\alpha_k\neq 0$, for some $k \in \llbracket 1, \ell-1 \rrbracket$, then we consider the element $\varphi_i(y_i - q^{-k h_i} \lambda_i^{-1} x_i y_i) \cdot w$ and argue in a similar way to show that this also contradicts the minimality of $w$. Therefore, our assumption is false and there must exist some $j\in \llbracket 1, \ell \rrbracket$ such that $v_j^{(i)}\in W$. But then $W=V_i$, since $\varphi_i(y_i)$ permutes all the basis vectors, as can be seen in (\ref{eqnActionYonBasis}). We deduce from this that $(\varphi_i, V_i)$ is irreducible.

To verify surjectivity of $\varphi_i$, note that $V_i$ is a simple $\K_{q^{h_i}}[x_i, y_i]$-module, finite-dimensional over $\K$, and $\K$ is algebraically closed. Thus, by Schur's Lemma, $\End_{\K_{q^{h_i}}[x_i, y_i]}(V_i) \cong \K$ and we can apply the Jacobson Density Theorem to $R=\K_{q^{h_i}}[x_i, y_i]$, with  $R'= \End_{\K_{q^{h_i}}[x_i, y_i]}(V_i)$, to see that the map $\rho: \K_{q^{h_i}}[x_i, y_i] \rightarrow \End_{\K}(V_i)$, sending $r$ to $f_r$, is surjective. Recall that $f_r$ was defined as  $f_r(v)=rv$, for all $r\in \K_{q^{h_i}}[x_i, y_i]$ and $v\in V_i$, and this is precisely the definition of $\varphi_i(r)$ as stated earlier. Hence $\varphi_i= \rho$ is surjective.

Finally, it is easily verified that $\varphi_i(x_i)^{\ell}= \lambda_i^{\ell} \Id_{V_i}$ and $\varphi_i(y_i)^{\ell}= \Id_{V_i}$. This proves the final property in the statement.
\end{proof}

Consider a quantum affine space $\K_{q^M}[T_1, \ldots, T_N]$ for some matrix $M$ with a kernel of dimension $t \in \llbracket 0, N-1 \rrbracket$ and invariant factors $\{h_i\}_{i=1}^s$, where $2s=N-t$. The skew normal form of $M$ is $S=EME^T \in M_{N}(\Z)$, where $E=(e_{i,j})_{i,j}\in M_N(\Z)$ is invertible. We denote the quantum affine space associated to $S$ as
\[D:=\K_{q^S}[x_1, y_1, x_2, y_2, \ldots, x_s, y_s, z_1, \ldots, z_t]. \]
If $\gcd(h_i, \ell)=1$, for all $i \in \llbracket 1, s \rrbracket$, then each subalgebra $\K_{q^{h_i}}[x_i, y_i]\subseteq D$ is a quantum affine plane satisfying the conditions of Lemma \ref{lemReponQAP}, hence it must have an irreducible representation $(\varphi_i, V_i)$ of dimension $\ell$. Let $V:=V_1 \otimes \cdots \otimes V_s$ so that the dimension of $V$ is $\ell^s$. Using this notation we state the following proposition:

\begin{proposition}\label{propReponQAS}
Let $q$ be a primitive $\ell^{\mathrm{th}}$ root of unity, for some $\ell >1$, and $M\in M_N(\Z)$ be a skew-symmetric matrix with invariant factors $h_1, \ldots, h_s\in \Z\backslash \{0\}$ satisfying $\gcd(h_i, \ell)=1$, for all $i \in \llbracket 1, s \rrbracket$. The following statements hold:
\begin{enumerate}[(i)]
\item $\PI(\K_{q^M}[T_1, \ldots, T_N])=\ell^s$.
\item There is an algebra homomorphism $\varphi: D \rightarrow \End_\K(V)$ which defines an irreducible representation of $D$ on $V$ of dimension $\ell^s$. It is defined using the tensor product of the maps $\varphi_i$ found in Lemma \ref{lemReponQAP}
 and it acts on the generators of $D$ as follows:
\begin{align*}
\varphi(x_i) &{}= \Id_{V_1} \otimes \cdots \otimes \Id_{V_{i-1}} \otimes \varphi_i(x_i) \otimes \Id_{V_{i+1}} \otimes \cdots \otimes \Id_{V_s}, \\
\varphi(y_i) &{}= \Id_{V_1} \otimes \cdots \otimes \Id_{V_{i-1}} \otimes \varphi_i(y_i) \otimes \Id_{V_{i+1}} \otimes \cdots \otimes \Id_{V_s}, \\
\varphi(z_j) &{}= \xi_j \Id_V,
\end{align*}
where $\xi_j \in \K^*$ for all $j \in \llbracket 1, t \rrbracket$, $\Id_{V_i}$ denotes the identity map on $V_i$ for all $i \in \llbracket 1, s \rrbracket$, and $\Id_V:=\Id_{V_1} \otimes \cdots \otimes \Id_{V_s}$ denotes the identity map on $V$.
Moreover, $\varphi(x_i)^{-1}=\lambda_i^{-\ell} \varphi(x_i)^{\ell-1}$ and  $\varphi(y_i)^{-1}=\varphi(y_i)^{\ell-1}$, for all $i\in \llbracket 1, s \rrbracket$.

\item $(\varphi, V)$ induces an irreducible representation, $(\phi, V)$, of $\K_{q^M}[T_1, \ldots, T_N]$ where, for all $i\in \llbracket 1, N \rrbracket$, we have
\[\phi(T_i):= \varphi(x_1)^{e'_{i, 1}} \varphi(y_1)^{e'_{i,2}} \cdots  \varphi(x_s)^{e'_{i, 2s-1}}  \varphi(y_s)^{e'_{i,2s}} \varphi(z_1)^{e'_{i,2s+1}} \cdots \varphi(z_t)^{e'_{i, 2s+t}},\]
with $E^{-1} := (e'_{i,j})_{i,j} \in M_N(\Z)$. Moreover, for all $i\in \llbracket 1, 2s+t\rrbracket$ there exists some $\nu_i \in \K^*$ such that $\phi(T_i)^{-1}=\nu_i^{-1} \phi(T_i)^{\ell-1}$.
\end{enumerate}
\end{proposition}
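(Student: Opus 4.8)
Part~(i) is a direct application of Lemma~\ref{lemPIdeg}: since $\gcd(h_i,\ell)=1$ for every $i$, the formula there reduces to $\PI(\K_{q^M}[T_1,\dots,T_N])=\prod_{i=1}^{s}\ell/\gcd(h_i,\ell)=\ell^{s}$, where $2s=N-\dim(\ker(M))$. So I would dispose of (i) in one line and concentrate on (ii) and (iii).

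For (ii), I would first read off from the block shape of $S=\mathrm{Sk}(M)$ that the only nontrivial relations in $D=\K_{q^S}[x_1,y_1,\dots,x_s,y_s,z_1,\dots,z_t]$ are $x_iy_i=q^{h_i}y_ix_i$ for $i\in\llbracket1,s\rrbracket$, all other pairs of generators commuting. Defining $\varphi(x_i),\varphi(y_i)$ by the tensor formulas in the statement and $\varphi(z_j)=\xi_j\,\Id_V$ for arbitrary $\xi_j\in\K^{*}$ then preserves every relation: generators in distinct tensor slots commute, the pair $\varphi(x_i),\varphi(y_i)$ satisfies the quantum-plane relation because $\varphi_i$ does (Lemma~\ref{lemReponQAP}), and each $\varphi(z_j)$ is central; hence $\varphi$ extends to an algebra homomorphism by the universal property of quantum affine spaces. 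For irreducibility I would invoke that each $\varphi_i\colon\K_{q^{h_i}}[x_i,y_i]\to\End_\K(V_i)$ is already surjective (Lemma~\ref{lemReponQAP}), so the subalgebra of $\End_\K(V)$ generated by all the $\varphi(x_i),\varphi(y_i)$ equals $\End_\K(V_1)\otimes\cdots\otimes\End_\K(V_s)=\End_\K(V)$; therefore $\varphi$ itself is surjective onto $\End_\K(V)$ and $V$ is a simple $D$-module of dimension $\prod_{i}\ell=\ell^{s}$. The identities $\varphi(x_i)^{-1}=\lambda_i^{-\ell}\varphi(x_i)^{\ell-1}$ and $\varphi(y_i)^{-1}=\varphi(y_i)^{\ell-1}$ then follow by tensoring up $\varphi_i(x_i)^{\ell}=\lambda_i^{\ell}\Id_{V_i}$ and $\varphi_i(y_i)^{\ell}=\Id_{V_i}$ and rearranging.

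For (iii), I would write $u_1=x_1,u_2=y_1,\dots,u_{2s+t}=z_t$ for the generators of $D$, so $\varphi(u_a)\varphi(u_b)=q^{S_{ab}}\varphi(u_b)\varphi(u_a)$ and — by (ii) together with $\varphi(z_j)^{\ell}=\xi_j^{\ell}\Id_V$ — each $\varphi(u_a)$ is invertible with $\varphi(u_a)^{\ell}$ a nonzero scalar. Putting $\varphi(u)^{v}:=\varphi(u_1)^{v_1}\cdots\varphi(u_N)^{v_N}$ for $v\in\Z^{N}$, a routine reordering computation gives $\varphi(u)^{v}\varphi(u)^{w}=q^{\,v^{T}Sw}\,\varphi(u)^{w}\varphi(u)^{v}$ for all $v,w\in\Z^{N}$. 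Since $\phi(T_i)=\varphi(u)^{r_i}$ with $r_i$ the $i$-th row of $E^{-1}$, this yields $\phi(T_i)\phi(T_j)=q^{\,r_i^{T}Sr_j}\phi(T_j)\phi(T_i)$, and $r_i^{T}Sr_j=e_i^{T}\big(E^{-1}S(E^{-1})^{T}\big)e_j=e_i^{T}Me_j=M_{ij}$ because $S=EME^{T}$ forces $E^{-1}S(E^{-1})^{T}=M$; hence $\phi$ respects the defining relations of $\K_{q^M}[T_1,\dots,T_N]$ and extends to an algebra homomorphism. Next I would raise the $q$-commuting invertible product $\varphi(u)^{r_i}$ to the $\ell$-th power: this equals a root-of-unity reordering scalar times $\prod_a(\varphi(u_a)^{\ell})^{r_{i,a}}$, which is a nonzero scalar $\nu_i$, so $\phi(T_i)^{\ell}=\nu_i\,\Id_V$ and $\phi(T_i)^{-1}=\nu_i^{-1}\phi(T_i)^{\ell-1}$. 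Finally, since each $\phi(T_i)^{-1}$ is then a polynomial in $\phi(T_i)$, the image of $\phi$ contains $\phi(T_1)^{\pm1},\dots,\phi(T_N)^{\pm1}$; as the rows of the $\Z$-invertible matrix $E^{-1}$ form a $\Z$-basis of $\Z^{N}$, each $\varphi(u_a)$ is a scalar multiple of a product of the $\phi(T_j)^{\pm1}$, so $\End_\K(V)=\varphi(D)$ lies in the image of $\phi$ and $(\phi,V)$ is irreducible of dimension $\ell^{s}$.

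The main obstacle will be the bookkeeping in part (iii): getting the exponent and sign conventions right so that $\varphi(u)^{v}\varphi(u)^{w}=q^{v^{T}Sw}\varphi(u)^{w}\varphi(u)^{v}$ holds with $S$ exactly, and then verifying that conjugating $S$ by $E^{-1}$ recovers $M$ on the nose rather than $-M$ or a transpose. Equally delicate is the order of the argument: the \emph{extra} identity $\phi(T_i)^{-1}=\nu_i^{-1}\phi(T_i)^{\ell-1}$ — which requires $\phi(T_i)^{\ell}$ to be scalar, hence requires the $\varphi(z_j)$ to have been sent to nonzero scalars in part (ii) rather than to $0$ — must be established before it can be used to carry the surjectivity of $\varphi$ over to $\phi$ and thereby conclude irreducibility.
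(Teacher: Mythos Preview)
Your proposal is correct and follows essentially the same route as the paper. The only notable difference is in part~(ii): the paper first writes down an explicit isomorphism $\iota$ from $D$ to the tensor product $\bigotimes_i \K_{q^{h_i}}[x_i,y_i]\otimes\K[z_1,\dots,z_t]$ and then appeals to Jacobson Density again to get irreducibility, whereas you argue directly that surjectivity of each $\varphi_i$ onto $\End_\K(V_i)$ forces the image of $\varphi$ to be all of $\End_\K(V_1)\otimes\cdots\otimes\End_\K(V_s)=\End_\K(V)$ --- this is a minor streamlining of the same idea. Part~(iii) is argued identically in substance: the paper spells out the inverse map $\tau:D\to\End_\K(V)$ sending $x_i\mapsto\phi(\underline{T}^{E_{2i-1}})$ etc.\ and checks $\tau(x_i)=\nu\varphi(x_i)$ by hand, which is exactly your observation that the rows of $E^{-1}$ form a $\Z$-basis so each $\varphi(u_a)$ is a scalar times a Laurent monomial in the $\phi(T_j)$.
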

\begin{proof}
\
\begin{enumerate}[(i)]
\item Straightforward application of Lemma \ref{lemPIdeg}.

\item It is clear from the matrix $S$ that the elements  $z_1, \ldots, z_t \in Z(D)$ generate a polynomial ring $\K[z_1, \ldots, z_t]\subseteq D$ and that, for each $i \in \llbracket 1, s \rrbracket$, the pair $x_i, \, y_i$ generates a quantum affine plane $\K_{q^{h_i}}[x_i, y_i]\subseteq D$ such that $x_i, \, y_i$ commute with $x_j, \, y_j$, for all $j \in \llbracket 1, s \rrbracket \backslash \{i\}$. The generators of $D$ therefore share the same commutation relations as the generators of $\left(\bigotimes_{i=1}^s \K_{q^{h_i}}[x_i, y_i]\right) \otimes \K[z_1, \ldots, z_t]$. Using the universal property of Ore extensions we deduce that the following map extends to an algebra homomorphism, and that this is in fact an isomorphism:
\begin{align}\label{isomIota}
\iota: D &{} ~ \longrightarrow ~ \K_{q^{h_1}}[x_1, y_1] \otimes \ldots \otimes \K_{q^{h_s}}[x_s, y_s] \otimes \K[z_1, \ldots, z_t]  \\
x_i &{} ~ \longmapsto ~ 1_1 \otimes \cdots \otimes 1_{i-1} \otimes x_i \otimes 1_{i+1} \otimes \cdots \otimes 1_s \otimes 1 \nonumber \\
y_i &{} ~ \longmapsto ~ 1_1 \otimes \cdots \otimes 1_{i-1} \otimes y_i \otimes 1_{i+1} \otimes \cdots \otimes 1_s \otimes 1 \nonumber \\
z_j &{} ~ \longmapsto ~ 1_1 \otimes \cdots \otimes 1_s \otimes z_j, \nonumber
\end{align}
where $1_i$ is the identity element in $\K_{q^{h_i}}[x_i, y_i]$, for all $i\in \llbracket 1, s \rrbracket$. 

That is, for a general element $d = \sum_{\underline{l}\in \N^{2s+t}} \alpha_{\underline{l}} x_1^{l_1} y_1^{l_2} \cdots x_s^{l_{2s-1}} y_s^{l_{2s}} z_1^{l_{2s+1}} \cdots z_t^{l_{2s+t}} \in D$,
\[ \iota(d) =  \sum_{\underline{l}\in \N^{2s+t}} \alpha_{\underline{l}} x_1^{l_1} y_1^{l_2} \otimes \cdots  \otimes x_s^{l_{2s-1}} y_s^{l_{2s}} \otimes  z_1^{l_{2s+1}} \cdots z_t^{l_{2s+t}}. \]

We can define an irreducible representation $(\varphi', V)$ of the tensor product algebra on the right-hand side of (\ref{isomIota}) by applying the Jacobson Density Theorem, which maintains irreducibility, and by noting that the image of $\K[z_1, \ldots, z_t]$ under $\varphi'$ is contained in $\K$ by Schur's Lemma. Here, $\varphi':=\varphi_1 \otimes \cdots \varphi_s$ for the maps $\varphi_i$ as defined in Lemma \ref{lemReponQAP}. We then pull this representation back to an irreducible representation $(\varphi, V)$ of $D$ via the isomorphism above by setting $\varphi:=\varphi' \circ \iota$. This is defined on the generators of $D$ as

\begin{align}\label{eqnVarphiFromD}
\varphi: D &{} ~ \longrightarrow ~ \End_\K(V) \nonumber \\
x_i &{} ~ \longmapsto ~ \Id_{V_1} \otimes \cdots \otimes \Id_{V_{i-1}} \otimes \varphi_i(x_i) \otimes \Id_{V_{i+1}} \otimes \cdots \otimes \Id_{V_s} \nonumber \\
y_i &{} ~ \longmapsto ~ \Id_{V_1} \otimes \cdots \otimes \Id_{V_{i-1}} \otimes \varphi_i(y_i) \otimes \Id_{V_{i+1}} \otimes \cdots \otimes \Id_{V_s} \nonumber \\
z_j &{} \longmapsto ~ \xi_j \Id_V,
\end{align}
where $\Id_{V_i}$ is the identity map on $V_i$ and $\Id_V:=\Id_{V_1} \otimes \cdots \otimes \Id_{V_s}$ is the identity map on $V$.

We may form a basis $\{\mu_{i_1, i_2, \ldots, i_s} \mid 1 \leq i_1, i_2, \ldots, i_s \leq \ell \}$ of $V$ in terms of the basis of each $V_i$ by setting:
\[ \mu_{i_1, i_2, \ldots, i_s} := v_{i_1}^{(1)} \otimes v_{i_2}^{(2)} \otimes \cdots \otimes v_{i_s}^{(s)}.\]
Then, using (\ref{eqnActionXonBasis}) and (\ref{eqnActionYonBasis}), one may verify the following:
\begin{align*}
\varphi(x_j)\cdot \mu_{i_1, \ldots, i_s} &{} = v_{i_1}^{(1)} \otimes \cdots \otimes v_{i_{j-1}}^{(j-1)} \otimes \varphi_j(x_j) \cdot v_{i_j}^{(j)} \otimes v_{i_{j+1}}^{(j+1)} \otimes \cdots \otimes v_{i_s}^{(s)} \\
&{}= v_{i_1}^{(1)} \otimes \cdots \otimes v_{i_{j-1}}^{(j-1)} \otimes \lambda_j q^{(i_j-1) h_j} v_{i_j}^{(j)} \otimes v_{i_{j+1}}^{(j+1)} \otimes \cdots \otimes v_{i_s}^{(s)}\\
&{}= \lambda_j q^{(i_j-1) h_j} \mu_{i_1, \ldots, i_s} \\
\varphi(y_j)\cdot \mu_{i_1,\ldots,  i_s} &{}=   v_{i_1}^{(1)}  \otimes \cdots \otimes v_{i_{j-1}}^{(j-1)} \otimes \varphi_j(y_j)\cdot v_{i_j}^{(j)}\otimes v_{i_{j+1}}^{(j+1)} \otimes \cdots \otimes v_{i_s}^{(s)} \\
&{}= \begin{cases} v_{i_1}^{(1)}  \otimes \cdots \otimes v_{i_{j-1}}^{(j-1)} \otimes v_{1+i_j}^{(j)}\otimes v_{i_{j+1}}^{(j+1)} \otimes \cdots \otimes v_{i_s}^{(s)},  &  \text{ if } i_j\neq \ell; \\
				v_{i_1}^{(1)}  \otimes \cdots \otimes v_{i_{j-1}}^{(j-1)} \otimes  v_{1}^{(j)}\otimes v_{i_{j+1}}^{(j+1)} \otimes \cdots \otimes v_{i_s}^{(s)},  & \text{ if } i_j= \ell,
	\end{cases} \\
&{}=  \begin{cases} 
													\mu_{i_1, \ldots, i_{j-1}, 1+i_j, i_{j+1}, \ldots,  i_s}, & \text{ if } i_j\neq \ell; \\
													\mu_{i_1, \ldots, i_{j-1}, 1, i_{j+1}, \ldots, i_s}, & \text{ if } i_j= \ell, 
												\end{cases} \\
\varphi(z_j) \cdot \mu_{i_1, \ldots, i_s}  &{}= \xi_j \mu_{i_1, \ldots, i_s}.
\end{align*}

Fixing this basis for $V$ allows us to represent the maps $\varphi(x_i), \, \varphi(y_i), \, \varphi(z_j) \in \End_{\K}(V)$ as matrices in $M_{\ell^s}(\K)$ by first defining $\varphi_i(x_i), \, \varphi_i(y_i)$ as the matrices in (\ref{matrices}), and then taking the images in (\ref{eqnVarphiFromD}) to be the Kronecker product of these matrices.

\item 
The algebra homomorphism $\varphi: D \rightarrow \End_\K(V)$ preserves the relations 
\[\varphi(x_i)\varphi(y_i)=q^{h_i} \varphi(y_i) \varphi(x_i),\]
for all $i \in \llbracket 1, s \rrbracket$, and, as was shown in Lemma \ref{lemReponQAP}, $\varphi_i(x_i)^{\ell}=\lambda_i^{\ell} \Id_{V_i}$ and $ \varphi_i(y_i)^{\ell}=\Id_{V_i}$. Hence we obtain 
\begin{equation}\label{eqnPhiX^l}
\begin{rcases}
\varphi(x_i)^{-1} &{}=\lambda_i^{-\ell}\varphi(x_i)^{\ell-1}=\varphi(\lambda_i^{-\ell}x_i^{\ell-1}) \\
\varphi(y_i)^{-1} &{}=\varphi(y_i)^{\ell-1}= \varphi(y_i^{\ell-1}) \\
\varphi(z_j)^{-1} &{}=\xi_j^{-1} \Id_V
\end{rcases}
\in \varphi(D).
\end{equation}
These identities show that $\varphi$ may be extended to define a representation of the quantum torus associated to $D$ and hence, any negative powers of the generators $x_i, \, y_i, \, z_j$ in the argument of $\varphi$ may be replaced with positive powers, upon an appropriate multiplication by a scalar.

From the identity $E^{-1}S(E^{-1})^T=M$ we obtain the equality
\begin{equation} \label{eqnM_ij}
 m_{i,j} = \sum_{k=1}^s h_k(e'_{i, 2k-1} e'_{j, 2k} - e'_{i, 2k} e'_{j, 2k-1}).
\end{equation}
Any algebra homomorphism $\phi: \K_{q^M}[T_1, \ldots, T_N] \rightarrow \End_\K(V)$ must preserve the commutation rules between the $T_i$. That is, for all $i, j \in \llbracket 1, N \rrbracket$,
\[ \phi(T_i)\phi(T_j) = q^{m_{i,j}} \phi(T_j) \phi(T_i) = q^{\sum_{k=1}^s h_k(e'_{i, 2k-1} e'_{j, 2k} - e'_{i, 2k} e'_{j, 2k-1})} \phi(T_j)\phi(T_i). \]
Let $E^{-1}_i$ denote the $i^{\text{th}}$ row of $E^{-1}$, for all $i \in \llbracket 1, N \rrbracket$, and let $\underline{d}:=x_1 y_1 \ldots x_s y_s z_1 \ldots z_t \in D$ be the ordered monomial of all the generators of $D$. We define the following elements of the quantum torus $D \Sigma^{-1}$, where $\Sigma\subset D$ is the multiplicatively closed set generated by the generators of $D$:
\begin{equation}\label{eqnU_i}
\underline{d}^{E^{-1}_i}:=x_1^{e'_{i, 1}} y_1^{e'_{i,2}} \cdots x_s^{e'_{i, 2s-1}}  y_s^{e'_{i,2s}} z_1^{e'_{i,2s+1}} \cdots z_t^{e'_{i,2s+t}} \in D\Sigma^{-1}.
 \end{equation}
We may apply $\varphi$ to $\underline{d}^{E_i^{-1}}$, using (\ref{eqnPhiX^l}) to replace any negative powers $e'_{i,j}$ with positive powers $\ell - e'_{i,j}$ so that $\varphi(\underline{d}^{E_i^{-1}})\in \varphi(D)$, and, using (\ref{eqnM_ij}) and (\ref{eqnU_i}),  it may be verified that
\[ \varphi(\underline{d}^{E^{-1}_i}) \varphi(\underline{d}^{E^{-1}_j}) =  q^{m_{i,j}}  \varphi(\underline{d}^{E^{-1}_j}) \varphi(\underline{d}^{E^{-1}_i}), \]
for all $i, j \in \llbracket 1, N \rrbracket$. Therefore, by the universal property of algebras with generators and relations,
the following map extends to an algebra homomorphism and hence defines a representation of $\K_{q^M}[T_1, \ldots, T_N]$ on $V$:
\begin{align*}
\phi: \K_{q^M}[T_1, \ldots, T_N] & \longrightarrow \End_\K(V) \\
T_i & \longmapsto \varphi(\underline{d}^{E^{-1}_i}).
\end{align*}
Note that $\Ima(\phi) \subseteq \Ima(\varphi)$ and, for all $i\in \llbracket 1, N \rrbracket$,
\[\phi(T_i)^{\ell}=\varphi(\underline{d}^{E^{-1}_i})^{\ell} = \varphi(x_1^{e'_{i, 1}} y_1^{e'_{i,2}} \cdots x_s^{e'_{i, 2s-1}}  y_s^{e'_{i,2s}} z_1^{e'_{i,2s+1}} \cdots z_t^{e'_{i,2s+t}})^{\ell} = \nu_i \Id_V \]
for some $\nu_i \in \K^*$, by properties of $\varphi$. Thus $\phi(T_i)^{-1}= \nu_i^{-1} \phi(T_i)^{\ell - 1}$, which allows us to deal with negative powers of $T_i$ under $\phi$ in a similar manner to how we dealt with negative powers of $x_i, \, y_i, \, z_j$ under $\varphi$.

We now wish to show that $(\phi, V)$ is irreducible. In a similar way to above, we may write each $\varphi(x_i), \, \varphi(y_i), \, \varphi(z_j)$ in terms of the $\phi(T_i)$. The identity $EME^T = S$ allows us to write the entries $s_{i,j}\in S$ in terms of the entries $m_{i,j}\in M$ and $e_{i,j}\in E$ as follows:
\[ s_{i,j} = \sum_{l, k = 1}^N m_{l,k}e_{i,l}e_{j,k}. \]
Letting $E_i$ denote the $i^{\text{th}}$ row of $E$, for all $i \in \llbracket 1, N \rrbracket$, and $\underline{T}:=T_1\cdots T_N$ the ordered monomial of the generators of $\K_{q^M}[T_1, \ldots, T_N]$, we define the following elements of $\K_{q^M}[T_1^{\pm 1}, \ldots, T_N^{ \pm 1}]$:
\begin{align}
\underline{T}^{E_i}:=T_1^{e_{i,1}}\cdots T_N^{e_{i, N}}.
\end{align}
Using this, one may verify, by explicit calculation on $\underline{T}^{E_i}\underline{T}^{E_j}$, that
\[ \phi(\underline{T}^{E_i}) \phi(\underline{T}^{E_j}) = q^{s_{i,j}}  \phi(\underline{T}^{E_j})\phi(\underline{T}^{E_i}), \]
for all $i, j\in \llbracket 1, N \rrbracket$. In particular, the elements
\[\phi(\underline{T}^{E_1}), \; \phi(\underline{T}^{E_2}), \; \ldots, \; \phi(\underline{T}^{E_{2s-1}}), \; \phi(\underline{T}^{E_{2s}}), \; \phi(\underline{T}^{E_{2s+1}}), \; \ldots, \, \phi(\underline{T}^{E_N})\]
share the same relations as the elements $x_1, \, y_1, \, \ldots, \, x_s, \, y_s, \, z_1, \, \ldots, \, z_t$. Therefore, again by the universal property of algebras with generators and relations, the following map extends to an algebra homomorphism:
\begin{align*}
\tau: D &{} \longrightarrow \End_{\K}(V) \\
x_i &{} \longmapsto \phi(\underline{T}^{E_{2i-1}}) \\
y_i &{} \longmapsto \phi(\underline{T}^{E_{2i}}) \\
z_j &{} \longmapsto \phi(\underline{T}^{E_{2s+j}}),
\end{align*}
with $\Ima(\tau)\subseteq \Ima(\phi)$. Since $E^{-1}E=\Id$, where $\Id$ is the identity matrix, we deduce that $\sum_{k=1}^N e'_{k,i} e_{j,k} = 1$ if $i=j$, and otherwise the sum is zero. Using this, along with the definition of $\phi$, we obtain
\begin{align*}
\tau(x_i) &{}= \phi(\underline{T}^{E_{2i-1}})= \phi(T_1^{e_{2i-1,1}}\cdots T_N^{e_{2i-1, N}}) \\
&{}= \varphi(\underline{d}^{E^{-1}_1})^{e_{2i-1,1}} \ldots \varphi(\underline{d}^{E^{-1}_N})^{e_{2i-1,N}} \\
&{} = \varphi(x_1^{e'_{1, 1}} y_1^{e'_{1,2}} \cdots  z_t^{e'_{1,N}})^{e_{2i-1, 1}} \cdots \varphi(x_1^{e'_{i, 1}} y_1^{e'_{i,2}}  \cdots z_t^{e'_{N,N}})^{e_{2i-1, N}} \\
&{}= \varphi(\nu x_1^{\sum_{k=1}^N e'_{k,1}e_{2i-1, k}} y_1^{\sum_{k=1}^N e'_{k,2}e_{2i-1, k}} \cdots z_t^{\sum_{k=1}^N e'_{k,N}e_{2i-1, k}}) \\
&{} = \varphi(\nu x_i),
\end{align*}
where $\nu \in \K^*$ is the scalar resulting from reordering the generators of $D$. Similarly we can show that $\tau(y_i)=\varphi(\nu' y_i)$ and $\tau(z_j)=\varphi(\nu'' z_j)$, for some $\nu', \nu'' \in \K^*$.  From this we deduce that $\Ima(\tau)=\Ima(\varphi)$ and hence $\Ima(\varphi) \subseteq \Ima(\phi)$. It  then follows that $(\phi, V)$ is an irreducible representation of $\K_{q^M}[T_1, \ldots, T_N]$ because $(\varphi, V)$ is irreducible.
\end{enumerate}
\end{proof}

Using the irreducible representation of $A$ above, we construct an irreducible representation of $R_t(M_n)$ of the same dimension.
\begin{proposition}\label{propIrredRepOnQDR}
Take $q$ to be a primitive $\ell^{\mathrm{th}}$ root of unity, with $\ell$ odd.  Let $A:=\K\langle \Omega_{n,n}^t \rangle \subseteq R_t(M_n)$ be the subalgebra generated by the final quantum minors of size less than or equal to $t$, denoted by $T_1, \ldots, T_{2s+t} \in R_t(M_n)$, where $2s=2nt-t^2-t$. Let $(\phi, V)$ be the $\ell^s$-dimensional irreducible representation of $A$ defined in Proposition \ref{propReponQAS}.

Then, every element $r\in R_t(M_n)$ may be written as
\[ r= \sum_{\underline{i} \in \Z^{2s+t}} \alpha_{\underline{i}} T_1^{i_1} \cdots T_{2s+t}^{i_{2s+t}} \in \K_{q^M}[T_1^{\pm 1}, \ldots, T_{2s+t}^{\pm 1}] \]
where $\alpha_{\underline{i}}\in \K$ and $i_j\in \Z$, for all $j \in \llbracket 1, 2s+t \rrbracket$, and there is an algebra homomorphism
\begin{align*}
\rho: R_t(M_n) &{} ~ \longrightarrow ~ \End_{\K}(V) \\
r &{} ~ \longmapsto ~ \sum_{\underline{i}\in \Z^{2s+t}} \alpha_{\underline{i}} \phi(T_1)^{i_1} \cdots \phi(T_{2s+t})^{i_{2s+t}}
\end{align*}
which defines an irreducible representation of $R_t(M_n)$ of dimension $\ell^s$.
\end{proposition}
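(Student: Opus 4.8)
The plan is to realise $\rho$ as the restriction to $R_t(M_n)$ of the representation of the quantum torus $A\Sigma^{-1}$ obtained by extending $\phi$, so that essentially no new work is needed beyond Lemma \ref{lemAQAS} and Proposition \ref{propReponQAS}.

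First I would collect the structural facts. By Lemma \ref{lemAQAS}, $A=\K\langle \Omega_{n,n}^t\rangle$ is a quantum affine space with skew-symmetric commutation matrix $M'$ of rank $2nt-t^2-t$, all of whose invariant factors are powers of $2$, and $A\subseteq R_t(M_n)\subseteq A\Sigma^{-1}$, where $\Sigma\subset A$ is the Ore set generated by $\Omega_{n,n}^t=\{T_1,\dots,T_{2s+t}\}$ and $A\Sigma^{-1}$ is the associated quantum torus. Since the Laurent monomials $T_1^{i_1}\cdots T_{2s+t}^{i_{2s+t}}$ ($\underline{i}\in\Z^{2s+t}$) form a $\K$-basis of $A\Sigma^{-1}$ and $R_t(M_n)$ is a subalgebra of this torus, every $r\in R_t(M_n)$ admits a unique expansion of the stated form.

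Next I would invoke Proposition \ref{propReponQAS}: since $\ell$ is odd and each invariant factor of $M'$ is a power of $2$, we have $\gcd(h_i,\ell)=1$ for all $i$, so parts (ii)--(iii) of that proposition produce an irreducible representation $(\phi,V)$ of $A=\K_{q^M}[T_1,\dots,T_{2s+t}]$ with $\dim_{\K}V=\ell^s$ in which each $\phi(T_i)$ is invertible, with $\phi(T_i)^{-1}=\nu_i^{-1}\phi(T_i)^{\ell-1}$ for suitable $\nu_i\in\K^*$. Hence $\phi$ sends every element of $\Sigma$ to a unit of $\End_{\K}(V)$, so by the universal property of Ore localisation it extends uniquely to an algebra homomorphism $\widetilde\phi\colon A\Sigma^{-1}\to\End_{\K}(V)$ with $\widetilde\phi(T_j^{-1})=\phi(T_j)^{-1}$. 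Setting $\rho:=\widetilde\phi|_{R_t(M_n)}$, evaluating on the expansion of $r$ reproduces exactly the displayed formula, and $\rho$ is an algebra homomorphism because it is a restriction of one. Finally, irreducibility is immediate: $\rho|_A=\phi$, and since $A\subseteq R_t(M_n)$ every $R_t(M_n)$-submodule of $V$ is in particular an $A$-submodule, so $V$ is simple over $R_t(M_n)$ because it is simple over $A$ by Proposition \ref{propReponQAS}(iii). One then notes $\dim_{\K}V=\ell^s=\ell^{(2nt-t^2-t)/2}=\PI(R_t(M_n))$ by Theorem \ref{PIdegQDR} (as $\ell$ is odd), so this representation has maximal dimension.

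The argument is a bookkeeping assembly of earlier results, so I do not anticipate a real obstacle. The two points requiring care are: (a) making $\rho$ well defined --- cleanest if one never defines it by the Laurent formula but rather as the restriction of the torus representation $\widetilde\phi$, reducing well-definedness to the universal property of localisation rather than to uniqueness of the expansion; and (b) justifying that $\phi$ extends past the generators to their inverses, which is exactly what the identities $\phi(T_i)^{-1}=\nu_i^{-1}\phi(T_i)^{\ell-1}$ from Proposition \ref{propReponQAS}(iii) provide.
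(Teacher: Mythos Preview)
Your proposal is correct and follows essentially the same approach as the paper: both extend $\phi$ to the quantum torus $A\Sigma^{-1}$ via the universal property of Ore localisation, restrict to $R_t(M_n)$ using the inclusion from Lemma~\ref{lemAQAS}(ii), and deduce irreducibility from the fact that $A\subseteq R_t(M_n)$ forces any $R_t(M_n)$-submodule to be an $A$-submodule. Your commentary on points (a) and (b) accurately identifies the only places where care is needed, and the paper handles them in exactly the way you describe.
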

\begin{proof}
By Lemma \ref{lemAQAS}(iii), $A$ is a quantum affine space so, denoting the final quantum minors in the set $\Omega_{n,n}^t$ by $T_1, \ldots, T_{2s+t}$, this allows us to write $A=\K_{q^{M'}}[T_1, \ldots, T_{2s+t}]$, where $M'\in M_{2nt-t^2}(\Z)$ is a skew-symmetric matrix. We saw in Lemma \ref{lemAQAS}(iv) that the dimension of the kernel of $M'$ is $t$ and that all its invariant factors, $h_1, \ldots, h_s$, are powers of $2$. Since $\ell$ is odd then $\gcd(h_i, \ell)=1$, for all $i \in \llbracket 1, s \rrbracket$, and we may apply Proposition \ref{propReponQAS} to obtain the irreducible representation $(\phi, V)$ of $A$ of dimension $\ell^s$ defined therein.

Recall that $\Sigma \subseteq A$ is the Ore set generated by all $T_i$, and let $\pi: A \rightarrow A \Sigma^{-1}$ be the localisation map which defines $A\Sigma^{-1}$ as a right ring of fractions of $A$. Recall also, from Proposition \ref{propReponQAS}, that $\phi(T_i) \subseteq \End_\K(V)$ is invertible for all $T_i\in \Sigma$. Then, by the universal property of localisations (see \cite[Proposition 10.4]{GoodearlWarfield}), there exists a unique ring homomorphism $\hat{\phi}: A\Sigma^{-1} \rightarrow \End_{\K}(V)$ such that $\phi=\hat{\phi} \circ \pi$. Since $\phi$ is a $\K$-algebra homomorphism then so too are $\hat{\phi}$ and $\pi$. We restrict $\hat{\phi}$ to $R_t(M_n)$ to obtain an algebra homomorphism 
\begin{align*}
\rho: R_t(M_n) &{} ~ \longrightarrow ~ \End_{\K}(V) \\
r &{} ~ \longmapsto ~ \hat{\phi}(r\cdot 1^{-1}),
\end{align*}
which defines a representation $(\rho, V)$ of $R_t(M_n)$. In particular, $\hat{\phi}(T_i^{-1})= \hat{\phi}(T_i)^{-1} = \phi(T_i)^{-1}$, for all $T_i \in \Sigma$, because of the observation in Proposition \ref{propReponQAS}(iii). Thus, writing $r\in R_t(M_n)$ in terms of $T_i^{\pm 1}\in A\Sigma^{-1}$, as is possible by the inclusions shown in Lemma \ref{lemAQAS}(ii), we obtain
\[ \rho(r) = \hat{\phi}(r\cdot 1^{-1}) = \hat{\phi}\left(\sum_{\underline{i}\in \Z^{2s+t}} \alpha_{\underline{i}} T_1^{i_1} \cdots T_{2s+t}^{i_{2s+t}}\right) = \sum_{\underline{i}\in \Z^{2s+t}} \alpha_{\underline{i}} \phi(T_1)^{i_1} \cdots \phi(T_{2s+t})^{i_{2s+t}},\]
where $i_j \in \Z$, for all $j\in \llbracket 1, 2s+t \rrbracket$, and $\alpha_{\underline{i}}\in \K^*$.

To show irreducibility of $(\rho, V)$ we use the fact that $A\subseteq R_t(M_n)$ to deduce $\Ima(\phi) \subseteq \Ima(\rho)$. Therefore, if $(\rho, V)$ were reducible this would force $(\phi, V)$ to be reducible, thus contradicting Proposition \ref{propReponQAS}. Hence $(\rho, V)$ is an irreducible representation of $R_t(M_n)$ of dimension $\ell^s$.
\end{proof} 

\begin{remark}
General techniques to produce irreducible representations of maximal dimension for completely prime quotients of Quantum Nilpotent Algebras at roots of unity were developed in \cite{LauLopRo}. Methods from \cite{LauLopRo} would apply to quantum determinantal rings viewed as completely prime quotients of quantum matrices since quantum matrices are an example of Quantum Nilpotent Algebra at roots of unity. 
\end{remark}

\section{Quantum Schubert Varieties}\label{sectionQSV}

We now return to the quantum Grassmannian and its quantum Schubert varieties $\oh_q(G_{m,n}(\K))_{\gamma} := \oh_q(G_{m,n}(\K))/ \langle \Pi_{m,n}^{\gamma} \rangle$ for $\gamma\in \Pi_{m,n}$. It follows from Proposition \ref{prop-PIdegree-yl} that the PI degree of  $\oh_q(G_{m,n}(\K))_{\gamma} $ is controlled by the matrix  $M_{\gamma} =\left(\begin{array}{cc} M_{\lambda} & {\bf 1} \\{\bf -1} &0 \end{array}\right)$. This leads us to introduce the following notation.

\begin{definition}
Given any square matrix $M\in M_N(\Z)$, we define its \emph{extended matrix} as $M^E:=\left(\begin{smallmatrix} M & \mathbf{-1} \\ \mathbf{1}^T & 0 \end{smallmatrix}\right) \in M_{N+1}(\Z)$, where $\mathbf{-1} \in \Z^N$ is a column vector consisting of $-1$ in each entry and $\mathbf{1}^T\in \Z^N$ is a row vector consisting of $1$ in each entry. 
\end{definition}

Before we start discussing the properties of the extended matrices $M_{\gamma}=M_{\lambda}^E$, we observe that quantum determinantal rings are also related to quantum Schubert varieties. 

\begin{lemma}\label{lemPIdegQSVtoQAS}
Let $q$ be a primitive $\ell^{\mathrm{th}}$ root of unity and $\gamma=\{1, \ldots, t, n+1, \ldots, 2n-t\}$. Then
\[\PI(\oh_q(G_{n,2n}(\K))_{\gamma})=\PI(R_t(M_n)[y; \phi])=\PI\left(\oh_{q^{\M_{n,t}^E}}(\K^{2nt-t^2+1})\right),\]
where $\phi$ is the automorphism on $R_t(M_n)$ defined by $\phi(X_{i,j})=q^{-1}X_{i,j}$ for all generators $X_{i,j}\in R_t(M_n)$ and $\M_{n,t}^E \in M_{2nt-t^2+1}(\Z)$ is the skew-symmetric matrix extended from $\M_{n,t}$.
\end{lemma}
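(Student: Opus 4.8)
The plan is to chain three isomorphisms/equalities together, each of which is essentially already available in the paper. First I would invoke Theorem \ref{the godfather iso} with the specific choice $m=n$, $n$ replaced by $2n$, and $\gamma=\{1,\dots,t,n+1,\dots,2n-t\}$. One computes the partition $\lambda$ associated to this $\gamma$ via $\lambda_i=(2n-n)-(\gamma_i-i)=n-(\gamma_i-i)$: for $i\le t$ we get $\gamma_i=i$, so $\lambda_i=n$; for $i>t$ we get $\gamma_i=n+i-t$, so $\lambda_i=n-(n-t)=t$. Hence $\lambda=(n^t,t^{\,n-t})$, i.e. the Young diagram whose first $t$ rows have length $n$ and whose remaining $n-t$ rows have length $t$. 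This is exactly (the transpose/rotation of) the diagram $C_{n,t}$ from Corollary \ref{corNotation}: the partition subalgebra $\oh_{q^{-1}}(Y_\lambda)$ for this $\lambda$ should be identified with (an algebra isomorphic to) $R_t(M_n)$. The cleanest route is to observe that $\oh_{q^{-1}}(Y_\lambda)$ is a partition subalgebra of $\oh_{q^{-1}}(M_{n,2n}(\K))$ on exactly the boxes that survive, and that Lemma \ref{lemequiv}/Lemma \ref{lemHaynal} identify $R_t(M_n)$ (via $B_t$) with precisely such a partition subalgebra up to the parameter inversion $q\leftrightarrow q^{-1}$ and passing to the opposite algebra. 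Both operations leave the PI degree unchanged (the opposite algebra has the same PI degree, and replacing $q$ by $q^{-1}$ replaces $M$ by $-M$, which has the same invariant factors and kernel, hence the same PI degree by Lemma \ref{lemPIdeg}).

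With that identification in hand, Theorem \ref{the godfather iso} gives
$$\oh_{q^{-1}}(Y_\lambda)[y^{\pm1};\sigma]\cong \bigl(\oh_q(G_{n,2n}(\K))/\langle\Pi_\gamma\rangle\bigr)[\overline\gamma^{-1}],$$
where $\sigma(X_{i,j})=qX_{i,j}$. Taking PI degrees and using that the PI degree of a prime noetherian algebra is unchanged under localisation (\cite[Corollary I.13.3]{BrownGoodearl}), we get $\PI(\oh_q(G_{n,2n}(\K))_\gamma)=\PI(\oh_{q^{-1}}(Y_\lambda)[y^{\pm1};\sigma])=\PI(\oh_{q^{-1}}(Y_\lambda)[y;\sigma])$. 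Now transporting the isomorphism $\oh_{q^{-1}}(Y_\lambda)\cong R_t(M_n)$ (with the appropriate matching of the automorphism: $\sigma$ corresponds to the automorphism $\phi$ of $R_t(M_n)$ with $\phi(X_{i,j})=q^{-1}X_{i,j}$, the sign flip coming from the $q\leftrightarrow q^{-1}$ switch) yields $\PI(\oh_{q^{-1}}(Y_\lambda)[y;\sigma])=\PI(R_t(M_n)[y;\phi])$. This establishes the first equality in the statement.

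For the second equality, I would apply the iterated-Ore-extension machinery of Section \ref{sectionB_tItOreExt} to the skew-Laurent/Ore extension $R_t(M_n)[y;\phi]$. By Lemma \ref{lemHaynal} and Corollary \ref{corNotation}, $R_t(M_n)$ has the same PI degree as the quantum affine space $\oh_{q^{\M_{n,t}}}(\K^{2nt-t^2})$, and more precisely $R_t(M_n)$ is (up to localisation and taking fraction fields) governed by an iterated Ore extension whose associated quantum affine space has commutation matrix congruent to $\M_{n,t}$. Adjoining one more variable $y$ with $y X_{i,j}=q^{-1}X_{i,j}y$ for every generator $X_{i,j}$ appends one row and column to the commutation matrix, with entries $-1$ against all the old generators (and $0$ on the diagonal) — after matching signs this is exactly the extension $M\mapsto M^E=\bigl(\begin{smallmatrix}M&\mathbf{-1}\\\mathbf{1}^T&0\end{smallmatrix}\bigr)$ applied to $\M_{n,t}$. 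Haynal's theorem (\cite[Corollary 4.7]{Haynal}) again lets us discard the skew-derivation part and conclude $\PI(R_t(M_n)[y;\phi])=\PI(\oh_{q^{\M_{n,t}^E}}(\K^{2nt-t^2+1}))$.

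The main obstacle is the bookkeeping in the first paragraph: carefully pinning down that the partition associated to $\gamma=\{1,\dots,t,n+1,\dots,2n-t\}$ is the one giving $C_{n,t}$, and reconciling the three ``twists'' that appear (passing to the opposite algebra in Lemma \ref{lemequiv}, the $q\leftrightarrow q^{-1}$ substitution, and the sign of the matrix), so that the automorphism in the Ore extension genuinely matches $\phi(X_{i,j})=q^{-1}X_{i,j}$ and the matrix genuinely matches $\M_{n,t}^E$ rather than, say, $(-\M_{n,t})^E$ or its transpose. Since all of these operations preserve invariant factors and kernel dimension, they preserve PI degree (Lemma \ref{lemPIdeg}), so at the level of PI degrees the identifications are harmless; the care is only needed to state the isomorphism of algebras correctly. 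Everything else is a direct application of results already in the paper: Theorem \ref{the godfather iso}, \cite[Corollary I.13.3]{BrownGoodearl}, \cite[Corollary 4.7]{Haynal}, and Corollary \ref{corNotation}.
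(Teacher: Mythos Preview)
Your argument for the second equality is essentially the paper's: pass from $R_t(M_n)$ to its associated quantum affine space via Corollary~\ref{corNotation}(1), adjoin the extra Ore variable $y$ with $\phi'(T_{i,j})=q^{-1}T_{i,j}$, and read off the commutation matrix $\M_{n,t}^E$.

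For the first equality, however, your route differs from the paper's and contains a gap. The paper does \emph{not} go through Theorem~\ref{the godfather iso} and a partition subalgebra; it invokes \cite[Proof of Proposition~4.3 and Remark~4.2(iv)]{LenaganRigal-SchubertVar}, which furnishes a \emph{direct} isomorphism
\[
R_t(M_n)[y,y^{-1};\phi]\;\cong\;\oh_q(G_{n,2n}(\K))_\gamma[[\overline{M}]^{-1}]
\]
(with $[M]=[n+1,\ldots,2n]$), and then uses invariance of PI degree under localisation. Your computation of $\lambda=(n^t,t^{\,n-t})$ from $\gamma$ is correct, and Theorem~\ref{the godfather iso} indeed gives $\PI(\oh_q(G_{n,2n}(\K))_\gamma)=\PI(\oh_{q^{-1}}(Y_\lambda)[y;\sigma])$. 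The problem is the step ``transporting the isomorphism $\oh_{q^{-1}}(Y_\lambda)\cong R_t(M_n)$'': there is no such algebra isomorphism. Lemma~\ref{lemequiv} and Remark~\ref{remFracR_tB_t} only yield $R_t(M_n)[\bar\delta^{-1}]\cong A_t[\delta^{-1}]$, i.e.\ an isomorphism of \emph{localisations} (equivalently of fraction fields). The algebra $R_t(M_n)$ is a proper quotient of $\oh_q(M_n(\K))$, whereas $\oh_{q^{-1}}(Y_\lambda)\cong A_t$ (up to the twists you mention) is an iterated Ore extension with no relations beyond the $q$-commutation ones; they are not isomorphic, so you cannot simply ``transport'' the Ore extension by $y$ across.

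Your route can be repaired, but only by abandoning the claimed isomorphism and working entirely at the level of PI degrees: reduce $\oh_{q^{-1}}(Y_\lambda)[y;\sigma]$ to a quantum affine space via Haynal, reduce $R_t(M_n)[y;\phi]$ to one via Corollary~\ref{corNotation} plus the extra variable, and then verify that the resulting commutation matrices are congruent (the $180^\circ$ rotation carrying your $D_\lambda$ to $C_{n,t}$, combined with a sign on the last row/column). This works, but it amounts to showing that the two outer terms of the lemma are each equal to the inner one, rather than linking them directly; the paper's appeal to the specific dehomogenisation in \cite{LenaganRigal-SchubertVar} short-circuits all of this bookkeeping.
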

\begin{proof}
We apply the isomorphism in \cite[Proof of Proposition 4.3]{LenaganRigal-SchubertVar}, along with \cite[Remark 4.2(iv)]{LenaganRigal-SchubertVar}, to obtain
\begin{equation}\label{eqnDehomog1}
 R_t(M_n)[y,y^{-1};\phi] \cong \oh_q(G_{n,2n}(\K))_{\gamma}[[\overline{M}]^{-1}],
\end{equation}
where $[M]= [\{n+1, \ldots, 3n\}] \in \oh_q(G_{n,2n}(\K))$ is a normal element and $[\overline{M}]$ is its canonical image in the quotient algebra.

Take $q$ to be a root of unity so that $R_t(M_n)$ is a PI algebra (as a finite module over a central subalgebra $Z_0$ \cite[Corollary 13.1.13]{McConnellRobson}). It is clear from the definition of $\phi$ that $y^{\ell}$ is a central element and hence that the skew-Laurent extension $R_t(M_n)[y, y^{-1}; \phi]$ is finite-dimensional over the central subalgebra $\K\langle Z_0, y^{\ell} \rangle$. Thus it is also a PI algebra. Since the PI degree is invariant under localisation, the isomorphism in (\ref{eqnDehomog1}) allows us to deduce
\[ \PI(\oh_q(G_{n,2n}(\K))_{\gamma})=\PI(R_t(M_n)[y; \phi]).\]

Observing Corollary \ref{corNotation}(1) and extending $\oh_{q^{\M_{n,t}}}(\K^{2nt-t^2})$ by the Ore extension $[y'; \phi']$, where $\phi'(T_{i,j})=q^{-1}T_{i,j}$ for all generators $T_{i,j}\in \oh_{q^{\M_{n,t}}}(\K^{2nt-t^2})$, we deduce that
\[\Frac(R_t(M_n)[y;\phi]) \cong \Frac(\oh_{q^{\M_{n,t}}}(\K^{2nt-t^2})[y';\phi']).\]
Upon noting the commutation rules of $y'$ with generators $T_{i,j}$, as defined by the action of $\phi'$, we may rewrite the Ore extension $\oh_{q^{\M_{n,t}}}(\K^{2nt-t^2})[y';\phi']$ as a quantum affine space in $2nt-t^2+1$ indeterminates whose commutation matrix is $\M_{n,t}^E$. The result of the lemma follows immediately.
\end{proof}

By Proposition \ref{prop-PIdegree-yl}  and Lemma \ref{lemPIdeg}, the key to compute the PI degree of quantum Schubert varieties is to understand the invariant factors and the dimension of the kernel of $M_{\lambda}^E$ and $\M_{n,t}^E$. Since we know the invariant factors for the matrices  $M_{\lambda}$ and $\M_{n,t}$, we devote the next sections to the study of how these invariants change upon extending the matrix.

\subsection{Properties of Arbitrary Extended Matrices}
Let $M\in M_N(\Z)$ be an arbitrary skew-symmetric matrix. Let $\{h_i(M)\}_{i=1}^s$ denote the set of (nonzero) invariant factors of $M$ and $\{h_i(M^E)\}_{i=1}^{s+1}$ denote the set of not necessarily nonzero invariant factors of $M^E$, where $h_i(M^E)\neq 0$ for all $i\in \{1, \ldots, s\}$ and $h_{s+1}(M^E)$ is either 0 or it is the last nonzero invariant factor of $M^E$.
\begin{proposition}\label{propInvFac}
For a skew-symmetric matrix $M\in M_N(\Z)$, 
\[\dim(\ker(M^E))=\dim(\ker(M))+\epsilon ~~ \text{ with } ~~ \epsilon = \begin{cases} 1 & \text{if } \ker(M)\subseteq \orth \\ -1 & \text{otherwise} \end{cases},\]
where $\mathbf{1}^{\bot}$ denotes the orthogonal complement of $\mathbf{1}$ in $\Z^N$.
\end{proposition}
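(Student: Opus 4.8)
The plan is to relate $\dim(\ker(M^E))$ and $\dim(\ker(M))$ by an explicit rank computation, exploiting skew-symmetry so that rank and nullity are pinned down by a single scalar condition. First I would set $r=\rank(M)$, so $\dim(\ker(M))=N-r$, and note that $\rank(M^E)\in\{r, r+2\}$ since $M^E$ is skew-symmetric of size $N+1$ (its rank is even, and adjoining one row and one column changes the rank by at most $2$; skew-symmetry forbids a change of exactly $1$). Correspondingly $\dim(\ker(M^E)) = (N+1)-\rank(M^E)$ equals either $N+1-r = \dim(\ker(M))+1$ or $N-1-r=\dim(\ker(M))-1$, which already gives the dichotomy $\epsilon=\pm 1$. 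It remains to identify \emph{which} case occurs in terms of the position of $\mathbf{1}$ relative to $\ker(M)$.

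The key step is to show $\rank(M^E)=r$ (equivalently $\epsilon=+1$) if and only if $\mathbf{1}\in\Ima(M)$ (over $\Q$), and then to check that $\mathbf{1}\in\Ima(M) \iff \ker(M)\subseteq\mathbf{1}^\bot$, which is exactly the skew-symmetric version of the standard orthogonality $\Ima(M)=(\ker(M^T))^\bot=(\ker(M))^\bot$ (using $M^T=-M$). For the rank statement: if $\mathbf{1}=Mv$ for some $v\in\Q^N$, then the last column of $M^E$ is $\binom{-\mathbf{1}}{0}=\binom{-Mv}{0}$; subtracting from it the combination $\sum v_j(\text{column }j\text{ of }M^E)$ — whose top block is $Mv=\mathbf{1}$ and whose bottom entry is $\mathbf{1}^T v$ — we can clear the top block of the last column, and symmetrically (using skew-symmetry) clear the last row, reducing $M^E$ by congruence to $\left(\begin{smallmatrix} M & 0 \\ 0 & c\end{smallmatrix}\right)$ where $c = 2\,\mathbf{1}^Tv - 0 = 2\mathbf{1}^Tv$... here I must be a little careful: after the symmetric row/column operation the bottom-right entry becomes $2(\mathbf 1^T v) - (\text{something})$, but in fact skew-symmetry forces a scalar alteration of the corner entry only, and since $\mathbf{1}^T v = v^T M^T v \cdot(\ldots)$ — more cleanly, $\mathbf 1^T v = (Mv)^T v = v^T M^T v = -v^T M v$, and $v^T M v = 0$ by skew-symmetry, so $\mathbf{1}^T v = 0$ and the corner entry is $0$. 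Hence $M^E\sim_C \left(\begin{smallmatrix} M & 0 \\ 0 & 0\end{smallmatrix}\right)$, giving $\rank(M^E)=r$ and $\dim\ker(M^E)=\dim\ker(M)+1$. Conversely, if $\mathbf 1\notin\Ima(M)$ then the last column of $M^E$ is not in the span of the first $N$ columns, so $\rank(M^E)\ge r+1$, hence $=r+2$ by parity, giving $\epsilon=-1$.

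Finally I would translate $\mathbf 1\in\Ima(M)$ into the kernel condition: over $\Q$, $\Ima(M) = (\ker(M))^\bot$ because $M$ is skew-symmetric (so $\Ima(M) = \Ima(M^T) = \mathrm{row\ space}(M) = (\ker M)^\bot$). Thus $\mathbf 1\in\Ima(M)\iff \mathbf 1\perp\ker(M)\iff \ker(M)\subseteq\mathbf 1^\bot$. Assembling: $\dim(\ker(M^E))=\dim(\ker(M))+1$ when $\ker(M)\subseteq\mathbf 1^\bot$ and $\dim(\ker(M))-1$ otherwise, as claimed. The main obstacle I anticipate is purely bookkeeping: carrying out the congruence reduction cleanly while tracking that the new corner entry vanishes (the identity $\mathbf 1^T v = -v^T M v = 0$ is the crux there), and being careful that the argument is done over $\Q$ so that "$\mathbf 1\in\Ima(M)$" is the right rational statement — the integral invariant factors are not needed for this proposition, only the rank/nullity over $\Q$.
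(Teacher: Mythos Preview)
Your argument is correct. The exposition around the congruence step is a bit tangled (you want to \emph{add} the combination $\sum_j v_j\cdot(\text{column }j)$ to the last column, not subtract, and the matching row operation then clears the last row automatically because $v^TM=-(Mv)^T=-\mathbf{1}^T$), but the key identity $\mathbf{1}^Tv=(Mv)^Tv=-v^TMv=0$ is exactly right and makes the corner vanish. The parity/submatrix bound $\rank(M^E)\in\{r,r+2\}$ and the translation $\mathbf{1}\in\Ima(M)\iff\ker(M)\subseteq\mathbf{1}^\bot$ via $\Ima(M)=(\ker M)^\bot$ are both sound over~$\Q$, and since integral rank agrees with rational dimension there is no issue in passing between $\Z$ and $\Q$.

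This is a genuinely different route from the paper's proof. The paper argues entirely on the level of kernels: it builds an explicit injection $\ker(M)\cap\mathbf{1}^\bot\hookrightarrow\ker(M^E)$ by appending a zero coordinate, then shows directly (by a spanning argument on the complement of the image) that the dimension can differ by at most one in each direction, and invokes parity to finish. Your approach instead works on the rank side: a single congruence reduction plus the image/kernel orthogonality replaces the paper's case analysis. Your version is shorter and more conceptual; the paper's version has the minor advantage of being self-contained (it never names the fact $\Ima(M)=(\ker M)^\bot$ or performs a congruence) and makes the kernel correspondence explicit, which the paper later reuses when working modulo a prime~$p$ in Corollary~\ref{corolDim}.
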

\begin{proof}
Let $W =\{\mathbf{w}\in \Z^N \mid M\mathbf{w}=0 \} = \ker(M)$,  $V=\{\mathbf{v}\in \Z^N \mid M^E\mathbf{v}=0 \}=\ker(M^E)$, and $\orth:=\{\mathbf{v}\in \Z^N \mid \sum_{i=1}^N v_i=0\}$ be the orthogonal complement of $\mathbf{1}$ in $\Z^N$.

First we prove the identity for $\epsilon=1$. Suppose $W\subseteq \orth$ so that $\sum_{i=1}^N w_i=0$ for all $\mathbf{w}\in W$. Introducing the notation $\mathbf{w}[0]:=(w_1 w_2 \ldots w_N 0)^T$ for any $\mathbf{w}\in W$ we see that $\mathbf{w}[0]\in V$ since 
\[ M^E\cdot \mathbf{w}[0] = \left(\begin{matrix} M & \mathbf{-1} \\ \mathbf{1}^T & 0 \end{matrix}\right) \mathbf{w}[0] = \begin{pmatrix} M\mathbf{w} \\ \sum_{i=1}^N w_i \end{pmatrix} = \mathbf{0}. \]
This allows us to define an injective map $f:W \rightarrow V; \; \mathbf{w} \mapsto \mathbf{w}[0]$ which gives the inequality $\dim(W)\leq \dim(V)-1$, since both $M$ and $M^E$ are skew-symmetric so the parity of the dimensions of their kernels must match the parity of the size of the matrix. To get equality here we will show that $\dim(V)\leq \dim(W)+1$. 

Let $V':=\{\mathbf{v}\in V \mid v_{N+1}=0\} \subseteq V$ and let $U:=V\backslash V'$ so that $V=V'\sqcup U$. First we note that $\dim(V')=\dim(W)$ since $V'$ is precisely the image of the injective map $f$ defined above. The inequality before shows that $U\neq \emptyset$ since otherwise $V'=V$ and $\dim(V)=\dim(W)$, thus breaking the inequality $\dim(W)\leq \dim(V)-1$. Therefore there exists some $\mathbf{u}=(u_1, \ldots, u_{n+1}) \in U$  with $u_{N+1} \neq 0$. If $U=\Span\{\mathbf{u}\}$ then we immediately obtain $\dim(V)=\dim(W)+1$. Suppose instead that there exists some other $\mathbf{u'}=(u'_1, \ldots, u'_{N+1})\in U$ with  $u'_{N+1}\neq 0$ and $\mathbf{u'}\neq \mathbf{u}$. With this we see that
\[ u_{N+1}\mathbf{u'} - u'_{N+1}\mathbf{u} = \begin{pmatrix} u_{N+1}u'_1 - u'_{N+1} u_1 \\
\vdots \\
u_{N+1} u'_N - u'_{N+1}u_N \\
u_{N+1} u'_{N+1} - u'_{N+1}u_{N+1} 
\end{pmatrix} =  \begin{pmatrix} u_{N+1}u'_1 - u'_{N+1} u_1 \\
\vdots \\ 
u_{N+1} u'_N - u'_{N+1}u_N \\
0
\end{pmatrix} \in V'.\]
Denoting the basis vectors of $V'$ as $\{\mathbf{b_i}\}_i \subset V'$ we write the following:
\[u_{N+1}\mathbf{u'} - u'_{N+1}\mathbf{u} = \sum_i \lambda_i \mathbf{b_i} \]
for some $\lambda_i \in \Z$. When rearranged, this equation expresses $u_{N+1} \mathbf{u'}$ in terms of these basis vectors as well as $u'_{N+1}\mathbf{u}$. Since $\mathbf{u'}$ was arbitrary, we conclude that the set $\{\mathbf{b_i}\}_i$ does not span $V$ but that $\{\mathbf{b_i}\}_i \cup \{\mathbf{u}\} $ is a linearly independent spanning set for  $V$. Since $\{\mathbf{b_i}\}_i$ is a basis for $V'$ and $\dim(V')=\dim(W)$ we conclude that $\dim(V) \leq \dim(W) + 1$ as desired. This proves the statement of the lemma for $\epsilon = 1$.

To prove the statement for $\epsilon=-1$ we suppose $W \nsubseteq \orth$ and let $W'\subseteq W$ be the set of all $\mathbf{w}\in W$ which are orthogonal to $\mathbf{1}$, i.e. $W'=\{w\in W \mid \sum_{i=1}^N w_i = 0\}$. Then by the injective map $f|_{W'}: W' \rightarrow V$ given by $\mathbf{w'} \mapsto \mathbf{w'}[0]$  we obtain the inequality $\dim(W')=\dim(W)-1 \leq \dim(V)$. Using parity arguments, this means that either $\dim(V)=\dim(W)-1$ or $\dim(V)>\dim(W)+1$. Suppose $\dim(V)>\dim(W)+1$ and and consider $V'\subseteq V$ as before so that $\dim(V')=\dim(V)-1$. The injective map $V' \rightarrow W; \; \mathbf{v'} = (v_1 \ldots v_N 0) \mapsto = (v_1 \ldots v_N)$ and our supposition implies 
\[ \dim(W) \geq \dim(V') =\dim(V)-1 \geq \dim(W) \implies \dim(V')=\dim(W) \]
but then the injective map becomes bijective and for each $\mathbf{w}\in W$ we have some $\mathbf{w}[0]\in V'$ meaning that $\mathbf{w} \in \orth$ and $W\subseteq \orth$. This contradicts our initial assumption, therefore we conclude $\dim(V)=\dim(W)-1$.
\end{proof}

The proof above used arguments from module theory that relied on $\ker(M)$ being a free $\Z$-module with well-defined rank or dimension. When working over the finite field $\Z/p\Z$, for prime $p$, the module arguments translate directly to vector space arguments and a similar proof can be used to show:
\begin{corollary}\label{corolDim}
Fix some prime number $p$. Let $\overline{M}:=M \mod{p} \in M_N(\Z/p\Z)$ and $\overline{M^E} =\left(\begin{smallmatrix} \overline{M} & -\bar{\mathbf{1}} \\ \bar{\mathbf{1}}^T & 0 \end{smallmatrix}\right) \in M_{N+1}(\Z/p\Z)$ be its extended matrix. Then 
\[\dim(\ker(\overline{M^E}))=\dim(\ker(\overline{M}))+\epsilon, ~ \text{ where } ~\epsilon = \begin{cases} 1 & \text{if } \ker(\overline{M})\subseteq \bar{\mathbf{1}}^{\bot}; \\ -1 & \text{otherwise,} \end{cases}\]
where $\bar{\mathbf{1}}^{\bot}$ denotes the orthogonal complement of $\bar{\mathbf{1}}$ in $(\Z/p\Z)^N$.
\end{corollary}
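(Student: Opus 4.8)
The plan is to mimic the proof of Proposition \ref{propInvFac} verbatim, replacing $\Z$ by the field $\K_p := \Z/p\Z$ throughout, and observing that every step of that proof becomes \emph{easier} in the vector space setting because we no longer have to worry about the distinction between a submodule and its saturation, nor about whether quotients are torsion-free. Set $W := \ker(\overline{M})$ and $V := \ker(\overline{M^E})$, both now genuine $\K_p$-subspaces of $\K_p^N$ and $\K_p^{N+1}$ respectively, and let $\bar{\mathbf 1}^{\bot}$ denote the hyperplane $\{\mathbf v : \sum v_i = 0\}$ in $\K_p^N$. The parity constraint that was used in the $\Z$-case (the dimension of the kernel of a skew-symmetric matrix has the same parity as the size of the matrix) still holds over any field of characteristic $\neq 2$; here we need $p \neq 2$, which is guaranteed in all our applications since $\ell_0 > 2$. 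So I would first note this parity fact, with a one-line justification (congruence to skew-normal form, or the fact that the rank of an alternating form over a field of characteristic $\neq 2$ is even).

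First I would handle the case $\ker(\overline{M}) \subseteq \bar{\mathbf 1}^{\bot}$, i.e. $\epsilon = 1$. The map $f : W \to V$, $\mathbf w \mapsto \mathbf w[0] := (w_1,\dots,w_N,0)^T$ is well-defined and injective exactly as before (the last coordinate of $\overline{M^E}\mathbf w[0]$ is $\sum w_i = 0$ by hypothesis, the rest is $\overline{M}\mathbf w = 0$), giving $\dim V \geq \dim W$; since the two dimensions have opposite parity (sizes $N$ and $N+1$), in fact $\dim V \geq \dim W + 1$. For the reverse inequality, let $V' := \{\mathbf v \in V : v_{N+1} = 0\}$; then $V' = \operatorname{Im}(f)$ so $\dim V' = \dim W$, and $\dim V \leq \dim V' + 1 = \dim W + 1$ because $V'$ is cut out of $V$ by the single linear condition $v_{N+1} = 0$. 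Hence $\dim V = \dim W + 1$. Note this direction is actually shorter than in the $\Z$-version because over a field ``$V'$ is a hyperplane section of $V$'' is immediate, whereas over $\Z$ the authors had to do an explicit $u_{N+1}\mathbf u' - u'_{N+1}\mathbf u$ argument to control the rank.

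Next, the case $\ker(\overline{M}) \nsubseteq \bar{\mathbf 1}^{\bot}$, i.e. $\epsilon = -1$. Let $W' := W \cap \bar{\mathbf 1}^{\bot}$, a hyperplane in $W$, so $\dim W' = \dim W - 1$. The restriction $f|_{W'} : W' \to V$ is still injective, giving $\dim V \geq \dim W - 1$; by the parity constraint either $\dim V = \dim W - 1$ or $\dim V \geq \dim W + 1$. Suppose for contradiction $\dim V \geq \dim W + 1$. Define $V'$ as above; the projection $V' \to \K_p^N$ forgetting the last (zero) coordinate lands in $W$ and is injective, so $\dim W \geq \dim V' = \dim V - 1 \geq \dim W$, forcing $\dim V' = \dim V - 1 = \dim W$, hence $V' \to W$ is an isomorphism. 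But then every $\mathbf w \in W$ is the image of some $\mathbf w[0] \in V'$, and $\mathbf w[0] \in V$ forces (reading the last row of $\overline{M^E}$) $\sum w_i = 0$, i.e. $W \subseteq \bar{\mathbf 1}^{\bot}$, contradicting the hypothesis. Therefore $\dim V = \dim W - 1$, completing the proof.

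The main obstacle, such as it is, is purely bookkeeping: making sure the parity argument is correctly invoked (it requires $p \neq 2$, and one should state this or note that $p \mid \ell$ with $\ell_0 > 2$), and being careful that ``$V'$ equals the image of $f$'' holds on the nose so that the dimension count is tight. There is no genuinely new idea needed beyond the observation that the $\Z$-module argument of Proposition \ref{propInvFac}, which was the only subtle point there, trivializes over a field. Indeed I would phrase the corollary's proof simply as: ``The proof is identical to that of Proposition \ref{propInvFac}, working with $\K_p$-vector spaces in place of free $\Z$-modules; the only place the module structure was used — the rank computation via the vectors $u_{N+1}\mathbf u' - u'_{N+1}\mathbf u$ — is replaced by the elementary fact that imposing one linear equation drops the dimension of a vector space by at most one, and the parity constraint on the kernel of a skew-symmetric matrix continues to hold over a field of characteristic $\neq 2$.''
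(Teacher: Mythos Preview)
Your proposal is correct and follows exactly the paper's approach: the paper's proof of this corollary is simply the sentence ``a similar proof can be used'' with vector spaces in place of $\Z$-modules, and you have correctly supplied the details. One small point: your caveat about needing $p\neq 2$ for the parity argument is unnecessary, since $M$ and $M^E$ are integer skew-symmetric matrices with zero diagonal, so their reductions mod $p$ define \emph{alternating} (not merely skew-symmetric) forms, and alternating forms have even rank over any field, including characteristic~$2$.
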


\begin{proposition}\label{propKerMod}
Fix a prime number $p$ and let $\overline{M}:=M \mod{p} \in M_N(\Z/p\Z)$. Then 
\[ \ker(\overline{M})\subseteq \bar{\mathbf{1}}^{\bot} \iff  p\mid h_{s'+1}(M^E), \]
where $s'$ is the number of nonzero invariant factors of $\overline{M}$, with $s' \leq s$, and $\overline{\mathbf{1}}^{\bot}$ denotes the orthogonal complement of $\mathbf{1}$ in $(\Z/p\Z)^N$.
\end{proposition}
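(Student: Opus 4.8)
The plan is to relate the condition $p \mid h_{s'+1}(M^E)$ to a rank computation modulo $p$ for $M^E$, and to read off the kernel condition for $\overline{M}$ from the same rank data. First I would recall the standard characterisation of invariant factors via $p$-adic valuations of determinantal divisors: for an integral matrix $A$, one has $p \nmid D_k(A)$ if and only if $\mathrm{rank}_{\Z/p\Z}(\overline{A}) \geq k$, where $D_k$ is the gcd of the $k\times k$ minors. Applying this to $A = M^E$ (size $N+1$) with its invariant factors $h_1(M^E), \dots, h_{s+1}(M^E)$ (padded by zeros), the Smith normal form gives $D_{2k}(M^E) = h_1(M^E)^2 \cdots h_k(M^E)^2$ up to sign for $k \leq s$, and $D_{2s+1}(M^E) = h_1(M^E)^2\cdots h_s(M^E)^2 \cdot h_{s+1}(M^E)$ (this is the first ``odd-index'' nonzero determinantal divisor, since $M^E$ being skew-symmetric forces its Smith form to pair up the first $2s$ entries, leaving a possibly-unpaired $h_{s+1}(M^E)$ in position $2s+1$). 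So $p \mid h_{s+1}(M^E)$ would be detected by $D_{2s+1}(M^E)$; but we want the statement phrased with $s'$, the number of nonzero invariant factors of $\overline{M}$, not $s$.

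The key reconciliation is that when $p \nmid h_i(M)$ for $i \leq s'$ but $p \mid h_i(M)$ for $i > s'$ — i.e. $\mathrm{rank}_{\Z/p\Z}(\overline{M}) = 2s'$ — we must show the relevant invariant factors $h_{s'+1}(M^E), \dots, h_{s}(M^E)$ of $M^E$ are also divisible by $p$, so that the ``first possibly-$p$-indivisible odd-index determinantal divisor'' for $M^E$ is exactly $D_{2s'+1}(M^E)$, and $p \mid h_{s'+1}(M^E)$ in the statement's indexing corresponds precisely to $\mathrm{rank}_{\Z/p\Z}(\overline{M^E}) \leq 2s'$, equivalently $= 2s'$ (parity). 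Concretely: $\mathrm{rank}_{\Z/p\Z}(\overline{M^E}) \in \{2s', 2s'+2\}$ by interlacing of ranks (adding one row and one column changes rank by at most $2$) together with the parity constraint that both $\overline{M}$ and its extension can have rank of controlled parity — actually $\overline{M^E}$ need not be skew-symmetric-with-even-rank since the extension of a skew-symmetric matrix by $\pm\mathbf 1$ is again skew-symmetric, so $\mathrm{rank}_{\Z/p\Z}(\overline{M^E})$ is even. Thus $p \mid h_{s'+1}(M^E)$ (in the statement's sense, meaning the $(s'+1)$-st invariant factor of $M^E$ is $0$ mod $p$ or genuinely $0$) $\iff \mathrm{rank}_{\Z/p\Z}(\overline{M^E}) = 2s' \iff \dim\ker(\overline{M^E}) = (N+1) - 2s' = \dim\ker(\overline{M}) + 1$.

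At that point Corollary~\ref{corolDim} closes the argument: $\dim\ker(\overline{M^E}) = \dim\ker(\overline{M}) + 1$ holds precisely when $\ker(\overline{M}) \subseteq \bar{\mathbf 1}^{\bot}$, and $\dim\ker(\overline{M^E}) = \dim\ker(\overline{M}) - 1$ otherwise; since these are the only two possibilities and they are mutually exclusive, we get the claimed equivalence $\ker(\overline{M}) \subseteq \bar{\mathbf 1}^{\bot} \iff p \mid h_{s'+1}(M^E)$.

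I expect the main obstacle to be the bookkeeping in the second step: carefully matching the indexing convention of the statement (where $h_{s'+1}(M^E)$ is ``either $0$ or the last nonzero invariant factor'', using $s'$ rather than $s$) with the determinantal-divisor computation, and in particular verifying that the invariant factors of $M^E$ beyond the $s'$-th are indeed all divisible by $p$ so that no spurious $p$-indivisible factor appears in an odd slot before position $2s'+1$. This needs the fact that $\mathrm{rank}_{\Z/p\Z}$ of a skew-symmetric matrix is even, applied to both $\overline{M}$ (giving rank $2s'$) and $\overline{M^E}$, plus the rank-interlacing inequality $|\mathrm{rank}(\overline{M^E}) - \mathrm{rank}(\overline{M})| \leq 2$. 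Once that parity/interlacing observation is in place, everything else is a routine unwinding of Smith normal form and Corollary~\ref{corolDim}.
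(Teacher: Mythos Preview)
Your proposal is correct and lands on the same two-step skeleton as the paper: reduce $p\mid h_{s'+1}(M^E)$ to the statement $\dim\ker(\overline{M^E})=\dim\ker(\overline{M})+1$, then invoke Corollary~\ref{corolDim}. The only genuine difference is in how the rank bound $\mathrm{rank}_{\Z/p\Z}(\overline{M^E})\in\{2s',2s'+2\}$ is obtained. The paper cites Thompson's interlacing theorem for invariant factors applied to $\overline{M}$ and $\overline{M^E}$ to get $h_i(\overline{M^E})\mid h_i(\overline{M})$ for $i\le s'$, whence $\overline{M^E}$ has at least $s'$ nonzero invariant factors. You instead argue directly: $\overline{M}$ is a submatrix of $\overline{M^E}$, so the rank cannot drop; it can rise by at most $2$ when one row and one column are appended; and both matrices are alternating (zero diagonal, skew-symmetric), so their ranks over $\Z/p\Z$ are even. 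Your route is more self-contained and avoids the external citation; the paper's route is terser once Thompson is on the table. Your opening paragraph on determinantal divisors $D_k$ is an unnecessary detour---all you actually use is that $\mathrm{rank}_{\Z/p\Z}(\overline{M^E})$ equals twice the number of $h_i(M^E)$ coprime to $p$, which is immediate from Smith normal form---so you could streamline by starting at your second paragraph.
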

\begin{proof}
From Corollary \ref{corolDim} we see that $\ker(\overline{M})\subseteq \overline{\mathbf{1}}^{\bot}$ if and only if $\dim(\ker(\overline{M^E}))=\dim(\ker(\overline{M}))+1$. Applying \cite[Theorem 3]{Thompson} to $\overline{M}$ and $\overline{M^E}$ to get
\begin{equation} \label{eqnThompson}
h_i(\overline{M^E})|h_i(\overline{M})|h_{i+1}(\overline{M^E})  \quad \forall \; i\in [1, s']
\end{equation}
we see that $\overline{M^E}$ has at least as many nonzero invariant factors as $\overline{M}$. Hence the dimension of the kernel increases in the extended matrix if and only if $h_{s'+1}(\overline{M^E})=\overline{0}$. That is, if and only if $p \mid h_{s'+1}(M^E)$.
\end{proof}

\subsection{Properties of Extended Matrices Coming From (Cauchon-Le) Diagrams}

Our results in this section use notation and results proved in \cite{BellCasteelsLaunois}. In particular, for a permutation $\sigma \in S_k$, we denote by $P_{\sigma}$ the corresponding $k \times k$ permutation matrix defined by $P_{\sigma}[i,j]:=\delta_{j,\sigma (i)}$, where $\delta$ denotes the Kronecker symbol. 

We now take $M=M(D) \in M_{N}(\Z)$ to be the matrix corresponding to an $m\times n$ diagram $D$ with $N$ white squares labelled $1, \ldots, N$. Let $\tau=\sigma \omega^{-1}$ be the toric permutation associated to $D$, where $\sigma$ is the restricted permutation associated to $D$ and $\omega$ is the restricted permutation associated to the $m\times n$ diagram consisting only of black squares. As noted in the proof of \cite[Lemma 4.3]{BellCasteelsLaunois}, the odd cycles $\{\eta_1, \ldots, \eta_r\}$ in $\tau$ give a basis $B=\{ \mathbf{v}^{\eta_i} \mid i \in \llbracket 1, r \rrbracket\}$ for $\ker(P_{\omega}+P_{\sigma})$. Recall that $\ker(P_{\omega}+P_{\sigma}) \cong \ker(M)$ via injective functions $\phi: \ker(P_{\omega}+P_{\sigma}) \rightarrow \ker(M(D))$ and $\phi: \ker(M(D)) \rightarrow \ker(P_{\omega}+P_{\sigma})$ as defined in \cite[Theorem 4.6]{BellCasteelsLaunois}. This allows us to transfer the basis $B$ to a basis $\phi(B)$ of $\ker(M)$.

\begin{lemma}\label{lemSumw}
For any basis element $\mathbf{w}:=\phi(\mathbf{v})\in \ker(M)$ corresponding to basis element $\mathbf{v}:=\mathbf{v}^{\eta}\in \ker(P_{\omega}+P_{\sigma})$ and odd cycle $\eta$ in $\tau$, we have that 
\begin{equation}\label{eqnSumw_i}
\sum_{i=1}^N w_i = \sum_{\substack{ \{j\in \llbracket m+1, m+n \rrbracket |  \\  \eta(j)\in \llbracket 1, m \rrbracket \} }} v_{\eta(j)} ~~ - \sum_{\substack{ \{k\in \llbracket 1, m \rrbracket | \\ \eta(k)\in \llbracket m+1, m+n \rrbracket\} }} v_{\eta(k)}.
\end{equation}

Writing $\eta=(R_1, \, C_1, \, R_2, \, C_2, \, \ldots \, R_k, \, C_k)$ as contiguous sequences which alternate between increasing subsequences of row-labels (the $R_i$, which form a partition of $\llbracket 1, m \rrbracket$) and decreasing subsequences of column-labels (the $C_i$, which form a partition of $\llbracket m+1,  m+n\rrbracket$), and letting $v_{1'}$ be the first nonzero entry in $\mathbf{v}$, we can rewrite the above sum as
\[\sum_{i=1}^N w_i = \left(-(-1)^{|R_1|} +(-1)^{|R_1|+|C_1|} - \cdots + (-1)^{|R_1|+ |C_1|+\cdots + |R_{k}|+|C_k|} \right) v_{1'}.  \]
\end{lemma}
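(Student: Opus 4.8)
The plan is to track explicitly how the map $\phi$ from \cite[Theorem 4.6]{BellCasteelsLaunois} sends the basis vector $\mathbf{v}^{\eta}\in\ker(P_\omega+P_\sigma)$ to $\mathbf{w}\in\ker(M)$, and then compute $\sum_i w_i$ by grouping the white squares of $D$ according to which pipe-segment (hence which entry of $\mathbf{v}^\eta$) they inherit their value from. Recall that each white square sits on a ``hyperbola'' in the pipe dream, so it lies on exactly one horizontal pipe and one vertical pipe; the coordinate of $\mathbf{w}$ at that square is, up to sign, the value $v_j$ carried by one of those pipes. The key bookkeeping fact is that $\phi$ assigns to the white square the entry $v_{\eta(j)}$ (or $v_j$) coming from the appropriate pipe, with a sign depending on whether the relevant index is a row-label in $\llbracket 1,m\rrbracket$ or a column-label in $\llbracket m+1,m+n\rrbracket$. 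Summing over all white squares, the contributions along any single pipe telescope: along a horizontal pipe the number of white boxes equals the difference between entry and exit column positions, which is exactly what converts the ``count of white squares'' into a signed combination of the $v_j$. This is the mechanism producing the two sums on the right of \eqref{eqnSumw_i}, the positive one indexed by column-labels $j$ with $\eta(j)$ a row-label, the negative one by row-labels $k$ with $\eta(k)$ a column-label.

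First I would recall precisely the definition of $\phi$ and of $\mathbf{v}^\eta$ from \cite{BellCasteelsLaunois}, set up coordinates on the white squares of $D$, and state the local rule: moving along a pipe from one white square to the next adjacent white square on the same pipe changes the $\mathbf{w}$-coordinate in a controlled way (either it stays constant along a straight run, or it flips/shifts at a crossing according to the structure of $P_\omega+P_\sigma$). Then I would sum over the white squares lying on the pipes traced out by the cycle $\eta$ — by construction of $\mathbf{v}^\eta$, only these squares have nonzero $\mathbf{w}$-coordinate — and observe that the internal cancellations along each straight segment leave only the boundary terms. Collecting the boundary terms at the top/left edges (labels in $\llbracket 1,m\rrbracket$ or $\llbracket m+1,m+n\rrbracket$ according to whether the pipe exits through the top or the left) yields \eqref{eqnSumw_i}.

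For the second formula, I would substitute the block decomposition $\eta=(R_1,C_1,\dots,R_k,C_k)$. Because $\mathbf{v}^\eta$ is built by alternately copying and negating a single value $v_{1'}$ as one walks along the cycle — each time one passes from a run of row-labels to a run of column-labels or back, the sign of the entry relative to $v_{1'}$ is determined by the total number of steps taken so far, i.e.\ by $(-1)^{|R_1|+|C_1|+\cdots}$ — every $v_{\eta(j)}$ and $v_{\eta(k)}$ appearing in \eqref{eqnSumw_i} equals $\pm v_{1'}$ with a sign read off from the partial sums of the block sizes. The boundary indices that survive are precisely the transition points between an $R_i$ block and a $C_i$ (or $C_{i-1}$) block, so the signed sum over those transition points is exactly
\[
\left(-(-1)^{|R_1|}+(-1)^{|R_1|+|C_1|}-\cdots+(-1)^{|R_1|+|C_1|+\cdots+|R_k|+|C_k|}\right)v_{1'},
\]
after checking that the first transition contributes $-(-1)^{|R_1|}$ (the leading row-run $R_1$, which contains the index $1'$, exits as a negative column-type boundary term) and the signs alternate thereafter.

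The main obstacle I expect is pinning down the signs and the precise index bookkeeping in $\phi$: one must be careful about which of the two pipes through a white square governs its coordinate, about the orientation conventions (where the pipe ``enters'' versus ``exits''), and about the off-by-one in how a run of $p$ consecutive row- or column-labels corresponds to $p$ pipe-steps and hence to a sign change $(-1)^p$. I would handle this by doing one small worked example (e.g.\ a single $4$-cycle or a $2$-cycle, matching the $w=(23)(4587)$ / $\tau=(17)(26384)$ diagrams already in the paper) to fix the conventions, and by invoking the injectivity and the explicit formula for $\phi$ from \cite[Theorem 4.6]{BellCasteelsLaunois} rather than re-deriving it. Once the local rule and the sign convention are nailed down, both displayed identities follow by a direct telescoping/grouping argument with no further subtlety.
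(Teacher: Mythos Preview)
Your overall strategy matches the paper's: use the explicit description of $\phi$ from \cite[Theorem 4.6]{BellCasteelsLaunois}, regroup the sum $\sum_i w_i$ according to the pipes/paths in the pipe dream, and observe that internal contributions along each path cancel so that only ``transition'' terms survive. The second displayed formula then follows by inserting $v_{\eta^k(1')}=(-1)^k v_{1'}$, exactly as you outline.

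There is, however, one inaccuracy in your local description that would derail the computation if left as stated. You write that ``the coordinate of $\mathbf{w}$ at that square is, up to sign, the value $v_j$ carried by one of those pipes.'' In fact the formula from \cite{BellCasteelsLaunois} is
\[
w_i \;=\; v_{\mathrm{left}(i)} \;-\; v_{\mathrm{up}(i)},
\]
a \emph{difference} of the two pipe-values attached to the white square $i$ (where $\mathrm{left}(i)$, $\mathrm{up}(i)$ are the edge-labels reached by the pipes exiting $i$ to the left and to the top). This means each white square contributes to \emph{two} paths in the decomposition, not one, and the correct grouping is by paths $j\to\tau(j)$: along such a path through squares $i_{j_1},\dots,i_{j_s}$, the contributions are $\pm v_{\eta(j)}$, alternating in sign because the pipe alternates between exiting left and exiting up at successive white squares. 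When the endpoints $j$ and $\eta(j)$ are both row-labels or both column-labels the alternating sum has an even number of equal terms and vanishes; when one is a row-label and the other a column-label the number of terms is odd and the contribution is $\pm v_{\eta(j)}$. This is the mechanism producing \eqref{eqnSumw_i}, and it is precisely the ``telescoping'' you describe --- but it does not work if you model $w_i$ as a single signed $v_j$. Once you write down the difference formula explicitly at the outset, the sign-checking you flag as the main obstacle becomes routine and no worked example is needed.
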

\begin{proof}
From \cite[Lemmas 4.2 \& 4.3]{BellCasteelsLaunois}, $\mathbf{v} \in \ker(P_{\omega}+P_{\sigma})$ is a vector with entries in $\{-1, 0, 1\}$ satisfying $v_i=(-1)^k v_{\eta^k(i)}$ for all $k\in \N$ and $i\in \llbracket 1, m+n \rrbracket$. The element $\mathbf{w}\in \ker(M)$ is defined in \cite[Theorem 4.6]{BellCasteelsLaunois} to be the vector with entries $w_i= v_{\text{left}(i)} - v_{\text{up}(i)}$, for all $i\in \llbracket 1, N \rrbracket$, where $\text{left}(i)$ and $\text{up}(i)$ is the row- or column-label reached by following the pipe exiting white square $i$ to the left and at the top, respectively. Figure \ref{illustration from v to w} illustrates this construction in the case where $D$ is the $3 \times 5$ diagram from Figure \ref{FigToricPerm}.  

\definecolor{light-gray}{gray}{0.6}
\begin{figure}[h]
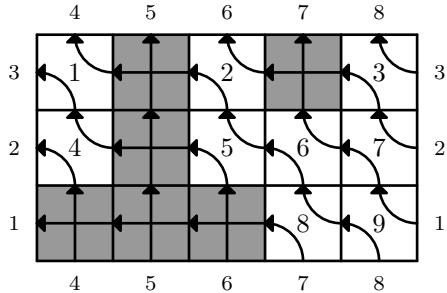

\begin{center}
\begin{pgfpicture}{0cm}{0cm}{6cm}{4cm}%

\pgfsetroundjoin \pgfsetroundcap%
\pgfsetfillcolor{light-gray}
\pgfmoveto{\pgfxy(0.5,0.5)}\pgflineto{\pgfxy(0.5,1.5)}\pgflineto{\pgfxy(3.5,1.5)}\pgflineto{\pgfxy(3.5,0.5)}\pgflineto{\pgfxy(0.5,0.5)}\pgffill
\pgfmoveto{\pgfxy(1.5,1.5)}\pgflineto{\pgfxy(1.5,3.5)}\pgflineto{\pgfxy(2.5,3.5)}\pgflineto{\pgfxy(2.5,1.5)}\pgflineto{\pgfxy(1.5,1.5)}\pgffill
\pgfmoveto{\pgfxy(3.5,2.5)}\pgflineto{\pgfxy(3.5,3.5)}\pgflineto{\pgfxy(4.5,3.5)}\pgflineto{\pgfxy(4.5,2.5)}\pgflineto{\pgfxy(3.5,2.5)}\pgffill
\pgfsetlinewidth{1pt}
\pgfxyline(0.5,0.5)(4.5,0.5)
\pgfxyline(0.5,1.5)(4.5,1.5)
\pgfxyline(0.5,2.5)(4.5,2.5)
\pgfxyline(0.5,3.5)(4.5,3.5)
\pgfxyline(0.5,0.5)(0.5,3.5)
\pgfxyline(1.5,0.5)(1.5,3.5)
\pgfxyline(2.5,0.5)(2.5,3.5)
\pgfxyline(3.5,0.5)(3.5,3.5)
\pgfxyline(4.5,0.5)(4.5,3.5)
\pgfxyline(5.5,0.5)(5.5,3.5)
\pgfxyline(4.5,0.5)(5.5,0.5)
\pgfxyline(4.5,1.5)(5.5,1.5)
\pgfxyline(4.5,2.5)(5.5,2.5)
\pgfxyline(4.5,3.5)(5.5,3.5)

\pgfputat{\pgfxy(1,3)}{\pgfnode{rectangle}{center}{\color{black} $1$}{}{\pgfusepath{}}}
\pgfputat{\pgfxy(3,3)}{\pgfnode{rectangle}{center}{\color{black} $2$}{}{\pgfusepath{}}}
\pgfputat{\pgfxy(5,3)}{\pgfnode{rectangle}{center}{\color{black} $3$}{}{\pgfusepath{}}}
\pgfputat{\pgfxy(1,2)}{\pgfnode{rectangle}{center}{\color{black} $4$}{}{\pgfusepath{}}}
\pgfputat{\pgfxy(3,2)}{\pgfnode{rectangle}{center}{\color{black} $5$}{}{\pgfusepath{}}}
\pgfputat{\pgfxy(4,2)}{\pgfnode{rectangle}{center}{\color{black} $6$}{}{\pgfusepath{}}}
\pgfputat{\pgfxy(5,2)}{\pgfnode{rectangle}{center}{\color{black} $7$}{}{\pgfusepath{}}}
\pgfputat{\pgfxy(4,1)}{\pgfnode{rectangle}{center}{\color{black} $8$}{}{\pgfusepath{}}}
\pgfputat{\pgfxy(5,1)}{\pgfnode{rectangle}{center}{\color{black} $9$}{}{\pgfusepath{}}}

\pgfsetlinewidth{1pt}
\color{black}

\pgfmoveto{\pgfxy(0.5,3)}\pgfpatharc{90}{0}{0.5cm}
\pgfstroke
\pgfmoveto{\pgfxy(0.5,3)}\pgflineto{\pgfxy(0.6,3.1)}\pgflineto{\pgfxy(0.6,2.9)}\pgflineto{\pgfxy(0.5,3)}\pgfclosepath\pgffillstroke

\pgfmoveto{\pgfxy(1,3.5)}\pgfpatharc{180}{270}{0.5cm}
\pgfstroke
\pgfmoveto{\pgfxy(1,3.5)}\pgflineto{\pgfxy(0.9,3.4)}\pgflineto{\pgfxy(1.1,3.4)}\pgflineto{\pgfxy(1,3.5)}\pgfclosepath\pgffillstroke

\color{black}
\pgfxyline(2,2.5)(2,3.5)
\pgfmoveto{\pgfxy(1.9,3.4)}\pgflineto{\pgfxy(2,3.5)}\pgflineto{\pgfxy(2.1,3.4)}\pgflineto{\pgfxy(1.9,3.4)}\pgfclosepath\pgffillstroke

\pgfxyline(1.5,3)(2.5,3)
\pgfmoveto{\pgfxy(1.5,3)}\pgflineto{\pgfxy(1.6,3.1)}\pgflineto{\pgfxy(1.6,2.9)}\pgflineto{\pgfxy(1.5,3)}\pgfclosepath\pgffillstroke

\pgfmoveto{\pgfxy(2.5,3)}\pgfpatharc{90}{0}{0.5cm}
\pgfstroke
\pgfmoveto{\pgfxy(2.5,3)}\pgflineto{\pgfxy(2.6,3.1)}\pgflineto{\pgfxy(2.6,2.9)}\pgflineto{\pgfxy(2.5,3)}\pgfclosepath\pgffillstroke

\pgfmoveto{\pgfxy(3,3.5)}\pgfpatharc{180}{270}{0.5cm}
\pgfstroke
\pgfmoveto{\pgfxy(3,3.5)}\pgflineto{\pgfxy(2.9,3.4)}\pgflineto{\pgfxy(3.1,3.4)}\pgflineto{\pgfxy(3,3.5)}\pgfclosepath\pgffillstroke


\pgfxyline(4,2.5)(4,3.5)
\pgfmoveto{\pgfxy(3.9,3.4)}\pgflineto{\pgfxy(4,3.5)}\pgflineto{\pgfxy(4.1,3.4)}\pgflineto{\pgfxy(3.9,3.4)}\pgfclosepath\pgffillstroke

\pgfxyline(3.5,3)(4.5,3)
\pgfmoveto{\pgfxy(3.5,3)}\pgflineto{\pgfxy(3.6,3.1)}\pgflineto{\pgfxy(3.6,2.9)}\pgflineto{\pgfxy(3.5,3)}\pgfclosepath\pgffillstroke


\pgfmoveto{\pgfxy(0.5,2)}\pgfpatharc{90}{0}{0.5cm}
\pgfstroke
\pgfmoveto{\pgfxy(0.5,2)}\pgflineto{\pgfxy(0.6,2.1)}\pgflineto{\pgfxy(0.6,1.9)}\pgflineto{\pgfxy(0.5,2)}\pgfclosepath\pgffillstroke

\pgfmoveto{\pgfxy(1,2.5)}\pgfpatharc{180}{270}{0.5cm}
\pgfstroke
\pgfmoveto{\pgfxy(1,2.5)}\pgflineto{\pgfxy(0.9,2.4)}\pgflineto{\pgfxy(1.1,2.4)}\pgflineto{\pgfxy(1,2.5)}\pgfclosepath\pgffillstroke


\pgfxyline(2,1.5)(2,2.5)
\pgfmoveto{\pgfxy(1.9,2.4)}\pgflineto{\pgfxy(2,2.5)}\pgflineto{\pgfxy(2.1,2.4)}\pgflineto{\pgfxy(1.9,2.4)}\pgfclosepath\pgffillstroke

\pgfxyline(1.5,2)(2.5,2)
\pgfmoveto{\pgfxy(1.5,2)}\pgflineto{\pgfxy(1.6,2.1)}\pgflineto{\pgfxy(1.6,1.9)}\pgflineto{\pgfxy(1.5,2)}\pgfclosepath\pgffillstroke

\pgfmoveto{\pgfxy(2.5,2)}\pgfpatharc{90}{0}{0.5cm}
\pgfstroke
\pgfmoveto{\pgfxy(2.5,2)}\pgflineto{\pgfxy(2.6,2.1)}\pgflineto{\pgfxy(2.6,1.9)}\pgflineto{\pgfxy(2.5,2)}\pgfclosepath\pgffillstroke

\pgfmoveto{\pgfxy(3,2.5)}\pgfpatharc{180}{270}{0.5cm}
\pgfstroke
\pgfmoveto{\pgfxy(3,2.5)}\pgflineto{\pgfxy(2.9,2.4)}\pgflineto{\pgfxy(3.1,2.4)}\pgflineto{\pgfxy(3,2.5)}\pgfclosepath\pgffillstroke


\pgfmoveto{\pgfxy(3.5,2)}\pgfpatharc{90}{0}{0.5cm}
\pgfstroke
\pgfmoveto{\pgfxy(3.5,2)}\pgflineto{\pgfxy(3.6,2.1)}\pgflineto{\pgfxy(3.6,1.9)}\pgflineto{\pgfxy(3.5,2)}\pgfclosepath\pgffillstroke

\pgfmoveto{\pgfxy(4,2.5)}\pgfpatharc{180}{270}{0.5cm}
\pgfstroke
\pgfmoveto{\pgfxy(4,2.5)}\pgflineto{\pgfxy(3.9,2.4)}\pgflineto{\pgfxy(4.1,2.4)}\pgflineto{\pgfxy(4,2.5)}\pgfclosepath\pgffillstroke


\pgfxyline(1,0.5)(1,1.5)
\pgfmoveto{\pgfxy(0.9,1.4)}\pgflineto{\pgfxy(1,1.5)}\pgflineto{\pgfxy(1.1,1.4)}\pgflineto{\pgfxy(0.9,1.4)}\pgfclosepath\pgffillstroke

\pgfxyline(0.5,1)(1.5,1)
\pgfmoveto{\pgfxy(0.5,1)}\pgflineto{\pgfxy(0.6,1.1)}\pgflineto{\pgfxy(0.6,0.9)}\pgflineto{\pgfxy(0.5,1)}\pgfclosepath\pgffillstroke


\pgfxyline(2,0.5)(2,1.5)
\pgfmoveto{\pgfxy(1.9,1.4)}\pgflineto{\pgfxy(2,1.5)}\pgflineto{\pgfxy(2.1,1.4)}\pgflineto{\pgfxy(1.9,1.4)}\pgfclosepath\pgffillstroke

\pgfxyline(1.5,1)(2.5,1)
\pgfmoveto{\pgfxy(1.5,1)}\pgflineto{\pgfxy(1.6,1.1)}\pgflineto{\pgfxy(1.6,0.9)}\pgflineto{\pgfxy(1.5,1)}\pgfclosepath\pgffillstroke


\pgfxyline(3,0.5)(3,1.5)
\pgfmoveto{\pgfxy(2.9,1.4)}\pgflineto{\pgfxy(3,1.5)}\pgflineto{\pgfxy(3.1,1.4)}\pgflineto{\pgfxy(2.9,1.4)}\pgfclosepath\pgffillstroke

\pgfxyline(2.5,1)(3.5,1)
\pgfmoveto{\pgfxy(2.5,1)}\pgflineto{\pgfxy(2.6,1.1)}\pgflineto{\pgfxy(2.6,0.9)}\pgflineto{\pgfxy(2.5,1)}\pgfclosepath\pgffillstroke

\pgfmoveto{\pgfxy(3.5,1)}\pgfpatharc{90}{0}{0.5cm}
\pgfstroke
\pgfmoveto{\pgfxy(3.5,1)}\pgflineto{\pgfxy(3.6,1.1)}\pgflineto{\pgfxy(3.6,0.9)}\pgflineto{\pgfxy(3.5,1)}\pgfclosepath\pgffillstroke

\pgfmoveto{\pgfxy(4,1.5)}\pgfpatharc{180}{270}{0.5cm}
\pgfstroke
\pgfmoveto{\pgfxy(4,1.5)}\pgflineto{\pgfxy(3.9,1.4)}\pgflineto{\pgfxy(4.1,1.4)}\pgflineto{\pgfxy(4,1.5)}\pgfclosepath\pgffillstroke

\pgfmoveto{\pgfxy(4.5,3)}\pgfpatharc{90}{0}{0.5cm}
\pgfstroke
\pgfmoveto{\pgfxy(4.5,3)}\pgflineto{\pgfxy(4.6,3.1)}\pgflineto{\pgfxy(4.6,2.9)}\pgflineto{\pgfxy(4.5,3)}\pgfclosepath\pgffillstroke

\pgfmoveto{\pgfxy(5,3.5)}\pgfpatharc{180}{270}{0.5cm}
\pgfstroke
\pgfmoveto{\pgfxy(5,3.5)}\pgflineto{\pgfxy(4.9,3.4)}\pgflineto{\pgfxy(5.1,3.4)}\pgflineto{\pgfxy(5,3.5)}\pgfclosepath\pgffillstroke


\pgfmoveto{\pgfxy(4.5,2)}\pgfpatharc{90}{0}{0.5cm}
\pgfstroke
\pgfmoveto{\pgfxy(4.5,2)}\pgflineto{\pgfxy(4.6,2.1)}\pgflineto{\pgfxy(4.6,1.9)}\pgflineto{\pgfxy(4.5,2)}\pgfclosepath\pgffillstroke

\pgfmoveto{\pgfxy(5,2.5)}\pgfpatharc{180}{270}{0.5cm}
\pgfstroke
\pgfmoveto{\pgfxy(5,2.5)}\pgflineto{\pgfxy(4.9,2.4)}\pgflineto{\pgfxy(5.1,2.4)}\pgflineto{\pgfxy(5,2.5)}\pgfclosepath\pgffillstroke

\pgfmoveto{\pgfxy(4.5,1)}\pgfpatharc{90}{0}{0.5cm}
\pgfstroke
\pgfmoveto{\pgfxy(4.5,1)}\pgflineto{\pgfxy(4.6,1.1)}\pgflineto{\pgfxy(4.6,0.9)}\pgflineto{\pgfxy(4.5,1)}\pgfclosepath\pgffillstroke

\pgfmoveto{\pgfxy(5,1.5)}\pgfpatharc{180}{270}{0.5cm}
\pgfstroke
\pgfmoveto{\pgfxy(5,1.5)}\pgflineto{\pgfxy(4.9,1.4)}\pgflineto{\pgfxy(5.1,1.4)}\pgflineto{\pgfxy(5,1.5)}\pgfclosepath\pgffillstroke

\pgfputat{\pgfxy(0.2,1)}{\pgfnode{rectangle}{center}{\color{black}\footnotesize $1$}{}{\pgfusepath{}}}
\pgfputat{\pgfxy(0.2,2)}{\pgfnode{rectangle}{center}{\color{black}\footnotesize $2$}{}{\pgfusepath{}}}
\pgfputat{\pgfxy(0.2,3)}{\pgfnode{rectangle}{center}{\color{black}\footnotesize $3$}{}{\pgfusepath{}}}

\pgfputat{\pgfxy(5.8,1)}{\pgfnode{rectangle}{center}{\color{black}\footnotesize $1$}{}{\pgfusepath{}}}
\pgfputat{\pgfxy(5.8,2)}{\pgfnode{rectangle}{center}{\color{black}\footnotesize $2$}{}{\pgfusepath{}}}
\pgfputat{\pgfxy(5.8,3)}{\pgfnode{rectangle}{center}{\color{black}\footnotesize $3$}{}{\pgfusepath{}}}

\pgfputat{\pgfxy(1,0.2)}{\pgfnode{rectangle}{center}{\color{black}\footnotesize $4$}{}{\pgfusepath{}}}
\pgfputat{\pgfxy(2,0.2)}{\pgfnode{rectangle}{center}{\color{black}\footnotesize $5$}{}{\pgfusepath{}}}
\pgfputat{\pgfxy(3,0.2)}{\pgfnode{rectangle}{center}{\color{black}\footnotesize $6$}{}{\pgfusepath{}}}
\pgfputat{\pgfxy(4,0.2)}{\pgfnode{rectangle}{center}{\color{black}\footnotesize $7$}{}{\pgfusepath{}}}
\pgfputat{\pgfxy(5,0.2)}{\pgfnode{rectangle}{center}{\color{black}\footnotesize $8$}{}{\pgfusepath{}}}

\pgfputat{\pgfxy(1,3.8)}{\pgfnode{rectangle}{center}{\color{black}\footnotesize $4$}{}{\pgfusepath{}}}
\pgfputat{\pgfxy(2,3.8)}{\pgfnode{rectangle}{center}{\color{black}\footnotesize $5$}{}{\pgfusepath{}}}
\pgfputat{\pgfxy(3,3.8)}{\pgfnode{rectangle}{center}{\color{black}\footnotesize $6$}{}{\pgfusepath{}}}
\pgfputat{\pgfxy(4,3.8)}{\pgfnode{rectangle}{center}{\color{black}\footnotesize $7$}{}{\pgfusepath{}}}
\pgfputat{\pgfxy(5,3.8)}{\pgfnode{rectangle}{center}{\color{black}\footnotesize $8$}{}{\pgfusepath{}}}
\end{pgfpicture}
\caption{In this example, $\text{left}(7)=7$ and $\text{up}(7)=6$, so that $w_7= v_{7} - v_{6}$. \label{illustration from v to w}}
\end{center}
\end{figure}

From this we obtain
\begin{equation}\label{eqnSum}
\sum_{i=1}^N w_i = \sum_{i=1}^N  v_{\text{left}(i)} - v_{\text{up}(i)}.
\end{equation}
Notice that we can split this sum up into the contributions coming from paths $j \rightarrow \tau(j)$ in $D$, for all $j\in \llbracket 1, m+n \rrbracket$, and if $j\notin \eta$ then $v_j=v_{\tau(j)}=0$. Therefore we only need to consider contributions from paths $j \rightarrow \eta(j)$ for $j\in \eta$. Label the white squares that the path $j \rightarrow \eta(j)$ passes through as $i_{j_1}, \ldots, i_{j_s}$. For each such path we sum the contributions coming from the white squares $i_{j_1}, \ldots, i_{j_s}$. If $j$ and $\eta(j)$ are both column-labels then the pipe must exit square $i_{j_1}$ to the left and traverse an even number of white squares before exiting square $i_{j_s}$ at the top.  Thus the contribution of this path to the sum (\ref{eqnSum}) is:
\[ v_{\text{left}(i_{j_1})} - v_{\text{up}(i_{j_2})} + \ldots + v_{\text{left}(i_{j_{s-1}})}-v_{\text{up}(i_{j_s})} = v_{\eta(j)} - v_{\eta(j)} + \ldots +v_{\eta(j)}-v_{\eta(j)} = 0.\]
Similarly, if $j$ and $\eta(j)$ are both row-labels then the contribution of the path to the sum (\ref{eqnSum}) is also $0$. If, however, $j$ is a row-label and $\eta(j)$ is a column-label then $s$ is odd and the contribution becomes:
\[  - v_{\text{up}(i_{j_1})} +  v_{\text{left}(i_{j_2})} - \ldots + v_{\text{left}(i_{j_{s-1}})}-v_{\text{up}(i_{j_s})} = - v_{\eta(j)} + v_{\eta(j)} - \ldots + v_{\eta(j)}-v_{\eta(j)} = -v_{\eta(j)} .\]
With similar calculations we see that if $j$ is a column-label and $\eta(j)$ is a row-label then the contribution of the path is $v_{\eta(j)}$. 
Hence the only contributions to the sum in (\ref{eqnSum}) come from the paths $j \rightarrow \eta(j)$ going from a row-label to a column-label, or vice-versa, and these contributions are $-v_{\eta(j)}$ when $j$ is a row-label and $v_{\eta(j)}$  when $j$ is a column-label. This proves identity (\ref{eqnSumw_i}).

We now group the consecutive entries of $\eta$ into contiguous subsequences as discussed in \cite[Section 5]{BellCasteelsLaunois} and write the permutation as $\eta=(R_1, \, C_1, \, R_2, \, C_2, \, \ldots \, R_k, \, C_k)$. If we let $v_{1'}$ be the first nonzero entry of $\mathbf{v}$ then these subsequences become: 
\[R_1 = (v_{1'}, \, v_{\eta(1')}, \, \ldots, \, v_{\eta^{|R_1|-1}(1')}),  \quad C_1 = (v_{\eta^{|R_1|}(1')}, \, \ldots, \, v_{\eta^{|R_1|+|C_1|-1}(1')}) \]
and, in general, for any $i\in \llbracket 1, k-1 \rrbracket$:
\begin{align*}
 R_{i+1} = ( v_{\eta^{|R_1|+|C_1|+\cdots +|R_i|+|C_i|}(1')}, \, \ldots, \, v_{\eta^{|R_1|+|C_1|+\cdots +|R_i|+|C_i|+|R_{i+1}|-1}(1')}), \\
 C_{i+1} = (v_{\eta^{|R_1|+|C_1|+\cdots +|C_i|+|R_{i+1}|}(1')}, \, \ldots, \, v_{\eta^{|R_1|+|C_1|+\cdots +|R_{i+1}|+|C_{i+1}|-1}(1')}).
\end{align*}
The sums on the right-hand side of equation (\ref{eqnSumw_i}) then correspond to summing the first entries in $R_1, \ldots, R_k$ and the first entries in $C_1, \ldots, C_k$ respectively. That is, we may write these sums as
\begin{align*}
\sum_{\substack{ \{j\in \llbracket m+1, m+n \rrbracket | \\ \eta(j)\in \llbracket 1, m \rrbracket \} }} v_{\eta(j)} & = v_{1'}+\sum_{i=1}^{k-1} v_{\eta^{|R_1|+|C_1|+\cdots +|R_i|+|C_i|}(1')} \\
 \sum_{\substack{ \{ k\in \llbracket 1, m \rrbracket |\\ \eta(k)\in \llbracket m+1, m+n \rrbracket \} }} v_{\eta(k)} & = v_{\eta^{|R_1|}(1')} + \sum_{i=1}^{k-1} v_{\eta^{|R_1|+|C_1|+\cdots +|C_{i}|+|R_{i+1}|}(1')}.
 \end{align*}
Upon applying the identity $(-1)^{i}v_{1'}=v_{\eta^i(1')}$, for all $i\in \N$, we can rewrite the sums above in terms of $v_{1'}$ only, thus completing the proof.
\end{proof}

\begin{proposition}\label{propSomething}
Take $D$ to be an $m\times n$ diagram. $M:=M(D)$, and let $\{h_i(M)\}_{i=1}^s$ be the nonzero invariant factors of the associated matrix.
Then $h_i(M^E)$ is a power of 2, for all $i\in \llbracket 1,s \rrbracket$, and $h_{s+1}(M^E)$ is  either 0 or its odd prime factors are bounded above by $\min(m,n)$.
\end{proposition}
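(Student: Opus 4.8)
The plan is to combine Theorem~\ref{leminvariant} with the combinatorial estimate of Lemma~\ref{lemSumw} and the divisibility relations for invariant factors under extension that were set up in Propositions~\ref{propInvFac}--\ref{propKerMod}. First I would handle the easy half: by the interlacing inequalities \eqref{eqnThompson} of Thompson applied over $\mathbb{Z}$, we have $h_i(M^E)\mid h_i(M)$ for $i\in\llbracket 1,s\rrbracket$, and by Theorem~\ref{leminvariant} each $h_i(M)$ is a power of $2$; hence $h_i(M^E)$ is a power of $2$ for $i\in\llbracket 1,s\rrbracket$. This leaves only the last invariant factor $h_{s+1}(M^E)$ to control (it is either $0$, in which case there is nothing to prove, or a genuine nonzero invariant factor).

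So assume $h_{s+1}(M^E)\neq 0$ and let $p$ be an odd prime dividing it; I must show $p\le\min(m,n)$. The key is Proposition~\ref{propKerMod}: since $p\mid h_{s+1}(M^E)$ and $p$ is odd (so $p$ does not divide any power of $2$, meaning $s'=s$ when we reduce $M$ mod $p$ — here I should check that reducing mod an odd prime does not kill any of the $2$-power invariant factors of $M$, which is immediate), we get $\ker(\overline{M})\subseteq\bar{\mathbf{1}}^{\bot}$ where $\overline{M}=M\bmod p$. Now reduce the basis $\phi(B)=\{\mathbf{w}^{\eta_i}\}$ of $\ker(M)$ (coming from the odd cycles $\eta_1,\dots,\eta_r$ of the toric permutation via \cite[Theorem 4.6]{BellCasteelsLaunois}) modulo $p$: these still span $\ker(\overline M)$ provided $p$ does not destroy the relevant rank information, which again holds because the transition data are integral and the kernel dimension of $M$ over $\mathbb Q$ equals $r$ by Proposition~\ref{propkerneldimension}. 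Then $\ker(\overline M)\subseteq\bar{\mathbf 1}^\bot$ forces $\sum_{i=1}^N (\mathbf{w}^{\eta})_i\equiv 0\pmod p$ for \emph{every} odd cycle $\eta$.

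At this point I invoke the explicit formula of Lemma~\ref{lemSumw}: for the odd cycle $\eta=(R_1,C_1,\dots,R_k,C_k)$ written as alternating runs of row-labels and column-labels,
\[
\sum_{i=1}^N (\mathbf w^\eta)_i=\Bigl(-(-1)^{|R_1|}+(-1)^{|R_1|+|C_1|}-\cdots+(-1)^{|R_1|+|C_1|+\cdots+|R_k|+|C_k|}\Bigr)v_{1'},
\]
and $v_{1'}=\pm1$. So the integer in parentheses — call it $c_\eta$ — must be divisible by $p$. The remaining task is a bound on $|c_\eta|$. Each of the $k$ runs $R_j$ is a nonempty increasing block of distinct row-labels from $\llbracket 1,m\rrbracket$ and each $C_j$ a nonempty decreasing block of distinct column-labels from $\llbracket m+1,m+n\rrbracket$; since the runs partition the entries of $\eta$, one has $\sum_j|R_j|\le m$ and $\sum_j|C_j|\le n$, hence $k\le\min(m,n)$. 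The alternating sum $c_\eta$ is a sum of $2k$ terms each $\pm1$ (in fact the terms cancel in pairs except for boundary effects), so $|c_\eta|\le 2k\le 2\min(m,n)$; a closer look shows the signs collapse so that $|c_\eta|$ is actually at most $2$ times the number of ``sign changes'' among the partial parities, which is at most $\min(m,n)$ — I expect $|c_\eta|\le\min(m,n)$ after the telescoping. Since $p\mid c_\eta$, $p$ is odd, and $c_\eta\neq 0$ is not possible to have for \emph{all} odd cycles unless $h_{s+1}(M^E)$ has such a factor, we conclude $p\le|c_\eta|\le\min(m,n)$, as required.

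The main obstacle is the last step: getting the sharp bound $|c_\eta|\le\min(m,n)$ (rather than the crude $2\min(m,n)$ or $2k$) out of the alternating sum in Lemma~\ref{lemSumw}. This requires carefully tracking how the running parities $|R_1|,|R_1|+|C_1|,\dots$ behave — the terms telescope whenever two consecutive partial sums have the same parity, so $c_\eta$ is governed by the number of runs of \emph{odd} length among the $R_j$ and $C_j$, and each such odd run consumes at least one label. Pinning down exactly which of $m$, $n$, or $\min(m,n)$ controls this, and confirming the formula of Lemma~\ref{lemSumw} was stated with compatible sign conventions, is where the real care is needed; everything else (the Thompson interlacing, the mod-$p$ reduction of the kernel basis, Proposition~\ref{propKerMod}) is essentially bookkeeping on top of results already established in the excerpt.
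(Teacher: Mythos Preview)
Your approach is essentially the paper's: Thompson interlacing handles $h_1,\dots,h_s$, and for an odd prime $p\mid h_{s+1}(M^E)\neq 0$ you reduce mod $p$, use Proposition~\ref{propKerMod} (equivalently the dimension comparison the paper does via Proposition~\ref{propInvFac} and Corollary~\ref{corolDim}) to force every $\sum_i w_i^{\eta}\equiv 0\pmod p$, and bound those sums combinatorially. Two points to tighten:

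\textbf{The ``main obstacle'' is not one.} You do not need to sharpen $|c_\eta|\le 2k$ to $|c_\eta|\le\min(m,n)$. The quantity $c_\eta$ is a sum of $2k$ terms each equal to $\pm1$, so $c_\eta$ is \emph{even}. Hence if $p$ is odd and $p\mid c_\eta$ with $c_\eta\neq 0$, then $p\mid c_\eta/2$, and $|c_\eta/2|\le k\le\min(m,n)$ gives $p\le\min(m,n)$ immediately. (The paper does exactly this implicitly: in its extremal example the sum is $\pm 2m$, and from ``$p$ odd divides $2m$'' it concludes $p\le m$.) Your bound $k\le\min(m,n)$ is correct, since the $k$ nonempty runs $R_1,\dots,R_k$ use distinct row labels from $\llbracket 1,m\rrbracket$ and the $k$ runs $C_1,\dots,C_k$ use distinct column labels from $\llbracket m+1,m+n\rrbracket$.

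\textbf{Why some $c_\eta$ is nonzero.} Your sentence ``$c_\eta\neq 0$ is not possible to have for all odd cycles unless $h_{s+1}(M^E)$ has such a factor'' is garbled. What you need is simply: $h_{s+1}(M^E)\neq 0$ forces $\dim\ker(M^E)=\dim\ker(M)-1$, so by Proposition~\ref{propInvFac} we have $\ker(M)\nsubseteq\mathbf{1}^\bot$; since $\{\mathbf w^{\eta_i}\}$ is a basis of $\ker(M)$, at least one $c_{\eta_i}=\sum_j w_j^{\eta_i}$ is nonzero. Combine this with $p\mid c_{\eta_i}$ for all $i$ (each $\mathbf w^{\eta_i}\bmod p$ lies in $\ker(\overline M)\subseteq\bar{\mathbf 1}^\bot$; you do not need these reductions to span $\ker(\overline M)$, only to lie in it) and you are done.
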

\begin{proof}
Let $M=M(D)$ be the matrix corresponding to an $m\times n$ diagram. By \cite[Theorem 3]{Thompson} we obtain the following interlacing inequalities:
\begin{equation}\label{Thompson}
h_1(M^E) | h_1(M), ~ h_2(M^E) | h_2(M), ~ \ldots,  h_s(M^E) | h_s(M),
\end{equation}
where the $|$ means `divides'. From this we see that $h_i(M^E)$ are all powers of 2 for $i\in \llbracket 1, s \rrbracket$, but not necessarily for $i=s+1$.

Suppose that $h_{s+1}(M^E)\neq 0$ and that $p|h_{s+1}(M^E)$ for some odd prime $p>2$. In this case, by Proposition \ref{propInvFac} we deduce that $\ker(M)\nsubseteq \orth$ and $\ker(M \mod p) \subseteq \orth \mod p$. To see this we note that when $h_{s+1}(M^E)\neq 0$ then $\dim(\ker(M^E))=\dim(\ker(M))-1$ (hence $\ker(M)\nsubseteq \orth$, by Proposition \ref{propInvFac}) and when $p|h_{s+1}(M^E)$ then $\dim(\ker(M^E))<\dim(\ker(M^E \mod p))$ and $\dim(\ker(M)) = \dim(\ker(M \mod p))$, since all $h_i(M)$ are powers of 2. Applying Proposition \ref{propInvFac} to $M \mod p$ and $M^E \mod p$, we obtain
\begin{align*} 
\dim(\ker(M^E \mod p)) &{}> \dim(\ker(M^E))\\
&{}=\dim(\ker(M))-1 \\
&{} = \dim(\ker(M \mod p)) -1 \\
&{} = \dim(\ker(M^E \mod p)) - \epsilon -1,
\end{align*}
where $\epsilon$ is defined as in Corollary \ref{corolDim}. This implies $\epsilon =1$ and thus $\ker(M\mod p)\subseteq \orth \mod p$.

We now find an upper bound for $p$ given that $\ker(M)\nsubseteq \orth$ and $\ker(M \mod p) \subseteq \orth \mod p$. Without loss of generality we will take $m<n$ so that $\min(m,n)=m$. Suppose that the disjoint cycle decomposition of $\tau$ consists of exactly one odd cycle $\tau=(1' \; \tau(1') \; \ldots \; \tau^{m+n-1}(1'))$, where $1'$ is the first nonzero entry of $\tau$ and we let each successive label $\tau^i(1')$ alternate between row- and column-labels for as long as this is possible. That is, we suppose that $1', \tau^2(1'), \ldots, \tau^{2m-2}(1')$ are row-labels and $\tau(1'), \tau^3(1'), \ldots, \tau^{2m-1}(1'), \tau^{2m}(1'), \allowbreak \ldots, \tau^{m+n-1}(1')$ are column-labels. Thus the pair $\{\tau^j(1'), \, \tau^{j+1}(1')\}$ denotes a switch from a row-label to a column-label $m$ times (when $j=0, 2, \ldots, 2m-2$) and from a column-label to a row-label $m$ times (when $j=1, 3, \ldots, 2m-3, m+n-1$). In this case, the kernel of $M$ has 1 basis element $\mathbf{w}=\phi(\mathbf{v})$ associated to the odd cycle $\tau$ and vector $\mathbf{v}=\mathbf{v^{\tau}}\in \ker(P_{\omega} + P_{\sigma})$. Applying Proposition \ref{propSomething} to $\mathbf{w}$ we obtain the alternating sum:
\begin{align}\label{eqnSum2}
\sum_{i=1}^N w_k = v_{1'} - v_{\tau(1')} + v_{\tau^2(1')} - \ldots + v_{\tau^{2m-2}(1')} - v_{\tau^{m+n-1}(1')} = 2m v_{1'} = \pm 2m.
\end{align}
Thus, since $\ker(M \mod p) \subseteq \orth \mod p$, the sum above must be congruent to $0 \mod p$. That is, $p$ must be less than $m$, since $p$ is odd. A similar argument can be used if $n\leq m$ to conclude that $p<n$ hence the general result concludes that $p<\min(m,n)$.

It is obvious that if the disjoint cycle decomposition of $\tau$ consists of shorter length odd cycles $\eta_1, \ldots, \eta_r$ then the absolute value of the sum (\ref{eqnSum2}) corresponding to basis element $\mathbf{w}$ associated to any $\eta_i$ will be less than $2\min(m,n)$. Hence $2\min(m,n)$ is the upper bound for any toric permutation, $\tau$, and hence any diagram, $D$.
\end{proof}

We note that the invariant factors of a matrix $M(D)^E$ do not need to be powers of $2$ as the following example shows. 

\begin{example}
\label{example off invariant factors}
An example of a diagram with a matrix whose extended matrix has an invariant factor which is not a power of 2 is given in Figure \ref{FigEG}. The $3 \times 3$ diagram produces a matrix $M(D)$, which has nonzero invariant factors $(1,1)$, whereas its extended matrix has nonzero invariant factors $(1,1,3)$. This still obeys the theorem above as the invariant factor $h_3(M^E)$ has prime factor $3 \leq \min(3,3)$.

\definecolor{light-gray}{gray}{0.6}
\begin{figure}[h]
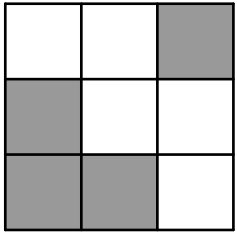

\begin{center}
\begin{tabular}{cc}
\begin{pgfpicture}{0cm}{0cm}{3.75cm}{3.75cm}%

\pgfputat{\pgfxy(-0.5,2)}{$D: $}

\pgfsetroundjoin \pgfsetroundcap%
\pgfsetfillcolor{light-gray}
\pgfmoveto{\pgfxy(0.5,0.5)}\pgflineto{\pgfxy(0.5,2.5)}\pgflineto{\pgfxy(1.5,2.5)}\pgflineto{\pgfxy(1.5,0.5)}\pgflineto{\pgfxy(0.5,0.5)}\pgffill

\pgfmoveto{\pgfxy(1.5,0.5)}\pgflineto{\pgfxy(1.5,1.5)}\pgflineto{\pgfxy(2.5,1.5)}\pgflineto{\pgfxy(2.5,0.5)}\pgflineto{\pgfxy(1.5,0.5)}\pgffill

\pgfmoveto{\pgfxy(2.5,2.5)}\pgflineto{\pgfxy(2.5,3.5)}\pgflineto{\pgfxy(3.5,3.5)}\pgflineto{\pgfxy(3.5,2.5)}\pgflineto{\pgfxy(2.5,2.5)}\pgffill

\pgfsetlinewidth{1pt}
\pgfxyline(0.5,0.5)(3.5,0.5)
\pgfxyline(0.5,0.5)(0.5,3.5)
\pgfxyline(0.5,3.5)(3.5,3.5)
\pgfxyline(3.5,0.5)(3.5,3.5)
\pgfxyline(2.5,0.5)(2.5,3.5)
\pgfxyline(1.5,0.5)(1.5,3.5)
\pgfxyline(0.5,1.5)(3.5,1.5)
\pgfxyline(0.5,2.5)(3.5,2.5)
\end{pgfpicture}
& 
\begin{pgfpicture}{0cm}{0cm}{8cm}{3.75cm}%
\color{black}
\pgfputat{\pgfxy(0,2)}{$\mapsto $ }

\pgfputat{\pgfxy(0.75,2)}{ $\mathrm{Sk}(M(D)^E)= \left( \begin{matrix} 0& 1 & 0 & 0 & 0 &0 \\ -1 & 0 & 0 & 0 & 0 &0 \\ 0 & 0 & 0 & 1 & 0 & 0 \\  0 & 0 & -1 & 0 & 0 & 0  \\ 0 & 0 & 0 & 0 & 0 & 3 \\ 0 & 0 & 0 & 0 & -3 & 0 \end{matrix} \right)$}

\end{pgfpicture}
\end{tabular}
\caption{A diagram $D$ with the skew-normal form of its extended adjacency matrix $M(D)^E$.}\label{FigEG}
\end{center}
\end{figure}
\end{example}

\subsection{PI degree of Quantum Schubert Varieties associated to quantum determinantal ideals }
Recall the matrix $\M_{n,t}$ defining the quantum affine space associated to $R_t(M_n)$, for $1\leq t < n$. 
Let $2s_t:=2nt-t^2-t$ be its rank so that it has $s_t$ (pairs of nonzero) invariant factors, which we will denote by $\{h_i^{(t)}\}_i$. 

The aim of this section is to compute the PI degree of the quantum Schubert variety $\oh_q(G_{n,2n}(\K))_{\gamma}$ when $\gamma=\{1, \ldots, t, n+1, \ldots, 2n-t\}$.

\begin{proposition} \
\begin{enumerate}
\item\label{something1} $h_{s_t+1}(\M^E_{n,t}) = 0$ if and only if $(n=2m \text{ and } t|m)$. 

\item\label{something2} All invariant factors of $\M_{n,t}^E$ are powers of $2$.
\item\label{something5} Let $\gamma=\{1, \ldots, t, n+1,\ldots, 2n-t\} \in \Pi_{n,2n}$.  Let $q$ be a primitive $\ell^{\mathrm{th}}$ root of unity with $\ell$ odd. Then,
\begin{equation*}
\PI(\oh_q(G_{n,2n}(\K))_{\gamma}) =\begin{cases} \ell^{\frac{2nt-t^2-t}{2}} & \text{if } n=2m \text{ and } t|m,\\
\ell^{\frac{2nt-t^2-t}{2} +1} & \text{otherwise.}
\end{cases}
\end{equation*}
\end{enumerate}
\end{proposition}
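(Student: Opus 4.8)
The plan is to reduce all three parts to the explicit description of the toric permutation $\tau$ of $C_{n,t}$ given in Proposition \ref{lemtoricperm}, together with the extended‑matrix machinery of Section \ref{sectionQSV}. For part \ref{something1}: since $\dim(\ker(\M_{n,t}))=t$ (Proposition \ref{lemtoricperm}), Proposition \ref{propInvFac} tells us that $h_{s_t+1}(\M_{n,t}^E)=0$ precisely when $\ker(\M_{n,t})\subseteq\orth$, so I only need to decide when this inclusion holds. By the discussion preceding Lemma \ref{lemSumw} (based on \cite{BellCasteelsLaunois}), $\ker(\M_{n,t})=\ker(M(C_{n,t}))$ carries a basis $\{\mathbf{w}^{(j)}\}_{j=1}^{t}$ indexed by the $t$ disjoint odd cycles $\eta_1,\dots,\eta_t$ of $\tau$. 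Reading off Proposition \ref{lemtoricperm} in each of its three cases, every cycle $\eta_j$ ($1\le j\le t$) has the shape: an increasing run of row‑labels $j,\,j+t,\,j+2t,\dots$ (continued while the value stays $\le n$), followed by a decreasing run of column‑labels of the same length. Thus, in the notation of Lemma \ref{lemSumw}, $k=1$ and $|R_1|=|C_1|=\lfloor(n-j)/t\rfloor+1$, and Lemma \ref{lemSumw} gives $\sum_i w^{(j)}_i=(1-(-1)^{|R_1|})\,v_{1'}$, which is $0$ if $|R_1|$ is even and $\pm2$ if $|R_1|$ is odd (recall $v_{1'}\in\{\pm1\}$). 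Hence $\ker(\M_{n,t})\subseteq\orth$ iff $\lfloor(n-j)/t\rfloor$ is odd for every $j\in\llbracket1,t\rrbracket$. As $j$ runs through $\llbracket1,t\rrbracket$ the integers $n-j$ form a block of $t$ consecutive integers, on which $\lfloor\,\cdot\,/t\rfloor$ is constant exactly when $t\mid n$ and otherwise takes two consecutive integer values (which cannot all be odd); so the inclusion holds iff $t\mid n$ and the common value $n/t-1$ is odd, i.e. iff $2t\mid n$, i.e. iff $n=2m$ and $t\mid m$.

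For part \ref{something2}: Proposition \ref{propSomething} already gives that $h_i(\M_{n,t}^E)$ is a power of $2$ for $i\le s_t$ (using Theorem \ref{leminvariant}), so only $h_{s_t+1}(\M_{n,t}^E)$ is in question. If it is $0$, we are done; otherwise part \ref{something1} gives $2t\nmid n$, and the computation above then produces a basis vector $\mathbf{w}\in\ker(\M_{n,t})$ with coordinate sum $\pm2$. Suppose some odd prime $p$ divided $h_{s_t+1}(\M_{n,t}^E)$. Because every invariant factor of $\M_{n,t}$ is a power of $2$, reduction mod $p$ preserves its rank, so $\M_{n,t}\bmod p$ has $s_t$ nonzero invariant factors and Proposition \ref{propKerMod} would force $\ker(\M_{n,t}\bmod p)\subseteq\bar{\mathbf{1}}^{\bot}$. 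But reducing $\mathbf{w}$ modulo $p$ yields an element of $\ker(\M_{n,t}\bmod p)$ whose coordinate sum is $\pm2\neq0$ in $\Z/p\Z$ (as $p$ is odd) --- a contradiction. Hence $h_{s_t+1}(\M_{n,t}^E)$ has no odd prime factor, so it is a power of $2$.

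For part \ref{something5}: by Lemma \ref{lemPIdegQSVtoQAS}, $\PI(\oh_q(G_{n,2n}(\K))_{\gamma})=\PI(\oh_{q^{\M_{n,t}^E}}(\K^{2nt-t^2+1}))$, and by Lemma \ref{lemPIdeg} this equals $\prod_{i}\ell/\gcd(h_i(\M_{n,t}^E),\ell)$ over $i=1,\dots,\frac{1}{2}(2nt-t^2+1-\dim(\ker(\M_{n,t}^E)))$. Since $\ell$ is odd and, by part \ref{something2}, every invariant factor of $\M_{n,t}^E$ is a power of $2$, each factor equals $\ell$, so the PI degree is $\ell^{\frac{1}{2}(2nt-t^2+1-\dim(\ker(\M_{n,t}^E)))}$. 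Finally, Proposition \ref{propInvFac} combined with part \ref{something1} gives $\dim(\ker(\M_{n,t}^E))=t+1$ when $n=2m$ and $t\mid m$, and $\dim(\ker(\M_{n,t}^E))=t-1$ otherwise; substituting these two values into the exponent produces exactly the two cases of the claimed formula.

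I expect the main obstacle to be the first paragraph: one must carefully verify, across the three regimes of Proposition \ref{lemtoricperm}, that each cycle $\eta_j$ really does have the stated ``single increasing row‑run of length $\lfloor(n-j)/t\rfloor+1$ followed by a single decreasing column‑run of the same length'' shape, and one must be attentive to the parity convention for ``odd cycles'' when invoking Lemma \ref{lemSumw}. After that, parts \ref{something2} and \ref{something5} are bookkeeping on top of the results of Section \ref{sectionQSV}.
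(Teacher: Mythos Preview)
Your proposal is correct and follows essentially the same approach as the paper's proof: both arguments reduce part~\ref{something1} to Lemma~\ref{lemSumw} applied to the explicit cycles of Proposition~\ref{lemtoricperm}, derive part~\ref{something2} from Thompson interlacing together with Proposition~\ref{propKerMod}, and obtain part~\ref{something5} from Lemma~\ref{lemPIdegQSVtoQAS} and Lemma~\ref{lemPIdeg}. Your treatment of part~\ref{something1} is in fact slightly cleaner than the paper's: rather than working through the three regimes of Proposition~\ref{lemtoricperm} separately, you observe the unified formula $|R_1|=\lfloor (n-j)/t\rfloor+1$ (valid in all regimes) and reduce the question to an elementary divisibility statement about a block of $t$ consecutive integers, which yields the criterion $2t\mid n$ directly.
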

\begin{proof}\
\begin{enumerate}
\item For each of the $t$ odd cycles $\eta_i=(i \; \tau(i) \; \ldots \; \tau^l(i))=(R_1, C_1,  R_2, C_2, \ldots, R_k, C_k )$ in the disjoint cycle decomposition of $\tau$, as calculated in Proposition \ref{lemtoricperm}, we denote its corresponding basis element in $\ker(\M_{n,t})$ by $\mathbf{w}^{\eta_i}=(w^{\eta_i}_1, \ldots, w^{\eta_i}_N)^T$. We apply Lemma \ref{lemSumw} to each $\eta_i$, using the fact that $\tau^{|R_1|+|C_1|+ \cdots + |R_k| + |C_k|}(i)=i$ and noting that $1'=i$, to obtain:
\[\sum_{j=1}^N w^{\eta_i}_j = v^{\eta_i}_i - v^{\eta_i}_{\tau^{|R_1|}(i)} + v^{\eta_i}_{\tau^{|R_1|+|C_1|}(i)} - \cdots -v^{\eta_i}_{\tau^{|R_1|+|C_1|+|R_2|+\cdots + |R_k|}(i)}. \]
We saw in Proposition \ref{lemtoricperm} that the odd cycles $\{\eta_i\}_{i=1}^t$ are divided into 3 sets depending on whether $t<n-t, \, n-t<t,$ or $n-t=t$, with the cycles in each set sharing similar permutation rules. Recall we can write $n=ut+r$ for some $u\in \N$ and $r\in \llbracket 0,t-1 \rrbracket$. Below we look at the cycles in each of these 3 main sets, further dividing one of these sets into 2 subsets based on the value of $r$:

\begin{enumerate}[(a)]
\item \label{n-t=t} \underline{$n-t=t$:} For all $i\in \llbracket 1, t \rrbracket$ we have $\eta_i=(R_1, \, C_1)$, where $|R_1|=2$. Therefore, $\sum_{j=1}^N w^{\eta_i}_j = v^{\eta_i}_i - v^{\eta_i}_{\tau^2(i)} = v_i - v_i =0$.

\item  \underline{$t<n-t$} we split into further subsets:
\begin{enumerate} [(i)]
\item\label{t<n-t, r=0} \underline{$r=0$:} For all $i\in \llbracket 1,t \rrbracket$ we have $\eta_i = ( R_1, C_1)$ where $|R_1|=u$. Therefore,
\[\sum_{j=1}^N w^{\eta_i}_j = v^{\eta_i}_i - v^{\eta_i}_{\tau^u(i)} = \begin{cases} 
												0 & \text{ if $u$ is even}; \\
												2v_i & \text{ if $u$ is odd}.
											\end{cases} \]

If $u$ is even then $n=ut+r=2m't$, for some $m'\in \N_{>0}$, hence $n=2m$ and $t|m$. If $u$ is odd then $n=(2m'+1)t$ is odd when $t$ is odd; otherwise $n$ is even with $n=2m$ and $t\nmid m$. We therefore obtain:
\[ \left|\sum_{j=1}^N w^{\eta_i}_j\right| = \begin{cases} 0 & \text{ if } n=2m\text{ and } t|m;\\
												2 & \text{ otherwise.} 
									\end{cases}\]
									
\item  \label{t<n-t, r>0} \underline{$r\in \llbracket 1, t-1 \rrbracket$:} For all $i\in \llbracket 1,t \rrbracket$ we have $\eta_i=(R_1, C_1)$, where $|R_1|=u+1$ for all $i\in \llbracket 1, r \rrbracket$, and $|R_1|=u$ for all $i\in \llbracket r+1, t \rrbracket$. Therefore, if $u$ is odd we obtain
\begin{align*}
\sum_{j=1}^N w^{\eta_i}_j &{}= \begin{cases} v^{\eta_i}_i -v^{\eta_i}_{\tau^{u+1}(i)} =  0 & \text{ for all } i\in \llbracket 1, r \rrbracket; \\
			 v^{\eta_i}_i -v^{\eta_i}_{\tau^u(i)} = 2v_i & \text{ for all } i\in \llbracket r+1, t \rrbracket,
\end{cases}
\end{align*}
and if $u$ is even we obtain
\begin{align*}
\sum_{j=1}^N w^{\eta_i}_j &{}= \begin{cases} v^{\eta_i}_i -v^{\eta_i}_{\tau^{u+1}(i)} =  2v_i & \text{ for all } i\in \llbracket 1, r \rrbracket; \\
			 v^{\eta_i}_i -v^{\eta_i}_{\tau^u(i)} = 0 & \text{ for all } i\in \llbracket r+1, t \rrbracket.
\end{cases}
\end{align*}
Since $r\in \llbracket 1,t-1 \rrbracket$ then both of the sets $\llbracket 1,r \rrbracket$ and $\llbracket r+1, t \rrbracket$ are nonempty, therefore, for any $n$ there exist $i, k \in \llbracket 1, t \rrbracket$ such that $\left|\sum_{j=1}^N w^{\eta_i}_j\right|=0$ and $\left|\sum_{j=1}^N w^{\eta_k}_j\right|=2$.
\end{enumerate}
\item \label{0<n-t<t}\underline{$0<n-t<t$:} For all $i\in \llbracket 1,t \rrbracket$ we have $\eta_i=(R_1, C_1)$ where $|R_1|= 2$ if $i\in \llbracket 1,r \rrbracket$ and $|R_1|=1$ if $i\in \llbracket r+1, t \rrbracket$. Therefore,
\[ \sum_{j=1}^N w^{\eta_i}_j = \begin{cases} v^{\eta_i}_i -v^{\eta_i}_{\tau^2(i)} =  0 & \text{ for all } i\in \llbracket 1, r \rrbracket; \\
			 v^{\eta_i}_i -v^{\eta_i}_{\tau(i)} = 2v_i & \text{ for all } i\in \llbracket r+1, t \rrbracket,
\end{cases}. \]
If $r=0$ then the above sum takes only one value, $2v_i$, for all $i\in \llbracket 1, t \rrbracket$. Therefore, for any $n$ there exists $i\in \llbracket 1, t \rrbracket$ such that $\left|  \sum_{j=1}^N w^{\eta_i}_j \right| =2$.
\end{enumerate}

If $n=2m$ with $t|m$ then we are in cases \ref{n-t=t} and \ref{t<n-t, r=0} for $u$ even. Both of these have $\left|  \sum_{j=1}^N w^{\eta_i}_j \right| =0$ for all $i \in \llbracket 1, t \rrbracket$, hence we have $\ker(\M_{n,t})\subseteq \orth$ and $h_{s_t+1}(\M_{n,t}^E)=0$.

If $n$ is odd, or $n=2m$ and $t\nmid m$, then we are in cases \ref{t<n-t, r=0} for $u$ odd, \ref{t<n-t, r>0}, and \ref{0<n-t<t}. In each of these cases we have at least one cycle $\eta_i$ with $\left|  \sum_{j=1}^N w^{\eta_i}_j \right| =2$ and no odd cycles with an absolute sum greater than this. Therefore we conclude that $\ker(\M_{n,t})\nsubseteq \orth$ and $\ker(\M_{n,t}) \subseteq \orth \mod 2$, hence $h_{s_t+1}(\M_{n,t}^E)\neq 0$. This proves part \ref{something1}.

\item Using \cite[Theorem 5]{Thompson} to obtain $h_i(\M_{n,t}^E) | h_i(\M_{n,t})$, for all $i\in \llbracket 1, s_t \rrbracket$, it is clear that for all $t \in \llbracket 1, n-1 \rrbracket$ and $i\in \llbracket 1, s_t \rrbracket$, $h_i(\M_{n,t}^E)$ is a power of 2. The question is therefore regarding the value of $h_{s_t+1}(\M_{n,t}^E)$.

When $n=2m$ with $t|m$ we see by part \ref{something1} of this lemma that $h_{s_t+1}(\M_{n,t}^E)=0$. Therefore the statement is true for this case. When $n$ is not of this form, we note from part \ref{something1} of this lemma that $\ker(\M_{n,t}) \subseteq \orth \mod 2$ and $\ker(\M_{n,t}) \nsubseteq \orth \mod p$ for any $p>2$. Therefore, the only prime dividing $h_{s_{t+1}}(\M_{n,t}^E)$ is 2 by Proposition \ref{propKerMod}. This completes the proof of part \ref{something2}.

\item This result follows immediately from Lemma \ref{lemPIdegQSVtoQAS}, parts \ref{something1} and \ref{something2} of this lemma, and Theorem \ref{PIdegQDR}.

\end{enumerate}
\end{proof}

\subsection{PI degree of quantum affine spaces associated to the extended matrix of a diagram matrix and application to quantum Schubert varieties}

We are now in position to compute the PI degree of a quantum affine space associated to the extended matrix of a Young diagram matrix. This allows the computation of the PI degree of an arbitrary quantum Schubert variety.

\begin{theorem}\label{propGeneralPIdegME}
Let $D$ be a diagram on the Young tableau $Y_{\lambda}$ with $n=\lambda_1 \geq \dots \geq \lambda_m>0$. Denote by $N$ the number of white boxes, by $M=M(D)$ its associated matrix, and by $M^E$ the extended matrix. Suppose $\tau$ has $r$ odd cycles and $M$ has $s=(N-r)/2$ nonzero invariant factors. Let $q$ be a primitive $\ell^{\mathrm{th}}$ root of unity with $\ell$ odd and $\ell_0$ the smallest prime factor of $\ell$. Then the PI degree of the quantum affine space associated to $M^E$ is 
\[ \PI \left(\oh_{q^{M^E}} \left(\K^{N+1} \right) \right) = \begin{cases}  \ell^{\frac{N-r}{2}} & \text{ if  $\ker(M) \subseteq \mathbf{1}^{\bot}$}; \\
 \ell^{\frac{N-r}{2} + 1} & \text{ if $\ker(M) \nsubseteq \mathbf{1}^{\bot}$ and $\ell_0 > \min(m,n)$}; \\
\ell^{\frac{N-r}{2}} \cdot \frac{\ell}{\gcd(h_{s+1}(M^E), \ell)} & \text{ if $\ker(M) \nsubseteq \mathbf{1}^{\bot}$ and $\ell_0 \leq \min(m,n)$}.
								\end{cases}
\]
\end{theorem}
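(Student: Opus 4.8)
The plan is to combine Lemma~\ref{lemPIdeg} with the structural results about $M^E$ already assembled, namely Proposition~\ref{propInvFac} (the kernel dimension of $M^E$), Proposition~\ref{propKerMod} (when a prime $p$ divides $h_{s+1}(M^E)$), Proposition~\ref{propSomething} (all $h_i(M^E)$ for $i\le s$ are powers of $2$ and $h_{s+1}(M^E)$ is either $0$ or has odd prime factors bounded by $\min(m,n)$), and the fact from Theorem~\ref{leminvariant} that the invariant factors of $M=M(D)$ are themselves powers of $2$. Since $\ell$ is odd, $\gcd(h_i(M^E),\ell)=1$ for all $i\in\llbracket 1,s\rrbracket$, so the entire PI degree computation reduces to understanding the single invariant factor $h_{s+1}(M^E)$ and the rank of $M^E$.

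First I would treat the case $\ker(M)\subseteq\mathbf{1}^{\bot}$. By Proposition~\ref{propInvFac}, $\dim(\ker(M^E))=\dim(\ker(M))+1=r+1$, so $M^E\in M_{N+1}(\Z)$ has rank $(N+1)-(r+1)=N-r=2s$; hence it has exactly $s$ nonzero invariant factors, all powers of $2$ by Proposition~\ref{propSomething}, and Lemma~\ref{lemPIdeg} gives $\PI(\oh_{q^{M^E}}(\K^{N+1}))=\prod_{i=1}^{s}\ell/\gcd(h_i,\ell)=\ell^{s}=\ell^{(N-r)/2}$. Next, when $\ker(M)\nsubseteq\mathbf{1}^{\bot}$, Proposition~\ref{propInvFac} gives $\dim(\ker(M^E))=r-1$, so $\rank(M^E)=(N+1)-(r-1)=N-r+2=2(s+1)$, i.e.\ $M^E$ has $s+1$ nonzero invariant factors $h_1,\dots,h_{s+1}$. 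By Proposition~\ref{propSomething}, $h_1,\dots,h_s$ are powers of $2$ (so coprime to the odd $\ell$) and $h_{s+1}=h_{s+1}(M^E)$ is nonzero with odd prime factors at most $\min(m,n)$. Lemma~\ref{lemPIdeg} then yields $\PI=\ell^{s}\cdot\ell/\gcd(h_{s+1},\ell)$, which is the third branch of the statement.

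It then remains to see that in the subcase $\ell_0>\min(m,n)$ the factor $\ell/\gcd(h_{s+1},\ell)$ collapses to $\ell$, recovering the second branch. Here I would argue that every prime $p$ dividing $\ell$ satisfies $p\ge\ell_0>\min(m,n)$; by Proposition~\ref{propSomething} the odd prime factors of $h_{s+1}(M^E)$ are $\le\min(m,n)<p$, and $h_{s+1}(M^E)$ has no factor of $2$ either unless... — more cleanly: any prime divisor of $\gcd(h_{s+1}(M^E),\ell)$ would be a prime $\le\min(m,n)$ dividing $\ell$, contradicting $\ell_0>\min(m,n)$. (Note $2\nmid\ell$ since $\ell$ is odd, so the even part of $h_{s+1}(M^E)$ is irrelevant.) Hence $\gcd(h_{s+1}(M^E),\ell)=1$ and $\PI=\ell^{s+1}=\ell^{(N-r)/2+1}$, as claimed. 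The Young-diagram-versus-rectangle remark in Section~\ref{sectionCauchonDiag} lets me invoke all the diagram results freely for $D$ on $Y_\lambda$.

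I do not anticipate a genuine obstacle here: all the hard combinatorial work (that $h_{s+1}(M^E)$ can fail to be a power of $2$ only through small primes, and the precise kernel-dimension dichotomy) is already packaged in Propositions~\ref{propInvFac}, \ref{propKerMod} and \ref{propSomething}. The only point requiring a little care is bookkeeping the rank of $M^E$ correctly in each branch so that the product in Lemma~\ref{lemPIdeg} runs over the right number of factors, and making sure the $\ell_0>\min(m,n)$ hypothesis is used exactly where the third branch must degenerate into the second. If anything is delicate it is confirming that $s+1$ (not more) is the number of nonzero invariant factors of $M^E$ when $\ker(M)\nsubseteq\mathbf{1}^{\bot}$, but this is forced by the rank computation together with skew-symmetry (invariant factors come in equal pairs), so Proposition~\ref{propInvFac} does all the work.
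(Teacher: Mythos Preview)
Your proposal is correct and follows exactly the route the paper takes: the paper's proof is the one-line ``This easily follows from Lemma~\ref{lemPIdeg}, Proposition~\ref{propInvFac} and Proposition~\ref{propSomething}'', and what you have written is a faithful unpacking of that sentence. Your bookkeeping on the rank of $M^E$ in each branch and the coprimality argument for the subcase $\ell_0>\min(m,n)$ are precisely what is needed; Proposition~\ref{propKerMod} is not actually required here, so you can drop that reference.
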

\begin{proof}
This easily follows from Lemma \ref{lemPIdeg}, Proposition \ref{propInvFac} and Proposition \ref{propSomething}. 

\end{proof}

\begin{example}
For the diagram of Example \ref{example off invariant factors}, we get the following for the PI degree of $\oh_{q^{M^E}} \left(\K^{N+1} \right) $:
\[ \PI \left(\oh_{q^{M^E}} \left(\K^{N+1} \right) \right) 
= \begin{cases}   \ell^3 & \text{ if } \mathrm{gcd} (\ell,3)=1; \\ 
\ell^{3} /3 & \text{ if $3$ divides $\ell$}.
\end{cases} \]
\end{example}

\begin{corollary}
\label{PIdegreeqSchubertvar}
 Let $q$ be a primitive $\ell^{\mathrm{th}}$ root of unity such that the smallest prime factor $\ell_0$ of $\ell$ satisfies $\ell_0>\min\{m,n,2\}$.
 
 Let $\gamma=[\gamma_1 < \dots < \gamma_m]$ be a quantum Pl\"ucker coordinate of $\oqgmn$ and let $\lambda $ be the partition associated to $\gamma$. We denote by $N$ the number of boxes in $Y_{\lambda}$ and by $r$ the number of odd cycles for the toric permutation associated to  $Y_{\lambda}$ (with all boxes being white).
 
  Then the PI degree of the quantum Schubert variety $\oqgmn_{\gamma}$ is given by 
\[ \PI \left(\oqgmn_{\gamma} \right) = \begin{cases}  \ell^{\frac{N-r}{2}} & \text{ if  $\ker(M_{\lambda}) \subseteq \mathbf{1}^{\bot}$}; \\
 \ell^{\frac{N-r}{2} + 1} & \text{ if $\ker(M_{\lambda}) \nsubseteq \mathbf{1}^{\bot}$}.
								\end{cases}
\]
\end{corollary}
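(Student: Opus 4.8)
The plan is to combine the dehomogenisation picture of Section~\ref{subsec-lambda-gamma} with the structural results already assembled in this section. By Proposition~\ref{prop-PIdegree-yl}(2) we have $\PI(\oqgmn_{\gamma}) = \PI(\oh_{q^{M_{\gamma}}}(\K^{N+1}))$, where $M_{\gamma} = M_{\lambda}^E$ and $M_\lambda = M(D_\lambda)$ is the matrix of the diagram $D_\lambda$ on the Young tableau $Y_\lambda$ associated to $\gamma$. So the corollary is really just Theorem~\ref{propGeneralPIdegME} applied to this particular diagram, and the only work is to show that the third case of that theorem cannot occur under the running hypothesis, and that the quantities $N$ and $r$ appearing there agree with those in the statement.

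First I would invoke Theorem~\ref{propGeneralPIdegME} with $D = D_\lambda$, $M = M_\lambda$, $M^E = M_\gamma$. The hypothesis $\ell_0 > \min\{m,n,2\}$ means $\ell$ is odd (so $\ell_0 > 2$) and $\ell_0 > \min\{m,n\}$. Since $D_\lambda$ sits inside an $m \times (n-m)$ rectangle (recall $\lambda_i = n - m - (\gamma_i - i)$, so $\lambda_1 \leq n-m$ and the diagram has $m$ rows), the relevant bound from Proposition~\ref{propSomething} is $\min\{m, n-m\}$, and we certainly have $\ell_0 > \min\{m,n\} \geq \min\{m, n-m\}$; hence the third branch of Theorem~\ref{propGeneralPIdegME}, which requires $\ell_0 \leq \min(\text{number of rows},\text{number of columns})$, is vacuous here. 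This leaves exactly the two cases stated, giving $\PI(\oqgmn_\gamma) = \ell^{(N-r)/2}$ when $\ker(M_\lambda) \subseteq \mathbf{1}^\bot$ and $\ell^{(N-r)/2 + 1}$ otherwise.

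Second I would reconcile the combinatorial data. The number $N$ of boxes in $Y_\lambda$ is the same whether we view it as a Young tableau or as the set of white boxes of $D_\lambda$. For $r$: the statement of the corollary says $r$ is the number of odd cycles of the toric permutation associated to $Y_\lambda$ with all boxes white, whereas Theorem~\ref{propGeneralPIdegME} uses the toric permutation $\tau$ of the diagram $D_\lambda$ (where boxes outside $Y_\lambda$ are black). By Proposition~\ref{propkerneldimension} (and the Remark following Theorem~\ref{thm-PIdegree-diag} explaining that a Young diagram embedded in a rectangle has the same associated matrix as the corresponding diagram), $\dim\ker(M_\lambda)$ equals the number of odd cycles of $\tau_{D_\lambda}$; and this is the invariant that actually controls the formula. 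So I would either phrase $r$ directly as $\dim\ker(M_\lambda)$ and note the two descriptions coincide, or cite Proposition~\ref{propkerneldimension} together with the embedding remark to identify them. This bookkeeping is the only genuinely fiddly point.

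I do not anticipate a serious obstacle: all the analytic content is in Theorem~\ref{propGeneralPIdegME} and Propositions~\ref{propInvFac}, \ref{propSomething}, which are already proved. The one thing to be careful about is exactly which pair $(\text{rows},\text{columns})$ enters the bound in Proposition~\ref{propSomething} for the diagram $D_\lambda$ — it is the $m \times (n-m)$ rectangle, not $m \times n$ — and to check that the hypothesis $\ell_0 > \min\{m,n\}$ comfortably dominates $\min\{m,n-m\}$, which it does since $\min\{m,n-m\} \le \min\{m,n\}$. Once that is observed, the proof is a one-line deduction, which is presumably why the authors state it as a corollary.
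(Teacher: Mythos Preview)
Your overall strategy is exactly what the paper intends: the corollary is an immediate specialization of Theorem~\ref{propGeneralPIdegME} via Proposition~\ref{prop-PIdegree-yl}(2), and the bookkeeping you do to match $N$, $r$, and the ambient rectangle of $D_\lambda$ is correct. There is, however, a genuine slip in how you read the hypothesis.

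You write that ``the hypothesis $\ell_0 > \min\{m,n,2\}$ means $\ell$ is odd (so $\ell_0 > 2$) and $\ell_0 > \min\{m,n\}$.'' This is not what $\min\{m,n,2\}$ says. Since $m<n$ in $\oqgmn$, we have $\min\{m,n,2\}=\min\{m,2\}$, which equals $2$ as soon as $m\ge 2$. So for $m\ge 2$ the hypothesis is \emph{only} $\ell_0>2$, i.e.\ $\ell$ is odd; it does \emph{not} give you $\ell_0>\min\{m,n\}$, and hence does not give you $\ell_0>\min\{m,n-m\}$ either. Your chain $\ell_0>\min\{m,n\}\ge\min\{m,n-m\}$ therefore has an unjustified first inequality, and with it the exclusion of the third branch of Theorem~\ref{propGeneralPIdegME} collapses. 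Concretely: take $m=5$, $n=10$, so the diagram $D_\lambda$ can be $5\times 5$; the bound from Proposition~\ref{propSomething} is $\min\{5,5\}=5$, but the hypothesis only forces $\ell_0\ge 3$, so $\ell=3$ is allowed and you are squarely in the third case.

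What this means is that either (i) the corollary implicitly relies on a sharper bound for the odd prime factors of $h_{s+1}(M_\lambda^E)$ in the special case of an all-white Young diagram $D_\lambda$ (something not provided by Proposition~\ref{propSomething} as stated, and which would require an extra argument using the explicit description of $\tau_\lambda$ from Section~\ref{section-toric-permutation} together with Lemma~\ref{lemSumw}), or (ii) the hypothesis in the corollary is mis-stated and should really be $\ell_0>\min\{m,n-m\}$ together with $\ell$ odd. Your proof would be complete and correct under (ii); as written against the stated hypothesis it has a gap at exactly this point.
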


It would be interesting to characterise when $\ker(M_{\lambda}) \subseteq \mathbf{1}^{\bot}$ via $\gamma$ or $\lambda$. Theorem \ref{PIdegqgrass} in the following subsection suggests that this might be an interesting question.

\subsection{PI degree of quantum Grassmannians}

We conclude this article with a computation of the PI degree of any quantum Grassmannian. The dimension of the kernel of the relevant skew-symmetric integral matrix was computed in \cite[Proposition 2.4]{LaunoisLenagan2007}. To prove the following theorem, we will just need to prove that the kernel of that matrix is contained in $\mathbf{1}^{\bot}$ exactly when this kernel is trivial. 

If $i$ is a positive integer greater than or equal to $2$, we denote by $\mu_2(i)$ the 2-adic valuation of $i$; that is, the largest integer $j$ such that $2^j$ divides $i$.

\begin{theorem}\label{PIdegqgrass}
Let $q$ be a primitive $\ell^{\mathrm{th}}$ root of unity such that the smallest prime factor $\ell_0$ of $\ell$ satisfies $\ell_0>\min\{m,n,2\}$.

  Then the PI degree of the quantum Schubert variety $\oqgmn$ is given by 
\[ \PI \left(\oqgmn \right) = \begin{cases}  \ell^{\frac{m(n-m)}{2}} & \text{ if  } \mu_2(m)\neq \mu_2(n); \\
 \ell^{\frac{m(n-m)-\gcd(m,n)}{2} + 1} & \text{ otherwise}.
								\end{cases}
\]
\end{theorem}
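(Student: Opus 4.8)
The plan is to reduce the computation of $\PI(\oqgmn)$ to Theorem~\ref{propGeneralPIdegME} applied to the all-white $m \times (n-m)$ rectangular diagram, which corresponds to the partition $\lambda = (n-m)^m$ associated to $\gamma = [1,2,\dots,m]$. By Proposition~\ref{prop-PIdegree-yl}(2), the PI degree of $\oqgmn = \oqgmn_{[1,\dots,m]}$ is $\PI(\oh_{q^{M_\gamma}}(\K^N))$ with $M_\gamma = M_\lambda^E$ and $N = m(n-m)$. Since $\ell_0 > \min\{m,n,2\}$, we are in the first two cases of Theorem~\ref{propGeneralPIdegME}: the answer is $\ell^{(N-r)/2}$ if $\ker(M_\lambda) \subseteq \mathbf{1}^\bot$ and $\ell^{(N-r)/2+1}$ otherwise, where $r$ is the number of odd cycles of the toric permutation $\tau$ of the full rectangle. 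So the theorem will follow from two inputs: (a) identifying $r$ and the exponent $(N-r)/2$ in terms of $m,n$; and (b) deciding exactly when $\ker(M_\lambda) \subseteq \mathbf{1}^\bot$.

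For input (a), I would invoke \cite[Proposition 2.4]{LaunoisLenagan2007} (cited in the paragraph preceding the theorem), which computes $\dim\ker(M_\lambda)$ for the rectangular case; by Proposition~\ref{propkerneldimension} this equals $r$, the number of odd cycles of $\tau$. The expected output is $\dim\ker(M_\lambda) = \gcd(m,n)$ when $\mu_2(m) = \mu_2(n)$ and $\dim\ker(M_\lambda) = 0$ when $\mu_2(m) \neq \mu_2(n)$ — this is consistent with the two cases in the statement, where the exponents are $m(n-m)/2$ (kernel trivial, $r = 0$) and $(m(n-m) - \gcd(m,n))/2 + 1$ (kernel of dimension $\gcd(m,n)$). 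So in the case $\mu_2(m) \neq \mu_2(n)$, $M_\lambda$ is nonsingular, hence trivially $\ker(M_\lambda) = \{0\} \subseteq \mathbf{1}^\bot$, giving $\PI = \ell^{m(n-m)/2}$. The content is entirely in the case $\mu_2(m) = \mu_2(n)$.

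For input (b) in the case $\mu_2(m) = \mu_2(n)$, I must show $\ker(M_\lambda) \nsubseteq \mathbf{1}^\bot$, i.e. exhibit a kernel vector whose coordinates do not sum to zero. Here I would use the explicit basis of $\ker(M_\lambda)$ coming from the odd cycles of the toric permutation $\tau$ of the $m \times (n-m)$ rectangle, transported to $\ker(M_\lambda)$ via the isomorphism $\phi$ of \cite[Theorem 4.6]{BellCasteelsLaunois}, together with the sum formula from Lemma~\ref{lemSumw}. For the full rectangular diagram $\tau$ should be easy to write down explicitly — it is a "uniform" permutation, essentially a rotation-type permutation of $\{1,\dots,m+n-m\} = \{1,\dots,n\}$ (after relabelling), and its odd cycles all have the same shape. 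For each odd cycle $\eta$, Lemma~\ref{lemSumw} gives $\sum_i w_i^\eta = \big(-(-1)^{|R_1|} + (-1)^{|R_1|+|C_1|} - \cdots\big) v_{1'}$ in terms of the lengths of the alternating row/column blocks $R_i, C_i$ of $\eta$; for the rectangle these block lengths are governed by the quotient and remainder of $n-m$ and $m$ (or $m$ and $n-m$), and the alternating sum evaluates to $\pm 2\gcd(m,n)/(\text{number of cycles})$ or similar. The key arithmetic fact to extract is: this alternating sum is nonzero (so $\ker \nsubseteq \mathbf{1}^\bot$) precisely when $\mu_2(m) = \mu_2(n)$, and zero otherwise — but since in the latter case the kernel is already trivial, I only need the "nonzero" direction here. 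Then Theorem~\ref{propGeneralPIdegME} (second case, valid since $\ell_0 > \min\{m,n\}$) gives $\PI = \ell^{(m(n-m)-\gcd(m,n))/2 + 1}$.

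The main obstacle will be the explicit combinatorial computation of the toric permutation $\tau$ for the all-white $m \times (n-m)$ rectangle and the evaluation of the alternating block-length sum in Lemma~\ref{lemSumw} for each of its odd cycles. This is a concrete but delicate calculation of the same flavour as Proposition~\ref{lemtoricperm} (which handled the $C_{n,t}$ diagrams): I would write $n - m = u m + r$ (or symmetrically, depending on which of $m$, $n-m$ is larger), describe the cycle structure of $\tau$ case by case on the sign of $r$ and the parity of $u$, and in each case show the relevant alternating sum is a nonzero even integer exactly when $\mu_2(m) = \mu_2(n)$. A cleaner alternative, if available, is to relate $M_\lambda$ for the rectangle directly to a circulant-type structure and compute $\ker(M_\lambda) \cap \mathbf{1}^\bot$ by a rank argument mod small primes via Proposition~\ref{propKerMod}, but I expect the cycle-decomposition route to be the most self-contained given the machinery already set up in the paper.
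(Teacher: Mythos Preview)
Your overall reduction is correct and matches the paper's: apply Corollary~\ref{PIdegreeqSchubertvar} to $\gamma=[1,\dots,m]$, so $\lambda=(n-m)^m$; cite \cite[Proposition 2.4]{LaunoisLenagan2007} for $\dim\ker(M_\lambda)$; and the remaining content is to show that $\ker(M_\lambda)\subseteq\mathbf{1}^\bot$ precisely when $\ker(M_\lambda)=0$.

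Where you diverge from the paper is in \emph{how} you establish that last fact. You propose to compute the toric permutation of the all-white $m\times(n-m)$ rectangle explicitly (it is indeed a single rotation $i\mapsto i+(n-m)$ on $\mathbb{Z}/n\mathbb{Z}$, so all cycles have equal length $n/\gcd(m,n)$), decompose each odd cycle into its alternating row/column blocks $R_1,C_1,\dots$, and evaluate the alternating sum in Lemma~\ref{lemSumw}. This is a legitimate route and is exactly parallel to the treatment of the $C_{n,t}$ diagrams in Proposition~\ref{lemtoricperm} and the proof of part~(1) of the proposition in Section~5.3; the block lengths $|R_i|,|C_i|$ are governed by how the arithmetic progression $\{i+k(n-m)\bmod n\}$ crosses the threshold $m$, so the analysis reduces to a Euclidean-algorithm-style case split on $m$ and $n-m$.

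The paper instead bypasses the combinatorics entirely. It writes $M_\lambda$ as an $m\times m$ block matrix with $(n-m)\times(n-m)$ blocks built from $B:=B_{n-m}$, observes that any kernel vector $\mathbf{v}=(v_1|\cdots|v_m)^T$ satisfies $v_{j+1}=Cv_j$ with $C=(B-I)^{-1}(B+I)$, and then diagonalises over $\mathbb{C}$: kernel vectors correspond to eigenvectors of $B$ with eigenvalue $\gamma$ satisfying certain root-of-unity conditions, and a direct computation gives $(\gamma-1)^2\,\mathbf{1}^T\mathbf{v}=4$, so $\mathbf{1}^T\mathbf{v}\neq 0$ for every such eigenvector. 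This is shorter and avoids any case analysis, at the cost of working with complex eigenvalues rather than the paper's own combinatorial machinery. Your approach stays closer to the toolkit built in Sections~3 and~5 and would make the argument more self-contained, but expect the block-length bookkeeping to be somewhat fiddly compared to the paper's two-line eigenvalue computation.
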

\begin{proof} Given a positive integer $\eta$, we let $B_{\eta}$ denote the $\eta\times \eta$ skew-symmetric matrix with $1$'s in every entry above the diagonal.  Given positive integers $\mu, \eta$ with $\mu\le \eta$, we let $U_{\eta,\mu}$ denote the $\eta\times \mu$ matrix given by
$(I_{\mu} ~|~{\bf 0}_{\mu\times (\eta-\mu)})^T$.  Finally, let ${\bf 1}_{\eta}$ denote the $\eta \times 1$ column vector whose entries are all $1$.  
Let $\lambda_1\ge \cdots \ge \lambda_s\ge 1$ with $\sum \lambda_i=N$.  Then the matrix $M_{\lambda}$ can be expressed as a block $s\times s$ matrix in which the $(i,j)$-block is $U_{\lambda_i,\lambda_j}$ if $i<j$; $-U_{\lambda_j,\lambda_i}^T$ if $i>j$; and $B_{\lambda_i}$ if $i=j$.  Let $v_i\in \mathbb{C}^{\lambda_i\times 1}$ and suppose that 
$${\bf v}=(v_1~|~v_2~|~\cdots~|~v_s)^T$$ is in the kernel of $M_{\lambda}$.  We would like to know when this implies that ${\bf 1}_{N}^T\cdot {\bf v}=0$.  

In order to compute the PI degree of $\oqgmn$, we need to apply Corollary \ref{PIdegreeqSchubertvar} in the case when $\gamma = [1 \dots m]$ since $\oqgmn_{[1 \dots m]}=\oqgmn$. In this case, the partition $\lambda$ is given by $(\lambda_1=n-m, \dots ,\lambda_m=n-m)$.

 In this case, we set $B:=B_{n-m}$. If we subtract the $i$-th row from our block matrix $A_{\lambda}$ from the $(i+1)$-row of our block matrix, then we get
$(-I-B)v_i + (B-I)v_{i+1}=0$.  Since $B$ is skew-symmetric, its eigenvalues are purely imaginary and so $B-I$ is invertible.  Then $v_{i+1} = (B-I)^{-1}(B+I)v_i$ for $i=1,\ldots ,m-1$.  In particular, if we let $C:=(B-I)^{-1}(B+I)$, then we see that $v_j = C^{j-1} v_1$ for $j=1,\ldots ,m$.  Now looking at the first row of our block matrix and multiplying it by ${\bf v}$ and using the fact that ${\bf v}$ is in the kernel, we see that
\begin{equation}
\label{eq:v}
-v_1-Cv_1-\cdots - C^{m-2} v_1 + B C^{m-1} v_1 = 0.
\end{equation} Notice that the eigenvalues of $C$ are of the form 
$(\beta+1)/(\beta-1)$ with $\beta$ a purely imaginary number and since $C$ and $B$ commute, we see that
$v_1=0$ unless $0$ is an eigenvalue of 
$-I-C-\cdots - C^{m-2} + B C^{m-1}$.  
Notice that $B$ and $C$ commute and are simultaneously diagonalizable (since $B$ is skew-symmetric).  If $\gamma=(\beta+1)/(\beta-1)$ is an eigenvalue of $C$ then the corresponding eigenvalue, $\beta$, of $B$ is $(\gamma+1)/(\gamma-1)$.  
So the eigenvalues of $X:=-I-C-\cdots - C^{n-2} + B C^{n-1}$ are of the form
$-(\gamma^{m-1}-1)/(\gamma-1) + \gamma^{m-1} (\gamma+1)/(\gamma-1)$. If $X$ has a zero eigenvalue then since $\gamma\neq 0$ we must have 
$$-(\gamma^{n-1}-1) +\gamma^{n-1}(\gamma+1)=0,$$ or equivalently 
$$1+\gamma^m=0.$$
Thus $\gamma=\exp(\pi i j/m)$ for some odd $j$.  Thus the kernel of $A_{\lambda}$ in this case is spanned by the vectors of the form ${\bf v}=(v~|~Cv~|~\cdots ~| C^{m-1}v)^T$ with $v$ an eigenvector of $B$ with eigenvalue $\gamma$ with $\gamma^m=1$.  It is easily checked that the eigenspace (if such an eigenvector exists) associated with $\gamma$ is one-dimensional and spanned by $(1,\gamma,\ldots , \gamma^{n-m-1})^T$, and that this occurs only if $\gamma^{n-m}=-1$.  In other words
$(\gamma-1)^2{\bf 1}_{mn}^T \cdot {\bf v} = (\gamma^n-1)(\gamma^m-1)=4$. In other words, the kernel is contained in the orthogonal complement of ${\bf 1}_{mn}^T$ if and only if the kernel of $M_{(n-m, \dots , n-m)}$ is zero.

Now, by \cite[Proposition 2.4]{LaunoisLenagan2007}, the kernel is zero if and only if $\mu_2(m) \neq \mu_2(n)$, and in the case where $\mu_2(m) = \mu_2(n)$, the kernel of $M_{(n-m, \dots , n-m)}$ has dimension $\gcd (m,n)$. We conclude thanks to Corollary \ref{PIdegreeqSchubertvar}.

\end{proof}

\bibliographystyle{alpha}

\begin{minipage}{\textwidth}
\noindent J.P. Bell \\
Department of Pure Mathematics\\ 
University of Waterloo\\
Waterloo, ON, N2L 3G1\\ Canada\\[0.5ex]
email: jpbell@waterloo.ca \\

\noindent S. Launois \\
School of Mathematics, Statistics and Actuarial Science,\\
University of Kent\\
Canterbury, Kent, CT2 7FS,\\ UK\\[0.5ex]
email: S.Launois@kent.ac.uk \\

\noindent A. Rogers\\
email: alex@hence.ai \\

\end{minipage}

\end{document}